\documentclass[11pt]{article}

\usepackage{style}

\newtheorem{Theorem}{Theorem}
\numberwithin{Theorem}{section}

\newtheorem{Proposition}[Theorem]{Proposition}
\newtheorem*{proposition}{Proposition}
\newtheorem{Lemma}[Theorem]{Lemma}
\newtheorem*{Notation}{Notation}

\newtheorem{Remark}{Remark}
\newtheorem{Definition}[Theorem]{Definition}
\newtheorem{Example}[Theorem]{Example}
\newtheorem{Conjecture}[Theorem]{Conjecture}

\newtheorem{corollary}[Theorem]{Corollary}

\newtheorem*{theorem}{Theorem}

\newtheorem*{Ack}{Acknowledgements}

\title{Cohen Macaulay modules and positroid varieties.}
\author{Liam Riordan}
\date{}

\begin{document}

\maketitle

\begin{abstract}
    Jensen, King, and Su described a category $\cm(C)$ which categorifies the cluster structure on the homogeneous coordinate ring of a Grassmannian. In this paper we describe subcategories $\rich \subseteq \cm(C)$ which lift Leclerc's categories $\lec v w$ in the case where $v \in \quotmax$ and $w \geq v.$ As such, these categories are Frobenius, stably 2-CY, have natural cluster characters, and induce a cluster structure in lifts of open positroid varieties. 
\end{abstract}

\tableofcontents

\section{Introduction}
Cluster algebras were defined by Fomin and Zelevinsky in 2001 \cite{FZ1}. They were first introduced to study total positivity and dual canonical bases. It is known, due to Scott \cite{Scott}, that the homogeneous coordinate ring $\mathbb{C}[\gr (k,n)]$ of the Grassmannian of $k$-planes in $\mathbb{C}^n$ can be given the structure of a cluster algebra. In \cite{JKS} Jensen, King, and Su constructed a category $\cm(C)$ and used it to categorify the cluster structure on $\mathbb{C}[\gr (k,n)].$ Let $Q$ be the quiver with vertex set indexed by elements of $[n] = \{1,\dotsc,n\}$ and arrows $x_i : i \rightarrow i+1,$ $y_i: i+1 \rightarrow i.$ The indexing of vertices are up to multiples of $n.$ Further denote $x = \sum_i x_i$ and $y = \sum_i y_i.$ The algebra $C_{k,n}$ is the quotient of the completed path algebra of $\mathbb{C}Q$ by the relations \[xy - yx = 0,\, x^{k} - y^{n-k} = 0.\] The category $\cm(C_{k,n})$ is the full subcategory of $C_{k,n}$-modules which are free over $\mathbb{C}[[t]],$ where $t = xy.$ We will drop the $k,n$ subscript on $C_{k,n}$ except when we want to highlight the use of a different pair of indices. 

In \cite{POS}, Postnikov defined the combinatorial data of a necklace $\mathcal{I} = (I_i)_{i \in [n]},$ where $I_i \in \binom{[n]}{k}.$ A necklace $\mathcal{I}$ can be used to define a subvariety of the Grassmannian $\Pi_{\mathcal{I}} \subseteq \gr (k,n)$ called a \textit{positroid variety}. After localising at a collection of variables determined by $\mathcal{I}$ we get the \textit{open positroid} $\Pi^{0}_{\mathcal{I}}.$ Building on work of Leclerc \cite{Leclerc}, Galashin and Lam \cite{GaLa} showed that the coordinate rings of open positroid varieties can also be given a cluster structure. Following the definition of $\downcat w$ in \cite{BIRS}, Leclerc \cite{Leclerc} defined $\upcat v,$ and $\lec v w \defeq \upcat v \cap \downcat w,$ which are all full subcategories of the module category of the type A$_{n-1}$ preprojective algebra. These categories give rise to a cluster structure on the coordinate rings of open positroid varieties and more generally open Richardson varieties. By \cite{Lecconj} the entire coordinate ring is generated by the cluster structure given by $\lec v w.$

In this paper we will consider subcategories of $\cm(C)$ determined by $M_I,$ a choice of rank one module in $\cm(C),$ and $i \in [n]$ a choice of vertex of the quiver Q underlying $C.$ Let $e_i \in C$ denote the idempotent associated to the vertex $i$ of $Q.$ Given a morphism $f: X \rightarrow Y$ in $\Mod C$ then we say $f$ surjects at vertex $i$ if the restriction $\overline{f}: e_iX \rightarrow e_iY$ is surjective. We consider the full subcategories of $\cm(C)$ defined by \[\fa(I,i) = \{M \in \cm(C): \exists\, r\in \mathbb{N}, \,  M_I^r \hookrightarrow M \text{ which surjects at vertex $i$}\},\] \[\su(I,i) = \{M \in \fa(I,i): \ext^1_C(M,M_I) = 0\}.\] We study these subcategories in several steps. Firstly, we construct a functor \[\pi_I\colon \fa(I,i) \rightarrow \Mod C.\] We explicitly describe the projective-injectives in the category $\su(I,i)$ and show that $\su(I,i)$ is equivalent to a category of Gorenstein-projectives constructed by Jensen, King, and Su \cite{JKS3}. By $W^k$ we denote the parabolic subgroup of $W = S_{n}$ generated by all of the simples except $s_k.$ By $\quotmin$ we denote minimal length representatives in $W$ of cosets in $\quot.$ Likewise by $\quotmax$ we denote maximal length representatives in $W$ of cosets in $\quot.$ There is a standard bijection between $\quotmax$ and $\binom{[n]}{k}$ defined by sending a permutation $v$ to $v^{-1}[k]$ (see for example \cite{BjBr}). Let $z_i$ denote the permutation on $[n]$ defined by $z_i(j) = i+j \mod n.$ 
\begin{theorem}[Theorem \ref{intro thm 1}]
    The category $\su(I,i)$ is  Frobenius and stably 2-CY. Let $v \in \quotmax$ be the unique permutation such that $z_i^{-1}I = v^{-1}[k].$ The functor $\pi_I$ induces an equivalence of categories \[\su(I,i)/\langle M_I\rangle \cong \upcat v\] and a triangle equivalence of stable categories \[\underline{\smash{\su(I,i)}} \cong \underline{\smash{\upcat v}}.\]
\end{theorem}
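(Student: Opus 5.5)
By rotational symmetry of $C$ (the automorphism shifting vertices by $i$, which acts on rank one modules by $M_I \mapsto M_{z_i^{-1}I}$), we may reduce to the case $i = n$ (equivalently $0$) and $I = v^{-1}[k]$. The functor $\pi_I$ will be defined as follows. Take a minimal monomorphism $M_I^r \hookrightarrow M$ that surjects at vertex $i$, and set $\pi_I(M)$ to be the cokernel, regarded as a module over $C/\langle e_i\rangle$. The rank one module $M_I$ is generated at vertex $i$, so surjectivity at $i$ forces $e_i$ to kill the cokernel. The quotient $C/\langle e_i \rangle$ is the preprojective algebra $\Pi$ of type $A_{n-1}$, so $\pi_I$ lands in $\Mod \Pi$. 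The first task is to check that the cokernel is independent of choices up to isomorphism and is functorial. For this I would show that the maximal such $r$ is determined by $\dim e_i M/te_iM$, and that any map $M \to N$ in $\fa(I,i)$ carries the image of $M_I^r$ into that of $M_I^{r'}$ using $\mathrm{Hom}(M_I,M_I)=\mathbb{C}[[t]]$. Then $\pi_I$ kills $\add M_I$ and factors through $\fa(I,i)/\langle M_I\rangle$.

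Next I would identify the image. Leclerc's category $\upcat v$ is characterised as the $\Pi$-modules $X$ admitting a filtration or socle condition given by the submodule $\Pi$-module $D_v$ (the factor category of $\Sub$-type attached to $v$). I would compute $\pi_I$ on the rank one modules $M_J \in \su(I,i)$: the condition $\ext^1(M_J,M_I)=0$ means $I,J$ are noncrossing, and $\pi_I(M_J)$ should be the $\Pi$-module whose Loewy profile is the region between the lattice paths of $I$ and $J$. Comparing with Leclerc's description of the generators of $\upcat v$ via the cluster structure for $v\in\quotmax$, this gives essential surjectivity on a cluster-tilting object. To get full faithfulness, for $M,N\in\su(I,i)$ I would apply $\mathrm{Hom}(-,N)$ to $0\to M_I^r\to M\to \pi_I M\to 0$. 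The vanishing $\ext^1(M,M_I)=0$ together with the fact that maps into $\pi_I N$ lift gives $\mathrm{Hom}_\Pi(\pi_IM,\pi_IN)\cong \mathrm{Hom}(M,N)/[\add M_I]$. I would then show that every object of $\upcat v$ lifts by pulling back along a projective cover in $\cm(C)$ and truncating, which gives the equivalence $\su(I,i)/\langle M_I\rangle\cong\upcat v$.

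For the Frobenius and 2-CY claims, I would use the stated equivalence of $\su(I,i)$ with the Gorenstein-projective category of Jensen–King–Su \cite{JKS3}. That category is Frobenius as an exact subcategory of a Frobenius category closed under extensions, and the paper's explicit list of projective-injectives, which includes $M_I$ and the lifts of the projective-injectives of $\upcat v$, identifies them. Since $M_I$ is projective-injective, passing to the stable category of $\su(I,i)$ factors through the quotient by $\langle M_I\rangle$. The equivalence above sends projective-injectives to projective-injectives and preserves exact sequences (the snake lemma on the defining sequences), so it induces a triangle equivalence $\underline{\su(I,i)}\cong\underline{\upcat v}$. Stable 2-CY then transfers from Leclerc's result for $\upcat v$.

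I expect the main obstacle to be essential surjectivity together with exactness of $\pi_I$. Specifically, one must show that every module in $\upcat v$ arises as such a cokernel of a $\mathbb{C}[[t]]$-free module with $\ext^1(-,M_I)=0$, and that conflations in $\su(I,i)$ map to exact sequences in $\upcat v$. My first attempt would be to use the known cluster-tilting object of $\upcat v$ and its lift built from rank one modules. I would then extend to all objects via resolutions by this cluster-tilting object, using 2-CY on both sides.
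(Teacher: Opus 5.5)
Your skeleton matches the paper's. You reduce to $i=n$ by $\mathcal{W}_i$ and take $\pi_I(M)=M/\eta_{M_I}(M)$. You get fullness and faithfulness modulo $\langle M_I\rangle$ from $\ext^1_C(M,M_I)=0$ and $\eta_{M_I}(N)\in\mathrm{add}(M_I)$, and you conclude with a Happel-type argument.

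Your cokernel does agree with $M/\eta_{M_I}(M)$, but for a different reason than the one you give. The components of an embedding $M_I^r\hookrightarrow M$ that is an isomorphism at vertex $n$ form a $\mathbb{C}[[t]]$-basis of $\Hom_C(M_I,M)$. It is not true that $M_I$ is generated at one vertex; only the projectives $\cmcpro j$ are.

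\textbf{First gap: essential surjectivity.} The first load-bearing step is essential surjectivity, together with the inclusion $\pi_I(\su(I,n))\subseteq\upcat v$ for objects of arbitrary rank. Checking rank one modules against a cluster-tilting object says nothing about general objects. Pulling back along a projective cover and truncating does not land in $\su(I,n)$.

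The paper's route runs as follows.
\begin{itemize}
\item Lemma \ref{sub-lec} gives $\upcat v=\sub(V)$ for the single cyclic module $V=\soc{v^{-1}}Q_k$.
\item Lemma \ref{index v proof} shows that the projective cover $\cmcpro{n-k}\twoheadrightarrow V$ has kernel exactly $M_I$.
\item Composing $M_I^r\hookrightarrow M\hookrightarrow\cmcpro{n-k}^r$ gives $\pi_I(M)\hookrightarrow V^r$, so the image lies in $\upcat v$.
\item Pulling back $X\hookrightarrow V^r$ produces a lift in $\fa(I,n)$.
\end{itemize}
That lift usually has $\ext^1_C(-,M_I)\neq 0$. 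The missing idea is to \emph{extend} rather than truncate. A non-split extension $M_I\hookrightarrow L\twoheadrightarrow N$ keeps $\pi_I(L)\cong\pi_I(N)$ and strictly lowers $\dim\ext^1_C(-,M_I)$ (Proposition \ref{trick}).

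Your fallback via cluster-tilting resolutions does not close this. It uses stable 2-CY of $\su(I,n)$, which is an output of the theorem. Also, a lift of $T_1\to T_0$ need not be a monomorphism with cokernel in $\su(I,n)$.

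\textbf{Second gap: the Frobenius property.} The second load-bearing step is Frobenius, together with ``projectives go to projectives'', which the Happel argument requires. You cite $\su(I,i)\simeq\gp(B)$ and a list of projective-injectives, but in the paper these are results whose proofs form Section 4.

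The needed steps are:
\begin{itemize}
\item Construct a rank one lift of each $\soc{v^{-1}}Q_{n-j}$. The paper pushes out $\pi_n(\cmcpro j)\hookleftarrow t_v(\pi_n(\cmcpro j))\hookrightarrow Y$, where $Y=\head{v^{-1}w_0}Q_k=\pi_n(M_I)$.
\item Use $t_v(\pi_n(\cmcpro j))=Y\cap\pi_n(\cmcpro j)$ to see that the pushout embeds in $Q_k$. Hence it equals $\pi_n(M_{I(j+1)})$.
\item Use the resulting sequence $M_R\hookrightarrow\cmcpro j\oplus M_I\twoheadrightarrow M_{I(j+1)}$ to show $\ext^1_C(M_{I(j+1)},\su(I,n))=0$ (Proposition \ref{extend-ext}).
\item Only then does one get $\su(I,n)=\gp(B_I)$ (Lemma \ref{su is gp}), and Frobenius follows from \cite{JKS3}.
\end{itemize}
Your stated reason, that an extension-closed subcategory of a Frobenius category is Frobenius, is not valid: having enough projectives is exactly the issue.

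\textbf{Repairs.} Once the lifted injectives exist, your pullback idea can be repaired directly. Pull back an injective envelope $X\hookrightarrow Q_X$ in $\upcat v$ along $\widetilde{Q}_X\twoheadrightarrow Q_X$. The result lies in $\su(I,n)$, because its $\ext^1_C(M_I,-)$ is a quotient of $\Hom_C(M_I,Q_X/X)$. That Hom space vanishes since $\pi_n(M_I)\in\downcat v$ and $Q_X/X\in\upcat v$.

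For exactness of $\pi_I$, state explicitly that $\Hom_C(M_I,-)$ is exact on conflations in $\su(I,n)$. This holds because $\ext^1_C(M_I,X)\cong D\ext^1_C(X,M_I)=0$ there. The sequence of traces is therefore short exact before you apply the snake lemma.
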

We use a result of Auslander \cite[Prop 7.2]{Aus} to give a dualised construction of $\su(I,i).$ Given $I \in \binom{[n]}{k}$ and $i \in [n]$ we define 
\[\opfa(I,i) = \{M \in \cm(C): \exists\,  r \in \mathbb{N} \text{ and } M \hookrightarrow M_I^{r} \text{ which surjects at vertex } i\},\]
\[\opsu(I,i) = \{M \in \opfa(I,i): \ext^{1}_C(M,M_I) = 0\}.\] The dual version of our previous theorem is then the following.

\begin{theorem}[Theorem \ref{intro thm 2}]
    Let $w \in \quotmin$ such that $w^{-1}w_0[k] = z_i^{-1}I.$ Then we have that
    \[ \opsu(I,i)/\langle M_I\rangle \cong \downcat w\] and a triangle equivalence of stable categories    
    \[\underline{\smash{\opsu(I,i)}} \cong \underline{\smash{\downcat w}}.\]
\end{theorem}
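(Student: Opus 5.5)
I will deduce this dual statement from the first theorem (Theorem \ref{intro thm 1}) by a duality on $\cm(C)$, rather than redoing the analysis. The dual in question is the functor used by Auslander \cite[Prop 7.2]{Aus}. Write $D = \hom_{\mathbb{C}[[t]]}(-,\mathbb{C}[[t]])$. Since $C$ is Gorenstein of dimension one over $\mathbb{C}[[t]]$ and $\cm(C)$ consists of the $\mathbb{C}[[t]]$-free modules, $D$ gives an exact duality $\cm(C)\to\cm(C^{op})$. I then compose with the anti-isomorphism $C^{op}\cong C$ that swaps $x$ and $y$ and reverses the arrows. This swap also interchanges the roles of $k$ and $n-k$, so I expect to reindex, and if necessary to compose with the isomorphism $C_{k,n}\cong C_{n-k,n}$ coming from the reflection $i\mapsto -i$ of the vertex set. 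The result is an exact contravariant duality $\delta\colon\cm(C)\to\cm(C)$.

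The first step is to compute $\delta(M_I)$. A rank one module should go to a rank one module $M_{I'}$, where $I'$ is obtained from $I$ by complementation and/or reversal. I would check this on the standard lattice picture of $M_I$. Next, $\delta$ is exact and preserves injectivity, so an injection $M\hookrightarrow M_I^r$ that surjects at vertex $i$ dualises to an injection $M_{I'}^r\hookrightarrow \delta M$, after correcting for the cokernels being torsion. Surjectivity at vertex $i$ should translate to surjectivity at a vertex $i'$. This matching of the vertex condition is the delicate point: the cokernel of $\delta f$ is $\ext^1_{\mathbb{C}[[t]]}(\operatorname{coker} f,\mathbb{C}[[t]])$, and I need the condition ``$e_i\operatorname{coker}f=0$'' to be preserved at the dual vertex. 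Finally, $\ext^1_C(M,M_I)\cong\ext^1_C(\delta M_I,\delta M)$. Under the 2-CY property of the stable category $\underline{\cm}(C)$, this vanishing is equivalent to $\ext^1_C(\delta M,M_{I'})=0$. Putting these together gives $\delta\colon\opsu(I,i)\xrightarrow{\sim}\su(I',i')^{op}$, sending $M_I$ to $M_{I'}$.

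The second step is to apply Theorem \ref{intro thm 1}. This gives $\su(I',i')/\langle M_{I'}\rangle\cong\upcat{v'}$, where $v'\in\quotmax$ is determined by $z_{i'}^{-1}I'=v'^{-1}[k]$. On the preprojective side I use the standard duality on $\Lambda$-modules, namely $D$ composed with the automorphism induced by the diagram flip of $A_{n-1}$. Following \cite{Leclerc} and \cite{BIRS}, this duality exchanges $\upcat{v'}$ with $\downcat w$, where $w = w_0 v' w_0$ or $v' w_0$ depending on conventions. Since Frobenius, stably 2-CY categories and their triangulated stable categories are preserved under such dualities (the opposite of a triangulated category is triangulated), the chain
\[\opsu(I,i)/\langle M_I\rangle\cong \big(\su(I',i')/\langle M_{I'}\rangle\big)^{op}\cong \upcat{v'}^{op}\cong\downcat w\]
follows, and so does the corresponding triangle equivalence of stable categories.

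The remaining and, I expect, main obstacle is combinatorial bookkeeping. I must verify that the $w$ obtained this way is exactly the element of $\quotmin$ satisfying $w^{-1}w_0[k]=z_i^{-1}I$. Concretely, I would track how complementation or reversal of $I$, the shift $i\mapsto i'$, and conjugation by $w_0$ interact with the bijection $\quotmax\leftrightarrow\binom{[n]}{k}$. I would also need the fact that $u\mapsto uw_0$ or $w_0u$ maps $\quotmax$ to $\quotmin$. I would first test the identification in small cases such as $\gr(2,4)$ to fix the conventions, and then prove it in general.
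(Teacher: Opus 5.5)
Your route is the paper's. You dualise with Auslander's $\Hom_{\mathbb{C}[[t]]}(-,\mathbb{C}[[t]])$ to land in $\su$ of the dual label, apply Theorem \ref{intro thm 1}, and come back with the $\mathbb{C}$-linear dual $D$, which turns $\sub(V)^{op}$ into $\fac(D(V))$. The paper does not fold back into $C_{k,n}$: it stays in $\cm(C_{n-k,n})$ with label $I^c$ at the same vertex $i$ and uses plain $D$. Your reflection $C_{n-k,n}\cong C_{k,n}$ and the diagram flip of $A_{n-1}$ cancel each other, so they only add bookkeeping. Two of your steps are sound and worth keeping. First, the vertex condition is simpler than you fear: by Lemma \ref{fa char} it says $e_if$ is an isomorphism, and that dualises to an isomorphism at the same vertex. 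Second, your appeal to the 2-CY symmetry of $\underline{\cm}(C)$, to pass from $\ext^1(\delta M_I,\delta M)=0$ to $\ext^1(\delta M,M_{I'})=0$, is genuinely needed and is left implicit in the paper.

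The step that will fail is the one you defer: the normalisation $w^{-1}w_0[k]=z_i^{-1}I$ cannot be verified, because it is wrong; the paper's proof asserts it without checking. The assignment $w\mapsto w^{-1}w_0[k]$ is constant on $W^{n-k}$-cosets, not $W^k$-cosets, so it does not single out an element of $\quotmin$. For $k=1$, $n=3$ one has $\quotmin=\{e,s_1,s_1s_2\}$ with $w^{-1}w_0[1]=\{3\},\{3\},\{2\}$ respectively.

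Do the bookkeeping in the paper's setup: $V=\soc{v^{-1}}Q_{n-k}$ with $v\in\left(W^{n-k}\backslash W\right)^{\max}$ and $v^{-1}[n-k]=z_i^{-1}I^c$. Since $D(Q_{n-k})$ is the projective with top $S_{n-k}$, i.e. $Q_k$, you get $D(V)=\head{v^{-1}}Q_k$. Hence $w=w_0v\in\quotmin$ and $w^{-1}[k]=v^{-1}[n-k+1,n]=z_i^{-1}I$, whereas $w^{-1}w_0[k]=v^{-1}[k]$. So $w$ is the minimal-length representative of the coset whose maximal representative is the $v$ attached to $I$ in Theorem \ref{intro thm 1}.

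A sanity check is $\gr(1,3)$ with $I=\{3\}$ and $i=3$. Every object of $\cm(C_{1,3})$ is a sum of the projectives $M_{\{1\}},M_{\{2\}},M_{\{3\}}$, and all three lie in $\opsu(\{3\},3)$. The quotient by $M_{\{3\}}$ therefore has two indecomposables, with cokernels $Q_1$ and $S_2$, so it is $\downcat{s_1s_2}$. The two elements allowed by the stated condition give $\downcat{e}=0$ and $\downcat{s_1}=\mathrm{add}(S_1)$, neither of which matches. Target $w^{-1}[k]=z_i^{-1}I$ instead; with that correction the rest of your argument goes through.
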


The categories $\upcat v$ with $v \in \quotmax$ were shown by Leclerc \cite{Leclerc} to model cluster structures on opposite Schubert cells in the Grassmannian $\gr(k,n).$ Likewise $\downcat w$ with $w \in \quotmin$ were shown by Geiss, Leclerc, and Schr\"oer \cite{GLSkac} to model cluster structures on Schubert cells in the Grassmannian $\gr(k,n).$ Given $I,J \in \binom{[n]}{k}$ we define \[\rij(I,J) = \{M\in \cm(C): \text{any embedding } M_I^{\rank M} \hookrightarrow M_J^{\rank M} \text{ factors via M}\}\] and \[\calrij(I,J) = \{M \in \rij(I,J)\colon \ext^{1}_{C}(M,M_I\oplus M_J) = 0\}. \] This definition is motivated by Lemma \ref{grass intersection} which says that $\calrij(I,J) = \su(I,j) \cap \opsu(J,j)$ for some $j \in [n].$ This suggests we should compare it to an intersection of a Schubert cell and opposite Schubert cell in $\gr(k,n).$ Given $i \in [n]$ there is a partial order on $\binom{[n]}{k}$ denoted $\leq_i$ (see the paragraph preceding Lemma \ref{equiv} for a definition.)

\begin{theorem}[Theorem \ref{skew cat}]
    For any $j \in [n]$ such that $I \leq_{j+1} J$ then \[\calrij(I,J) \subseteq \su(I,j)\] and under the quotient functor \[\pi_I\colon \su(I,j) \rightarrow \upcat v\] we have  \[\pi_I(\calrij(I,J)) = \lec {v} {w_J}\] for some $w_J \in \quotmax$ with $w_J = vu.$ 
\end{theorem}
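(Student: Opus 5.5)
The plan has three stages: first show that $\calrij(I,J)$ lies inside $\su(I,j)$, then transport it along $\pi_I$ into $\upcat v$, and finally identify the image with $\lec v {w_J}$ using the dual description of $\downcat w$.

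\textbf{Step 1: $\calrij(I,J)\subseteq\su(I,j)$.} Let $M\in\calrij(I,J)$ and put $r=\rank M$. The hypothesis $I\leq_{j+1}J$ should be exactly what guarantees an embedding $M_I\hookrightarrow M_J$ that surjects at vertex $j$. The intended reading of $\leq_{j+1}$ is a componentwise comparison of the sets after rotating by $z_{j}$; this would be checked against the lattice-path description of rank one modules. Taking the $r$-fold direct sum gives an embedding $M_I^r\hookrightarrow M_J^r$, and by the defining property of $\rij(I,J)$ it factors as $M_I^r\to M\to M_J^r$. The first map is injective, being the first factor of an injective composite. Surjectivity at $j$ needs more care, because $M\to M_J^r$ need not be an isomorphism at $e_j$. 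Here I would use ranks. Each of $e_jM_I$, $e_jM$ and $e_jM_J$ is a free $\mathbb{C}[[t]]$-module of rank $r$ (or $1$ for the rank one modules). The composite is an isomorphism at $e_j$, so $e_jM_I^r\to e_jM$ is a split injection between free modules of the same rank, and hence an isomorphism. Together with $\ext^1_C(M,M_I)=0$, this shows $M\in\su(I,j)$. Lemma \ref{grass intersection} gives essentially the same conclusion, but the point here is to make the vertex $j$ explicit from $I\leq_{j+1}J$.

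\textbf{Step 2: the image under $\pi_I$.} By Theorem \ref{intro thm 1}, $\pi_I$ is full and essentially surjective onto $\upcat v$ with kernel $\langle M_I\rangle$. By Lemma \ref{grass intersection}, $\calrij(I,J)=\su(I,j)\cap\opsu(J,j)$. The task is therefore to show that the extra condition of lying in $\opsu(J,j)$ corresponds, under $\pi_I$, to lying in $\downcat{w_J}$. Let $w_J\in\quotmax$ be determined by $J$ via $z_j^{-1}J=w_J^{-1}[k]$, adjusted by $w_0$ as in Theorem \ref{intro thm 2}. The order relation $I\leq_{j+1}J$ should translate into the Bruhat relation $v\leq w_J$, with $w_J=vu$ length-additive; this is a combinatorial lemma on $k$-subsets and Grassmannian permutations.

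\textbf{Step 3: identifying $\opsu(J,j)$-membership inside $\su(I,j)$.} This is the step I expect to be the main obstacle. I would describe $\downcat{w_J}$ as $\operatorname{Fac}$ of the module $\pi_I(M_J)$, which should be the BIRS-type module $I_{w_J}$ restricted appropriately. The key property to establish is: for $M\in\su(I,j)$, the condition that $M$ embeds in $M_J^r$ surjecting at $j$ holds exactly when $\pi_I(M)$ is a quotient of a sum of copies of $\pi_I(M_J)$. For the reverse containment, take $X\in\lec v{w_J}$ and lift it to $M\in\su(I,j)$ with $\pi_I(M)\cong X$. Using Auslander's duality (the result behind Theorem \ref{intro thm 2}), one shows that the lift can be chosen to satisfy the factorisation property of $\rij(I,J)$. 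The Ext-vanishing $\ext^1(M,M_J)=0$ should follow from $\ext^1$-orthogonality in $\downcat{w_J}$ together with the stable equivalence. I would try first to prove everything on indecomposable rigid objects and then extend by additivity. If that fails, I would use the stable 2-CY structure to compare cluster-tilting objects on both sides.
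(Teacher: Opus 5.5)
Your Step 1 is fine. $I\leq_{j+1}J$ is exactly the fourth condition of Lemma \ref{equiv}, and your rank argument for surjectivity at $j$ makes explicit what the paper takes from Lemma \ref{grass intersection}.

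The gap is in Step 3: the property you plan to establish is false, and it points the wrong way. The condition contributed by $\opfa(J,j)$ is an embedding $M\hookrightarrow M_J^r$ that is an isomorphism at $j$, and that is a \emph{submodule} condition. Inside $M_J^r$ one has $\eta_{M_I}(M)=M_I^r=\eta_{M_I}(M_J^r)$, so the condition becomes $\pi_I(M)\hookrightarrow\pi_I(M_J)^r$. Combined with Theorem \ref{intro thm 1}, the image of $\calrij(I,J)$ is $\{U\in\sub(\pi_I(M_J)):\ext^1(U,\pi_I(M_J))=0\}$, which has nothing to do with $\fac(\pi_I(M_J))$.

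In fact $\downcat{w_J}\neq\fac(\pi_I(M_J))$ in general. Take $v=w_0^k$, $I=[k]$, $j=n$, $J=[n-k+1,n]$. Then $M_I=\cmcpro{n}$ and $M_J=\cmcpro{n-k}$ are projective-injective, and every $M\in\cm(C)$ sits between $M_I^r$ and $M_J^r$ isomorphically at $n$. Hence $\calrij(I,J)=\cm(C)$, with image $\lec{w_0^k}{w_0}=\sub(Q_k)$. But $\pi_I(M_J)=Q_k$ has top $S_{n-k}$, so for $k\neq n-k$ the module $S_k\in\sub(Q_k)$ is not in $\fac(Q_k)$.

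Your fallbacks do not repair this. Theorem \ref{intro thm 2} describes $\opsu(J,j)$ modulo $M_J$, through the duality and with $\downcat{w}$ for some $w\in\quotmin$. Nothing in your plan transports that description through $\pi_I$, which quotients by $M_I$. Also, $w_J$ is simply the element of $\quotmax$ with $w_J^{-1}[k]=z_j^{-1}J$; there is no $w_0$ adjustment.

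The missing idea is the exact sequence the paper uses to compare the two sides. Pushing out $M_J\hookrightarrow\cmcpro{j-k}\twoheadrightarrow\pi_J(\cmcpro{j-k})$ along $M_J\twoheadrightarrow\pi_I(M_J)$ gives
\[0\to\pi_I(M_J)\to\pi_I(\cmcpro{j-k})\to\pi_J(\cmcpro{j-k})\to 0.\]
The middle term is the injective generator $\soc{v^{-1}}Q_k$ of $\upcat{v}$, and the last term is the generator $\soc{w_J^{-1}}Q_k$ of $\upcat{w_J}$ (Lemma \ref{sub-lec}). By the torsion pair (Lemma \ref{torsion}),
\[\lec{v}{w_J}=\{U\in\sub(\pi_I(\cmcpro{j-k})):\Hom(U,\pi_J(\cmcpro{j-k}))=0\},\]
so the comparison reduces to applying $\Hom(U,-)$ to this sequence.

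If $\Hom(U,\pi_J(\cmcpro{j-k}))=0$, then any embedding $U\hookrightarrow\pi_I(\cmcpro{j-k})^r$ factors through $\pi_I(M_J)^r$. Moreover $\ext^1(U,\pi_I(M_J))$ injects into $\ext^1(U,\pi_I(\cmcpro{j-k}))$, which is $0$.

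Conversely, $\ext^1(U,\pi_I(M_J))=0$ makes $\Hom(U,\pi_I(\cmcpro{j-k}))\to\Hom(U,\pi_J(\cmcpro{j-k}))$ surjective. The factorisation property of $\rij(I,J)$, namely that every map $M\to\cmcpro{j-k}^r$ factors through $M_J^r$, makes that same map zero. So $\Hom(U,\pi_J(\cmcpro{j-k}))=0$.
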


See the paragraph preceding Lemma \ref{equiv} for the definition of $\leq_{j+1}.$ Given a subset $J \subseteq [n]$ and $e_1,\cdots,e_n$ a set of generators for $\mathbb{Z}^n$ we denote $e(J) = \sum_{j \in J} e_j.$ For $J \subseteq [n]$ we denote by $\mink{J}$ the $k$-subset of the $k$ smallest elements in $J$ if $|J|\geq k$ and $J$ if $|J|<k.$ Denote by $\Pi(A_{n-1})$ the type $A_{n-1}$ preprojective algebra.

\begin{proposition}[Proposition \ref{label char}]
    For $v \in \quotmax, \, w \geq v,$ and $I = v^{-1}[k]$ there is a $k$-subset $L_i \in \binom{[n]}{k}$ for each $i \in \{0,\cdots,n-1\}$ characterised by \[e(L_i) = e(\mink{w^{-1}[i]}) + e(I) - e(\mink{v^{-1}[i]}).\] Furthermore, $M_{L_i} \in \fa(I,n),$ $\pi_I(M_{L_i}) = \soc {v^{-1}} \head {w^{-1}w_0} Q_i$ and \[\ext_C^1(M_{L_i},M_I) = \underline{\Hom}_{\Pi(A_{n-1})}(\head {w^{-1}w_0} Q_i, \soc {v^{-1}} Q_k).\]
\end{proposition}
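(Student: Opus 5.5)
Three things need proving: that $e(L_i)$ is the indicator vector of a genuine $k$-subset, that $M_{L_i}\in\fa(I,n)$ with the stated image under $\pi_I$, and the $\ext^1$ formula. I will use the combinatorial model of rank one modules $M_J$ in $\cm(C)$ as lattice paths / profiles, and a computation of $\pi_I$ on rank one modules from the construction of $\pi_I$ and the equivalence $\su(I,n)/\langle M_I\rangle\cong\upcat v$ of Theorem \ref{intro thm 1}.

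\textbf{Step 1: $L_i$ is a $k$-subset.} It suffices to show every coordinate of
\[
e(\mink{w^{-1}[i]})+e(I)-e(\mink{v^{-1}[i]})
\]
lies in $\{0,1\}$ and that the coordinates sum to $k$. The sum is clear, since both $\mink{\cdot}$ terms have size $\min(i,k)$. For the coordinate bound I plan to use the tableau criterion for Bruhat order: $v\le w$ gives $v^{-1}\le w^{-1}$, which yields a Gale-type comparison of the sorted sets $v^{-1}[i]$ and $w^{-1}[i]$. I will combine this with $v\in\quotmax$. Maximality forces $v^{-1}$ to send $[k]$ and $[k+1,n]$ each in decreasing order, so $\mink{v^{-1}[i]}$ consists of the $\min(i,k)$ largest elements of $I$ (when $i\le k$), or equals $I$ (when $i\ge k$). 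Hence $e(I)-e(\mink{v^{-1}[i]})$ is the indicator of the smallest elements of $I$. The remaining task is to check that $\mink{w^{-1}[i]}$ is disjoint from these smallest elements of $I$ and fills the complement appropriately. This should follow from the Gale inequality.

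\textbf{Step 2: identify $\pi_I(M_{L_i})$.} Recall that the modules $\soc{v^{-1}}\head{w^{-1}w_0}Q_i$ are exactly the objects giving the cluster characters in Leclerc's $\lec vw$. I will first treat the two extreme cases.
\begin{itemize}
\item For $w=v$, the set $L_i$ should correspond to $\soc{v^{-1}}Q_i$ inside $\upcat v$, matching the projective-type objects already described for $\su(I,n)$.
\item For general $w$, the term $\mink{w^{-1}[i]}$ is the minor labelling $\head{w^{-1}w_0}Q_i$ in $\downcat w$. The computation should match $\pi_I(M_{L_i})$ with $\soc{v^{-1}}$ applied to that module, via Theorem \ref{intro thm 2}.
\end{itemize}
Concretely, I will exhibit an embedding $M_I^r\hookrightarrow M_{L_i}$ (with $r=1$ as $M_{L_i}$ has rank one) that surjects at vertex $n$. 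This holds because $L_i$ and $I$ agree, in profile, near vertex $n$: both $\mink{\cdot}$ terms involve the smallest elements. The cokernel is then a finite-dimensional module, and I will read off its socle and top filtrations from the profile difference. Since $\pi_I$ essentially takes this cokernel, comparing with the known structure of $Q_i$ as $\Pi(A_{n-1})$-module gives the identification.

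\textbf{Step 3: the $\ext^1$ formula.} I will use the triangle equivalence $\underline{\su(I,n)}\cong\underline{\upcat v}$. In a stably 2-CY Frobenius category, $\ext^1$ can be expressed through stable Hom. Also, $M_I$ corresponds to the projective $\soc{v^{-1}}Q_k$ of $\upcat v$. The adjunction between $\soc{v^{-1}}$ and inclusion then turns $\ext^1_C(M_{L_i},M_I)$ into $\underline{\Hom}(\head{w^{-1}w_0}Q_i,\soc{v^{-1}}Q_k)$.

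\textbf{Main obstacle.} Step 3 is where I expect the difficulty. $M_{L_i}$ need not lie in $\su(I,n)$, so I cannot transport $\ext^1$ directly. I would try the long exact sequence of $0\to M_I\to M_{L_i}\to \pi_I(M_{L_i})\to0$ together with $\ext^1_C(M_I,M_I)=0$. This reduces the question to $\ext^1$ from a finite-dimensional module, which I would compute by Auslander–Reiten duality against $\soc{v^{-1}}Q_k$.
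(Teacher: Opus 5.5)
Your proposal has a genuine gap. None of the three steps is actually carried out, and what is missing is the short exact sequence of rank-one modules that the paper constructs \emph{before} any formula for $L_i$ appears.

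\textbf{How the paper proceeds.} It sets $Z_i=\soc{w_0^k}\head{w^{-1}w_0}Q_i$ and notes that $Z_i=\pi_n(M_J)$ for an explicit rank-one $M_J$. This is Corollary \ref{full proj}, obtained by first lifting $\head{w^{-1}w_0}Q_i$ to $\cm(C_{i,n})$. Lemma \ref{top soc lem} then gives $t_v(Z_i)=Z_i\cap Y=\ker(Z_i\twoheadrightarrow\soc{v^{-1}}Z_i)$, where $Y=\head{v^{-1}w_0}Q_k=\pi_n(M_I)$.

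\begin{itemize}
\item The pushout of $Z_i\hookleftarrow t_v(Z_i)\hookrightarrow Y$ therefore lies inside $Q_k$. So it is $\pi_n(M_{L_i})$ for a rank-one $M_{L_i}$ with $M_I\hookrightarrow M_{L_i}$ surjecting at vertex $n$, and $\pi_I(M_{L_i})=Z_i/t_v(Z_i)=\soc{v^{-1}}\head{w^{-1}w_0}Q_i$.
\item Lifting the pushout sequence gives $M_R\hookrightarrow M_I\oplus M_J\twoheadrightarrow M_{L_i}$ with $\pi_n(M_R)=t_v(Z_i)$.
\item The label formula is then just $e(L_i)=e(J)+e(I)-e(R)$, so being a $k$-subset is automatic.
\item Apply $\Hom_C(-,M_I)$ to this sequence. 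The map $\Hom(M_I\oplus M_J,M_I)\to\Hom(M_R,M_I)$ is onto, as in Proposition \ref{extend-ext}. Also $\ext^1_C(M_R,M_I)=\underline{\Hom}(t_v(Z_i),\soc{v^{-1}}Q_k)=0$, since $t_v(Z_i)\in\downcat v$ and $\soc{v^{-1}}Q_k\in\upcat v$. Hence $\ext^1_C(M_{L_i},M_I)\cong\ext^1_C(M_J,M_I)\cong\underline{\Hom}(Z_i,\soc{v^{-1}}Q_k)$.
\item A triangle involving $t_{w_0^k}Q_i$ replaces $Z_i$ by $\head{w^{-1}w_0}Q_i$.
\end{itemize}

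Your Step 2 ("read off the filtrations from the profile difference and compare with $Q_i$") offers nothing in place of this identification. Your Step 3 has no way to handle the fact that $M_{L_i}\notin\su(I,n)$.

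\textbf{Specific claims that are false.}
\begin{itemize}
\item \emph{Step 1.} The claim $\mink{v^{-1}[i]}=I$ fails for $i>k$. Here $v^{-1}[i]$ is $I$ together with the $i-k$ largest elements of $[n]\setminus I$, and these can be smaller than elements of $I$. In the paper's example with $k=3$, $n=9$, one has $\mink{v^{-1}[7]}=\{2,4,5\}$. So $e(I)-e(\mink{v^{-1}[7]})=e_6-e_5$ has a negative entry, and your reduction to a disjointness check breaks down.
\item \emph{Step 2, the case $w=v$.} For $w=v$ one gets $\soc{v^{-1}}\head{v^{-1}w_0}Q_i=0$ and $L_i=I$. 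The projectives $\soc{v^{-1}}Q_i$ of $\upcat v$ arise for $w=w_0$, not $w=v$.
\item \emph{Step 2, use of Theorem \ref{intro thm 2}.} This theorem does not apply: $\head{w^{-1}w_0}Q_i$ has socle at $i$ and lies outside $\mathrm{Fac}(Q_k)$ until you apply $\soc{w_0^k}$.
\item \emph{Step 2, the embedding.} The existence of $M_I\hookrightarrow M_{L_i}$ surjecting at $n$ is the global Gale condition $I\le L_i$ of Lemma \ref{equiv}. It is not a matter of the profiles agreeing near vertex $n$.
\item \emph{Step 3.} $M_I$ does not correspond to $\soc{v^{-1}}Q_k$; in fact $\pi_I(M_I)=0$. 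The module $\soc{v^{-1}}Q_k$ appears because, under the stable equivalence $\pi_n$, $M_I\mapsto Y$ and $\Omega^{-1}Y=\soc{v^{-1}}Q_k$. No adjunction of $\soc{v^{-1}}$ performs the conversion you describe.
\item \emph{Step 3, fallback.} The sequence $0\to M_I\to M_{L_i}\to\pi_I(M_{L_i})\to0$ only expresses $\ext^1_C(M_{L_i},M_I)$ as the cokernel of $\mathrm{End}(M_I)\to\ext^1_C(\pi_I(M_{L_i}),M_I)$. The real computation is still left undone.
\end{itemize}
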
 

Denote $M_{v,w} = \oplus_i M_{L_i} \in \cm(C)$ and consider the universal extension \[M_I^t \hookrightarrow D_{v,w} \twoheadrightarrow M_{v,w}\] with $t = \dim \ext^1(M_{v,w}, M_I).$ Associated to $v \in \quotmax$ and $w \geq v$ we define a module $B_{v,w} \in \su(I,n)$ as the basic module associated to $D_{v,w}.$ We then define a subcategory of $\su(I,n)$ \[\rich = \{M \in \cm(C): M \in \fac(B_{v,w}), \, \ext^1_C(M_I,M) = 0\}.\]
\begin{theorem}[Theorem \ref{rich cat}]
    Given $v \in \quotmax$ and $w \geq v$ the category $\rich$ is Frobenius and stably 2-CY. 
    Under the functor \[\pi_I:\su(I,n) \rightarrow \upcat v\] we have $\pi_I(\rich) = \lec v w.$ The induced functor \[\pi_I: \rich \rightarrow \lec v w\] induces a triangle equivalence of the stable categories.
\end{theorem}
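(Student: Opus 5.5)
We plan to deduce everything from Theorem \ref{intro thm 1}, i.e.\ the equivalence $\su(I,n)/\langle M_I\rangle \cong \upcat v$ induced by $\pi_I$, together with Leclerc's description of $\lec v w$ as a Frobenius, stably 2-CY subcategory of $\upcat v$ whose projective-injectives are the summands of $\bigoplus_i \soc {v^{-1}} \head {w^{-1}w_0} Q_i$. Leclerc shows $\lec v w = \fac\bigl(\bigoplus_i \soc {v^{-1}} \head {w^{-1}w_0} Q_i\bigr)\cap \upcat v$. By Proposition \ref{label char}, $\pi_I(M_{L_i}) = \soc {v^{-1}} \head {w^{-1}w_0} Q_i$. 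Because $M_I$ is killed by $\pi_I$ and $D_{v,w}$ is an extension of $M_{v,w}$ by copies of $M_I$, we expect $\pi_I(D_{v,w}) \cong \pi_I(M_{v,w})$, and hence $\pi_I(B_{v,w}) = \bigoplus_i \soc {v^{-1}} \head {w^{-1}w_0} Q_i$ up to additive closure.

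The first step is to check that $\rich \subseteq \su(I,n)$. We show that $B_{v,w} \in \su(I,n)$: the universal extension kills $\ext^1(-,M_I)$, using the long exact sequence and $\ext^1(M_I,M_I)=0$. The embedding $M_I^t \hookrightarrow D_{v,w}$, combined with the embeddings $M_I^{r_i}\hookrightarrow M_{L_i}$ surjecting at vertex $n$ that come from $M_{L_i}\in\fa(I,n)$, gives membership in $\fa(I,n)$. Next, for $M \in \fac(B_{v,w})$ with $\ext^1_C(M_I,M)=0$, we pull back the relevant embedding along an epimorphism $B_{v,w}^m \twoheadrightarrow M$. To get $\ext^1(M,M_I)=0$ we would use the 2-CY/Gorenstein symmetry in $\cm(C)$, namely $\ext^1(X,Y)\cong D\ext^1(Y,X)$. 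This turns the hypothesis $\ext^1(M_I,M)=0$ into $\ext^1(M,M_I)=0$, so $M\in\su(I,n)$.

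Second, we prove $\pi_I(\rich)=\lec v w$. We want to show that $\pi_I$ preserves epimorphisms from $\fac(B_{v,w})$. This should follow from the construction of $\pi_I$ as a quotient that is exact on $\su(I,n)$, since it is induced by an equivalence of exact categories modulo $M_I$. That gives $\subseteq$. For $\supseteq$, take $X\in\lec v w$ and lift it by Theorem \ref{intro thm 1} to $M\in\su(I,n)$ with $\pi_I M = X$. An epimorphism $\pi_I(B_{v,w})^m\to X$ lifts modulo maps factoring through $\mathrm{add}\,M_I$. Adding a summand $M_I^s \to M$ (note $M_I$ is a summand of $B_{v,w}$, presumably as a projective-injective of $\su(I,n)$) makes it surjective, because $\pi_I$ detects surjectivity up to the $M_I$-part. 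We then replace $M$ by its version with $\ext^1(M_I,M)=0$; this is automatic in $\su(I,n)$ by the symmetry above.

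Third, for the Frobenius and stable statements, we identify the projective-injectives of $\rich$ as $\mathrm{add}(B_{v,w})$, which contains $M_I$. Projectivity in $\rich$ comes from the fact that $\ext^1(B_{v,w},M)$ maps injectively to $\ext^1(\pi_I B_{v,w},\pi_I M)$, which vanishes in $\lec v w$. To justify that injectivity we use Theorem \ref{intro thm 1}'s triangle equivalence $\underline{\su(I,n)}\cong\underline{\upcat v}$, which identifies $\ext^1$ as stable Hom shifted. Enough projectives then come from the $\fac$ condition together with closure under kernels; the injective side is dual, using 2-CY. The triangle equivalence $\underline{\rich}\cong\underline{\lec v w}$ is then the restriction of the equivalence in Theorem \ref{intro thm 1} to full subcategories: both are the stable categories of full extension-closed subcategories whose projective-injectives correspond. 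The 2-CY property is inherited.

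The main obstacle is step two's lifting, i.e.\ controlling morphisms modulo $\langle M_I\rangle$ so that epimorphisms lift and $\rich$ is extension-closed. I would first try to prove an explicit lemma: for $M,N\in\su(I,n)$, a map $\pi_I M\to\pi_I N$ is surjective if and only if some lift plus a map from $M_I^s$ is surjective.
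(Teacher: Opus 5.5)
Your arguments for $\rich\subseteq\su(I,n)$ and for $\pi_I(\rich)=\lec v w$ are essentially the paper's. Your use of the 2-CY symmetry of $\cm(C)$, to pass between $\ext^1_C(M_I,M)=0$ and $\ext^1_C(M,M_I)=0$, makes explicit a point the paper glosses over. The genuine gap is in the Frobenius part.

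You need $\rich$ to be extension-closed in $\cm(C)$; otherwise it has no exact structure and Happel's theorem does not apply. You only assert this, and the lemma you propose for your ``main obstacle'' (lifting surjections modulo $M_I$) does not address extensions. The missing observation, which the paper uses, is one you already have. For every $M\in\rich$ the stable equivalence gives $\ext^1_C(B_{v,w},M)\cong\ext^1(\pi_I B_{v,w},\pi_I M)=0$. Hence $\rich=\{M\in\fac(B_{v,w}):\ext^1_C(B_{v,w},M)=0\}=\overline{S}(B_{v,w},[n])$, and Lemma \ref{gp/cm ext} gives extension-closure at once.

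Your construction of enough projectives and injectives also fails as stated. For $K\hookrightarrow B_{v,w}^m\twoheadrightarrow M$, the group $\ext^1_C(B_{v,w},K)$ is the cokernel of $\Hom_C(B_{v,w},B_{v,w}^m)\to\Hom_C(B_{v,w},M)$. So $K\notin\rich$ unless the epimorphism is an $\mathrm{add}(B_{v,w})$-approximation, and even then $K\in\fac(B_{v,w})$ still needs proof. Your remark that ``the injective side is dual, using 2-CY'' is not an argument either. $\rich$ is cut out by a $\fac$-condition with no evident dual description, and Ext-symmetry alone does not produce inflations $M\hookrightarrow P$ with $P\in\mathrm{add}(B_{v,w})$ and cokernel in $\rich$.

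The paper instead lifts the injective envelope (resp.\ projective cover) $\pi_I(M)\hookrightarrow Q\twoheadrightarrow W$ from $\lec v w$:
\begin{enumerate}
\item take the lift $\widetilde W\in\rich$ of $W$;
\item realise the extension class through $\ext^1_C(\widetilde W,M)\cong\ext^1(W,\pi_I M)$;
\item use extension-closure to keep the middle term $E$ in $\rich$, and exactness of $\pi_I$ (Proposition \ref{exactness}) to get $\pi_I(E)\cong Q$;
\item conclude $E\in\mathrm{add}(B_{v,w})$ by Proposition \ref{unique lift}.
\end{enumerate}

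Some smaller corrections. The triangle equivalence is not literally a restriction of $\underline{\su(I,n)}\cong\underline{\upcat v}$. Here $\underline{\rich}$ factors out maps through $\mathrm{add}(B_{v,w})$, not through the projective-injectives $M_{I(j)}$ of $\su(I,n)$, so the two stable categories quotient by different ideals. Argue as in Theorem \ref{theorem} instead: $\pi_I$ is exact on $\rich$, it sends $\mathrm{add}(B_{v,w})$ onto the projective-injectives $\mathrm{add}\,\pi_I(M_{v,w})$ of $\lec v w$, and $\rich/\langle M_I\rangle\cong\lec v w$ with $M_I$ projective in $\rich$.

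Several justifications in your second step should also be replaced:
\begin{itemize}
\item $\pi_I$ preserves every epimorphism simply because any $f\colon X\to Y$ maps $\eta_{M_I}(X)$ into $\eta_{M_I}(Y)$. Exactness on $\su(I,n)$ is not needed, and appealing to it would require knowing that the kernel of $B_{v,w}^m\twoheadrightarrow M$ lies in $\su(I,n)$.
\item $\pi_I(D_{v,w})\cong\pi_I(M_{v,w})$ holds because $\ext^1_C(M_I,M_I)=0$ lets every map $M_I\to M_{v,w}$ lift to $D_{v,w}$.
\item $M_I\in\mathrm{add}(B_{v,w})$ because $L_0=I$ in Proposition \ref{label char}. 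The rigid summand $M_I$ of $M_{v,w}$ therefore splits off the universal extension; its projectivity in $\su(I,n)$ is not the reason.
\end{itemize}
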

Given $\gr(k,n) \subseteq \mathbb{P}(\wedge^k \mathbb{C}^n)$ the Pl\"ucker embedding and \[\mathbb{A}^{\binom{n}{k} +1}\backslash (0,\cdots,0) \twoheadrightarrow \mathbb{P}(\wedge^k \mathbb{C}^n)\] the natural quotient we can lift a subset $X \subseteq \gr(k,n)$ to its preimage $\widetilde{X} \subset \mathbb{A}^{\binom{n}{k} +1}.$ Note this is not the affine cone as it does not contain 0. We will define \ref{sigma props} a cluster character \[\clchar : \su(I,n) \rightarrow \mathbb{C}[\widetilde{C^I}].\]  Here $C^I$ denotes the opposite Schubert cell in $\gr(k,n)$ determined by $I \in \binom{[n]}{k},$ see the following section for a definition. Let $\clalg v w$ denote the subalgebra of $\mathbb{C}[\widetilde{C^I}]$ generated by the variables $\clchar_M$ for $M \in \rich.$ 
\begin{theorem}[Theorem \ref{loc thm}]
    Let $\clalg {v} {w}^{\circ}$ be the localisation of $\clalg v w$ at the multiplicative subset \[\{\clchar_X : X \in \mathrm{add}(B_{v,w})\}.\] There is an isomorphism \[\clalg {v} {w}^{\circ} \cong \mathbb{C}[\widetilde{C_{v,w}}].\]
\end{theorem}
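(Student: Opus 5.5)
We need the isomorphism $\clalg v w^{\circ}\cong \mathbb{C}[\widetilde{C_{v,w}}]$, where $\clalg v w$ is generated by the $\clchar_M$ with $M\in\rich$ and the localisation inverts $\clchar_X$ for $X\in\mathrm{add}(B_{v,w})$. The plan is to transport Leclerc's cluster structure on the open Richardson variety through $\pi_I$, and to compare the lifted variety $\widetilde{C_{v,w}}$ with $\widetilde{C^I}$ via the projective-injectives.

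\textbf{Step 1 (compatibility of characters).} Recall that $C^I$ is the opposite Schubert cell defined by $\Delta_I\neq 0$, together with the vanishing of the Plücker coordinates not $\geq I$. Hence $\mathbb{C}[\widetilde{C^I}]\cong \mathbb{C}[C^I]\otimes \mathbb{C}[\Delta_I^{\pm1}]$, where $C^I$ is identified with the unipotent cell $N^v$. First I check, using the properties of $\clchar$ established in \ref{sigma props}, that for $M\in\su(I,n)$
\[\clchar_M=\Delta_I^{\,\rank M}\cdot \varphi_{\pi_I(M)},\]
where $\varphi$ is the Geiss--Leclerc--Schr\"oer/Leclerc cluster character on $\upcat v$, pulled back to $\mathbb{C}[C^I]$. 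The Euler-characteristic formula for $\clchar$ reduces to quiver Grassmannians of $\pi_I(M)$ once the $M_I^r$ summand is stripped off. I would verify this on the rank one modules $M_{L_i}$ using Proposition \ref{label char}, and then extend it to all of $\su(I,n)$ by the multiplication formula for both characters.

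\textbf{Step 2 (the subalgebra).} By Theorem \ref{rich cat}, $\pi_I(\rich)=\lec v w$. Moreover $\pi_I(B_{v,w})$ is the projective-injective generator of $\lec v w$, whose summands are exactly $\soc{v^{-1}}\head{w^{-1}w_0}Q_i$. By Step 1, $\clalg v w$ is therefore generated by $\Delta_I^{\pm}$-twists of the $\varphi_N$ for $N\in\lec v w$. Here $\Delta_I=\clchar_{M_I}$ and $M_I\in\mathrm{add}(B_{v,w})$, so $\Delta_I$ is inverted in the localisation. Leclerc's theorem, together with the generation result of \cite{Lecconj}, says that the algebra generated by the $\varphi_N$, $N\in\lec v w$, with the frozen $\varphi_{\pi_I(B_{v,w})}$ inverted, is $\mathbb{C}[\mathcal{R}_{v,w}]$, the coordinate ring of the open Richardson variety. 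It follows that
\[\clalg v w^{\circ}\cong \mathbb{C}[\mathcal{R}_{v,w}]\otimes\mathbb{C}[\Delta_I^{\pm1}].\]

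\textbf{Step 3 (geometry).} It remains to identify $\widetilde{C_{v,w}}$ with $\mathcal{R}_{v,w}\times\mathbb{C}^*$ inside $\widetilde{C^I}$. This is the lift of the closed subset cut out in $C^I$ by the Schubert condition for $w$ (vanishing of the Plücker coordinates not $\leq$ the relevant index), localised at the frozen variables. One must also check that the ideal of vanishing is the kernel of the restriction map. I expect this to be the main obstacle: matching Leclerc's frozen minors with the $\clchar_{M_{L_i}}$, i.e.\ matching the Plücker coordinates $\Delta_{L_i}$ up to powers of $\Delta_I$, and showing that the resulting localisation is exactly the open Richardson cell rather than a further open subset. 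My first attempt would use the formula $e(L_i)=e(\mink{w^{-1}[i]})+e(I)-e(\mink{v^{-1}[i]})$. This should identify $\clchar_{M_{L_i}}/\Delta_I$ with the ratio of the chamber minors $\Delta_{w^{-1}[i]}/\Delta_{v^{-1}[i]}$ that define $\mathcal{R}_{v,w}$ in Marsh--Rietsch/Leclerc coordinates.
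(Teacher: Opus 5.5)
Your Steps 1--2 are the paper's argument. You write $\clchar_M=\Delta_I^{\rank M}\varphi_{\pi_I(M)}$ and note $\Delta_I=\clchar_{M_I}$, where $M_I=M_{L_0}$ is a summand of $B_{v,w}$. Inverting $\{\clchar_X: X\in\mathrm{add}(B_{v,w})\}$ therefore amounts to inverting $\Delta_I$ and then Leclerc's frozen $\varphi_{Q_{v,w}}$, and $\pi_I(\rich)=\lec v w$ reduces everything to Leclerc's Theorem 4.5. The cluster-generation result you cite is not needed, since $S(\lec v w)$ already contains every $\varphi_N$.

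Step 1 needs no argument: in the paper $\clchar$ is \emph{defined} as the unique function, homogeneous of degree $\rank M$, lifting $\varphi\circ\pi_I$. The verification you sketch would not work anyway. The $M_{L_i}$ are lifts of projective-injectives, not a cluster-tilting object, so the exchange formula cannot propagate from them. They also need not lie in $\su(I,n)$.

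The gap is Step 3, which you leave open and plan to close in a way that does not work as stated.
\begin{itemize}
\item The element $\clchar_{M_{L_i}}$ is undefined whenever $\ext^1_C(M_{L_i},M_I)=\underline{\Hom}_{\propi}(\head{w^{-1}w_0}Q_i,\soc{v^{-1}}Q_k)\neq 0$. This is exactly why $B_{v,w}$ is built from the universal extension $D_{v,w}$ and can have higher-rank summands. In the paper's last example, $I=125$, $L_3=367$, and the corresponding frozen object is $M_{\frac{136}{257}}$.
\item What you actually need about the frozen elements is only $\pi_I(D_{v,w})\cong\pi_I(M_{v,w})=Q_{v,w}$. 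This holds because every map $M_I\to M_{v,w}$ lifts to $D_{v,w}$, as $\ext^1_C(M_I,M_I)=0$. Hence $\clchar_{B_{v,w}}=\Delta_I^{j}\varphi_{Q_{v,w}}$.
\item $p_k(C_{v,w})$ is generally not cut out of $C^I$ by a single Schubert condition for $w$; it is a positroid variety. You also need no equations or Marsh--Rietsch chamber minors.
\end{itemize}
That the localisation is exactly the open Richardson variety, rather than a smaller open set, and that restriction is injective \emph{is} Leclerc's Theorem 4.5. You already invoked it in Step 2, transported along $N'(v)\cong C^v\cong C^I$ and $C_{v,w}\cong p_k(C_{v,w})$ for $v\in\quotmax$.

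The only remaining point is passing from $C_{v,w}$ to $\widetilde{C_{v,w}}$, and it is immediate. Since $C_{v,w}\subseteq C^I$, $\Delta_I$ is nowhere zero on $\widetilde{C_{v,w}}$. The same $\mathbb{C}^\times$-grading argument you used for $\widetilde{C^I}$ then gives $\mathbb{C}[\widetilde{C_{v,w}}]=\bigoplus_{r\in\mathbb{Z}}\mathbb{C}[C_{v,w}]\,\Delta_I^r$. Combined with Step 2, this completes the proof exactly as in the paper.
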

We then discuss a possible description of the algebra $\clalg{v}{w}.$ In particular, under a conjecture about the supports of semicanonical basis elements we show that there is a collection of functions (Definition \ref{grad def}) $\mathcal{G}_0(v,w) \subset \mathbb{C}[\widetilde{C^I}]$ such that \[\clalg{v}{w} = \bigoplus_{i \in \mathbb{N}} \mathcal{G}_0(v,w)\cdot \Delta_I^i.\] 

A geometric result of Knutson, Lam, and Speyer \cite{KLS} says that open positroid varieties can be described by cyclic shifts of opposite Schubert cells. We compare the intersections of categories $S(I,i)$ to categories of Cohen-Macaulay modules and Gorenstein projective modules defined by Jensen, King, and Su \cite{JKS3}.
\begin{proposition}[Proposition \ref{intersection}]
    Suppose we have $\mathcal{I} = (I_j)_{j \in [n]}$ a Grassmannian necklace, then for \[B_{\mathcal{I}} = \bigoplus_j M_{I_j} \in \cm(C)\] we have \[\cm(B_{\mathcal{I}}) = \bigcap_j \fa(I_j,j-1) \text{ and } \gp(B_{\mathcal{I}}) = \bigcap_j \su(I_j,j-1).\] 
\end{proposition}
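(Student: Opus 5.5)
We need to show that $\cm(B_{\mathcal{I}}) = \bigcap_j \fa(I_j,j-1)$ and $\gp(B_{\mathcal{I}}) = \bigcap_j \su(I_j,j-1)$. We use the Jensen--King--Su description from \cite{JKS3}. There $B_{\mathcal{I}}=\bigoplus_j M_{I_j}$ and $A_{\mathcal{I}}=\End_C(B_{\mathcal{I}})$. The category $\cm(B_{\mathcal{I}})$ is identified, via $\Hom_C(B_{\mathcal{I}},-)$, with the full subcategory of $\cm(C)$ of modules $M$ admitting a filtration-type condition. We will take the concrete characterisation: $M\in\cm(C)$ lies in $\cm(B_{\mathcal{I}})$ if and only if, for every $j$, the natural evaluation map at the summand $M_{I_j}$ detects $e_{j-1}M$. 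Equivalently, $e_{j-1}M$ is generated by images of maps $M_{I_j}\to M$. Our first step is to rewrite this as the existence of an embedding $M_{I_j}^r\hookrightarrow M$ surjecting at vertex $j-1$.

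\textbf{Step 1 (first equality).} Fix $j$ and $M\in\cm(C)$ of rank $r$. The rank one module $M_{I_j}$ has $e_{j-1}M_{I_j}\cong\mathbb{C}[[t]]$ generated in the lowest degree allowed by the necklace condition at position $j-1$; the Grassmannian necklace condition says exactly that $I_j$ is the relevant ``leftmost'' $k$-subset at this vertex. Choose a $\mathbb{C}[[t]]$-basis $m_1,\dots,m_r$ of $e_{j-1}M$. Each $m_s$ generates a map $C e_{j-1}\to M$, and we will show it factors through $M_{I_j}$ exactly when the $x$- and $y$-degrees of $m_s$ satisfy the profile bounds of $I_j$. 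Summing these maps gives $M_{I_j}^r\to M$, which is an isomorphism at vertex $j-1$. Injectivity follows since both modules are $\mathbb{C}[[t]]$-free of rank $r$ and the map is generically an isomorphism, hence has torsion-free kernel of rank $0$. For the converse, an embedding surjecting at $j-1$ shows that $e_{j-1}M$ is generated by $M_{I_j}$-images. Intersecting over $j$ then recovers the JKS characterisation of $\cm(B_{\mathcal{I}})$ in terms of $\Hom(B_{\mathcal{I}},-)$ being exact/faithful. For this we will invoke the description of $\cm(B_{\mathcal{I}})$ as $\mathrm{sub}$/$\mathrm{fac}$-type categories from \cite{JKS3}, together with the fact that $e_{j-1}B_{\mathcal I}$ only involves $M_{I_j}$ at its top.

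\textbf{Step 2 (second equality).} JKS define $\gp(B_{\mathcal{I}})$ as the objects $M\in\cm(B_{\mathcal{I}})$ with $\ext^1_C(M,B_{\mathcal{I}})=0$. Since $\ext^1$ is additive, $\ext^1_C(M,B_{\mathcal I})=0$ if and only if $\ext^1_C(M,M_{I_j})=0$ for all $j$. Combined with Step 1 this gives
\[\gp(B_{\mathcal{I}})=\bigcap_j\{M\in\fa(I_j,j-1):\ext^1_C(M,M_{I_j})=0\}=\bigcap_j\su(I_j,j-1).\]
If JKS phrase Gorenstein projectivity through $\ext^i_{A_{\mathcal I}}$ instead, we will transfer it using the equivalence $\Hom_C(B_{\mathcal I},-)$ and the fully faithfulness shown in the earlier section identifying $\su(I,i)$ with JKS Gorenstein projectives.

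\textbf{Main obstacle.} The delicate point is Step 1: matching the JKS definition of $\cm(B_{\mathcal I})$ with the local ``surjects at vertex $j-1$'' condition, and checking that the index shift $j-1$ is consistent with the necklace convention. We would first handle a single $j$, where $\mathcal I$ degenerates to a Schubert-type necklace and the earlier comparison theorem for $\fa(I,i)$ applies. The general case then follows by intersecting, since membership in $\cm(B_{\mathcal I})$ is tested summand by summand on $B_{\mathcal I}$.
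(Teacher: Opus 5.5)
Your Step 2 is correct and is how the paper gets the second equality from the first: $\ext^1_C(-,B_{\mathcal I})$ is additive over the summands $M_{I_j}$. The gap is in Step 1.

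In this paper $\cm(B_{\mathcal I})$ is \emph{defined} as the class of $M$ admitting a surjection $B_{\mathcal I}^t\twoheadrightarrow M$. The ``concrete characterisation'' you propose to import is that $e_{j-1}M$ is generated by images of maps from the single summand $M_{I_j}$. In its nontrivial direction this is exactly the statement to be proved, so adopting it is circular. Your element-by-element check via ``$x$- and $y$-degrees'' and ``profile bounds'' of a basis $m_1,\dots,m_r$ of $e_{j-1}M$ has no meaning once $\rank M>1$. As written it also never uses the hypothesis $B_{\mathcal I}^t\twoheadrightarrow M$, and without it the conclusion fails: for $M=M_L$ with $I_j\not\leq_j L$, Lemma \ref{equiv} gives no embedding $M_{I_j}\hookrightarrow M_L$ surjecting at $j-1$. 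The fallback in your last paragraph fails too. Being a quotient of $B_{\mathcal I}^t$ is not tested summand by summand. After applying the Schubert-case identification $\fa(I,n)=\cm(B_I)$ for each $j$, you would still need the intersection of the resulting categories to equal $\cm(B_{\mathcal I})$, which is just the claim restated.

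Your remark that $e_{j-1}B_{\mathcal I}$ ``only involves $M_{I_j}$'' is the right intuition, but it has to be derived. The missing step, which also settles the index shift you flag, is Lemma \ref{equiv} with $i=j-1$.
\begin{itemize}
\item The lemma turns the necklace inequality $I_j\leq_j I_i$ into an embedding $M_{I_j}\hookrightarrow M_{I_i}$ that surjects at vertex $j-1$, for every $i$.
\item Compose these with a surjection $B_{\mathcal I}^t\twoheadrightarrow M$, which in particular surjects at $j-1$. This gives a map $M_{I_j}^N\to M$ surjective at vertex $j-1$.
\item Keep $\rank M$ copies whose generators map to a basis of $e_{j-1}M/t\,e_{j-1}M$. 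By Nakayama this is an isomorphism at vertex $j-1$, hence an embedding surjecting there by Lemma \ref{fa char}. So $M\in\fa(I_j,j-1)$ for all $j$.
\item The reverse inclusion needs nothing from JKS. Summing over $j$ embeddings $M_{I_j}^{r}\hookrightarrow M$ that surject at $j-1$ gives a map $B_{\mathcal I}^{r}\to M$ that is surjective at every vertex, hence surjective.
\end{itemize}
This is the paper's argument.
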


In the final two sections we give a conjecture about the relation between $\lec v w$ and $\gi(B)$ before computing two cases of the cluster structures induced on specific open positroid varieties. 
\begin{Notation}~
    \begin{itemize}
        \item By $\proinj i$ we denote the injective indecomposable module over the preprojective algebra of type $A_{n-1}$ with socle at vertex $i,$
        \item By $\cmcpro i$ we denote the projective indecomposable module in $\cm(C)$ with top at vertex $i,$
        \item By $\cmcinj i$ we denote the injective indecomposable module in $\cm(C)$ with top at vertex $i-k.$     
    \end{itemize}
\end{Notation}

\begin{Ack}
    The author was supported by the Additional Funding Programme for Mathematical Sciences via UKRI/EPSRC (EP/V521917/1) and the Heilbronn institute. We thank Xiuping Su for suggesting improvements to the presentation of this paper, explaining the contents of \cite{JKS3} to the author and giving the author the initial project of investigating the relationship between Leclerc's categories and Gorenstein projective modules. We also thank Ellis Caird for demonstrating extension computations via webs to the author which helped with the examples in sections 13 and 14. Thanks to Jan Schr\"oer, Christof Geiss, and George Lusztig for answering questions about Conjecture \ref{semican conj}. Finally we thank Jan Grabowski and Lewis Topley for helpful comments in the presentation and clarity of this paper.
\end{Ack}

\section{Richardson varieties and Leclerc's cluster categories}
We will review the necessary geometric constructions used in this paper. A good reference for these results is the survey by Speyer \cite{SpSurvey}.

The type $A_{n-1}$ complete flag variety $X$ parametrises vector space flags of $\mathbb{C}^{n}.$ That is to say \[X = \{0 \subset V_1 \subset \cdots \subset V_{n-1} \subset \mathbb{C}^{n} : \dim V_i = i\}.\] There is a transitive action of $\mathrm{GL}_n (\mathbb{C})$ and so fixing $\mathcal{V}$ to be the flag determined by the standard basis of $\mathbb{C}^n$ gives a quotient description of the flag variety as \[X = B \backslash \mathrm{GL}_{n}(\mathbb{C})\] where $B = \mathrm{Stab}_{\mathrm{GL}_{n}(\mathbb{C})}(\mathcal{V}).$ This is an example of a Borel subgroup. Such a Borel then acts on the right as well and the orbits give rise to a stratification of X into locally closed subsets called \textit{Schubert cells}. These strata are parametrised by elements of $W = S_{n}$ and we denote the orbit associated to $w \in W$ by \[C_w = B\backslash BwB.\] The longest element of $W$ which is denoted by $w_0$ can be represented by a permutation matrix in $G$ given by the matrix with $1$s on the anti-diagonal by fixing the diagonal torus in $B.$ Given a choice of Borel $B$ we can form a second Borel subgroup $B_{-} = w_0 B w_0$ called the \textit{opposite Borel}. This also acts on the flag variety and in turn gives rise to a second stratification $C^w$ into \textit{opposite Schubert cells}. 

We denote by $N_{-}$ the unipotent radical of $B_{-}.$ The quotient \[G \rightarrow B\backslash G\] is injective on $N_{-}.$ Its image is a dense open cell in the flag variety in $B\backslash G.$ For $v \in W$ define \[N'(v) = N_{-} \cap (v^{-1}N_{-}v).\] Likewise by $N$ we denote the unipotent radical of $B.$ Further define \[N(v) = N_{-}\cap(v^{-1}Nv).\] Given a pair of elements $v,w \in W,$ we can consider the intersection, $C_{v,w} \defeq C^{v} \cap C_{w} \subset X$ of the associated Schubert and opposite Schubert cells in the flag variety. This intersection $C_{v,w}$ is called an \textit{open Richardson variety}. For $v,w \in W$ then we write $v \leq w$ if there is a reduced expression for $w$ containing a reduced expression for $v.$ Given $v,w \in W$ the variety $C_{v,w}$ is empty unless $v \leq w.$ When $v \leq w$ then $\dim C_{v,w} = l(w) - l(v).$ For $k \in \{1,\dotsc, n-1\}$ we have a parabolic subgroup $B \subset P_k \subset G$ and a natural quotient \[p_k : B\backslash G \twoheadrightarrow P_k \backslash G \cong \gr(k,n).\] The Schubert cells and opposite Schubert cells in the Grassmannian $\gr(k,n)$ are indexed by elements of $\binom{[n]}{k}.$ Given $\{e_1,\dotsc,e_n\}$ the standard basis of $\mathbb{C}^n$ let $F_i$ denote $\spn(e_1,\dotsc,e_i) \subseteq \mathbb{C}^n$ and $F^{'}_i = \spn(e_i,\dotsc,e_n) \subseteq \mathbb{C}^n.$ Define the Schubert cell in Grassmannians for $I \in \binom{[n]}{k}$ as 
\[C_I = \{U \in \gr(k,n): \dim (U \cap F_i) = |I \cap \{1,\dotsc,i\}|\},\] and the opposite Schubert cell in Grassmannians for $I \in \binom{[n]}{k}$ as  
\[C^I = \{U \in \gr(k,n): \dim (U \cap F^{'}_i) = |I \cap \{i,\dotsc,n\}|\}.\] For $v \in \quotmax$ the quotient map $C_{v,w} \twoheadrightarrow p_{k}(C_{v,w})$ is actually an isomorphism \cite[Section 2.1]{Lus2} and so we abuse notation slightly by considering $C_{v,w} \subset \gr(k,n)$ when $v$ is a maximal length representative for an element of $W^{k}\backslash W.$ The varieties $C_{v, w}$ with $v \in \quotmax$ are called \textit{open positroid varieties}. 

\begin{Theorem}[\cite{Lus2}]\label{stratification}
    The Grassmannian can be written as the union
    \[\gr(k,n) = \displaystyle\bigsqcup_{\substack{v \in \quotmax, \\ w \geq v}} C_{v,w}.\]
\end{Theorem}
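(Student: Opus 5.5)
The plan is to deduce the Grassmannian decomposition from the Bruhat decompositions of the flag variety and to push them forward along $p_k$. The first step uses the two Bruhat decompositions $B\backslash G = \bigsqcup_{w\in W} C_w = \bigsqcup_{v\in W} C^v$, which are standard. Intersecting them gives $B\backslash G = \bigsqcup_{v,w} C_{v,w}$, and since $C_{v,w}=\emptyset$ unless $v\le w$ (as recalled above), we get
\[B\backslash G = \bigsqcup_{v\le w} C_{v,w}.\]
The issue is that $p_k$ is not injective on this union. So the goal is to choose a subfamily of pairs $(v,w)$ whose Richardson cells map isomorphically onto their images, have pairwise disjoint images, and together cover $\gr(k,n)$.

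The second step identifies this subfamily as the pairs with $v \in \quotmax$. The opposite Schubert decomposition of $\gr(k,n)$ is $\gr(k,n) = \bigsqcup_{I} C^I$, and I expect the preimage to satisfy
\[p_k^{-1}(C^I) = \bigsqcup_{v \in W^k v_I} C^v,\]
where $v_I \in \quotmax$ is the representative with $v_I^{-1}[k]=I$. The plan is then to show that the fibre $p_k^{-1}(U)$ over a point $U \in C^I$, which is a partial flag variety $P_k\backslash P_k g \cong B\backslash P_k$, meets $C^v$ for $v \in W^k v_I$ in a pattern such that for each $w$ there is exactly one point of $p_k^{-1}(U)\cap C_w$ lying in the maximal stratum $C^{v_I}$. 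Concretely, I would use the isomorphism cited from \cite[Section 2.1]{Lus2}: for $v\in\quotmax$, the map $p_k|_{C^{v}}\colon C^{v}\to C^I$ is an isomorphism. With this, $\gr(k,n)=\bigsqcup_I p_k(C^{v_I})$, and
\[p_k(C^{v_I}) = \bigsqcup_{w\ge v_I} p_k(C_{v_I,w}),\]
because $C^{v_I}$ is the disjoint union of the $C_{v_I,w}$ over $w\ge v_I$ and $p_k$ is injective on $C^{v_I}$.

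The third step, which I expect to be the main obstacle, is to justify that $p_k|_{C^{v}}$ is injective with image $C^I$ when $v$ is maximal in its coset. First I would show $p_k(C^v)\subseteq C^I$: the conditions $\dim(U\cap F'_i)$ are determined by the relative position of $U$ with the opposite flag, and this depends only on the coset $W^k v$. Then I would compare dimensions. The codimension of $C^v$ in $B\backslash G$ is $l(v)$, maximal within the coset, and $\operatorname{codim} C^I$ equals $l(v_I)-l(w_{0,k})$, where $w_{0,k}$ is the longest element of $W^k$. Since $\dim B\backslash P_k = l(w_{0,k})$, we get $\dim C^{v_I}=\dim C^I$. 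Finally I would exhibit $C^{v_I}$ as the set of flags extending $U$ in a unique prescribed way, namely the flag in $B\backslash P_k$ in the minimal-dimensional opposite Bruhat cell of the fibre. Each fibre $B\backslash P_k$ meets the opposite Bruhat stratification in exactly one lowest-dimensional (point) cell, and that point is the unique preimage in $C^{v_I}$.

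Disjointness of the union then follows from disjointness of the $C^I$ together with injectivity on each $C^{v_I}$, and covering follows from the two decompositions above.
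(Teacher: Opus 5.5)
Your proposal is correct and follows essentially the same route as the result the paper cites from \cite{Lus2}. The only substantive input is that $p_k$ restricts to a bijection (indeed an isomorphism) $C^{v}\to C^{v^{-1}[k]}$ for $v\in\quotmax$, which the paper itself invokes in Section 9 via \cite[Section 2.1]{Lus2} and \cite[Lemma 3.1]{KLS2}; then $C^{v}=\bigsqcup_{w\ge v}C_{v,w}$ and $\gr(k,n)=\bigsqcup_I C^I$ give the decomposition.

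Since this is a set-theoretic statement, surjectivity is automatic ($p_k$ maps the $B_-$-orbit of $Bv$ onto that of $P_kv$), and injectivity is just the equality of stabilisers $B_-\cap v^{-1}Bv=B_-\cap v^{-1}P_kv$ for $v$ maximal in $W^kv$, so your dimension count is not needed. Also, your phrase ``for each $w$ there is exactly one point of $p_k^{-1}(U)\cap C_w$ lying in $C^{v_I}$'' should say that $p_k^{-1}(U)\cap C^{v_I}$ is a single point, which lies in exactly one $C_w$.
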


Given $v \in \quotmax$ the closure of the open positroid variety $p_k(C_{v,w_0})$ is the opposite Schubert variety associated to $v^{-1}[k]$ in the Grassmannian $\gr(k,n).$ However as is observed in Remark 1.20 of \cite{SpSurvey} the open positroid $p_k(C_{v,w_0})$ is a proper open subvariety of the corresponding opposite Schubert cell, $C^{v^{-1}[k]}.$ 

An alternate viewpoint for positroids is the approach of Postnikov \cite{POS}. Given a collection of $k$-subsets of $\{1,\dotsc,n\}$ denoted $(I_1,\dotsc,I_n)$ which are weakly separated and $I_i \leq_i I_j$ (see \cite{OPS} or Proposition \ref{equiv} for definitions) then there is a positroid \[\Pi_{\mathcal{I}} = \{V \in \gr(k,n): \Delta_{J}(V) \neq 0 \Rightarrow I_i \leq_i J, \forall i\}.\] The open positroids are then given by \[\Pi_{\mathcal{I}}^{\circ} = \{V \in \Pi_{\mathcal{I}}: \Delta_{I_i}(V) \neq 0, \forall i\}.\]

We will mostly use the viewpoint of considering open positroids as $C^v \cap C_w$ however when thinking about $\gp(B)$ and $\gi(B)$ it will be helpful to consider Postnikov's construction. The equivalence of the two constructions comes from \cite{KLS}. 

We recall the subcategories of $\Mod \propi$ which were introduced by \cite{BIRS} and \cite{Leclerc}. Denote by $S_i$ the \textit{simple} module supported at vertex $i.$ Recall that by the \textit{socle} of a module we mean the maximal semisimple submodule. The \textit{top} of a module is the maximal semisimple quotient. We denote the socle and top of a module $X$ by $\Soc X,\Top X$ respectively.

\begin{Definition}[\cite{GLSpar}]
    Define the functor $\head {i} : \Mod \propi \rightarrow \Mod \propi$ by the exact sequence 
\[\begin{tikzcd}
	0 & {\head {i} (X)} & X & {S_{i}^{a_{i}(X)}} & 0
	\arrow[from=1-1, to=1-2]
	\arrow[from=1-2, to=1-3]
	\arrow[from=1-3, to=1-4]
	\arrow[from=1-4, to=1-5]
\end{tikzcd}\]
where $a_{i}(X)$ is the multiplicity of $S_i$ in $\Top X.$ 
\end{Definition}

\begin{Definition}[\cite{GLSpar}]
    Define $\soc {i} : \Mod \propi \rightarrow \Mod \propi$ by the exact sequence 
\[\begin{tikzcd}
	0 & {S_{i}^{b_{i}(X)}} & X & {\soc {i} (X)} & 0
	\arrow[from=1-1, to=1-2]
	\arrow[from=1-2, to=1-3]
	\arrow[from=1-3, to=1-4]
	\arrow[from=1-4, to=1-5]
\end{tikzcd}\]
where $b_{i}(X)$ is the multiplicity of $S_i$ in $\Soc X.$
\end{Definition}

In \cite{GLSpar}, Geiss, Leclerc, and Schr{\"o}er showed that that these functors satisfy the braid relations and therefore there is a pair of well-defined functors $\soc w, \head w$ for $w \in W.$ We are using $W$ to denote the type A Weyl group rather than the standard $S_n$ to avoid clashing with the notation $S_i$ for simple modules. Recall that we use $Q_i$ to denote the injective indecomposable $\propi$ module associated to vertex $i.$ Further, denote by $\sub(X)$ the full subcategory of $\Mod \propi$ consisting of submodules of objects in $\mathrm{add}(X).$ Likewise denote by $\fac(X)$ the full subcategory of $\Mod \propi$ consisting of factor modules of objects in $\mathrm{add}(X).$

\begin{Definition}[\cite{BIRS},\cite{Leclerc}]
    For $v \in W$ define the modules in $\Mod \propi$ given by \[\Jmod v = \displaystyle\bigoplus_{1 \leq i \leq n-1} \head { v^{-1}w_0} (Q_i), \gap \gap \Imod v = \displaystyle\bigoplus_{1 \leq i \leq n-1} \soc {v^{-1}} (Q_i).\]

    Define the categories \[\downcat w = \fac(\Jmod w), \gap \upcat v = \sub(\Imod v),\] and
    \[\lec v w = \upcat v \cap \downcat w.\] 
\end{Definition}

\begin{Lemma}[Section 3.2.5, \cite{Leclerc}]\label{torsion}
    The pair $(\downcat v, \upcat v)$ is a torsion pair in $\Mod \propi.$ That is to say, $\Hom(X,N) = 0$ for all $N \in \upcat v$ if and only if $X \in \downcat v.$ Likewise $\Hom(M,Y) = 0$ for all $M \in \downcat v$ if and only if $Y \in \upcat v.$
\end{Lemma}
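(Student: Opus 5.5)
The plan is to show directly that $\downcat v = \fac(\Jmod v)$ is a torsion class whose torsion-free class is exactly $\sub(\Imod v)$. The two "if and only if" statements then follow formally, since for any torsion pair $(\mathcal{T},\mathcal{F})$ we have $\mathcal{T} = {}^{\perp}\mathcal{F}$ and $\mathcal{F} = \mathcal{T}^{\perp}$. For brevity write $u = v^{-1}w_0$, $J_i = \head u (Q_i)$ and $I_i = \soc{v^{-1}}(Q_i)$.

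\textbf{Step 1: the sequence $0 \to J_i \to Q_i \to I_i \to 0$.} For each $i$ we want an exact sequence $0 \to J_i \to Q_i \to I_i \to 0$. Both $\head u (Q_i) \subseteq Q_i$ and the quotient map $Q_i \twoheadrightarrow \soc{v^{-1}}(Q_i)$ come from iterating the defining sequences of the functors $\head j$ and $\soc j$ of \cite{GLSpar} along reduced words. Since $l(u)+l(v^{-1}) = l(w_0)$, one can choose a reduced word for $w_0$ that is a reduced word for $v^{-1}$ followed by one for $u$. We then check, using the braid relations and the description $\head{w_0}(Q_i)=0$, $\soc{w_0}(Q_i)=0$, that the kernel of $Q_i \to \soc{v^{-1}}(Q_i)$ is exactly $\head u(Q_i)$.

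\textbf{Step 2: orthogonality.} Next we show $\Hom(\Jmod v, \Imod v) = 0$, which gives $\downcat v \subseteq {}^{\perp}\upcat v$ and $\upcat v \subseteq (\downcat v)^{\perp}$. For this we use the GLS adjunction-type property: $\head j(X)$ is the largest submodule of $X$ with no $S_j$ in its top, and $\soc j(Y)$ is the largest quotient of $Y$ with no $S_j$ in its socle. Iterating along the reduced word from Step 1, a morphism $J_i \to I_{i'}$ is forced to factor through the inclusion $J_{i'} \hookrightarrow Q_{i'}$, because $Q_{i'}$ is injective. Its composite with $Q_{i'} \to I_{i'}$ therefore vanishes by Step 1.

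\textbf{Step 3: $\fac(\Jmod v)$ is a torsion class.} The class $\fac(\Jmod v)$ is closed under quotients by construction. For closure under extensions we use that $\Jmod v$ is Ext-projective in $\fac(\Jmod v)$, i.e.\ $\ext^1(\Jmod v, \fac(\Jmod v)) = 0$; this is the BIRS statement that $\fac(\Jmod v)$ is the Frobenius category $\mathcal{C}_{v}$ with $\Jmod v$ as its projective generator. Given $0 \to A \to E \to B \to 0$ with $A, B \in \fac(\Jmod v)$, we lift an epimorphism $\Jmod v^{b} \twoheadrightarrow B$ to $E$. Combining this lift with $\Jmod v^{a} \twoheadrightarrow A$ gives a surjection onto $E$. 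Let $\mathcal{F} = \fac(\Jmod v)^{\perp}$ be the corresponding torsion-free class.

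\textbf{Step 4: $\mathcal{F} = \sub(\Imod v)$.} The inclusion $\sub(\Imod v) \subseteq \mathcal{F}$ follows from Step 2, since $\mathcal{F}$ is closed under submodules and finite sums. For the converse, take $Y \in \mathcal{F}$ and embed it in its injective hull $Y \hookrightarrow \bigoplus Q_i^{a_i}$. Composing with the projection of Step 1 gives a map $Y \to \bigoplus I_i^{a_i}$ with kernel $K = Y \cap \bigoplus J_i^{a_i}$, and we must show $K = 0$. This step is the main obstacle. The approach is to show that $\bigoplus J_i^{a_i}$ is the torsion part (trace of $\Jmod v$) of $\bigoplus Q_i^{a_i}$, and more strongly that every nonzero submodule of $J_i$ meeting $\soc Q_i = S_i$ admits a nonzero map from $\Jmod v$. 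We would try this via the characterisation of $\head u$ as a largest submodule in Step 2, applied to the essential socle $S_i \subseteq J_i$. If this holds, then $K \neq 0$ would give a nonzero morphism from $\Jmod v$ into $Y$, contradicting $Y \in \mathcal{F}$. Hence $K = 0$ and $Y \in \sub(\Imod v)$.
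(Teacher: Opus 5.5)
\textbf{The gap is in Step 4.} It is more than a missing verification: the kernel $K=Y\cap\bigoplus_i J_i^{a_i}$ is in general nonzero, and the stronger claim you plan to use is false.

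Take $n=3$ and $v=s_1s_2$, so $u=v^{-1}w_0=s_1$. Here $Q_1$ is uniserial with top $S_2$ and socle $S_1$, and $Q_2$ is uniserial with top $S_1$ and socle $S_2$. Hence $J_1=\head{1}(Q_1)=Q_1$ and $J_2=\head{1}(Q_2)=S_2$, and your Step 1 gives $\Imod{v}=Q_2/S_2\cong S_1$. The simple $S_1$ lies in $\mathcal{F}$, since the top of $\Jmod{v}$ is $S_2^2$, and of course $S_1\in\sub(\Imod{v})$. But the injective hull of $S_1$ is $Q_1=J_1$, so $K=S_1\neq 0$. Moreover $S_1\subseteq J_1$ admits no nonzero map from $\Jmod{v}$.

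The underlying reason is that $\downcat{v}$ is not closed under submodules, so a torsion-free module can lie entirely inside the torsion part of its injective hull. The embedding of $Y$ into $\Imod{v}^m$ need not pass through the injective hull at all: here $\Imod{v}\cong S_1$ comes from $Q_2$, not from $Q_1$. In fact, if $K$ always vanished, every simple in the socle of a module in $\upcat{v}$ would satisfy $J_i=0$. That would force $\upcat{v}=\sub(\bigoplus_{J_i=0}Q_i)$, which is $0$ in this example.

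\textbf{Smaller issues, and a repair.} In Step 2, injectivity of $Q_{i'}$ does not let you lift $f\colon J_i\to I_{i'}$ along $Q_{i'}\twoheadrightarrow I_{i'}$. The obstruction lies in $\ext^1(J_i,J_{i'})$, which vanishes by the Ext-projectivity of $\Jmod{v}$ that you invoke in Step 3. Once $f$ is lifted to $J_i\to Q_{i'}$, extend it to $Q_i\to Q_{i'}$ by injectivity, and functoriality of $\head{u}$ puts the image in $J_{i'}$, so $f=0$. In Step 1, the length-additive factorisation is $w_0=vu$, not $v^{-1}u$. The kernel identification there is exactly \cite[Lemma 3.4]{Leclerc}, which the paper uses elsewhere and which you should cite rather than "check".

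To repair Step 4, use $\bigoplus_iQ_i$ as a projective generator rather than as an injective hull ($\propi$ is self-injective).
\begin{itemize}
\item Apply $\Hom(-,Y)$ to $0\to\Jmod{v}\to\bigoplus_iQ_i\to\Imod{v}\to0$. Since $\Hom(\Jmod{v},Y)=0$, every map $\bigoplus_iQ_i\to Y$ factors through $\Imod{v}$, so $Y\in\fac(\Imod{v})$. The same sequence gives $\ext^1(\Imod{v},Y)=0$, hence $\ext^1(Y,\Imod{v})=0$ by the 2-Calabi--Yau property of $\Mod\propi$.
\item Since $\head{j}(X)=\propi(1-e_j)\propi\,X$, one has $\head{u}(X)=IX$ for a two-sided ideal $I$, namely a product of the ideals $\propi(1-e_j)\propi$ along a reduced word, as in \cite{BIRS}. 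Because $\bigoplus_iQ_i\cong\propi$ as left modules, Step 1 identifies $\Jmod{v}\cong I$ and $\Imod{v}\cong\propi/I$.
\item Thus $Y$ is a $\propi/I$-module with $\ext^1_{\propi/I}(Y,\propi/I)=0$, since $\ext^1$ over $\propi/I$ injects into $\ext^1$ over $\propi$. So $Y$ is Cohen--Macaulay over $\propi/I$, which is Iwanaga--Gorenstein of dimension at most one by \cite{BIRS}. Hence $Y$ embeds in a projective $\propi/I$-module, which is exactly $Y\in\sub(\Imod{v})$.
\end{itemize}
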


\begin{Definition} \label{torsion radical}
    Given a torsion pair $(\downcat v, \upcat v)$ there is a functor \[t_v : \Mod \propi \rightarrow \downcat v\] called the torsion radical. It sends $X \in \Mod \propi$ to $t_v(X)$ the maximal submodule of $X$ in $\downcat v.$ By maximal we mean there is no other submodule of $X$ in $\downcat v$ with $t_v(X)$ as a proper submodule.
\end{Definition}

\begin{Lemma}[Lemma 3.15, \cite{Leclerc}]
    The category $\lec v w$ is Frobenius, stably 2-CY and the projective-injective indecomposables are given by \[\soc {v^{-1}} \head {w^{-1}w_0} Q_i, \hspace{0.2cm} i \in \{1,\cdots,n-1\}.\]
\end{Lemma}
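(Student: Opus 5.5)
The plan is to deduce the statement from the known structure of the ambient category $\downcat w$ rather than work in $\Mod \propi$ directly. By \cite{BIRS} (and \cite{GLSkac}), $\downcat w = \fac(\Jmod w)$ is a functorially finite, extension-closed subcategory of $\Mod\propi$. It is Frobenius, its projective-injectives are $\mathrm{add}(\Jmod w)$, and its stable category is 2-CY. Since $v \leq w$, we have $\downcat v \subseteq \downcat w$. Lemma \ref{torsion} then identifies
\[\lec v w = \{X \in \downcat w : \Hom(M,X) = 0 \text{ for all } M \in \downcat v\} = \{X \in \downcat w : \Hom(\Jmod v, X) = 0\}.\]
The last equality holds because every $M \in \downcat v$ is a quotient of some object of $\mathrm{add}(\Jmod v)$.

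First I would show that $\lec v w$ is extension closed and functorially finite. Extension closure is automatic, since $\downcat w$ is extension closed and $\upcat v$, being a torsion-free class, is extension closed. For functorial finiteness I would use the functor $X \mapsto X/t_v(X)$ of Definition \ref{torsion radical}. For $X \in \downcat w$, the quotient $X/t_v(X)$ is again in $\downcat w$, since $\fac$-classes are closed under quotients. It is also in $\upcat v$, by the torsion pair property. So this functor is a left adjoint to the inclusion $\lec v w \hookrightarrow \downcat w$. Combined with the functorial finiteness of $\downcat w$, this gives left approximations, and a dual argument gives right approximations.

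Next I would show that the modules $P_i := \soc{v^{-1}}\head{w^{-1}w_0} Q_i$ are exactly the projective-injectives. One checks $P_i = \head{w^{-1}w_0}Q_i / t_v(\head{w^{-1}w_0}Q_i)$; this is the identity $\soc{v^{-1}} = (-)/t_v(-)$ on $\downcat w$, which I would prove by induction on a reduced word for $v^{-1}$ using the braid-compatible definition of $\soc{}$. Given this, $P_i$ lies in $\lec v w$, and the $P_i$ generate $\lec v w$ under quotients, because the left adjoint applied to a surjection from $\mathrm{add}(\Jmod w)$ remains surjective.

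Ext-projectivity of $P_i$ in $\lec v w$ comes from applying $\Hom(-,X)$, for $X \in \lec v w$, to $0 \to t_v(J) \to J \to P_i \to 0$ with $J = \head{w^{-1}w_0}Q_i$. This gives
\[\Hom(t_v(J),X) \to \ext^1(P_i,X) \to \ext^1(J,X).\]
The left term vanishes since $t_v(J)\in\downcat v$ and $X \in \upcat v$. The right term vanishes since $J$ is projective in $\downcat w$. Hence $\ext^1(P_i,X)=0$.

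Ext-injectivity is the main obstacle, because the left adjoint does not obviously preserve injectivity. Here I would use the 2-CY property of $\underline{\downcat w}$:
\[\ext^1(X,P_i) \cong D\,\ext^1(P_i,X).\]
Both arguments lie in $\downcat w$ and the Ext groups are computed in $\Mod\propi$, so the vanishing just proved gives $\ext^1(X,P_i)=0$.

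Finally, enough projectives and enough injectives follow from the adjunction and approximation arguments above, and the classes of projectives and injectives coincide since both equal $\mathrm{add}(\bigoplus_i P_i)$. For stable 2-CY, I would identify $\lec v w$ with an Iyama–Yoshino subfactor of $\downcat w$: the relevant rigid object is $t_v$ of the projective generator. I would then check that Hom-vanishing against $\downcat v$ translates, via the 2-CY duality in $\underline{\downcat w}$, into Ext-vanishing against that rigid object. This translation is the step I would verify most carefully. Iyama–Yoshino reduction then yields a 2-CY triangulated structure on $\underline{\lec v w}$.
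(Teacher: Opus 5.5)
The paper does not prove this lemma; it quotes it from \cite{Leclerc}, so I am judging your argument on its own terms. Most of the skeleton is sound. The functor $X\mapsto X/t_v(X)$ is left adjoint to $\lec v w\hookrightarrow\downcat w$. The sequence $0\to t_v(J)\to J\to J/t_v(J)\to 0$ gives Ext-projectivity, and Ext-symmetry in $\downcat w$ gives Ext-injectivity.

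\textbf{First gap: the inclusion $\downcat v\subseteq\downcat w$.} Your opening claim that $v\le w$ implies $\downcat v\subseteq\downcat w$ is false for the Bruhat order. It holds when $w=xv$ with $\ell(w)=\ell(x)+\ell(v)$ (\cite[Remark 3.5]{Leclerc}, which the paper invokes in Section 9), but not in general. In type $A_2$, let $Q_1$ be uniserial with top $S_2$ and socle $S_1$, and take $v=s_1\le w=s_1s_2$. Then $\upcat v=\mathrm{add}(S_2\oplus Q_2)$, so $\downcat v=\mathrm{add}(S_1)$. On the other hand $\Jmod w=Q_1\oplus S_2$, so $\downcat w=\mathrm{add}(Q_1\oplus S_2)$, which does not contain $S_1$. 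In particular $t_v(\Jmod w)=S_1\notin\downcat w$. So the Iyama--Yoshino reduction of $\underline{\downcat w}$ along $t_v(\Jmod w)$, which you propose for the 2-CY property, is not even defined in general.

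It is also unnecessary. Once $\lec v w$ is Frobenius, $\underline{\lec v w}(X,\Omega^{-1}Y)\cong\ext^1_{\propi}(X,Y)$, because $\lec v w$ is extension closed in $\Mod\propi$. The bifunctorial isomorphism $\ext^1(X,Y)\cong D\,\ext^1(Y,X)$ on $\downcat w$, which you already use for injectivity, restricts to $\lec v w$. That restriction is exactly the stable 2-CY property.

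\textbf{Second gap: the identification $J/t_v(J)\cong\soc {v^{-1}} J$ for $J=\head {w^{-1}w_0}Q_i$.} You want to deduce it from $\soc {v^{-1}}=(-)/t_v(-)$ on all of $\downcat w$. That identity is false, so no induction on a reduced word can prove it. In type $A_2$ take $v=s_1s_2$ and $w=w_0$. Then $\downcat v=\mathrm{add}(Q_1\oplus S_2)$ and $\upcat v=\mathrm{add}(S_1)$. So $S_1\in\lec v w$ and $t_v(S_1)=0$, yet $\soc {v^{-1}}(S_1)=0$, because the word for $v^{-1}$ contains $s_1$.

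Only the special case $X=J$ is needed, and it has a direct proof.
\begin{itemize}
\item Leclerc's surjection $J/t_v(J)\twoheadrightarrow\soc {v^{-1}}J$ gives $t_v(J)\subseteq K:=\ker(J\to\soc {v^{-1}}J)$.
\item Since $\soc {v^{-1}}$ quotients by a functorially defined submodule, $K=J\cap\ker(Q_i\to\soc {v^{-1}}Q_i)$. By \cite[Lemma 3.4]{Leclerc} this equals $J\cap\head {v^{-1}w_0}Q_i$.
\item The hypothesis $v\le w$ gives $\head {v^{-1}w_0}Q_i\subseteq J$, so $K=\head {v^{-1}w_0}Q_i\in\downcat v$.
\item Hence $K\subseteq t_v(J)$, and the two submodules coincide.
\end{itemize}
Compare the proof of Lemma \ref{top soc lem}.

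\textbf{A smaller point: enough projectives and injectives.} A surjection $P\twoheadrightarrow X$ is not enough, because $\downcat w$ is not closed under submodules. Instead, apply $(-)/t_v(-)$ to a conflation $\Omega X\hookrightarrow J'\twoheadrightarrow X$ in $\downcat w$. The kernel of $J'/t_v(J')\twoheadrightarrow X$ is $\Omega X/t_v(J')$. It lies in $\downcat w$ as a quotient and in $\upcat v$ as a submodule. Dually, for enough injectives, apply $t_w$ to an injective envelope of $X$ in the Frobenius category $\upcat v$.
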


In \cite{GLSr} Geiss, Leclerc, and Schr\"oer defined a map \[\varphi\colon \Mod \propi \rightarrow \mathbb{C}[N_{-}]\] where $N_{-}$ is the unipotent cell in the flag variety. Leclerc's main results are then as follows.

\begin{Theorem}[Theorem 4.5, Proposition 4.2, \cite{Leclerc}]\label{Leclerc main}~
    \begin{itemize}
        \item Let $S(\lec v w )$ be the subalgebra of $\mathbb{C}[N_{-}]$ spanned by the functions $\varphi_{M}$ for $M \in \lec v w .$ The algebra $S(\lec v w )$ is isomorphic to the doubly invariant ring $^{N(v)} \mathbb{C}[N_{-}] ^{N'(w)}.$
        \item The localisation of $S(\lec v w )$ at the subset \[\{\varphi_P: P \text{ projective in } \lec v w \}\] is isomorphic to $\mathbb{C}[C_{v,w}].$ 
        \item The category $\lec v w $ has a cluster structure in the sense of \cite{BIRS}. Let $R(\lec v w )$ be the subalgebra of $S(\lec v w )$ generated by \[\{\varphi_M: M \text{ is a summand of a reachable cluster tilting object in } \lec v w\}.\] The subalgebra $R(\lec v w )$ has the structure of a cluster algebra. The cluster algebra $\tilde{R}_{v,w}$ obtained by localising at the set of frozen variables \[\{\varphi_P: P \text{ projective in } \lec v w \}\] is a cluster subalgebra of $\mathbb{C}[C_{v,w}].$
        \item If there is a length additive factorisation $w = uv,$ $l(w) = l(u) + l(v)$ then the cluster algebra $\tilde{R}_{v,w}$ is equal to $\mathbb{C}[C_{v,w}].$
        \item The algebra $S(\lec v w )$ is spanned by a subset of the dual semicanonical basis.
    \end{itemize}
\end{Theorem}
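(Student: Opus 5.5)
The plan follows Leclerc's strategy: first identify the span of the $\varphi_M$ inside $\mathbb{C}[N_-]$ in terms of invariants, then transport the categorical cluster structure through $\varphi$.

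\textbf{Step 1: $S(\lec v w)$ is an algebra spanned by dual semicanonical basis elements.} I would use two properties of the map of \cite{GLSr}.
\begin{itemize}
\item It is multiplicative: $\varphi_{M\oplus N}=\varphi_M\varphi_N$. Since $\lec v w$ is closed under direct sums, the span $S(\lec v w)$ is a subalgebra.
\item Every $\varphi_M$ is a combination of dual semicanonical basis elements $\varphi_{Z}$. Here $Z$ runs over generic modules of irreducible components of the nilpotent varieties that meet $\lec v w$.
\end{itemize}
Since $\upcat v$ and $\downcat w$ are defined by open conditions (sub/factor conditions against $\Imod v$ and $\Jmod w$), a component meeting $\lec v w$ has its generic point in $\lec v w$. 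This gives the final bullet of the theorem.

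\textbf{Step 2: the invariant-theoretic description.} I would use the $N$-actions on $\mathbb{C}[N_-]$ by left and right translation. The key claims are:
\begin{itemize}
\item $\varphi_M$ is right $N'(w)$-invariant if and only if $M\in\downcat w$. This is the Geiss--Leclerc--Schr\"oer identification $\mathbb{C}[N_-]^{N'(w)}\cong\mathbb{C}[N(w)]$ together with $\downcat w=\fac(\Jmod w)$ \cite{GLSkac}.
\item $\varphi_M$ is left $N(v)$-invariant if and only if $M\in\upcat v$.
\end{itemize}
For the second claim I would translate infinitesimal invariance under the one-parameter subgroups $x_i(t)$ into vanishing of $\varphi$ on modules with prescribed socle. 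Then I would use the torsion pair $(\downcat v,\upcat v)$ of Lemma \ref{torsion} and the functors $\soc{v^{-1}}$ to pass from simple reflections to $v$. The two conditions together give $S(\lec v w)\subseteq {}^{N(v)}\mathbb{C}[N_-]^{N'(w)}$.

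For equality, I would show that the dual semicanonical basis is compatible with both invariance conditions. Concretely, the basis elements that are doubly invariant are exactly those whose generic module lies in $\lec v w$, so they span the invariant ring.

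\textbf{Step 3: localisation.} I would realise $C_{v,w}$ via the open cell $N_-\hookrightarrow B\backslash G$. After quotienting by $N(v)$ and $N'(w)$, the Richardson variety becomes the open subset where the generalised minors attached to $v$ and $w$ are nonzero. I would then identify these minors with $\varphi_P$ for the projective-injectives $P=\soc{v^{-1}}\head{w^{-1}w_0}Q_i$ of $\lec v w$ (Lemma 3.15). This gives the second bullet.

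\textbf{Step 4: cluster structure.} I would apply \cite{BIRS} to the Frobenius stably 2-CY category $\lec v w$, which supplies cluster tilting objects and their mutations. The Geiss--Leclerc--Schr\"oer multiplication formula $\varphi_M\varphi_{M^*}=\varphi_X+\varphi_Y$ for exchange sequences makes $\varphi$ a cluster character. It therefore produces a cluster algebra $R(\lec v w)$, whose localisation at the frozen variables $\varphi_P$ sits inside $\mathbb{C}[C_{v,w}]$ by Step 3.

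\textbf{Step 5: the length-additive case $w=uv$.} I would build an explicit cluster tilting object from a reduced word of $u$ adapted to $v$. I would then show that the cluster variables of one seed and its neighbours generate $\mathbb{C}[C_{v,w}]$, using a starfish/codimension-two argument: the coordinate ring is normal and the upper cluster algebra agrees with it.

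\textbf{Main obstacle.} I expect the hardest points to be the equality in Step 2 (compatibility of the semicanonical basis with both invariance conditions) and the final equality in Step 5. For Step 2 I would first try a triangularity argument with respect to the socle/top filtrations given by the functors $\soc{v^{-1}}$ and $\head{w^{-1}w_0}$.
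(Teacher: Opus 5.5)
The paper does not prove this statement; it quotes it from \cite{Leclerc}, so the comparison is with Leclerc's argument. Your overall shape matches Leclerc's: invariants, then localisation at the $\varphi_P$, then transport of the \cite{BIRS} structure through $\varphi$. However, two steps do not work as written.

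\textbf{Steps 1--2 (dual semicanonical basis).} In Step 1 you assert that for $M\in\lec v w$ the expansion $\varphi_M=\sum_Z f_Z(M)\rho_Z$ only involves components $Z$ meeting $\lec v w$. Nothing justifies this a priori. Deducing it directly would need a support property of the semicanonical basis such as Conjecture \ref{semican conj}, which is open.

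The same gap reappears in Step 2. Knowing which basis vectors are doubly invariant does not show that they span ${}^{N(v)}\mathbb{C}[N_-]^{N'(w)}$, since a subspace need not be spanned by the basis vectors it contains. No triangularity argument is needed; the missing idea is to work one side at a time and then intersect:
\begin{itemize}
\item By \cite{GLSkac}, $\mathbb{C}[N_-]^{N'(w)}$ is spanned by $\varphi(\downcat w)$. It has as a basis the set $B_w$ of those $\rho_Z$ whose generic module lies in $\downcat w$.
\item The analogous statement for ${}^{N(v)}\mathbb{C}[N_-]$, $\upcat v$ and a basis subset $B^v$ follows from a left--right symmetry. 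The duality $D$ corresponds to an anti-automorphism of $N_-$, permutes the dual semicanonical basis, and exchanges categories $\downcat u$ with categories of the form $\upcat{u'}$. This replaces your infinitesimal socle argument.
\item Since $B_w$ and $B^v$ are subsets of one basis, $\mathrm{span}(B_w)\cap\mathrm{span}(B^v)=\mathrm{span}(B_w\cap B^v)$.
\item By openness, each element of $B_w\cap B^v$ equals $\varphi_M$ for a generic $M\in\lec v w$.
\end{itemize}
Together with $S(\lec v w)\subseteq{}^{N(v)}\mathbb{C}[N_-]^{N'(w)}$, this gives the first and last bullets at once. So the fact you wanted in Step 1 is a consequence of this argument, not an input to it.

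\textbf{Step 5 does not prove the fourth bullet.} The starfish lemma, granted coprimality hypotheses you would still have to check, only gives that the upper cluster algebra $\mathcal{U}$ is contained in the normal ring $\mathbb{C}[C_{v,w}]$. A codimension-two covering argument could at best upgrade this to $\mathcal{U}=\mathbb{C}[C_{v,w}]$.

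The bullet, however, concerns $\tilde R_{v,w}$, which is generated by the localised cluster variables. The equality $\mathcal{A}=\mathcal{U}$ is a separate and genuinely hard statement that normality does not provide. Nor do one seed and its neighbours generate the cluster algebra outside special situations such as acyclic ones.

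Your argument uses $\ell(w)=\ell(u)+\ell(v)$ only to choose a reduced word. If it worked, it would prove $\tilde R_{v,w}=\mathbb{C}[C_{v,w}]$ for all $v\le w$, which is exactly the conjecture Leclerc left open. The hypothesis must be used to reduce to a setting where the coordinate ring is already known to be generated by cluster variables, such as the unipotent-cell/Schubert-cell case of \cite{GLSkac}.

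A smaller point in Step 4: \cite{BIRS} does not supply a cluster-tilting object. You need Leclerc's explicit object $U_{\mathbf{i}}$, a check that there are no loops or 2-cycles, and algebraic independence of the initial $\varphi$'s (for instance via transcendence degree $\ell(w)-\ell(v)$) before $R(\lec v w)$ is a cluster algebra.
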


\section{Quotients and submodules}
Note that the quiver $Q$ used to define $\cm(C_{k,n})$ is of type $\tilde{A}_{n-1}.$ By $\affpi$ we denote the preprojective algebra associated to this affine type. Let \[\emb {i} : \Mod \propi \hookrightarrow \Mod \affpi\] be the embedding of the module category induced by the quotient \[\propi \cong \affpi/\langle e_i \rangle.\] Under the embedding $\emb i$ the simple module at vertex $j$ in $\Mod \propi$ is sent to a simple at vertex $i+j \mod n$ in $\Mod \affpi.$ This can be seen by noting that under the quotient $\propi \cong \affpi/\langle e_i \rangle$ the vertex $i+j$ of $Q$ is sent to vertex $j$ of the type $A_{n-1}$ preprojective algebra.

Whenever we write a module $V$ in $\Mod \propi$ we will be referring to $\emb n V.$ We drop the $\emb n$ in this case to ease notation.

\begin{Lemma} \label{sub-lec}
    For $v \in \quotmax$ and $V = \soc {v^{-1}} Q_k$ \[\upcat v = \sub (V).\]
\end{Lemma}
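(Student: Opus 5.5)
We need $\upcat v = \sub(\Imod v)$ with $\Imod v=\bigoplus_{i}\soc{v^{-1}}Q_i$, to equal $\sub(\soc{v^{-1}}Q_k)$. Since $\soc{v^{-1}}Q_k$ is a summand of $\Imod v$, the inclusion $\sub(V)\subseteq\upcat v$ is immediate. For the reverse inclusion it suffices to show that each summand $\soc{v^{-1}}Q_i$ lies in $\sub(V)$, because $\sub(V)$ is closed under finite direct sums and submodules.

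\textbf{Step 1: reduce to $i\neq k$.} We use that $v\in\quotmax$. Then $v^{-1}$ is maximal in its coset $v^{-1}W^k$, so for every $j\neq k$ we have $l(v^{-1}s_j)<l(v^{-1})$. This means $v^{-1}$ has a reduced expression ending in $s_j$. Concretely, $v^{-1}=v_0 w_0^{(k)}$ with $v_0$ minimal and $w_0^{(k)}$ the longest element of $W^k$, with lengths adding.

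\textbf{Step 2: compute $\soc{v^{-1}}Q_i$.} By the braid relations of \cite{GLSpar}, $\soc{v^{-1}}=\soc{v_0}\circ\soc{w_0^{(k)}}$ (up to the paper's composition convention, which we fix so that it is consistent with length additivity). Next, apply $\soc{w_0^{(k)}}$ to $Q_i$. The longest element of the parabolic acts on $Q_i$ by stripping all socle layers supported away from vertex $k$. For $i\neq k$, the result $\soc{w_0^{(k)}}Q_i$ is the largest quotient of $Q_i$ whose socle is concentrated at $k$. Using the explicit structure of type $A$ injectives, we expect this to be a submodule of $\soc{w_0^{(k)}}Q_k^{m}$ for some $m$. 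Equivalently, it is the injective over the appropriate quotient, which is cogenerated by the simple $S_k$; its injective hull in that quotient category is $\soc{w_0^{(k)}}Q_k=Q_k$ (since $Q_k$ has socle $S_k$, the functor $\soc{w_0^{(k)}}$ fixes it). So the claim at this stage is
\[\soc{w_0^{(k)}}Q_i\in \sub(Q_k).\]

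\textbf{Step 3: transport along $\soc{v_0}$.} Next we apply $\soc{v_0}$ and need to preserve the embedding. The functors $\soc{j}$ are not exact in general. However, by \cite{GLSpar}/\cite{BIRS}, on $\sub(\soc{u}Q)$-type categories the map $X\mapsto \soc{j}X$ behaves well along reduced words. Alternatively, we can use Lemma~\ref{torsion}: $Y\in\upcat v$ if and only if $\Hom(\downcat v,Y)=0$. We then argue that $\sub(V)$ is exactly the torsion-free class cogenerated by $V$. By Leclerc's description, the torsion-free class is $\sub$ of an injective cogenerator of $\upcat v$. The Ext-injectives of $\upcat v$ are the $\soc{v^{-1}}Q_i$. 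So it suffices to show that $V$ cogenerates each of them, i.e.\ that the canonical map $\soc{v^{-1}}Q_i\to V^{\Hom(\soc{v^{-1}}Q_i,V)}$ is injective. This holds if every simple in $\Soc(\soc{v^{-1}}Q_i)$ lies in the socle of a map to $V$.

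\textbf{Main obstacle.} The hard part is precisely this socle comparison. We show that $\Soc\soc{v^{-1}}Q_i$ and $\Soc V$ have supports such that $V$ cogenerates, using $v^{-1}s_j<v^{-1}$ for $j\neq k$. The approach is to argue, via the grading by dimension vectors $v^{-1}(\varpi_i)$-style formulas, that $\soc{v^{-1}}Q_i$ is the submodule of $\soc{v^{-1}}Q_k^{\oplus}$ corresponding to the subminor inclusion. We would first try this directly with the explicit type $A$ combinatorics of $Q_i$, as rectangles or Young-diagram-shaped modules indexed by $v^{-1}[i]$.
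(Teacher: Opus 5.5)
Your Steps 1 and 2 match the paper's proof. Write $v=w_0^k u$, so that $\soc{v^{-1}}=\soc{u^{-1}}\circ\soc{w_0^k}$. Then note that $\soc{w_0^k}Q_i$ has socle concentrated at $k$, hence lies in $\sub(Q_k)$, while $\soc{w_0^k}Q_k=Q_k$.

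The hedge ``we expect'' is unnecessary. For every $j\neq k$, $w_0^k$ has a reduced expression beginning with $s_j$. The braid relations together with $\soc{j}\circ\soc{j}=\soc{j}$ then give $\soc{j}\circ\soc{w_0^k}=\soc{w_0^k}$. So no $S_j$ with $j\neq k$ occurs in $\Soc(\soc{w_0^k}Q_i)$, and its injective envelope is a sum of copies of $Q_k$.

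The genuine gap is Step 3, which you leave open as the ``main obstacle.'' Exactness of $\soc{j}$ is not what is needed; the missing observation is that each $\soc{j}$ preserves monomorphisms. If $Y\subseteq X$, the $S_j$-isotypic part of $\Soc Y$ consists of the elements of $e_jY$ killed by all arrows. This is exactly $Y$ intersected with the $S_j$-isotypic part of $\Soc X$. Hence the induced map $\soc{j}(Y)\to\soc{j}(X)$ is injective. By induction the same holds for $\soc{u^{-1}}$, which is also additive.

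Applying $\soc{u^{-1}}$ to $\soc{w_0^k}Q_i\hookrightarrow Q_k^m$ gives $\soc{v^{-1}}Q_i\hookrightarrow(\soc{u^{-1}}\soc{w_0^k}Q_k)^m=V^m$. This finishes the proof and is exactly the paper's concluding line.

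Your proposed alternative does not close the gap. Showing that $V$ cogenerates each $\soc{v^{-1}}Q_i$ is simply a restatement of the inclusion $\upcat v\subseteq\sub(V)$ you are trying to prove. The socle comparison via type $A$ combinatorics is only described as something you would try. So, as written, the reverse inclusion is not established.
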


\begin{proof}
    By the definition of $\upcat {v}$ we know $\sub(V) \subseteq \upcat {v}.$ Note that $\soc {w_0^k} Q_i$ is supported at vertex $k$ so it is a submodule of $Q_k.$ In particular, \[\soc {w_0^k} Q_i \subseteq \soc {w_0^k} Q_k = Q_k.\] Since $v \in \quotmax$ implies $v = w_0^k u$ then \[{\soc {v^{-1}} Q_i} \subseteq {\soc {v^{-1}} Q_k}.\] In particular, there is an embedding $\oplus_i \soc {v^{-1}} Q_i \hookrightarrow V^n.$
\end{proof}

\begin{Remark} \label{rem emb}
    Via the embedding $U_n$ we can and will consider $\upcat v$ for $v \in \quotmax$ as a full subcategory of $\Mod C_{k,n}.$ When using notation such as $\Hom_{C_{k,n}}(M, X)$ for $M \in \Mod C_{k,n},$ $X \in \upcat v$ we are exploiting this embedding. 
\end{Remark}

Recall that we have fixed $k, n$ and denote $C = C_{k,n}.$ Until stated otherwise we fix $v \in \quotmax$ and define $V \in \Mod C$ by \[V = \soc {v^{-1}} Q_k.\] 

\begin{Definition}[\cite{JKS}]
    Given $M \in \cm(C),$ at any vertex $i \in \tilde{Q}_0$ we have $M_i \cong \mathbb{C}[[t]]^r$ and the number $r$ doesn't depend on the vertex chosen. We call the number $r$ the rank of $M$ and denote this $\rank M.$    
\end{Definition}

\begin{Definition}[Definition 5.1, \cite{JKS}]
    Given $I \in \binom{[n]}{k}$ there is a rank one, indecomposable module in $\cm(C)$ denoted by $M_{I}.$
\end{Definition}

\begin{Proposition}[Proposition 5.2, \cite{JKS}]
    Up to isomorphism, any rank one module in $\cm(C)$ is of the form $M_{I}$ for some $I \in \binom{[n]}{k}.$
\end{Proposition}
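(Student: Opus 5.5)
Since $M$ is free over $\mathbb{C}[[t]]$ of rank one, I would write down every arrow as an explicit power series and then normalise bases until the module is visibly $M_I$ from \cite[Definition 5.1]{JKS}. The plan has three steps.

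\textbf{Step 1: each edge has exactly one unit arrow.} Each $e_iM \cong \mathbb{C}[[t]]$, so fix a generator $m_i$ of $e_iM$. The arrow $x_i\colon e_iM\to e_{i+1}M$ is $\mathbb{C}[[t]]$-linear, hence it is multiplication by some $f_i(t)$. Likewise $y_i\colon e_{i+1}M \to e_iM$ is multiplication by some $g_i(t)$. Since $t=xy=yx$ acts as $t$ at every vertex, we get $f_ig_i=t$ for all $i$. Because $\mathbb{C}[[t]]$ is a discrete valuation ring with uniformiser $t$, exactly one of $f_i,g_i$ is a unit and the other is $t$ times a unit. Set
\[J=\{i\in[n] : f_i \text{ is a non-unit}\}.\]

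\textbf{Step 2: $J$ has the right size.} Compare $t$-adic valuations in the relation $x^k=y^{n-k}$, evaluated at an arbitrary vertex $j$. Both sides are maps $e_jM\to e_{j+k}M$.
\begin{itemize}
\item The valuation of $x^k$ is the number of elements of $J$ among the $k$ arrows traversed.
\item The valuation of $y^{n-k}$ is the number of elements of $[n]\setminus J$ among the complementary $n-k$ arrows.
\end{itemize}
Equating these and rearranging gives a condition on $|J|$ alone. Up to the complement/orientation convention used in \cite{JKS}, this forces $|J|=k$ or $n-k$, and it determines a $k$-subset $I$ (equal to $J$ or its complement). This step is pure bookkeeping.

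\textbf{Step 3: normalise the bases.} Rescale the generators successively along the path $1\to2\to\cdots\to n$, replacing $m_{i+1}$ by a unit multiple. At each step choose the rescaling so that the unit arrow on edge $i$ becomes exactly $1$. The relation $f_ig_i=t$ then forces the other arrow on that edge to be exactly $t$. This is possible for every $i\le n-1$, because multiplying by units of $\mathbb{C}[[t]]$ (not just scalars) is allowed.

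\textbf{Main obstacle: the wrap-around edge $n\to 1$.} No freedom is left to normalise this edge, so its unit arrow is a priori some unit $u(t)\in\mathbb{C}[[t]]^\times$. To show $u=1$, I would use the relation $x^k=y^{n-k}$ at a vertex $j$ for which exactly one side passes through edge $n$. With every other arrow normalised to $1$ or $t$, the two sides then differ exactly by the factor $u$, which gives $u=1$. If the orientation makes that direct comparison awkward, the fallback is to rescale all the $m_i$ simultaneously by a common unit. That rescaling leaves the normalised arrows unchanged, and the same relation pins down the remaining factor.

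Once every arrow is $1$ or $t$, with the $t$'s on $x_i$ exactly for $i$ in the relevant set, the module coincides with the explicit description of $M_I$ in \cite[Definition 5.1]{JKS}. Hence $M\cong M_I$.
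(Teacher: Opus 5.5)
Your argument is correct. The paper cites \cite[Proposition 5.2]{JKS} without proof, and your route is the standard direct one for that result: fix generators at each vertex, use $xy=t$ to see that exactly one arrow on each edge is invertible, rescale, and use $x^k=y^{n-k}$ both to count the invertible $x$-arrows and to fix the wrap-around edge. Two tightenings are worth making. First, Step 2 actually gives exactly $|J|=n-k$, so $I=[n]\setminus J$ is the set of edges on which $x$ acts invertibly. Second, the $x^k$-path and the $y^{n-k}$-path from any vertex partition the $n$ edges, so every vertex $j$ works for the wrap-around comparison, and you can drop the common-rescaling fallback, which changes no arrow at all.
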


By Proposition 5.6 of \cite{JKS} rank one modules are \textit{rigid} in the sense that \[\ext^1_C(M_I,M_I) = 0.\] Since the category $\cm(C)$ is extension closed in $\Mod C$ any extension in $\ext^1_C(M_I,M_I)$ is an extension in $\cm(C).$ Define $\fa(I,i)$ to be the full subcategory of $\cm(C)$ given by \[\fa(I,i) = \{M \in \cm(C):\exists\, r \in \mathbb{N}, \, M_I^r \hookrightarrow M \text{ which surjects at vertex i}\}.\] Since $\Top V \subseteq S_{n-k},$ i.e. the top is either zero or $S_{n-k},$ we can consider $$X \hookrightarrow \cmcpro {n-k} \twoheadrightarrow V.$$  Since $\cm(C)$ is closed under kernels and rank is additive then $X$ is a rank one module in $\cm(C).$ In particular, we denote by $I \in \binom{[n]}{k}$ the index such that $M_{I} \cong X.$ 

\begin{Lemma} \label{index v proof}
    $I = v^{-1}[k].$
\end{Lemma}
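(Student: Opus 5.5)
We have to identify $I$ from the short exact sequence $M_I \hookrightarrow \cmcpro{n-k} \twoheadrightarrow V$ with $V = \soc{v^{-1}} Q_k$ viewed (via $\emb n$) as a $C$-module. The plan is to read off $I$ vertex by vertex from the $\mathbb{C}[[t]]$-lattice structure.

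First recall the JKS description. A rank one module $M_J$ is given at each vertex $j$ by a copy of $\mathbb{C}[[t]]$, with $x_j$ acting as $1$ or $t$ according to whether $j \in J$ (in one fixed convention), and $y_j$ acting complementarily. The projective $\cmcpro{n-k}$ is $M_{J_0}$ for the interval $J_0$ of $k$ consecutive elements determined by the top at $n-k$; in the paper's indexing this should be $J_0=[k]$, which is consistent with $v=w_0^k$, where $V=Q_k$.

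Next, an inclusion of rank one modules $M_I \hookrightarrow M_{J_0}$ is determined by the dimensions $d_j = \dim e_j(M_{J_0}/M_I)$. Here $M_{J_0}/M_I \cong V$, so $d_j = \dim e_j V$. These satisfy $d_{j} - d_{j-1} \in \{-1,0,1\}$, and comparing the actions of $x_j$ shows that $j\in I$ differs from $j \in J_0$ exactly according to the jump $d_j - d_{j-1}$. So the combinatorial identity needed is that the profile $d_j = \dim e_j\soc{v^{-1}}Q_k$ has up and down steps recording the symmetric difference of $v^{-1}[k]$ and $[k]$. Here $e_n V = 0$, which fixes the normalisation at vertex $n$.

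The main step is to compute $\dim e_j \soc{v^{-1}}Q_k$ for $v\in\quotmax$. I would induct on a reduced word $v^{-1} = s_{i_1}\cdots s_{i_m}$ (applying $\soc{}$ operators successively), proceeding downward in length from $w_0^k$, where $V = Q_k$. The module $Q_k$ has the well-known staircase (rectangle $k\times(n-k)$) profile, and each $V=\soc{v^{-1}}Q_k$ is a Young-diagram-shaped module (a "Grassmannian" submodule) whose dimension vector is given by the diagonal counts of a partition $\lambda \subseteq k\times(n-k)$. Removing one simple $S_i$ from the socle corresponds to removing a box, which changes $v^{-1}[k]$ by the transposition $s_i$, exchanging $i$ and $i+1$. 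One checks that this also swaps adjacent up and down steps of the profile. Equivalently, I would use the standard bijection between partitions in a $k\times(n-k)$ box and $k$-subsets via boundary lattice paths, match the partition of $V$ to $v^{-1}[k]$, and match the boundary path to the profile $d_j$.

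The expected obstacle is bookkeeping of conventions: left versus right action of $v$, whether $\soc{v^{-1}}$ removes boxes corresponding to $s_i$ acting on positions or values, and the vertex shift $\emb n$ ($j \mapsto j \bmod n$). I would first verify the base cases $v=w_0^k$ and a single reflection $v=w_0^k s_k$ directly, and fix conventions from these, before running the induction.
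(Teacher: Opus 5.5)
Your strategy is sound: read $I$ off the dimension profile of $V$ using the JKS description of maps between rank one modules, and compute that profile by induction along a reduced word. As written, however, it contains a concrete error that makes the identity you set out to prove false.

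The label of $\cmcpro{n-k}$ is not $[k]$. The paper uses $\cmcpro n = M_{[k]}$, and in general $\cmcpro j = M_{\{j+1,\dots,j+k\}}$, so $J_0=\{n-k+1,\dots,n\}$. Your consistency check runs backwards. The element $w_0^k$ preserves $[k]$, so at $v=w_0^k$ the lemma says that the \emph{kernel} is $M_{[k]}$, i.e.\ $\cmcpro n\hookrightarrow\cmcpro{n-k}\twoheadrightarrow Q_k$. With $J_0=[k]$, the set $v^{-1}[k]\,\triangle\,J_0$ is empty at the base case. Your profile would then have no steps and, since $d_n=0$, would vanish identically, whereas $V=Q_k\neq 0$. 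With the correct $J_0$ you get $d_t=|v^{-1}[k]\cap[t]|-\max(0,t-n+k)$. For $v=w_0^k$ this is $\min(t,k,n-t,n-k)=\dim e_tQ_k$, as it should be. A minor point: $w_0^k$ is the shortest element of $\quotmax$, so the induction goes up in $l(v)$; it is $V$ that shrinks.

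The second issue is that the real content sits in your ``one checks''. You need that for a length-additive $v=ws_i$ with $v,w\in\quotmax$, the functor $\soc{s_i}$ kills exactly one copy of $S_i$ in $\soc{w^{-1}}Q_k$.

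\emph{At most one:} $\soc{w^{-1}}Q_k$ is a quotient of the rank one module $\cmcpro{n-k}$. Hence its component at vertex $i$ is a cyclic $\mathbb{C}[[t]]$-module, whose $t$-socle is at most one-dimensional.

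\emph{At least one:} inductively $\soc{w^{-1}}Q_k\cong\cmcpro{n-k}/M_{I'}$ with $I'=w^{-1}[k]$. Length-additivity forces $i\in I'$ and $i+1\notin I'$. Then $M_{I'}\subset M_{s_iI'}$ with quotient $S_i$, which supplies the copy of $S_i$ in the socle.

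The paper's proof is exactly this step done locally. It compares the short exact sequences for $w$ and $v$ via the snake lemma to obtain $M_{w^{-1}[k]}\hookrightarrow M_I\twoheadrightarrow S_i$, and reads $I=s_i(w^{-1}[k])$ off that single inclusion. So it never needs the label of $\cmcpro{n-k}$ or a global profile. Once corrected, your global version additionally gives an explicit formula for $\dim\soc{v^{-1}}Q_k$. The price is that it is more exposed to the convention slip above.
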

\begin{proof}
     In the case where $v = w_0^k$ this follows from \cite{JKS}. Now suppose by as an inductive hypothesis that $v = ws_i$ is a length additive expression and the lemma is true for $w.$ 
     Consider the commutative diagram 
     \[\begin{tikzcd}
	{M_{w^{-1}[k]}} & {\cmcpro{n-k}} & {\soc {w^{-1}} Q_k} \\
	{M_I} & {\cmcpro{n-k}} & {\soc {v^{-1}} Q_k.}
	\arrow[hook, from=1-1, to=1-2]
	\arrow[hook, from=1-1, to=2-1]
	\arrow[two heads, from=1-2, to=1-3]
	\arrow[shift right, no head, from=1-2, to=2-2]
	\arrow[shift left, no head, from=1-2, to=2-2]
	\arrow[two heads, from=1-3, to=2-3]
	\arrow[hook, from=2-1, to=2-2]
	\arrow[two heads, from=2-2, to=2-3]
    \end{tikzcd}\]

    The first vertical map is injective by the snake lemma. Likewise the last vertical arrow is surjective by the snake lemma. If $\soc {v^{-1}} Q_k =  \soc {w^{-1}} Q_k$ the snake lemma implies $I = w^{-1}[k].$ But in this case either $i \notin w^{-1}[k]$ or $i, i+1 \in w^{-1}[k]$ since $\soc {s_iw^{-1}} Q_k = \soc {w^{-1}} Q_k$ and so $s_i \notin \Soc  \soc {w^{-1}} Q_k.$ Therefore $w^{-1}[k] = v^{-1}[k].$ If $\soc {v^{-1}} Q_k \neq \soc {w^{-1}} Q_k$ the snake lemma says we have \[M_{w^{-1}[k]} \hookrightarrow M_I \twoheadrightarrow S_i.\] This implies that $I = (w^{-1}[k]\backslash \{i\}) \cup \{i+1\} = s_i(w^{-1}[k]) = v^{-1}[k].$ 
\end{proof}

Throughout the rest of this paper we will fix $I = v^{-1}[k]$ unless stated otherwise. We endeavour to periodically remind the reader of this notation. A morphism $f:M\rightarrow N$ in $\cm(C)$ consists of $n$ morphisms $(f_i)_{i \in [n]}$ with $f_i : M_i \rightarrow N_i.$ We now give an equivalent definition of $\fa(I,i)$ which we shall use later.

\begin{Lemma} \label{fa char}
    \[\fa(I,i) = \{M \in \cm(C): \exists f\colon M_I^r \rightarrow M,\text{ such that } f_i : (M_{I}^r)_i \xrightarrow{\sim} M_i\}.\]
\end{Lemma}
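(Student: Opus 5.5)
We must show two inclusions between the original definition of $\fa(I,i)$ (an injective map $M_I^r \hookrightarrow M$ which is surjective at vertex $i$) and the alternative description (a map $f\colon M_I^r \to M$ whose component $f_i$ is an isomorphism $(M_I^r)_i \cong M_i$). The key input is that $M_I$ has rank one, together with the structure of $\cm(C)$: every $e_jM$ is a free $\mathbb{C}[[t]]$-module of rank $\rank M$, independent of $j$.

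\emph{First inclusion.} Suppose $f\colon M_I^r \hookrightarrow M$ is injective and surjective at vertex $i$. Then $f_i\colon \mathbb{C}[[t]]^r \to e_iM$ is injective (the restriction of an injective map) and surjective, hence an isomorphism. So $M$ lies in the right-hand set. As a byproduct, $r = \rank M$.

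\emph{Second inclusion.} Suppose $f\colon M_I^r \to M$ has $f_i$ an isomorphism. Then $r = \rank M$, so $f_i$ is an isomorphism of free $\mathbb{C}[[t]]$-modules of rank $r$. It remains to show that $f$ is injective at every vertex $j$. Let $K = \ker f$. Since $\cm(C)$ is closed under kernels (used already in the paper before Lemma \ref{index v proof}), or directly because a submodule of a $\mathbb{C}[[t]]$-free module of finite rank is free, $K$ is in $\cm(C)$. So each $e_jK$ is free over $\mathbb{C}[[t]]$ of the same rank $\rank K$ at every vertex. But $e_iK = \ker f_i = 0$, so $\rank K = 0$, and hence $e_jK = 0$ for all $j$. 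Therefore $K = 0$ and $f$ is an embedding which surjects at vertex $i$, so $M \in \fa(I,i)$.

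\emph{Potential obstacle.} The only delicate point is justifying that the kernel has constant rank across vertices, i.e.\ that $K$ is genuinely a $C$-module lying in $\cm(C)$ to which the rank definition applies. A $C$-submodule of a module free over $\mathbb{C}[[t]]$ is free over $\mathbb{C}[[t]]$, since $\mathbb{C}[[t]]$ is a PID and the modules are finitely generated. This puts $K$ in $\cm(C)$, where the rank is vertex-independent by the JKS definition. Alternatively, one can argue directly: if $x \in e_jK$ is nonzero, multiply by a path from $j$ to $i$ (a power of $x$ or $y$). Since $x^a y^b$ acts as a power of $t$ up to identifying vertices, and $t$ acts injectively on the $\mathbb{C}[[t]]$-free module $M_I^r$, the image is nonzero in $e_iK = 0$, a contradiction. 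This second argument avoids citing closure under kernels and is the one I would write out.
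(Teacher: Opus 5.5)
Your proof is correct and takes the same approach as the paper. The forward inclusion is immediate. For the converse, the paper likewise uses that $\cm(C)$ is closed under submodules: the kernel of $f$ lies in $\cm(C)$ and vanishes at vertex $i$, so it has rank zero and is therefore zero.

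Your alternative path argument is also valid and slightly more self-contained. It uses that $y^a x^a = t^a$ and that $M_I^r$ is torsion-free over $\mathbb{C}[[t]]$, so it avoids appealing to the vertex-independence of rank.
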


\begin{proof}
    Given \[M_I^r \hookrightarrow M\] which surjects at vertex $i$ then this clearly induces an injective and surjective map at vertex $i.$ In particular the embedding \[M \hookrightarrow M_I^{r}\] that surjects at $i$ gives an isomorphism \[M_i \cong (M_I^{r})_i.\] Conversely suppose $f: M_I^r \rightarrow M$ induces an isomorphism at vertex $i.$ This immediately implies it surjects at vertex $i$ and since $\cm(C)$ is closed under submodules and $f$ is injective at vertex $i$ then $f$ is injective at all vertices. This implies $f$ is an injective map which surjects at vertex $i.$ 
 \end{proof}

\begin{Remark}
    For $M \in \fa(I,n),$ by definition, there exists $r \in \mathbb{N}$ and $f: M_I^r \hookrightarrow M$ which surjects at vertex n. By the above lemma we see that $r$ coincides with $\rank M.$ We continue to write just $r$ as it is more compact than a superscript $\rank M.$
\end{Remark}

\begin{Definition}[\cite{JKS2}]
    There is an exact functor $$\omega_i : \cm(C) \rightarrow \Mod C$$ defined via the sequence $$M \hookrightarrow (\cmcpro i)^{\rank M} \twoheadrightarrow \omega_i(M)$$ where the first embedding is surjective at vertex $i + k.$
\end{Definition}

\begin{Proposition}[\cite{JKS2}]
    \[\omega_{n-k}(\cm(C)) \cong \fac(Q_k)\] and the functor induces a triangle equivalence between the respective stable categories. In particular, for $M,N \in \cm(C)$ \[\ext^{1}_C(M,N) = \ext^{1}_{\propi}(\omega_i(M), \omega_i(N)).\]
\end{Proposition}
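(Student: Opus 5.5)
The plan is to work with the single vertex $i = n-k$, so that the defining embedding $M \hookrightarrow (\cmcpro {n-k})^{\rank M}$ surjects at vertex $n$. The first step is to describe $\omega_{n-k}(\cmcpro{n-k})$ via the sequence $X \hookrightarrow \cmcpro{n-k} \twoheadrightarrow V$ preceding Lemma \ref{index v proof}, with $v = w_0^k$. In that case $V = \soc{w_0^k}Q_k = Q_k$, and by Lemma \ref{index v proof} we have $X = M_{w_0^k[k]}$. Since $\cmcpro{n-k}$ has rank one, this gives $\omega_{n-k}(\cmcpro{n-k}) \cong Q_k$.

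Next, for a general $M$ of rank $r$, the cokernel of $M \hookrightarrow (\cmcpro{n-k})^r$ vanishes at vertex $n$ because the embedding surjects there. Hence the cokernel is a $\propi$-module (through $\emb n$). I would show that the embedding $M \hookrightarrow (\cmcpro{n-k})^r$ factors through the kernel $M_{w_0^k[k]}^r$ of $(\cmcpro{n-k})^r \twoheadrightarrow Q_k^r$, which amounts to comparing ranks and $t$-torsion at vertex $n$. Then $\omega_{n-k}(M)$ is a quotient of $Q_k^r$, so it lies in $\fac(Q_k)$.

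For density, take $X \in \fac(Q_k)$ and an epimorphism $Q_k^r \twoheadrightarrow X$. Compose it with $(\cmcpro{n-k})^r \twoheadrightarrow Q_k^r$ and let $K$ be the kernel. Then $K$ is a submodule of a module free over $\mathbb{C}[[t]]$, so $K \in \cm(C)$. Since $X$ is $t$-torsion, $K$ has rank $r$. Since $X_n = 0$, the inclusion $K \hookrightarrow (\cmcpro{n-k})^r$ surjects at vertex $n$, and therefore $\omega_{n-k}(K) = X$.

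For functoriality and exactness, note that a map $M \to N$ extends uniquely to a map between the ambient free modules. Uniqueness holds because the extension is determined at vertex $n$, where both embeddings are isomorphisms, and then at every vertex by torsion-freeness. The map therefore descends to the cokernels. Exactness follows from the snake lemma applied to the $3\times 3$ diagram, using that rank is additive.

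The main step is the stable equivalence. On the $\cm(C)$ side, $\cm(C)$ is Frobenius with projective-injectives $\mathrm{add}(C)$. On the other side, $\fac(Q_k)$ is Frobenius with projective-injectives in $\mathrm{add}$ of the modules $\omega_{n-k}(\cmcpro j)$; the main obstacle is checking this. I would verify it directly: $\omega_{n-k}$ sends $\cmcpro{j}$ to modules of the form $\head{w} Q_k$-type objects, which are Ext-projective and Ext-injective in $\fac(Q_k)$, using the Ext formula for $\cm(C)$. Fullness then comes from lifting a morphism $\omega(M) \to \omega(N)$ along the projective cover $(\cmcpro{n-k})^r \to \omega(M)$, so that it restricts to $M \to N$. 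I would identify the kernel of the induced map on Hom-spaces with the morphisms factoring through $\mathrm{add}(\cmcpro{n-k})$, which gives faithfulness on stable categories. An exact functor between Frobenius categories that preserves projective-injectives commutes with the cosyzygy $\Omega^{-1}$, so it induces a triangle functor. Combined with fullness, faithfulness and density, this gives a triangle equivalence. Finally, $\ext^1(M,N) \cong \underline{\Hom}(M,\Omega^{-1}N)$ on both sides, so the stable equivalence yields $\ext^1_C(M,N) = \ext^1_{\propi}(\omega(M),\omega(N))$.
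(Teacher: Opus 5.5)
You have the right overall plan, and the paper itself gives no proof here, only a citation of \cite{JKS2}. The plan is: density by taking kernels, exactness by the snake lemma, fullness by lifting along $\cmcpro{n-k}^r\twoheadrightarrow\omega_{n-k}(M)$, identifying the kernel on Hom-spaces with the maps through $\mathrm{add}(\cmcpro{n-k})$, and then Happel. However, two concrete steps are wrong.

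First, $\omega_{n-k}(\cmcpro{n-k})=0$, not $Q_k$, because the identity of $\cmcpro{n-k}$ is itself an embedding surjective at vertex $n$. In the sequence $X\hookrightarrow\cmcpro{n-k}\twoheadrightarrow Q_k$ for $v=w_0^k$ one has $X=M_{w_0^k[k]}=M_{[k]}=\cmcpro n$. So that sequence computes $\omega_{n-k}(\cmcpro n)\cong Q_k$. This matters: your later claim that $\omega_{n-k}$ kills exactly the maps factoring through $\mathrm{add}(\cmcpro{n-k})$ holds precisely because $\omega_{n-k}(\cmcpro{n-k})=0$. With your value it would be false, since the identity of $\cmcpro{n-k}$ would survive.

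Second, the inclusion in your second step is reversed. If $M\hookrightarrow\cmcpro{n-k}^r$ factored through $\cmcpro n^r$, then $\omega_{n-k}(M)$ would surject onto $Q_k^r$. In fact $M=\cmcpro n^r$ would be forced, since $\cmcpro n$ is generated at vertex $n$. What you need is $\cmcpro n^r\hookrightarrow M$. A $\mathbb{C}[[t]]$-basis of $e_nM=\Hom_C(\cmcpro n,M)$ gives $\cmcpro n^r\to M$, which is an isomorphism at vertex $n$ and hence injective. Then $\omega_{n-k}(M)$ is a quotient of $\cmcpro{n-k}^r/\cmcpro n^r=Q_k^r$, exactly as in Proposition \ref{essentially surj}. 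You also need a reason for the existence, not just the uniqueness, of the extension of $M\to N$ to the ambient modules. For example, $\cmcpro{n-k}$ is the injective of $\cm(C)$ at vertex $n$, so $\Hom_C(L,\cmcpro{n-k})\cong\Hom_{\mathbb{C}[[t]]}(e_nL,\mathbb{C}[[t]])$.

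The genuine gap is the step you flag yourself. You propose to check that the $\omega_{n-k}(\cmcpro j)$ are Ext-projective and Ext-injective in $\fac(Q_k)$ ``using the Ext formula for $\cm(C)$''. This is circular. The only route you offer for transporting $\ext^1_C(\cmcpro j,-)=0$ into $\fac(Q_k)$ is the identity $\ext^1_C(M,N)=\ext^1_{\propi}(\omega_{n-k}M,\omega_{n-k}N)$, which is what you are proving.

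You also need the converse: every projective-injective of $\fac(Q_k)$ lies in $\mathrm{add}\,\omega_{n-k}(C)$, otherwise faithfulness on stable categories fails. That part does follow from exactness and density once Ext-projectivity is known: apply $\omega_{n-k}$ to $K\hookrightarrow P\twoheadrightarrow M$ and split. Ext-projectivity itself still needs independent input. There are two ways to supply it.

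The first is to identify the projective-injectives of $\fac(Q_k)$ directly, since it is one of Leclerc's categories $\downcat{w}$. Then match them with $\omega_{n-k}(\cmcpro j)=Q_k/\pi_n(\cmcpro j)$, viewing $\pi_n(\cmcpro j)\subseteq\pi_n(\cmcpro{n-k})=Q_k$.

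The second, cleaner way is Auslander's duality, as the paper uses for Theorem \ref{intro thm 2}. Apply $\Hom_{\mathbb{C}[[t]]}(-,\mathbb{C}[[t]])$ to $M\hookrightarrow\cmcpro{n-k}^r\twoheadrightarrow\omega_{n-k}(M)$. This gives $(\cmcpro{n-k}^{\vee})^r\hookrightarrow M^{\vee}\twoheadrightarrow\ext^1_{\mathbb{C}[[t]]}(\omega_{n-k}(M),\mathbb{C}[[t]])\cong D\,\omega_{n-k}(M)$, with $D=\Hom_{\mathbb{C}}(-,\mathbb{C})$. Here $\cmcpro{n-k}^{\vee}$ is the projective at vertex $n$ for $C_{n-k,n}$, and the first map is an isomorphism at vertex $n$. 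Hence $D\circ\omega_{n-k}\cong\pi\circ(-)^{\vee}$, with $\pi$ the JKS functor for $C_{n-k,n}$. The whole proposition then becomes the dual of \cite[Corollary 4.6]{JKS}: the Frobenius structure and projective-injectives of $\fac(Q_k)$, the triangle equivalence, and the Ext formula. This is the argument of Theorem \ref{intro thm 2} with $M_I=\cmcpro{n-k}$ and $i=n$, where $\opsu(I,n)=\cm(C)$.
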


We recall our notation $I = v^{-1}[k]$ for $v \in \quotmax$ and $V = \soc {v^{-1}} Q_k.$
    
\begin{Proposition} \label{essentially surj}
    The functor $\omega_{n-k}$ restricts to $$\omega_{n-k}: \fa(I,n) \rightarrow \fac(V).$$ Further, this restricted functor is essentially surjective.
\end{Proposition}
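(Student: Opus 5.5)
Two things need to be shown: that $\omega_{n-k}$ sends $\fa(I,n)$ into $\fac(V)$, and that every object of $\fac(V)$ arises up to isomorphism. Both rest on the defining sequence
\[M_I \hookrightarrow \cmcpro{n-k} \twoheadrightarrow V,\]
together with exactness of $\omega_{n-k}$ and the snake lemma.

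\textbf{Step 1: the image of $M_I$.} By construction $M_I \hookrightarrow \cmcpro{n-k}$ surjects at vertex $n = (n-k)+k$. Indeed, the cokernel $V$ is a $\propi$-module viewed through $\emb n$, so it vanishes at vertex $n$. This inclusion is therefore exactly the embedding in the definition of $\omega_{n-k}$, and we get $\omega_{n-k}(M_I) \cong V$. Taking direct sums gives $\omega_{n-k}(M_I^r) \cong V^r$.

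\textbf{Step 2: $\omega_{n-k}(\fa(I,n)) \subseteq \fac(V)$.} Take $M \in \fa(I,n)$ with an embedding $f\colon M_I^r \hookrightarrow M$ that surjects at vertex $n$; by Lemma \ref{fa char}, $r = \rank M$. Fix an embedding $g\colon M \hookrightarrow (\cmcpro{n-k})^r$ that surjects at vertex $n$. Then $g\circ f\colon M_I^r \hookrightarrow (\cmcpro{n-k})^r$ is an isomorphism at vertex $n$. So $g \circ f$ is an admissible choice of defining embedding for $\omega_{n-k}(M_I^r)$, and its cokernel is $V^r$. The snake lemma applied to
\[M_I^r \xrightarrow{f} M \xrightarrow{g} (\cmcpro{n-k})^r\]
gives a surjection $V^r = \mathrm{coker}(gf) \twoheadrightarrow \mathrm{coker}(g) = \omega_{n-k}(M)$. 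Hence $\omega_{n-k}(M) \in \fac(V)$. If one prefers not to use this choice-of-embedding argument, the same conclusion follows from functoriality of $\omega_{n-k}$ applied to $f$: the induced map $V^r \to \omega_{n-k}(M)$ has cokernel $\omega_{n-k}(\mathrm{coker} f)$, and $\mathrm{coker} f$ vanishes at vertex $n$.

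\textbf{Step 3: essential surjectivity.} Let $X \in \fac(V)$, with a surjection $p\colon V^r \twoheadrightarrow X$. Compose the projections
\[(\cmcpro{n-k})^r \twoheadrightarrow V^r \twoheadrightarrow X\]
and let $M$ be the kernel. Since $\cm(C)$ is closed under submodules, $M \in \cm(C)$. As $X$ vanishes at vertex $n$, the inclusion $M \hookrightarrow (\cmcpro{n-k})^r$ surjects at vertex $n$, so $\omega_{n-k}(M) \cong X$. It remains to check $M \in \fa(I,n)$. The kernel $M_I^r$ of $(\cmcpro{n-k})^r \to V^r$ lies inside $M$. At vertex $n$ both $M_I^r$ and $M$ equal $(\cmcpro{n-k})^r_n$, so this inclusion surjects at vertex $n$, and hence $M \in \fa(I,n)$.

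\textbf{Main obstacle.} The delicate point is the identification of $\omega_{n-k}$ with a cokernel computed via an arbitrary embedding that surjects at vertex $n$, i.e.\ independence of the choice of embedding. I would justify this using the fact that $\cmcpro{n-k}$ is projective: two such embeddings of $M$ differ by an automorphism of $(\cmcpro{n-k})^r$. One also needs the vertex conventions of $\emb n$ to ensure that $V$, and hence $X$, vanishes at vertex $n$.
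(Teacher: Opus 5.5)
Your argument is correct and is essentially the paper's proof. For the inclusion you compose $M_I^r \hookrightarrow M \hookrightarrow \cmcpro{n-k}^r$ and pass to cokernels to get $V^r \twoheadrightarrow \omega_{n-k}(M)$; for essential surjectivity you take the kernel of $\cmcpro{n-k}^r \twoheadrightarrow V^r \twoheadrightarrow X$, which contains $M_I^r$, and both inclusions surject at vertex $n$.

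The independence of the chosen embedding that you flag is also used implicitly in the paper, as part of $\omega_{n-k}$ being well defined. It comes from restriction to vertex $n$ identifying $\Hom_C(M,\cmcpro{n-k})$ with $\Hom_{\mathbb{C}[[t]]}(e_nM,\mathbb{C}[[t]])$, not from projectivity of $\cmcpro{n-k}$. Drop your optional ``functoriality'' variant: $\mathrm{coker} f\notin\cm(C)$, so $\omega_{n-k}(\mathrm{coker} f)$ is undefined, and the snake lemma actually shows $V^r\to\omega_{n-k}(M)$ is surjective with kernel $\mathrm{coker} f$.
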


\begin{proof}
    Consider $N \in \fa(I,n).$ By the definition of $\fa(I,n)$ there is a $r \in \mathbb{N}$ and a morphism \[M_{I(V)}^r \hookrightarrow N\] which surjects at vertex $n.$ By composing this with an embedding \[N \hookrightarrow \cmcpro{n-k}^r\] that surjects at vertex $n$ we get an embedding \[M_{I(V)}^r \hookrightarrow \cmcpro{n-k}^r\] which surjects at vertex $n.$ By the universality of cokernels it follows that we have a surjection \[\begin{tikzcd}
	& N \\
	{M_I^r} & {\cmcpro{n-k}^r} & {V^r} \\
	& {\omega_{n-k}(N).}
	\arrow[hook, from=1-2, to=2-2]
	\arrow[dashed, hook, from=2-1, to=1-2]
	\arrow[hook, from=2-1, to=2-2]
	\arrow[two heads, from=2-2, to=2-3]
	\arrow[two heads, from=2-2, to=3-2]
	\arrow[dashed, two heads, from=2-3, to=3-2]
    \end{tikzcd}\] This implies $\omega_{n-k}(N) \in \fac(V).$ Conversely, given $X \in \fac(V)$ with quotient $$V^r \twoheadrightarrow X,$$ take the kernel $N_X$ of the composition $\cmcpro{n-k}^r \twoheadrightarrow V^r \twoheadrightarrow X.$ By the universality of kernels we have the factorisation \[
    \begin{tikzcd}
	{M_{I(V)}^r} & {N_X} & {\cmcpro{n-k}^r}.
	\arrow[hook, from=1-1, to=1-2]
	\arrow[hook, from=1-2, to=1-3]
    \end{tikzcd}\]
    In particular both embeddings surject at vertex $n.$ It follows that $N_{X} \in \fa(I,n)$ and $\omega_{n-k}(N_X) = X.$
\end{proof}

Following Dlab and Ringel \cite{DR} we denote by $\eta_{L} (M)$ the submodule of $M$ generated by images of maps in $\Hom(L,M).$

\begin{Definition} \label{pi-i}
    Given $I \in \binom{[n]}{k}$ we define the functor $$\pi_{I} : \cm(C) \rightarrow \Mod C$$ by $\pi_{I}(M) = M/\eta_{M_{I}}(M)$ where $M_I$ is the rank one modules associated to $I \in \binom{[n]}{k}.$ 
\end{Definition}

This functor is a generalisation of the functor $\pi$ defined in \cite{JKS}.

\begin{Lemma} \label{eta add}
    For $M \in \cm(C)$ we have $\eta_{M_I}(M) \in \mathrm{add}(M_I).$
\end{Lemma}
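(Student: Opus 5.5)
The plan is to show directly that $\eta_{M_I}(M)$ is a finite direct sum of copies of $M_I$. It is the sum of the images of all maps $M_I \to M$.

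\emph{Step 1: $\eta_{M_I}(M)$ lies in $\cm(C)$ and is the image of a single map from $M_I^s$.} As a submodule of $M$ it is free over $\mathbb{C}[[t]]$, because $\mathbb{C}[[t]]$ is a PID and $M$ is a finitely generated free $\mathbb{C}[[t]]$-module. So $\eta_{M_I}(M)\in\cm(C)$, since $\cm(C)$ is closed under submodules. Moreover $\Hom_C(M_I,M)$ is a finitely generated $\mathbb{C}[[t]]$-module, so finitely many maps $g_1,\dots,g_s$ suffice. This gives a surjection $g\colon M_I^s \twoheadrightarrow \eta_{M_I}(M)$ in $\cm(C)$.

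\emph{Step 2: reduce the number of summands.} Let $m=\rank \eta_{M_I}(M)$. I want to replace $g$ by a surjection from $M_I^m$ that is an isomorphism. The approach is to work at a single vertex, say vertex $i$, and use the explicit structure of rank one modules from \cite{JKS}. Here $M_I$ is a lattice in which $x$ and $y$ act by $1$ or $t$ according to whether the index lies in $I$.

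The key observation is that any map $M_I\to N$ with $N\in\cm(C)$ is determined by the image of a generator at one vertex. Since $M_I$ is rank one, it is generated as a $C$-module by $e_jM_I$ for suitable vertices. Consequently $\eta_{M_I}(N)$ is controlled by a $\mathbb{C}[[t]]$-sublattice of $e_iN$, and we choose a $\mathbb{C}[[t]]$-basis of that sublattice adapted to the generators.

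\emph{Step 3: show the kernel of $g$ is a summand, so that $\eta_{M_I}(M)\cong M_I^m$.} Equivalently, show that $\ext^1_C(\eta_{M_I}(M),-)$ does not obstruct. The route I would take: the kernel $K$ of $g$ is in $\cm(C)$ and $0\to K\to M_I^s\to \eta\to 0$. Apply $\Hom(M_I,-)$. Every map $M_I\to \eta$ composed into $M$ lies in $\Hom(M_I,M)$, hence factors through $g$ by construction of the $g_j$. So $\Hom(M_I,M_I^s)\to\Hom(M_I,\eta)$ is surjective, i.e.\ $g$ is a right $\mathrm{add}(M_I)$-approximation.

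Next compare ranks at vertex $i$. Let $\mathrm{End}(M_I)\cong\mathbb{C}[[t]]$; this follows since it is rank one and torsion free, so endomorphisms are scalars in $\mathbb{C}[[t]]$. Then $\Hom(M_I,\eta)$ is a $\mathbb{C}[[t]]$-lattice of rank $m$. Choose $g$ with $s=m$ using a $\mathbb{C}[[t]]$-basis of $\Hom(M_I,\eta)$. The map $M_I^m\to\eta$ is then injective, since a nonzero kernel would be a rank positive submodule mapping to zero and would give a relation among the basis maps at the generic point. It is also surjective, since the images still generate $\eta$.

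\emph{Main obstacle.} The hardest step is the injectivity and rank count, i.e.\ showing that $\Hom_C(M_I,\eta_{M_I}(M))$ has $\mathbb{C}[[t]]$-rank equal to $\rank\eta_{M_I}(M)$. I would first try to prove this after inverting $t$. Over $\mathbb{C}((t))$, $C[t^{-1}]$ becomes Morita equivalent to $\mathbb{C}((t))$ (matrices), so every module becomes a sum of copies of $M_I[t^{-1}]$ and the rank count holds generically. Freeness over $\mathbb{C}[[t]]$ then transfers injectivity back. Surjectivity is then automatic from the definition of $\eta$.
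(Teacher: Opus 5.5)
Your proposal is correct and is essentially the paper's argument: take a $\mathbb{C}[[t]]$-basis $f_1,\dots,f_m$ of $\Hom_C(M_I,M)=\Hom_C(M_I,\eta_{M_I}(M))$. The map $\rho=\sum_i f_i\colon M_I^m\to\eta_{M_I}(M)$ is surjective by the definition of $\eta$, and it is injective by a rank count, because its kernel lies in $\cm(C)$.

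The paper just asserts that this Hom-space has rank $\rank M$ because $M_I$ has rank one. Your localisation argument via $C\otimes_{\mathbb{C}[[t]]}\mathbb{C}((t))\cong M_n(\mathbb{C}((t)))$ actually supplies that justification. Your Step 2 and the approximation discussion in Step 3 are not needed.
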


\begin{proof}
    Since $M$ and $M_I$ are free $\mathbb{C}[[t]]$-modules so is $\Hom_{C}(M_I,M).$ Furthermore its rank is equal to $\rank M = r$ since $M_I$ is a rank one module. Let $f_1,\dotsc,f_r$ be a $\mathbb{C}[[t]]$-basis of $\Hom_{C}(M_I,M).$ It suffices to show that the map \[\rho: \bigoplus_{i=1}^{r} M_I \rightarrow \eta_{M_I}(M)\] given by $\rho(a_1,\dotsc,a_r) = f_1(a_1) + \dotsb + f_r(a_r)$ is an isomorphism. By the definition of $\eta_{M_I}(M)$ it is easy to see $\rho$ is surjective. Since $\cm(C)$ is closed under submodules and $\rho$ is a surjective map between modules of the same rank its kernel is a rank zero module in $\cm(C).$ Therefore $\rho$ is injective.    
\end{proof}

\begin{Proposition} \label{Fac to sub}
    $$\pi_{I}(\fa(I,n)) = \sub(V).$$
\end{Proposition}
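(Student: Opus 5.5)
We prove the two inclusions separately. The idea is to compare $\pi_I(M)$ with the quotient $\cmcpro{n-k}^r/M_I^r \cong V^r$ coming from the defining sequence $M_I \hookrightarrow \cmcpro{n-k} \twoheadrightarrow V$ of Lemma \ref{index v proof}.

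\textbf{The inclusion $\pi_I(\fa(I,n)) \subseteq \sub(V)$.} Let $M \in \fa(I,n)$ with rank $r$. By Lemma \ref{fa char}, fix $f\colon M_I^r \hookrightarrow M$ that is an isomorphism at vertex $n$. As in the proof of Proposition \ref{essentially surj}, choose an embedding $g\colon M \hookrightarrow \cmcpro{n-k}^r$ that surjects at vertex $n$. The composite $gf$ is then an embedding $M_I^r \hookrightarrow \cmcpro{n-k}^r$ surjecting at vertex $n$, and its cokernel is $V^r$.

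The key claim is that $\eta_{M_I}(M) = f(M_I^r)$. Granting this, we get
\[\pi_I(M) = M/f(M_I^r) \hookrightarrow \cmcpro{n-k}^r/gf(M_I^r) \cong V^r,\]
which shows $\pi_I(M) \in \sub(V)$.

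For the claim, the inclusion $f(M_I^r) \subseteq \eta_{M_I}(M)$ is clear. By Lemma \ref{eta add}, $\eta_{M_I}(M) \cong M_I^r$ via a basis of $\Hom_C(M_I,M)$. It therefore suffices to show that every morphism $h\colon M_I \to M$ lands in $f(M_I^r)$. Equivalently, the induced map $M_I \to M/f(M_I^r)$ must vanish, where $M/f(M_I^r)$ is a finite-length module embedded in $V^r$ and vanishing at vertex $n$. This reduces to showing
\[\Hom_C(M_I, V) = 0.\]
Apply $\Hom_C(-,V)$ to $M_I \hookrightarrow \cmcpro{n-k} \twoheadrightarrow V$. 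Since $\cmcpro{n-k}$ is projective, and $\Hom_C(V,V) \to \Hom_C(\cmcpro{n-k},V)$ is an isomorphism because $V$ has simple top $S_{n-k}$ and $\Hom_C(\cmcpro{n-k},V) = e_{n-k}V$, vanishing of $\Hom_C(M_I,V)$ amounts to the surjection $\cmcpro{n-k}\to V$ being "universal". Two routes to this vanishing are available:
\begin{itemize}
\item pass through $\omega_{n-k}$ and the equivalence of \cite{JKS2};
\item more concretely, note that $M_I$ is generated at the vertices where its "rim" has peaks, and check that $V = \soc{v^{-1}}Q_k$ has zero socle-compatible components there, using $I = v^{-1}[k]$.
\end{itemize}
I expect this vanishing, $\Hom_C(M_I,V)=0$ (a statement of the form $V \in M_I^{\perp}$), to be the main obstacle. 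I would first try to prove it by induction on $v$ along a length-additive expression $v = ws_i$, exactly as in the proof of Lemma \ref{index v proof}, tracking how both $M_I$ and $V$ change under the snake-lemma diagram.

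\textbf{The inclusion $\sub(V) \subseteq \pi_I(\fa(I,n))$.} Let $X \subseteq V^r$ and let $N$ be the preimage of $X$ under $\cmcpro{n-k}^r \twoheadrightarrow V^r$. Then $M_I^r \subseteq N \subseteq \cmcpro{n-k}^r$. Since $\cm(C)$ is closed under submodules, $N \in \cm(C)$. Both inclusions surject at vertex $n$, because $V$ vanishes at vertex $n$, so $N \in \fa(I,n)$. By the claim above, $\eta_{M_I}(N) = M_I^r$, and hence $\pi_I(N) = N/M_I^r \cong X$.
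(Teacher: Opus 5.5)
Your argument has the same shape as the paper's. For $\subseteq$ you compare $M_I^r\hookrightarrow M\hookrightarrow \cmcpro{n-k}^r$ with $M_I^r\hookrightarrow\cmcpro{n-k}^r\twoheadrightarrow V^r$. For $\supseteq$ you take the preimage of $X\subseteq V^r$, which is exactly the paper's pullback. You also correctly isolate the one point the paper passes over: that $\eta_{M_I}(M)=f(M_I^r)$ for any embedding $f\colon M_I^r\hookrightarrow M$ surjecting at vertex $n$. The paper simply writes $M_I^r\hookrightarrow N\twoheadrightarrow\pi_I(N)$ as an exact row, and later says ``since this embedding surjects at vertex $n$ we have $\pi_I(\tilde X)=X$''. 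However, you leave this claim, in the form $\Hom_C(M_I,V)=0$, unproved. So the proposal has a gap at its only non-formal step. The route you sketch through $\Hom_C(-,V)$ is also the harder direction: it converts the question into $\ext^1_C(V,V)=0$. Moreover, your assertion that $\End_C(V)\to e_{n-k}V$ is bijective does not follow from $V$ having simple top; that gives only injectivity.

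The missing idea is to apply $\Hom_C(M_I,-)$ instead. Rigidity $\ext^1_C(M_I,M_I)=0$ (from \cite[Proposition 5.6]{JKS}, quoted in the paper) applied to $M_I\hookrightarrow\cmcpro{n-k}\twoheadrightarrow V$ gives an exact sequence $\Hom_C(M_I,M_I)\to\Hom_C(M_I,\cmcpro{n-k})\to\Hom_C(M_I,V)\to 0$. Both Hom spaces are free of rank one over $\mathbb{C}[[t]]$, and the first is generated by the identity. The embedding $\iota\colon M_I\hookrightarrow\cmcpro{n-k}$ generates the second: writing $\iota=c\,g$ with $g$ a generator and $c\in\mathbb{C}[[t]]$, the fact that $\iota$ is an isomorphism at vertex $n$ forces $c$ to be a unit. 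Hence the first map is onto and $\Hom_C(M_I,V)=0$.

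Alternatively, avoiding $V$: by Lemma \ref{eta add}, $\eta_{M_I}(M)\cong M_I^r$. So the inclusion $f(M_I^r)\subseteq\eta_{M_I}(M)$ is an injective endomorphism of $M_I^r$, i.e.\ a matrix over $\mathbb{C}[[t]]$ acting by the same matrix at every vertex. It is onto at vertex $n$ because $f$ already is, hence it is invertible.

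The same generator argument also justifies your unproved remark that $\operatorname{coker}(gf)\cong V^r$. Indeed $gf=\iota^{\oplus r}\circ A$ for some $A\in\End_C(M_I^r)$, and $A$ is invertible because $gf$ is onto at vertex $n$. With this inserted, both inclusions go through exactly as you wrote them.
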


\begin{proof}
    For $N \in \fa(I,n)$ there is an embedding \[M_{I}^r \hookrightarrow N\] which surjects at vertex $n.$ We can compose this with an embedding \[N \hookrightarrow \cmcpro{n-k}^r\] which surjects at $n.$ We see from the diagram 
    \[\begin{tikzcd}
	{M_{I}^r} & N & {\pi_{I}(N)} \\
	{M_{I}^r} & {\cmcpro{n-k}^r} & {V^r}
	\arrow[hook, from=1-1, to=1-2]
	\arrow[shift right, no head, from=1-1, to=2-1]
	\arrow[shift left, no head, from=1-1, to=2-1]
	\arrow[two heads, from=1-2, to=1-3]
	\arrow[hook, from=1-2, to=2-2]
	\arrow[dashed, hook, from=1-3, to=2-3]
	\arrow[hook, from=2-1, to=2-2]
	\arrow[two heads, from=2-2, to=2-3]
    \end{tikzcd}\] that $\pi_{I}(N)$ embeds into $V^r.$ 
    Conversely suppose we have $X \in \sub(V).$ Choose an embedding $$X \hookrightarrow V^r$$ and perform the pullback in $\Mod C$ 
    \[\begin{tikzcd}
	{M_{I}^r} && {M_{I}^r} \\
	\\
	{\tilde{X}} && {\cmcpro{n-k}^r} \\
	\\
	X && {V^r}.
	\arrow[shift right, no head, from=1-1, to=1-3]
	\arrow[shift left, no head, from=1-1, to=1-3]
	\arrow[hook, from=1-1, to=3-1]
	\arrow[hook, from=1-3, to=3-3]
	\arrow[hook, from=3-1, to=3-3]
	\arrow[two heads, from=3-1, to=5-1]
	\arrow[two heads, from=3-3, to=5-3]
	\arrow[hook, from=5-1, to=5-3]
    \end{tikzcd}\] The pullback exists since $\Mod C$ is an abelian category. Since $\tilde{X}$ is a submodule of $\cmcpro{n-k}^r$ then it is in $\cm(C)$ as $\cm(C)$ is closed under kernels in $\Mod C.$ The injection $M_I^r \hookrightarrow \tilde{X}$ comes from the snake lemma. Further, since $$M_{I}^r \hookrightarrow \tilde{X}$$ surjects at vertex $n$ then $\tilde{X} \in \fa(I,n).$ Again, since this embedding surjects at vertex $n$ we have $\pi_{I}(\tilde{X}) = X.$
\end{proof}

\begin{Definition} \label{sij}
    Define $\su(I,i)$ to be the full subcategory of $\cm(C)$ given by \[\su(I,i) = \{M \in \fa(I,i): \ext^{1}_C(M,M_I) = 0\}.\]
\end{Definition}

\begin{Proposition} \label{unique lift}
    Consider $M, N \in \su(I,i)$ with $\pi_{I}(M) \cong \pi_{I}(N).$ Then for some $r, t \in \mathbb{N},$ we have $M \oplus M_I^t \cong N \oplus M_I^r.$ 
\end{Proposition}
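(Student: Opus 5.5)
Write $X = \pi_I(M) \cong \pi_I(N)$. By Lemma \ref{eta add}, $\eta_{M_I}(M) \cong M_I^a$ and $\eta_{M_I}(N)\cong M_I^b$ for some $a,b$. This gives short exact sequences
\[0 \to M_I^a \to M \xrightarrow{p} X \to 0,\qquad 0 \to M_I^b \to N \xrightarrow{q} X \to 0.\]
The plan is a Schanuel-type argument: form the pullback $E = M\times_X N$ and show that it splits in both directions.

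First I form the pullback square of $p$ and $q$ in $\Mod C$. The projection $E \to N$ is surjective with kernel $\ker p \cong M_I^a$, and the projection $E \to M$ is surjective with kernel $\ker q\cong M_I^b$. So we get
\[0 \to M_I^a \to E \to N \to 0,\qquad 0\to M_I^b \to E \to M \to 0.\]
Since $E$ is an extension of modules in $\cm(C)$ and $\cm(C)$ is extension closed, $E\in\cm(C)$. Both sequences therefore define classes in $\ext^1_C(N,M_I^a)=\ext^1_C(N,M_I)^a$ and $\ext^1_C(M,M_I^b)$. These groups vanish because $M,N\in\su(I,i)$. Hence both sequences split, and
\[M\oplus M_I^b \cong E \cong N\oplus M_I^a,\]
which is the claim with $t=b$ and $r=a$.

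The main point to check is that the isomorphism $\pi_I(M)\cong\pi_I(N)$ really lets us identify both quotients with the same $X$; it does, since we simply compose $q$ with the given isomorphism. We also need that the kernels are exactly $\eta_{M_I}(-)$, which holds by Definition \ref{pi-i}. No property of $\fa(I,i)$ beyond the Ext-vanishing and Lemma \ref{eta add} is needed. The only subtle point is that the pullback is taken in $\Mod C$ rather than in $\cm(C)$; extension-closedness of $\cm(C)$ (or the fact that $E\subseteq M\oplus N$ and $\cm(C)$ is closed under submodules) handles this.
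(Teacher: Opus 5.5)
Your argument is correct. It is essentially the paper's proof in a cleaner Schanuel-lemma form. The paper uses $\ext^1_C(M,M_I)=0$ to lift the identity of $\pi_I(M)$ to a map $w\colon M\to N$; this is exactly a splitting of your projection $E\to M$, identifying $E\cong M\oplus\eta_{M_I}(N)$. It then checks by hand that the square of kernels is a pullback, obtains $\eta_{M_I}(M)\hookrightarrow M\oplus\eta_{M_I}(N)\twoheadrightarrow N$, and splits it using $\ext^1_C(N,M_I)=0$, which is your second splitting. Your fibre-product formulation simply bypasses constructing $w$ and verifying the pullback property and surjectivity explicitly.
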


\begin{proof}
    We have the diagram 
    \[\begin{tikzcd}
	{\eta_{M_{I}}(M)} && M && {\pi_{I}(M)} \\
	{\eta_{M_{I}}(N)} && N && {\pi_{I}(N).}
	\arrow[hook, from=1-1, to=1-3]
	\arrow[two heads, from=1-3, to=1-5]
	\arrow[shift right, no head, from=1-5, to=2-5]
	\arrow[hook, from=2-1, to=2-3]
	\arrow[two heads, from=2-3, to=2-5]
	\arrow[no head, from=2-5, to=1-5]
    \end{tikzcd}\]
    We apply $\Hom_C(M,-)$ to the bottom sequence to obtain the exact sequence $$0 \rightarrow \Hom_C(M,\eta_{M_{I}}(N)) \rightarrow \Hom_C(M,N) \rightarrow \Hom_C(M,\pi_{I}(N)) \rightarrow 0.$$ Here we have use the fact that $\eta_{M_{I}}(N) \in \mathrm{add}(M_{I})$ and $\ext^{1}_{C}(M,M_{I}) = 0.$ Using this final surjection and the universality of kernels we recover the commuting diagram 
    \[\begin{tikzcd}
	{\eta_{M_{I}}(M)} && M && {\pi_{I}(M)} \\
	{\eta_{M_{I}}(N)} && N && {\pi_{I}(N).}
	\arrow["l_1", hook, from=1-1, to=1-3]
	\arrow["y", from=1-1, to=2-1]
	\arrow["p_1", two heads, from=1-3, to=1-5]
	\arrow["w", from=1-3, to=2-3]
	\arrow[shift right, no head, from=1-5, to=2-5]
	\arrow["l_2", hook, from=2-1, to=2-3]
	\arrow["p_2", two heads, from=2-3, to=2-5]
	\arrow[no head, from=2-5, to=1-5]
    \end{tikzcd}\]
    We will now show that the first square is a pullback square. It suffices to show it has the universal property. Suppose we have another commuting square
    \[\begin{tikzcd}
	A && M \\
	{\eta_{M_{I}}(N)} && N
	\arrow["a", from=1-1, to=1-3]
	\arrow["b"', from=1-1, to=2-1]
	\arrow["w", from=1-3, to=2-3]
	\arrow["l_2", hook, from=2-1, to=2-3]
    \end{tikzcd}\]
    By commutativity we have that $a: A \rightarrow M$ composed with the quotient $p_1$ is zero. So by the universality of kernels it follows that there is a unique morphism $p : A \rightarrow \eta_{M_{I}}(M)$ such that $a$ factors as $a = l_1p.$ We observe $$l_2b = wa = wl_1p = l_2yp$$ so we obtain universality of the commuting square in the sense of pullbacks.

    This pullback square gives rise to an exact sequence $$\eta_{M_{I}}(M) \hookrightarrow M \oplus \eta_{M_{I}}(N) \rightarrow N.$$ We claim the last map is actually surjective. Observe that we could also have applied $\Hom_C(N,-)$ to our first sequence to get the diagram 
    \[\begin{tikzcd}
	{\eta_{M_I}(M)} && M && {\pi_I(M)} \\
	{\eta_{M_I}(N)} && N && {\pi_I(N).}
	\arrow["{l_1}"', hook, from=1-1, to=1-3]
	\arrow["y", shift left, from=1-1, to=2-1]
	\arrow["{p_1}", two heads, from=1-3, to=1-5]
	\arrow["w", shift left, from=1-3, to=2-3]
	\arrow[shift right, no head, from=1-5, to=2-5]
	\arrow[shift left, no head, from=1-5, to=2-5]
	\arrow["h", shift left, from=2-1, to=1-1]
	\arrow["{l_2}", from=2-1, to=2-3]
	\arrow["j", shift left, from=2-3, to=1-3]
	\arrow["{p_2}"', two heads, from=2-3, to=2-5]
    \end{tikzcd}\]
    We can write any element $n \in N$ as $n = (n-wj(n)) + wj(n).$ Since $p_2wj = p_2$ then $(n-wj(n)) \in \ker(p_2) = \mathrm{Im}(l_2).$ This implies we have our surjection and exact sequence 
    \[\begin{tikzcd}
	{\eta_{M_I}(M)} && {M\oplus \eta_{M_I}(N)} && N.
	\arrow[hook, from=1-1, to=1-3]
	\arrow[two heads, from=1-3, to=1-5]
    \end{tikzcd}\]    
    Now we use the fact that $\ext^{1}_{C}(N,M_{I}) = 0$ and $\eta_{M_{I}}(M) \in \mathrm{add}(M_{I})$ to get $$N \oplus \eta_{M_{I}}(M) \cong M \oplus \eta_{M_{I}}(N).$$ 
\end{proof}

We prove the following lemma in slightly more generality than we need at first as we will use a more general version later. 

\begin{Definition}
    Given $X \in \cm(C)$ and $J \subseteq [n],$ let $e_J = \sum_{j \in J} e_j$ be the associated idempotent and \[\overline{F}(X,J) = \{N \in \cm(C): \exists f\colon X^t \rightarrow N \text{ such that } e_Jf: e_JX^t \twoheadrightarrow e_JN\}, \]
    \[\overline{S}(X,J) = \{N \in \overline{F}(X,J): \ext^{1}(X,N) = 0\}.\]
\end{Definition}

\begin{Definition} \label{gen quot}
    Given $X \in \Mod C$ and we define the functor $\pi_X : \Mod C \rightarrow \Mod C$ by $M \mapsto M/\eta_{X}(M).$ 
\end{Definition} 

This extends the earlier case of $\pi_I$ associated to $M_I$ a rank-one module.

\begin{Lemma} \label{gp/cm ext}
    For an exact sequence \[N \hookrightarrow Y \twoheadrightarrow M\] in $\cm(C)$ with $M \in \overline{F}(X,J)$ and $N \in \overline{S}(X,J)$ we have $Y \in \overline{F}(X,J).$ 
\end{Lemma}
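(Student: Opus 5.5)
We need maps $g\colon X^s \to Y$ such that $e_J g$ is surjective onto $e_J Y$. The idea is to lift the map we already have onto $M$, and to use the map we already have onto $N$ to cover the kernel.

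Write the sequence as $N \xrightarrow{\iota} Y \xrightarrow{p} M$. Since $M \in \overline{F}(X,J)$, there is $f\colon X^t \to M$ with $e_J f$ surjective onto $e_J M$. Since $N \in \overline{S}(X,J)$, there is $h\colon X^u \to N$ with $e_J h$ surjective onto $e_J N$, and moreover $\ext^1_C(X,N)=0$.

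First I lift $f$ through $p$. Apply $\Hom_C(X^t,-)$ to the short exact sequence. This gives the exact sequence
\[\Hom_C(X^t,Y) \to \Hom_C(X^t,M) \to \ext^1_C(X^t,N) = \ext^1_C(X,N)^t = 0,\]
so there is $\tilde f\colon X^t \to Y$ with $p\tilde f = f$.

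Next consider $g = (\tilde f, \iota h)\colon X^t \oplus X^u \to Y$; I claim $e_J g$ is surjective onto $e_J Y$. Multiplication by $e_J$ is exact, so $e_J N \hookrightarrow e_J Y \twoheadrightarrow e_J M$ is still exact. Take $y \in e_J Y$. Because $e_J f$ is surjective, choose $a \in e_J X^t$ with $f(a) = p(y)$. Then $y - \tilde f(a)$ lies in $\ker p \cap e_J Y = \iota(e_J N)$. Because $e_J h$ is surjective onto $e_J N$, choose $b \in e_J X^u$ with $\iota h(b) = y - \tilde f(a)$. Hence $y = g(a,b)$. This is the usual horseshoe-style argument, restricted to the vertices in $J$.

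Finally, $Y \in \cm(C)$ by hypothesis, because the sequence lies in $\cm(C)$. So $Y \in \overline{F}(X,J)$ with exponent $t+u$.

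The only substantive step is the lifting, and it uses exactly the vanishing $\ext^1_C(X,N)=0$ that defines $\overline{S}(X,J)$; everything else is formal. One small point to check is that $\ext^1_C$ here is computed in $\Mod C$ and the sequence is exact there. That holds because $\cm(C)$ is a full, extension-closed subcategory and the sequence is exact in $\Mod C$.
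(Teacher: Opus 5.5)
Your proof is correct. It uses the same key input as the paper but a different, more elementary packaging.

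You lift $f$ along $p$ using $\ext^1_C(X,N)=0$. You then run a horseshoe-style element chase at the vertices in $J$ to show that the explicit map $(\tilde f,\iota h)\colon X^{t+u}\to Y$ is surjective there.

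The paper instead applies $\Hom_C(X,-)$ to get the short exact sequence $\Hom_C(X,N)\hookrightarrow\Hom_C(X,Y)\twoheadrightarrow\Hom_C(X,M)$. It tensors this with $X$ over $\End(X)$ and applies the snake lemma to the evaluation maps. This gives an exact sequence $\pi_X N\to\pi_X Y\twoheadrightarrow\pi_X M$, and since the outer terms vanish at $J$, so does $\pi_X Y=Y/\eta_X(Y)$.

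The essential ingredient is identical in both: Ext-vanishing makes $\Hom_C(X,Y)\to\Hom_C(X,M)$ surjective. The paper's snake lemma on the evaluation maps is a functorial version of your chase.

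\textbf{What the paper's formulation buys.} It is phrased as a right-exactness statement for the quotient functor $\pi_X$. The paper recycles exactly this later, for example for exactness of $\pi_I$ in Proposition \ref{exactness} and for surjectivity of evaluation maps in Lemma \ref{su is gp} and Proposition \ref{rich cat}.

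\textbf{What your version buys.} It is self-contained and gives an explicit exponent $t+u$. It also avoids a step the paper leaves implicit: passing from $e_J\eta_X(Y)=e_JY$ to a single map $X^s\to Y$ surjective at $J$. That step needs $\Hom_C(X,Y)$ to be finitely generated over $\End(X)$, which holds here because it is a finitely generated $\mathbb{C}[[t]]$-module.
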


\begin{proof}
    The result will follow by an adaptation of the argument from the proof of Lemma 1.9 of Section 6 in \cite{ASS}. It follows by applying $\Hom_C(X,-)$ to the sequence that 
    \[\begin{tikzcd}
	{\Hom_C(X,N)} && {\Hom_C(X,Y)} && {\Hom_C(X,M)}
	\arrow[hook, from=1-1, to=1-3]
	\arrow[two heads, from=1-3, to=1-5]
    \end{tikzcd}\]
    is exact. After tensoring with $X$ over $\End(X),$ this gives a commutative diagram with exact rows 
    \[\begin{tikzcd}
	{\Hom_C(X,N)\otimes_{\End(X)} X} & {\Hom_C(X,Y)\otimes_{\End(X)} X}& {\Hom_C(X,M)\otimes_{\End(X)} X}
	\\
	N & Y & M.
	\arrow[from=1-1, to=1-2]
	\arrow[from=1-1, to=2-1]
	\arrow[two heads, from=1-2, to=1-3]
	\arrow[from=1-2, to=2-2]
	\arrow[from=1-3, to=2-3]
	\arrow[hook, from=2-1, to=2-2]
	\arrow[two heads, from=2-2, to=2-3]
    \end{tikzcd}\] since tensoring is right exact.
    The vertical morphisms here are the canonical morphisms \[\begin{aligned}
        \Hom_C(X,N)\otimes_{\End(X)} X &\longrightarrow N \\
        (f, x) &\longmapsto  f(x).
    \end{aligned}\] By applying the snake lemma we get an exact sequence  
    \[\pi_{X} N \rightarrow \pi_{X} Y \twoheadrightarrow \pi_{X} M.\] Since $N,M \in \overline{F}(X,J)$ then $\pi_{X} N$ and $\pi_{X} M$ are not supported at the vertices in $J.$ This implies $\pi_{X} Y$ is not supported at the vertices in $J$ and so $Y \in \overline{F}(X,J).$ 
\end{proof}

Observe that for an extension in $\ext^{1}_{C}(M,N)$ with $\ext_{C}^{1}(X,M) = \ext_{C}^{1}(X,N) = 0$ then the middle term of the extension must also satisfy this condition. We get the following.
\begin{corollary}\label{ext-closed}
    $\overline{S}(X,J) \subseteq \cm(C)$ is extension closed.
\end{corollary}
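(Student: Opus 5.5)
We take an extension $N \hookrightarrow Y \twoheadrightarrow M$ in $\Mod C$ with $M, N \in \overline{S}(X,J)$ and check the three conditions defining membership of $Y$ in $\overline{S}(X,J)$ one at a time.

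\emph{Step 1: $Y \in \cm(C)$.} This holds because $\cm(C)$ is extension closed in $\Mod C$, as already used earlier in the paper for rank one modules. So the sequence is a short exact sequence in $\cm(C)$.

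\emph{Step 2: $Y \in \overline{F}(X,J)$.} Since $\overline{S}(X,J) \subseteq \overline{F}(X,J)$, we have $M \in \overline{F}(X,J)$ and $N \in \overline{S}(X,J)$. These are exactly the hypotheses of Lemma \ref{gp/cm ext}, which gives $Y \in \overline{F}(X,J)$. The vanishing $\ext^1_C(X,N)=0$ is what makes the $\Hom_C(X,-)$ sequence used in that lemma right exact, and the lemma has already absorbed this.

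\emph{Step 3: $\ext^1_C(X,Y)=0$.} Apply $\Hom_C(X,-)$ to the sequence and take the long exact sequence. It contains the segment
\[\ext^1_C(X,N) \rightarrow \ext^1_C(X,Y) \rightarrow \ext^1_C(X,M).\]
Both outer terms vanish because $M, N \in \overline{S}(X,J)$, so the middle term vanishes as well. This is the observation stated just before the corollary.

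Combining the three steps gives $Y \in \overline{S}(X,J)$. There is no real obstacle here: the only nontrivial input is Lemma \ref{gp/cm ext}, and it applies verbatim. The one point to watch is the order of the two terms, so that it is the submodule $N$ that satisfies the $\ext$-vanishing required by that lemma. This is automatic here since both terms lie in $\overline{S}(X,J)$.
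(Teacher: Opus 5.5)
Your proposal is correct and follows the paper's argument. Membership of $Y$ in $\overline{F}(X,J)$ comes from Lemma \ref{gp/cm ext}, and the vanishing of $\ext^1_C(X,Y)$ comes from the long exact sequence for $\Hom_C(X,-)$, which is exactly the observation the paper states just before the corollary; your explicit check that $Y \in \cm(C)$ is a harmless addition that the paper leaves implicit.
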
 

\begin{Remark}
Note that we have not used properties of $\cm(C)$ in the proofs of the above results on $\overline{F}(X, J)$ and $\overline{S}(X, J).$ This suggests that it may be useful to study categories of the form \[\fa(X, J) = \{N \in \mathcal{C}: \exists f\colon X^t \rightarrow N \text{ such that } e_Jf: e_JX^t \twoheadrightarrow e_JN\}\] and \[\su(X,J) = \{N \in F(X,J): \ext^{1}_{\mathcal{C}}(X,N) = 0\}\] for other categories $\mathcal{C}$ defined in terms of a quiver and $X \in \mathcal{C}$ some indecomposable object.  
\end{Remark}

Since $\fa(I,n) = \overline{F}(M_I,\{n\})$ and $\su(I,n) = \overline{S}(M_I,\{n\})$ we get that the above two results also follow for $\fa(I,n)$ and $\su(I,n).$ We have now shown that $\su(I,n)$ is an extension closed subcategory of $\cm(C)$ and therefore we have an exact structure. The following proposition is an extension of part of Proposition 4.3 in \cite{JKS}.
\begin{Proposition} \label{exactness}
    The functor $$\pi_{I}: \su(I,n) \rightarrow \upcat v$$ is exact.
\end{Proposition}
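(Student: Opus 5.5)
Take a short exact sequence $N \hookrightarrow Y \twoheadrightarrow M$ in $\su(I,n)$; the target $\upcat v$ is correct by Proposition \ref{Fac to sub}, since $\su(I,n)\subseteq\fa(I,n)$. The plan is to compare this sequence with the induced sequence of the submodules $\eta_{M_I}(-)$ and then apply the snake lemma.

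First apply $\Hom_C(M_I,-)$. This gives
\[0\to \Hom_C(M_I,N)\to\Hom_C(M_I,Y)\to\Hom_C(M_I,M)\to \ext^1_C(M_I,N).\]
The obstruction term $\ext^1_C(M_I,N)$ does not obviously vanish: the defining condition of $\su(I,n)$ is $\ext^1_C(N,M_I)=0$, whereas $\overline{S}(M_I,\{n\})$ as written uses $\ext^1(M_I,N)=0$. I would handle this via the 2-CY-type symmetry of $\cm(C)$. By the JKS2 result, $\ext^1_C(A,B)=\ext^1_{\propi}(\omega(A),\omega(B))$, and $\underline{\cm}(C)$ is 2-CY (JKS), so $\ext^1_C(M_I,N)\cong D\ext^1_C(N,M_I)=0$. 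Hence $\Hom_C(M_I,Y)\to\Hom_C(M_I,M)$ is surjective.

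Next use the tensor diagram from the proof of Lemma \ref{gp/cm ext} with $X=M_I$. Its top row
\[\Hom(M_I,N)\otimes_{\End M_I}M_I\to\Hom(M_I,Y)\otimes M_I\to\Hom(M_I,M)\otimes M_I\to 0\]
maps onto the row $\eta(N)\to\eta(Y)\to\eta(M)$. The images of the vertical maps are exactly the $\eta$'s. Since any map $M_I\to M$ lifts to $Y$, we get $\eta(Y)\to\eta(M)$ surjective. We also have $\eta(N)\hookrightarrow\eta(Y)$ as a restriction of $N\hookrightarrow Y$, and $\eta(N)\subseteq\eta(Y)\cap N$.

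Then the sequence $\eta(N)\to\eta(Y)\to\eta(M)\to 0$ is in fact short exact, by ranks. By Lemma \ref{eta add} each $\eta$ lies in $\mathrm{add}(M_I)$, with rank equal to $\rank$ of the module because every object is in $\fa(I,n)$ and $\Hom(M_I,-)$ has rank equal to that rank. Thus $\eta(Y)/\eta(N)$ maps onto $\eta(M)$ with equal rank, and the kernel $K=(\eta(Y)\cap N)/\eta(N)$ is a finite-rank-zero subquotient. To see $K=0$, note $\eta(Y)\cap N$ is the kernel of $\eta(Y)\to\eta(M)$, so it lies in $\cm(C)$ and is a submodule of $\eta(Y)\in\mathrm{add}(M_I)$ of rank $\rank N$. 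A map $M_I^{r}\to\eta(Y)\cap N$ that is an isomorphism at vertex $n$ (Lemma \ref{fa char}) forces $\eta(N)$ to equal $\eta(Y)\cap N$ at vertex $n$. Both are $\mathbb{C}[[t]]$-free submodules of the same rank, and $\pi_I(N)$ is unsupported at $n$. Alternatively, and more cleanly, I would argue $\eta(Y)\cap N$ is generated by $M_I$: it is a kernel between modules in $\mathrm{add}(M_I)$, and the sequence $0\to\eta(Y)\cap N\to\eta(Y)\to\eta(M)\to0$ splits because $\ext^1(\eta(M),M_I)=0$ by rigidity. So $\eta(Y)\cap N\in\mathrm{add}(M_I)$, whence it is contained in $\eta(N)$.

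Finally apply the snake lemma to the map of short exact sequences from $(\eta(N),\eta(Y),\eta(M))$ to $(N,Y,M)$. All vertical maps are injective, so the cokernel sequence $\pi_I N\to\pi_I Y\to\pi_I M$ is short exact, which proves exactness. The main obstacle I expect is the step showing $\Hom(M_I,Y)\to\Hom(M_I,M)$ is surjective, i.e.\ the vanishing of $\ext^1_C(M_I,N)$. If the 2-CY symmetry fails outside the stable setting, I would instead try to use that $\ext^1_C(M_I,N)$ can be computed via $\omega_{n-k}$ in $\fac(Q_k)$, where $\propi$ is 2-CY.
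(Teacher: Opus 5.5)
Your route is the paper's: apply $\Hom_C(M_I,-)$, compare with the evaluation diagram from the proof of Lemma~\ref{gp/cm ext}, and finish with the snake lemma. Your use of the stable 2-CY property of $\cm(C)$ to get $\ext^1_C(M_I,N)\cong D\ext^1_C(N,M_I)=0$ is correct, and it is actually needed. The definition of $\su(I,n)$ only gives vanishing in the other variable, while the argument of Lemma~\ref{gp/cm ext} requires $\ext^1_C(M_I,N)=0$. The paper passes over this by silently identifying $\su(I,n)$ with $\overline{S}(M_I,\{n\})$.

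One step is justified incorrectly. The splitting of $0\to \eta_{M_I}(Y)\cap N\to\eta_{M_I}(Y)\to\eta_{M_I}(M)\to 0$ does not follow from $\ext^1_C(\eta_{M_I}(M),M_I)=0$. The obstruction lies in $\ext^1_C(\eta_{M_I}(M),\eta_{M_I}(Y)\cap N)$, and rigidity of $M_I$ only controls this once you know $\eta_{M_I}(Y)\cap N\in\mathrm{add}(M_I)$, which is what you are trying to prove. So as written the argument is circular.

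The repair is already available to you. Every map from $M_I$ lands in $\eta_{M_I}(-)$, so your surjection $\Hom_C(M_I,Y)\to\Hom_C(M_I,M)$ is the same as $\Hom_C(M_I,\eta_{M_I}(Y))\to\Hom_C(M_I,\eta_{M_I}(M))$. Since $\eta_{M_I}(M)\cong M_I^a$, the identity of $\eta_{M_I}(M)$ lifts and the sequence splits. The kernel is then a direct summand of $\eta_{M_I}(Y)$, hence generated by images of maps $M_I\to N$, hence contained in $\eta_{M_I}(N)$.

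Alternatively, do what the paper does. By the proof of Lemma~\ref{eta add}, the evaluation maps $\Hom_C(M_I,X)\otimes_{\mathbb{C}[[t]]}M_I\to X$ are isomorphisms onto $\eta_{M_I}(X)$. Middle exactness of $\eta_{M_I}(N)\to\eta_{M_I}(Y)\to\eta_{M_I}(M)\to 0$ is then immediate from right exactness of the tensor product.

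Either way, drop the vertex-$n$ rank argument. It does not give $K=0$, because two free submodules that agree at vertex $n$ need not agree at other vertices.
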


\begin{proof}
      For $X \in \su(I,n)$ there is an isomorphism $\eta_{M_{I}}(X) \cong M_{I} \otimes_{\mathbb{C}[[t]]} \Hom(M_{I},X)$ by the proof of Lemma \ref{eta add}. Now consider an exact sequence $X \hookrightarrow Y \twoheadrightarrow Z$ in $\su(I,n).$ The same argument as in Lemma \ref{gp/cm ext} gives an exact sequence \[\pi_{I}X \hookrightarrow \pi_{I} Y \twoheadrightarrow \pi_{I} Z.\]
\end{proof}

\begin{Proposition} \label{trick}
    $$\pi_{I}(\su(I,n)) = \sub(V).$$
\end{Proposition}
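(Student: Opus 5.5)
The inclusion $\pi_I(\su(I,n)) \subseteq \sub(V)$ is immediate. Indeed, $\su(I,n) \subseteq \fa(I,n)$, and Proposition \ref{Fac to sub} gives $\pi_I(\fa(I,n)) = \sub(V)$. The work is in the reverse inclusion. Given $X \in \sub(V)$, Proposition \ref{Fac to sub} produces a lift $\tilde X \in \fa(I,n)$ with $\pi_I(\tilde X) = X$, namely the pullback of $\cmcpro{n-k}^r \twoheadrightarrow V^r$ along an embedding $X \hookrightarrow V^r$. In general, however, $\ext^1_C(\tilde X, M_I) \neq 0$. The plan is to modify $\tilde X$ by killing these extensions with a universal extension, in the spirit of Bongartz's construction, while keeping the image under $\pi_I$ unchanged.

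Concretely, set $t = \dim \ext^1_C(\tilde X, M_I)$. This is finite because $\ext^1$ in $\cm(C)$ is finite-dimensional; one sees this via $\omega_{n-k}$ and the JKS proposition identifying $\ext^1_C$ with $\ext^1_{\propi}$ of finite-dimensional modules. Take the universal extension
\[ M_I^t \hookrightarrow Y \twoheadrightarrow \tilde X, \]
which lies in $\cm(C)$ since $\cm(C)$ is extension closed. Apply $\Hom_C(-,M_I)$. The connecting map $\Hom_C(M_I^t, M_I) \to \ext^1_C(\tilde X, M_I)$ is surjective by universality. Combined with $\ext^1_C(M_I,M_I)=0$ (rigidity, Proposition 5.6 of \cite{JKS}), this gives $\ext^1_C(Y, M_I) = 0$.

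Next we check that $Y \in \fa(I,n)$. We have $M_I^t \in \su(I,n)$, since $M_I^t$ is trivially in $\fa(I,n)$ and is rigid, and $\tilde X \in \fa(I,n)$. Lemma \ref{gp/cm ext}, applied with $X = M_I$ and $J = \{n\}$, then gives $Y \in \fa(I,n)$. There is one subtlety: the definition of $\overline{S}$ uses $\ext^1(M_I, N) = 0$, which holds for $N = M_I^t$. Hence $Y \in \su(I,n)$.

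It remains to show $\pi_I(Y) \cong X$, and I expect this to be the main point requiring care. I would run the snake-lemma argument from the proof of Lemma \ref{gp/cm ext} on the sequence $M_I^t \hookrightarrow Y \twoheadrightarrow \tilde X$. This yields an exact sequence $\pi_I(M_I^t) \to \pi_I(Y) \twoheadrightarrow \pi_I(\tilde X)$. The first term is zero, so $\pi_I(Y) \cong \pi_I(\tilde X) = X$. One point must be checked: the map $\Hom_C(M_I,Y) \to \Hom_C(M_I,\tilde X)$ need not be surjective, because $\ext^1_C(M_I, M_I^t)$ vanishes but we are using the other variable. In fact surjectivity does hold here: the obstruction lies in $\ext^1_C(M_I, M_I^t) = 0$ by rigidity. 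Therefore the top row in that diagram is right exact and the snake lemma applies verbatim. This shows $X \in \pi_I(\su(I,n))$, completing the proof.
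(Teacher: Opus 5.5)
Your proof is correct and follows essentially the same route as the paper. The only difference is that the paper removes $\ext^1_C(N,M_I)$ by iterating single non-split extensions $M_I \hookrightarrow L \twoheadrightarrow N$, each strictly lowering $\dim\ext^1(-,M_I)$, whereas you do it in one step with a universal extension; the ingredients are the same (Lemma \ref{gp/cm ext} with $\ext^1_C(M_I,M_I^t)=0$, and surjectivity of $\Hom_C(M_I,Y)\to\Hom_C(M_I,\tilde X)$ to see that $\pi_I$ is unchanged), though your last paragraph should simply state that this surjectivity follows from $\ext^1_C(M_I,M_I^t)=0$ rather than first saying it ``need not'' hold.
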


\begin{proof}
    By Proposition \ref{Fac to sub} we have $\pi_{I}(\fa(I,n)) = \sub(V).$ Now consider $X \in \sub(V)$ and take $N \in \fa(I,n)$ with $\pi_{I}(N) \cong X.$ If $N \in \su(I,n)$ we are done in this case. Otherwise we have \[\dim\ext^{1}_{C}(N,M_{I}) \neq 0.\] Since $M_{I} \in \su(I,n),$ by Lemma \ref{gp/cm ext} applied to a non-split extension 
    \[\begin{tikzcd}
	{M_{I}} && L && N
	\arrow[hook, from=1-1, to=1-3]
	\arrow[two heads, from=1-3, to=1-5]
    \end{tikzcd}\]
    all of the terms are in $\fa(I,n).$ Apply $\Hom(-,M_{I})$ to get 
    \[\begin{tikzcd}
	{\Hom(L,M_{I})} & {\mathrm{End}(M_{I})} & {\ext^1(N,M_{I})} & {\ext^1(L,M_{I})}
	\arrow["f", from=1-1, to=1-2]
	\arrow[from=1-2, to=1-3]
	\arrow[two heads, from=1-3, to=1-4]
    \end{tikzcd}\]
    Since we have taken a non-split extension $f$ cannot surject. This implies $\dim\ext^{1}(L,M_{I}) < \dim\ext^1(N,M_{I}).$ We see from the diagram 

    \[\begin{tikzcd}
	{\Hom(M_{I},M_{I})\otimes_{\mathbb{C}[[t]]}M_{I}} & {\Hom(M_{I},L)\otimes_{\mathbb{C}[[t]]}M_{I}} & {\Hom(M_{I},N)\otimes_{\mathbb{C}[[t]]}M_{I}} \\
	{M_{I}} & L & N \\
	0 & {\pi_{I}(L)} & {\pi_{I}(N)}
	\arrow[hook, from=1-1, to=1-2]
	\arrow[hook, from=1-1, to=2-1]
	\arrow[two heads, from=1-2, to=1-3]
	\arrow[hook, from=1-2, to=2-2]
	\arrow[hook, from=1-3, to=2-3]
	\arrow[hook, from=2-1, to=2-2]
	\arrow[two heads, from=2-1, to=3-1]
	\arrow[two heads, from=2-2, to=2-3]
	\arrow[two heads, from=2-2, to=3-2]
	\arrow[two heads, from=2-3, to=3-3]
	\arrow[hook, from=3-1, to=3-2]
	\arrow[two heads, from=3-2, to=3-3]
    \end{tikzcd}\] with exact rows and columns that $\pi_{I}(L) \cong \pi_{I}(N) \cong X.$ Since $\dim\ext^1(N,M_{I})$ is finite we iterate this process to find a lift in $\su(I,n).$
\end{proof}

\begin{corollary} \label{lift cor}
    Every object of $\sub(V)$ has a lift to $\su(I,n)$ and this lift is unique up to summands of $M_{I}.$ 
\end{corollary}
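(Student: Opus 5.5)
The corollary follows by combining Proposition \ref{trick} with Proposition \ref{unique lift}; nothing new should be needed.

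\emph{Existence.} Take $X \in \sub(V)$. Proposition \ref{trick} says $\pi_I(\su(I,n)) = \sub(V)$, so there is some $N \in \su(I,n)$ with $\pi_I(N) \cong X$. The construction in that proof is explicit: start from the pullback lift $\tilde{X} \in \fa(I,n)$ of Proposition \ref{Fac to sub}, then repeatedly replace it by the middle term of a non-split extension by $M_I$. Each step strictly lowers $\dim \ext^1_C(-,M_I)$ and does not change the image under $\pi_I$. This dimension is finite, so the process stops at an object of $\su(I,n)$.

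\emph{Uniqueness.} Suppose $M, N \in \su(I,n)$ both satisfy $\pi_I(M) \cong \pi_I(N) \cong X$. Proposition \ref{unique lift} with $i = n$ gives
\[M \oplus M_I^t \cong N \oplus M_I^r.\]
This is exactly the statement that the lift is unique up to summands of $M_I$.

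For this to make sense, adding or removing copies of $M_I$ must keep us inside $\su(I,n)$ and must not change the image under $\pi_I$. Both hold:
\begin{itemize}
\item $M_I \in \su(I,n)$, since $M_I$ is rigid and $\mathrm{id}\colon M_I \to M_I$ trivially surjects at vertex $n$.
\item $\su(I,n)$ is closed under direct sums, as it is extension closed by Corollary \ref{ext-closed}.
\item $\pi_I(M_I) = 0$, because $\eta_{M_I}(M_I) = M_I$.
\item $\pi_I$ is additive, so $\pi_I(M \oplus M_I^t) \cong \pi_I(M)$.
\end{itemize}

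The only point needing any care is that Krull--Schmidt holds in $\cm(C)$. This lets us cancel common summands, so the isomorphism above can be read as: after removing all $M_I$ summands, the lifts agree. Krull--Schmidt holds here because $C$ is a complete (Noetherian) algebra and the modules are finitely generated free over $\mathbb{C}[[t]]$. This is not a real obstacle.
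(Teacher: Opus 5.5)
Your proposal is correct and follows the paper's own (implicit) argument: existence comes from Proposition \ref{trick}, and uniqueness up to summands of $M_I$ is exactly Proposition \ref{unique lift}. The extra checks you list ($M_I \in \su(I,n)$, $\pi_I(M_I)=0$, additivity, Krull--Schmidt) are harmless bookkeeping that the paper leaves unstated.
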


\begin{Remark}
    We hope the results of this section help justify the choice of the notation $\su(I,n)$ and $\fa(I,n).$ The F in $\fa(I,n)$ indicates that it is the lift of a factor module category. The S in $\su(I,n)$ indicates that it is the lift of a submodule category. Note that by while $\pi_I(\fa(I,n)) = \sub(V)$ we don't consider it a lift since it does not satisfy the property in Proposition \ref{unique lift}.
\end{Remark}

\section{Lifting Projective-Injective objects: the Schubert case}
By Corollary \ref{lift cor}, $\su(I,n)$ is a lift of $\upcat v.$ However to fully describe the relation between them we will need to better understand the structure of $\su(I,n).$ In particular, we want to find the projective and injective objects. Since $\su(I,n)$ is extension-closed in $\cm(C)$ and $\cm(C)$ is Frobenius then projective and injective objects coincide. Define the cyclic order $\leq_{i}$ on $\{1,2,\dotsc,n\}$ determined by \[i \leq_i i+1 \leq_i \dotsb \leq_i n \leq_i 1 \leq_i \dotsb \leq_i i-1.\] This extends to an ordering, also denoted $\leq_i,$ on $\binom{[n]}{k}.$ For $I,J \in \binom{[n]}{k}$ then $I \leq_i J$ if $I = \{i_1 <_i i_2 <_i \dotsb <_i i_k\}$ and $J = \{j_1 <_i j_2 <_i \dotsb <_i j_k\}$ with $i_t \leq_i j_t$ for all $t \in [k].$ When $i=1$ we simply denote $\leq_1$ by $\leq.$ We will use the notation \[\pi_i = \pi_{\cmcpro i} : \cm(C) \rightarrow \Mod C\] where $\pi_{\cmcpro i}$ is defined by Definition \ref{gen quot}. The functor $\pi_n$ is the classical functor denoted by $\pi$ in \cite{JKS}. 
\begin{Lemma} \label{triv comp}
    For $I,J \in \binom{[n]}{k}$ then $I \leq J$ if and only if $|I \cap [t]| \geq |J \cap [t]|$ for all $t \in [n].$
\end{Lemma}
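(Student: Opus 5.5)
The plan is to prove both directions via the Gale-type criterion, using a counting argument on initial segments. Write $I = \{i_1 < \dots < i_k\}$ and $J = \{j_1 < \dots < j_k\}$ in the usual order; since $\leq_1$ is the usual order, $I \leq J$ means $i_s \leq j_s$ for all $s \in [k]$.

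For the forward direction, assume $i_s \leq j_s$ for all $s$ and fix $t \in [n]$. Let $m = |J \cap [t]|$. Then $j_1, \dots, j_m \leq t$. Since $i_s \leq j_s \leq t$ for each $s \leq m$, the elements $i_1, \dots, i_m$ all lie in $[t]$. Hence $|I \cap [t]| \geq m = |J \cap [t]|$.

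For the converse, assume $|I \cap [t]| \geq |J \cap [t]|$ for all $t$, and suppose for contradiction that $i_s > j_s$ for some $s$. Take $t = j_s$. Then $J \cap [t] = \{j_1, \dots, j_s\}$ has exactly $s$ elements. On the other hand, $i_s > t$ forces $i_s, i_{s+1}, \dots, i_k \notin [t]$, so $|I \cap [t]| \leq s - 1$. This contradicts the hypothesis, so $i_s \leq j_s$ for every $s$ and $I \leq J$.

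There is no real obstacle here. The only point needing care is choosing $t = j_s$ in the converse, so that the count on $J$ is exactly $s$ while the count on $I$ is at most $s-1$. The statement does not depend on anything specific to $\cm(C)$; it is purely combinatorial, and no earlier result of the paper is needed.
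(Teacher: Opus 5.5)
Your proof is correct and follows essentially the same argument as the paper: the forward direction uses $i_s \leq j_s \leq t$ for the first $|J \cap [t]|$ indices, and the converse evaluates the inequality at $t = j_s$. The only difference is that you phrase the converse as a contradiction, whereas the paper argues directly that $|J \cap [j_s]| = s$ forces $i_s \in [j_s]$.
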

\begin{proof}
    For $t \in [n]$ then if $I \leq J$ and $J \cap [t] = \{J_1, \dots, J_l\}$ then since $I_l \leq J_l$ it follows that $\{I_1,\dots, I_l\} \subseteq I\cap[t]$ so $|i: i \in I \cap [t]| \geq |i: i \in J \cap [t]|.$ Conversely suppose $|i: i \in I \cap [t]| \geq |i: i \in J \cap [t]|$ for all $t \in [n].$ Then for $t = J_l$ since $J_l \in [t]$ it follows that $I_l \in [t]$ so $I_l \leq J_l.$
\end{proof}
In Remark 5.4 of \cite{JKS} it is stated that the space of maps $\Hom_C(M_I,M_J)$ has a  basis given by $t^\alpha = (t^{\alpha_i})_{i \in [n]}$ where \[ \alpha_{h(a)} - \alpha_{t(a)} = \begin{cases}
        1 & \text{ if } a \in J/I, \\ -1 & \text{ if } a \in I/J, \\ 0 & \text{ otherwise.}
    \end{cases}\] We observe that this is not quite correct. In fact the correct relation is swapped: 
    \[ \alpha_{h(a)} - \alpha_{t(a)} = \begin{cases}
        1 & \text{ if } a \in I/J, \\ -1 & \text{ if } a \in J/I, \\ 0 & \text{ otherwise.}
    \end{cases}\] For example consider $\gr(2, 4)$ and $I = \{1, 2\},$ $J = \{1, 3\}.$ The space $\Hom_{C}(M_I, M_J) \cong \mathbb{C}[[t]]$ has a $\mathbb{C}[[t]]$-generator given by $(1, t, 1, 1) = (t^{\alpha_i})_{i \in [4]}.$ In particular,
    \[\begin{aligned}
        \alpha_1 - \alpha_4 &= 0 \\
        \alpha_2 - \alpha_1 &= 1 \\
        \alpha_3 - \alpha_2 &= -1 \\
        \alpha_4 - \alpha_3 &= 0.
    \end{aligned}
    \]
    This set of rules is given by 
    \[ \alpha_{h(a)} - \alpha_{t(a)} = \begin{cases}
        1 & \text{ if } a \in I/J, \\ -1 & \text{ if } a \in J/I, \\ 0 & \text{ otherwise}
    \end{cases}\]
    and not the first set of cases in \cite{JKS}.

\begin{Lemma} \label{equiv}
    Given $I,J \in \binom{[n]}{k}$ then the following conditions are equivalent:
    \begin{itemize}
        \item $I \leq_{i+1} J,$
        \item any embedding $\cmcpro i \hookrightarrow M_J$ factors via $M_I,$
        \item there is an embedding $\pi_i(M_I) \hookrightarrow \pi_i(M_J),$
        \item there is an embedding $M_I \hookrightarrow M_J$ which surjects at vertex i.
    \end{itemize}    
\end{Lemma}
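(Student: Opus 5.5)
Everything reduces to the explicit description of $\Hom_C(M_I,M_J)$ recalled above. It is a free $\mathbb{C}[[t]]$-module of rank one, generated by $t^\alpha=(t^{\alpha_j})_{j\in[n]}$, normalised so that $\min_j\alpha_j=0$. The exponents are determined by the corrected rule: $\alpha_{h(a)}-\alpha_{t(a)}$ equals $1$ if the arrow $a$ lies in $I\setminus J$, equals $-1$ if $a\in J\setminus I$, and is $0$ otherwise. Summing along the arrows $j+1,j+2,\dots$ up to $m$ gives
\[\alpha_m-\alpha_j=|I\cap(j,m]|-|J\cap(j,m]|,\]
with intervals taken cyclically. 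Every nonzero map $M_I\to M_J$ is injective, since $\cm(C)$ is closed under submodules and both modules have rank one. Such a map is $t^{\beta}\cdot t^\alpha$ for a unit times a power $t^\beta$.

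\textbf{Step 1: (1)$\Leftrightarrow$(4).} A nonzero map $M_I\to M_J$ is an isomorphism at vertex $i$ exactly when it is (up to a unit) $t^\alpha$ itself and $\alpha_i=\min_m\alpha_m$, i.e. $\alpha_m-\alpha_i\ge 0$ for all $m$. By the formula, this says $|I\cap(i,m]|\ge|J\cap(i,m]|$ for every cyclic interval starting at $i+1$. Relabelling via the order $\leq_{i+1}$, Lemma \ref{triv comp} identifies this with $I\leq_{i+1}J$. Lemma \ref{fa char} then converts ``isomorphism at vertex $i$'' into ``embedding surjecting at vertex $i$''. The index conventions (whether $i$ or $i+1$ starts the interval) must be matched against the example $I=\{1,2\},J=\{1,3\}$; this bookkeeping is the main place an off-by-one could go wrong.

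\textbf{Step 2: (4)$\Leftrightarrow$(2).} $\cmcpro i$ is the rank one module $M_{[i+1,i+k]}$ (cyclic interval) or a similar interval. It is projective with top at $i$, and $\Hom_C(\cmcpro i,M)=e_iM$. For any rank one $M_J$, an embedding $\cmcpro i\hookrightarrow M_J$ is given by an element of $e_iM_J\cong\mathbb{C}[[t]]$. Suppose $g\colon M_I\hookrightarrow M_J$ is an isomorphism at vertex $i$. Then every element of $e_iM_J$ lifts uniquely to $e_iM_I$, so every map $\cmcpro i\to M_J$ factors through $g$. Conversely, if every embedding factors via $M_I$, apply this to the generator of $e_iM_J$. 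This yields $\cmcpro i\to M_I\to M_J$ whose composite hits a generator at vertex $i$. Hence the map $M_I\to M_J$ is surjective at $i$, and it is injective because it is a nonzero map between rank one modules.

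\textbf{Step 3: (4)$\Leftrightarrow$(3).} Since $\eta_{\cmcpro i}(M)$ is the submodule generated by $e_iM$, the module $\pi_i(M)$ is $M/Ce_iM$. Given (4), $g$ maps $Ce_iM_I$ onto $Ce_iM_J$, so it induces $\pi_i(M_I)\to\pi_i(M_J)$. A snake-lemma argument then shows this is injective. Precisely, the kernel of the induced map is $g^{-1}(Ce_iM_J)/Ce_iM_I$, and $g^{-1}(Ce_iM_J)=Ce_iM_I$ because $g$ is injective and $g(Ce_iM_I)=Ce_iM_J$.

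For the converse, I would use that $\pi_i(M_I)$ is a finite-dimensional module with a combinatorial description: at vertex $m$ its dimension is $\alpha^{(I)}_m-\alpha^{(I)}_i$, computed as in Step 1 with $\cmcpro i$ in place of $J$. An embedding $\pi_i(M_I)\hookrightarrow\pi_i(M_J)$ then forces dimension inequalities at each vertex. These translate into $|I\cap(i,m]|\ge|J\cap(i,m]|$, which is (1). Alternatively, one can lift the embedding to $M_I\to M_J$ using projectivity and the pullback construction of Proposition \ref{Fac to sub}. The dimension-count route is the one I would try first. The hardest point is showing that an abstract embedding of these (uniserial-like) modules really yields the inequality for all $m$, not just a total dimension inequality.
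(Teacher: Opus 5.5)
Your proof is correct, but it is organised differently from the paper's. The paper reduces to $i=n$. It proves (1)$\Rightarrow$(2) by comparing the exponent vectors of the maps $\cmcpro n\to M_I$ and $\cmcpro n\to M_J$ that are isomorphisms at vertex $n$. It obtains (2)$\Leftrightarrow$(4) by composing such maps, and (2)$\Leftrightarrow$(3) from the fact that $\pi_n$ is the quotient by $\cmcpro n$ \cite[Theorem 4.5]{JKS}. For (3)$\Rightarrow$(2) it lifts the given embedding to a map $M_I\to M_J$ and asserts, without further argument, that the lift surjects at $n$.

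You instead:
\begin{itemize}
\item read (1)$\Leftrightarrow$(4) off the position of the minimum of the generator $t^\alpha$ of $\Hom_C(M_I,M_J)$;
\item use $\Hom_C(Ce_i,-)=e_i(-)$ for (2)$\Leftrightarrow$(4);
\item compute the kernel directly for (4)$\Rightarrow$(3);
\item close the cycle with a vertexwise dimension count for (3)$\Rightarrow$(1).
\end{itemize}
Your route is self-contained and handles all $i$ at once. Your index bookkeeping is right: with the arrow $a-1\to a$ labelled $a$, as the paper's $\gr(2,4)$ example forces, the intervals $(i,m]$ are exactly the initial segments of $\leq_{i+1}$. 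You never need the fullness of $\pi_i$, nor the unexplained surjectivity of the lifted map. Your argument even shows that a vertexwise inequality of dimension vectors alone forces $I\leq_{i+1}J$. In exchange, the paper's route phrases the lemma through the quotient functor used in the rest of the section.

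Two small corrections.

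First, the dimension of $e_m\pi_i(M_I)$ is the exponent at $m$ of the generator of $\Hom_C(\cmcpro i,M_I)$. So $\cmcpro i$ replaces $I$ and $M_I$ replaces $J$, giving $|P\cap(i,m]|-|I\cap(i,m]|$ where $\cmcpro i=M_P$. Your literal prescription (``$\cmcpro i$ in place of $J$'') has the opposite sign, although the inequality you then state is the correct one. Since $P$ cancels in the comparison, its exact value is irrelevant.

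Second, the point you call hardest is immediate. An injective module map restricts to injective linear maps $e_m\pi_i(M_I)\to e_m\pi_i(M_J)$ for every vertex $m$. So you get the inequality at each $m$ separately, not merely in total.
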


\begin{proof}
    We will prove the above lemma in the case where $i = n$ in order to simplify notation. The cases with $i \neq n$ can be proved in the same way but require more care with notation. As noted above, a basis for $\Hom_C(M_I,M_J)$ is given by $t^\alpha = (t^{\alpha_i})_{i \in [n]}$ where \[ \alpha_{h(a)} - \alpha_{t(a)} = \begin{cases}
        1 & \text{ if } a \in I/J, \\ -1 & \text{ if } a \in J/I, \\ 0 & \text{ otherwise.}
    \end{cases}\]
    Notice that \[\displaystyle\sum_{1 \leq i \leq t} (\alpha_i - \alpha_{i-1}) = |I \cap [t]| - | J\cap [t]|.\] Consider a pair of embeddings \[(t^{\alpha_j})_{j \in [n]}: \cmcpro n \hookrightarrow M_I,\] \[(t^{\beta_j})_{j \in [n]}: \cmcpro n \hookrightarrow M_J\] which surjects at vertex $n.$ This implies $\alpha_n = \beta_n = 0$ so for $t \in [n]$ we see \[\begin{aligned}
        \alpha_t &= \alpha_t - \alpha_n \\
                 &= \sum_{1 \leq i \leq t} (\alpha_i - \alpha_{i-1}).
    \end{aligned}\]    
    Since $\cmcpro n = M_{[k]}$ it follows that \[\alpha_t = |[k] \cap [t]| - |I\cap [t]|.\] Likewise \[\beta_t = |[k]\cap [t]| - |J \cap [t]|.\] Therefore \[\beta_t - \alpha_t = |I \cap [t]| - |J \cap[t]|.\] Since $J \geq I$ then we see $\beta_t \geq \alpha_t$ for all $t \in [n]$ by Lemma \ref{triv comp}. In particular the map \[(t^{\beta_j})_{j \in [n]}: \cmcpro n \hookrightarrow M_J\] factorises via \[(t^{\alpha_j})_{j \in [n]}: \cmcpro n \hookrightarrow M_I.\]
    
    We now show equivalence of the second and third conditions. First suppose $\cmcpro n \hookrightarrow M_J$ factors via $M_I.$ By \cite[Theorem 4.5]{JKS} $\pi_n$ is the quotient by $\cmcpro n$ so it immediately follows that $\pi_n(M_I) \hookrightarrow \pi_n(M_J).$ Now suppose there is an embedding  $\pi_n(M_I) \hookrightarrow \pi_n(M_J).$ This gives an embedding $M_I \hookrightarrow M_J$ which doesn't factor via $\cmcpro n$ but surjects at vertex $n.$ Consider an embedding $$\cmcpro n \hookrightarrow M_I$$ which surjects at $n$ and compose with the above embedding to get an embedding $$\cmcpro n \hookrightarrow M_J.$$ Since both embeddings surject at vertex $n$ this must also surject at $n.$ A morphism $\cmcpro n \rightarrow M_J$ which surjects at vertex $n$ is then a $\mathbb{C}[[t]]$-generator for $\Hom(\cmcpro n ,M_J).$

    Finally, we show that the second and fourth conditions are equivalent. If the embedding $\cmcpro n \hookrightarrow M_J$ surjecting at $n$ factors via $M_I$ then since the embedding of $\cmcpro n$ surjects at $n$ then so must the morphism $M_I \hookrightarrow M_J.$ Now suppose we have a morphism \[(t^{\alpha_j})_{j \in [n]} : M_I \hookrightarrow M_J\] which surjects at vertex $n.$ In particular this means the map $t^{\alpha_n} : e_nM_I \cong \mathbb{C}[[t]] \rightarrow e_nM_J \cong \mathbb{C}[[t]]$ must have $\alpha_n = 0.$ Now consider \[(t^{\beta_j})_{j \in [n]} : \cmcpro n \hookrightarrow M_I\] an embedding surjecting at $n.$ This also surjects at $n$ so $\beta_n = 0.$ The composition gives a morphism \[ (t^{\gamma_j})_{j \in [n]}: \cmcpro j \hookrightarrow M_J\] with $\gamma_n = 0.$ In particular this also surjects at $n.$ This again gives a generator for $\Hom(\cmcpro n, M_J).$
\end{proof}

\begin{Lemma} \label{ind-to-perm}
    Given $v \in \quotmax,$ $I = v^{-1}[k],$ $V = \soc {v^{-1}} Q_k,$ and $Y = \head {v^{-1}w_0} Q_k$ we have \[Y = \pi_n(M_{I}).\]
\end{Lemma}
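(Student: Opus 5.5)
The plan is to argue by induction on the length of $v$ inside $\quotmax$, in the same way as the proof of Lemma \ref{index v proof}. For the base case $v = w_0^k$, we have $I = [k]$ and $M_I = \cmcpro n$. Then $\pi_n(M_I) = \cmcpro n/\eta_{\cmcpro n}(\cmcpro n) = 0$. On the other side, $v^{-1}w_0 = w_0^k w_0$ with $w_0^k = (w_0^k)^{-1}$, and $\head{w_0^k w_0} Q_k$ should vanish. To see this, note that $w_0^k w_0$ is the longest minimal coset representative, and applying $\head{s_j}$ along a reduced word strips $Q_k$ down to zero. This is the standard fact that $\mathcal{C}_{w}$ for the longest element of $^kW$ is $\fac(Q_k)$, i.e. $\head{w}Q_k=0$ exactly when $w$ kills $Q_k$. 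Alternatively, one can cite \cite{JKS}, where $\pi_n(M_J)$ is identified with the module in $\fac(Q_k)$ attached to $J$ and $\pi_n(\cmcpro n) = 0$.

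For the inductive step, write $v = w s_i$ length-additively with $w,v \in \quotmax$, so that $v^{-1}w_0 = s_i w^{-1} w_0$. By Lemma \ref{index v proof}, $I = s_i(w^{-1}[k])$, and either $I = w^{-1}[k]$ or $M_{w^{-1}[k]} \hookrightarrow M_I \twoheadrightarrow S_i$. In the second case $i \in w^{-1}[k]$ and $i+1 \notin w^{-1}[k]$, so $w^{-1}[k] \leq I$. Lemma \ref{equiv} with vertex $n$ then gives embeddings $\pi_n(M_{w^{-1}[k]}) \hookrightarrow \pi_n(M_I)$. The snake lemma applied to the quotients by the (common) image of $\cmcpro n$ gives a short exact sequence
\[\pi_n(M_{w^{-1}[k]}) \hookrightarrow \pi_n(M_I) \twoheadrightarrow S_i.\]
By induction, $\pi_n(M_{w^{-1}[k]}) = \head{w^{-1}w_0}Q_k =: Y'$.

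It remains to identify the middle term with $\head{s_i}Y'$. Here the relevant direction of the defining sequence of $\head{i}$ must be applied to the correct module. The sequence $Y \hookrightarrow \cdots$ suggests reading it as $\head{i}$ of the larger module, i.e. $Y' = \head{s_i}(Y)$ with $Y = \pi_n(M_I)$. This requires checking that the $S_i$-quotient above is the full $S_i$-isotypic part of $\Top \pi_n(M_I)$, and then using that $\head{s_i}$ restricted to modules with no $S_i$ in top is inverted appropriately (the GLS adjoint pair). Concretely, I would show that $\head{v^{-1}w_0} Q_k$ contains $\head{w^{-1}w_0}Q_k$ with cokernel $S_i^{a}$, using $\ell(v^{-1}w_0) = \ell(w^{-1}w_0) - 1$. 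This follows from the GLS description $\head{u} Q_k$ with $\ell$-additive words together with $\dim$ counts: $\underline{\dim}\,\head{s_i x}Q = \underline{\dim}\,\head{x}Q$ minus a multiple of $e_i$. Comparing dimension vectors, which are computable from the explicit weights of $M_I$ via the basis $t^\alpha$ (the $\alpha_t = |[k]\cap[t]| - |I\cap[t]|$ formula), then shows that both sides have the same dimension vector and contain $Y'$ with quotient supported at $i$. A uniqueness argument then identifies them: extensions of $S_i^a$ by $Y'$ that lie in $\fac(Q_k)$ with no $S_i$ in socle are unique.

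In the first case, $I = w^{-1}[k]$ and $\soc{v^{-1}}Q_k = \soc{w^{-1}}Q_k$, and one needs $\head{s_i}$ to act trivially in the matching sense. This I would deduce from the torsion pair of Lemma \ref{torsion}: $Y$ should be the torsion part $t_v(Q_k)$, since $V$ is the torsion-free quotient. By Proposition \ref{essentially surj} and Lemma \ref{ind-to-perm}'s setup, $\pi_n(M_I)$ is the kernel of $Q_k=\pi_n(\cmcpro{n-k}\text{-type}) \twoheadrightarrow V$. Indeed, this suggests a cleaner global argument I would try first. Apply $\pi_n$ to $M_I \hookrightarrow \cmcpro{n-k} \twoheadrightarrow V$, and using Lemma \ref{equiv} get an exact sequence $\pi_n(M_I) \hookrightarrow \pi_n(\cmcpro{n-k}) = Q_k \twoheadrightarrow V$. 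Then invoke the standard sequence $\head{v^{-1}w_0}Q_k \hookrightarrow Q_k \twoheadrightarrow \soc{v^{-1}}Q_k$, the canonical torsion sequence for $(\downcat v,\upcat v)$. The main obstacle is justifying the identification $\pi_n(\cmcpro{n-k}) \cong Q_k$ and the exactness of $\pi_n$ on this sequence, which should follow from the JKS equivalence $\pi_n$ onto $\fac(Q_k)$.
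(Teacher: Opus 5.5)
Your closing ``global argument'' is exactly the paper's proof. Apply $\pi_n$ to $M_I\hookrightarrow\cmcpro{n-k}\twoheadrightarrow V$ to get $\pi_n(M_I)\hookrightarrow Q_k\twoheadrightarrow V$, then compare with Leclerc's sequence $\head{v^{-1}w_0}Q_k\hookrightarrow Q_k\twoheadrightarrow\soc{v^{-1}}Q_k$ \cite[Lemma 3.4]{Leclerc}. Lead with it and drop the induction. Its key step, the claimed uniqueness of extensions of $S_i^{a}$ by $Y'$ ``in $\fac(Q_k)$ with no $S_i$ in socle'', is asserted without proof. The modules involved are submodules of $Q_k$, so the natural setting would be $\sub(Q_k)$ rather than $\fac(Q_k)$. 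Your first inductive case is in any event deferred to the global argument, which makes the induction unnecessary.

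The obstacle you flag does not follow from exactness of $\pi_n$ on $\cm(C)$, since $V\notin\cm(C)$. It is elementary instead. Because $e_nV=0$, the inclusion $M_I\hookrightarrow\cmcpro{n-k}$ is bijective at vertex $n$. Hence $\eta_{\cmcpro n}(M_I)=Ce_nM_I=Ce_n\cmcpro{n-k}=\eta_{\cmcpro n}(\cmcpro{n-k})$, and the snake lemma gives $\pi_n(M_I)\hookrightarrow\pi_n(\cmcpro{n-k})\twoheadrightarrow V$. Here $\pi_n(\cmcpro{n-k})=(C/Ce_nC)e_{n-k}$ is the projective-injective $\propi$-module with top $S_{n-k}$, i.e.\ $Q_k$.

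To match the two kernels, note that any surjection $Q_k\twoheadrightarrow V$ kills $t_v(Q_k)\in\downcat v$, because $V\in\upcat v$. So it factors through $Q_k/t_v(Q_k)\cong V$ by a surjective, hence bijective, endomorphism of $V$. Its kernel is therefore $t_v(Q_k)=\head{v^{-1}w_0}Q_k$, independent of the chosen surjection.
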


\begin{proof}
    By Lemma 3.4 of \cite{Leclerc} we have $\ker(Q_k \twoheadrightarrow \soc {v^{-1}} Q_k) = \head {v^{-1}w_0} Q_k.$ The result then follows from the commuting diagram \[\begin{tikzcd}
	0 & {M_{I}} & {\cmcpro{n-k}} & V & 0 \\
	0 & {\pi_n(M_{I})} & {Q_k} & {\soc {v^{-1}} Q_k} & 0.
	\arrow[from=1-1, to=1-2]
	\arrow[from=1-2, to=1-3]
	\arrow[two heads, from=1-2, to=2-2]
	\arrow[from=1-3, to=1-4]
	\arrow[two heads, from=1-3, to=2-3]
	\arrow[from=1-4, to=1-5]
	\arrow[shift right, no head, from=1-4, to=2-4]
	\arrow[shift left, no head, from=1-4, to=2-4]
	\arrow[from=2-1, to=2-2]
	\arrow[from=2-2, to=2-3]
	\arrow[from=2-3, to=2-4]
	\arrow[from=2-4, to=2-5]
    \end{tikzcd}\]
\end{proof}

\begin{Proposition} \label{functor factor}
    For $v \in \quotmax$ and $I = v^{-1}[k]$ suppose that $I \leq J.$ For $Y = \head {v^{-1}w_0} Q_k$ then  $$\pi_{I}(M_J) = \pi_Y ( \pi_n (M_J) ).$$
\end{Proposition}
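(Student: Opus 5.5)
We want to show $\pi_I(M_J) = \pi_Y(\pi_n(M_J))$ whenever $I \leq J$ and $Y = \head {v^{-1}w_0} Q_k = \pi_n(M_I)$ (Lemma \ref{ind-to-perm}). The plan is to compare the two submodules of $M_J$ being quotiented out. On one side is $\eta_{M_I}(M_J)$. On the other is the preimage $E \subseteq M_J$ of $\eta_Y(\pi_n(M_J))$ under $M_J \twoheadrightarrow \pi_n(M_J)$; since $\pi_n(M_J) = M_J/\eta_{\cmcpro n}(M_J)$, this preimage contains $\eta_{\cmcpro n}(M_J)$. Once we show $\eta_{M_I}(M_J) = E$, the third isomorphism theorem gives $\pi_I(M_J) = M_J/E \cong \pi_n(M_J)/\eta_Y(\pi_n(M_J)) = \pi_Y(\pi_n(M_J))$.

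\emph{Step 1: $\eta_{M_I}(M_J) \subseteq E$.} By Lemma \ref{equiv} (with $i=n$, using $I \leq_1 J$), the embedding $\cmcpro n \hookrightarrow M_J$ surjecting at vertex $n$ factors through some embedding $g\colon M_I \hookrightarrow M_J$ that also surjects at $n$. Since $M_J$ has rank one, $\Hom_C(M_I,M_J)$ is a free $\mathbb{C}[[t]]$-module of rank one (Lemma \ref{eta add}); write $g$ for a generator. Then $\eta_{M_I}(M_J) = \mathrm{Im}(g)$, and it contains $\eta_{\cmcpro n}(M_J)$, which is the image of the generator of $\Hom(\cmcpro n, M_J)$ and factors through $g$. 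Any map $f\colon M_I \to M_J$ induces $\bar f\colon \pi_n(M_I) = Y \to \pi_n(M_J)$ by functoriality of $\pi_n$, so the image of $f$ modulo $\eta_{\cmcpro n}(M_J)$ lies in $\eta_Y(\pi_n(M_J))$. Hence $\eta_{M_I}(M_J) \subseteq E$.

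\emph{Step 2: $E \subseteq \eta_{M_I}(M_J)$.} This is the main obstacle. We need every map $h\colon Y \to \pi_n(M_J)$ to lift to a map $M_I \to M_J$, i.e.\ that $\Hom_C(M_I,M_J) \to \Hom(Y,\pi_n(M_J))$ is surjective. My first attempt is to apply $\Hom_C(M_I,-)$ to $\cmcpro n \hookrightarrow M_J \twoheadrightarrow \pi_n(M_J)$ (with $\cmcpro n$ embedded as $\eta_{\cmcpro n}(M_J)$). This gives surjectivity of $\Hom(M_I,M_J) \to \Hom(M_I,\pi_n(M_J))$ provided $\ext^1_C(M_I,\cmcpro n)=0$, which holds because $\cmcpro n$ is projective-injective in $\cm(C)$ and the extension lies in $\cm(C)$. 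Then every map $M_I \to \pi_n(M_J)$ kills $\eta_{\cmcpro n}(M_I)$, because $\Hom(\cmcpro n, \pi_n(M_J))$ applied to that image vanishes: every map $\cmcpro n \to M_J$ lands in $\eta_{\cmcpro n}(M_J)$. So $\Hom(M_I,\pi_n(M_J)) = \Hom(Y,\pi_n(M_J))$, and surjectivity follows. A fallback, if this projectivity argument has a gap, is the explicit $t^\alpha$ description of morphisms between rank one modules: compute both submodules vertexwise as $t^{\gamma_j}\mathbb{C}[[t]]$ and compare exponents using Lemma \ref{triv comp}.

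Combining Steps 1 and 2 gives $E = \eta_{M_I}(M_J)$, and the identification of quotients completes the proof.
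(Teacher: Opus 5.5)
Your proposal is correct and is essentially the paper's argument: the paper applies the snake lemma to the composite $\tilde{P}_n \hookrightarrow M_I \hookrightarrow M_J$ supplied by Lemma~\ref{equiv} to obtain $Y \hookrightarrow \pi_n(M_J) \twoheadrightarrow \pi_I(M_J)$, which amounts to your Step~1 together with the third isomorphism theorem. Your Step~2 lifts every map $Y \to \pi_n(M_J)$ to $\mathrm{Hom}_C(M_I,M_J)$ via $\mathrm{Ext}^1_C(M_I,\tilde{P}_n)=0$, and so also justifies that the image of $Y$ is all of $\eta_Y(\pi_n(M_J))$, a point the paper's proof leaves implicit.
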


\begin{proof}
    Since $J \geq I$ there is a map $M_{I} \hookrightarrow M_J$ which surjects at vertex $n$ by Lemma \ref{equiv}. There is also a map \[\tilde{P}_n \hookrightarrow M_{I}\] which surjects at $n.$ We can compose these maps to give the commuting diagram with exact rows
    \[\begin{tikzcd}
	{\tilde{P}_n} & {\tilde{P}_n} & 0 \\
	{M_I} & {M_J} & {\pi_{I}(M_J).}
	\arrow[hook, from=1-1, to=1-2]
	\arrow[hook, from=1-1, to=2-1]
	\arrow[two heads, from=1-2, to=1-3]
	\arrow[hook, from=1-2, to=2-2]
	\arrow[from=1-3, to=2-3]
	\arrow[hook, from=2-1, to=2-2]
	\arrow[two heads, from=2-2, to=2-3]
    \end{tikzcd}\] 
    Applying the snake lemma gives us the exact sequence 
    \[Y \hookrightarrow \pi_n(M_J) \twoheadrightarrow \pi_{I}(M_J).\]
\end{proof}

We prove a slightly adapted version of Lemma 3.4 in \cite{Leclerc}. The proof is essentially the same however we re-produce it for the sake of completeness. Recall that $t_v : \Mod \propi \rightarrow \upcat v$ is the torsion radical (see Definition \ref{torsion radical}).

\begin{Lemma} \label{tor-soc-proj}
    For $v \in \quotmax$ and $\overline{Q}_j \defeq \soc {w_0^k} Q_j$ $$\overline{Q}_j/t_{v}(\overline{Q}_j) = \soc {v^{-1}} (Q_j).$$ 
\end{Lemma}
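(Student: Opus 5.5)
We prove the statement by following the proof of \cite[Lemma 3.4]{Leclerc}, replacing $Q_j$ by $\overline{Q}_j = \soc {w_0^k} Q_j$ throughout. We may write $v = w_0^k u$ with $l(v) = l(w_0^k) + l(u)$, since $v \in \quotmax$; as in the proof of Lemma \ref{sub-lec} this gives $\soc {v^{-1}} = \soc {u^{-1}} \soc {w_0^k}$. Hence $\soc {v^{-1}} Q_j = \soc {u^{-1}}(\overline{Q}_j)$ is a quotient of $\overline{Q}_j$. It therefore suffices to show that the kernel $K$ of the natural surjection $\overline{Q}_j \twoheadrightarrow \soc {v^{-1}} Q_j$ equals $t_v(\overline{Q}_j)$.

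\textbf{Step 1: the quotient lies in $\upcat v$.} By definition $\soc {v^{-1}} Q_j$ is a summand of $\Imod v$, so it lies in $\upcat v$. By the torsion pair of Lemma \ref{torsion}, every submodule of $\overline{Q}_j$ lying in $\downcat v$ maps to zero in $\soc {v^{-1}} Q_j$. Consequently $t_v(\overline{Q}_j) \subseteq K$.

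\textbf{Step 2: the kernel lies in $\downcat v$.} This is the main obstacle. We must show $K \in \fac(\Jmod v)$, or equivalently that $\Hom(K, N) = 0$ for all $N \in \upcat v$. Leclerc's argument for $Q_j$ identifies $\ker(Q_j \twoheadrightarrow \soc {v^{-1}} Q_j)$ with $\head {v^{-1}w_0} Q_j$, which is a summand of $\Jmod v$; we used this fact in Lemma \ref{ind-to-perm}. Now $\overline{Q}_j \subseteq Q_j$, and the quotient $Q_j/\overline{Q}_j$ is killed by $\soc {w_0^k}$, hence also by $\soc {v^{-1}}$. Consider the commutative diagram with rows
\[K \hookrightarrow \overline{Q}_j \twoheadrightarrow \soc {v^{-1}} Q_j, \qquad \head {v^{-1}w_0} Q_j \hookrightarrow Q_j \twoheadrightarrow \soc {v^{-1}} Q_j,\]
whose right vertical map is the identity. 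The snake lemma then gives $K \hookrightarrow \head {v^{-1}w_0} Q_j$ with cokernel $Q_j/\overline{Q}_j$.

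From this we would conclude $K \in \downcat v$ as follows. We have $\head {v^{-1}w_0} Q_j \in \downcat v$. The cokernel $Q_j/\overline{Q}_j$ is filtered by modules in the image of $\ker \soc {w_0^k}$, which are built from socles at the vertices $i \neq k$. Since $v = w_0^k u$ is of maximal length, these lie in $\downcat v$ by \cite[Section 3]{Leclerc}. As $\downcat v$ is closed under extensions and quotients, we then check $\Hom(K,N)=0$ for $N \in \upcat v$ by applying $\Hom(-,N)$ to
\[K \hookrightarrow \head {v^{-1}w_0} Q_j \twoheadrightarrow Q_j/\overline{Q}_j.\]
This yields $\Hom(\head {v^{-1}w_0} Q_j, N) \to \Hom(K,N) \to \ext^1(Q_j/\overline{Q}_j, N)$. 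The first term vanishes. For the last term, the cleaner route is to argue directly, as Leclerc does, by induction on $l(u)$ using the defining sequences of $\soc i$: at each step the new kernel is $S_i^{b}$ with $i$ a socle vertex, and we use $S_i \in \downcat v$ whenever $l(v s_i) < l(v)$ is not the case. If this works, $K \in \downcat v$ and hence $K \subseteq t_v(\overline{Q}_j)$.

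\textbf{Step 3: conclusion.} Steps 1 and 2 together give $K = t_v(\overline{Q}_j)$, and therefore $\overline{Q}_j/t_v(\overline{Q}_j) = \soc {v^{-1}} Q_j$.
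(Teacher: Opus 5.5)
\textbf{There is a genuine gap, in Step 2.} It rests on a false inclusion. Like every $\soc{i}$, the functor $\soc{w_0^k}$ is a quotient functor. So $\overline{Q}_j = \soc{w_0^k} Q_j$ is a \emph{quotient} of $Q_j$, not a submodule; you use this correctly in your first paragraph. For $j \neq k$ it cannot be a submodule at all. Its socle lies at vertex $k$ (this is why the paper embeds it in $Q_k$ in Lemma \ref{sub-lec}), whereas every nonzero submodule of $Q_j$ contains $S_j$. So the diagram you feed into the snake lemma does not exist, and $Q_j/\overline{Q}_j$ is meaningless.

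Even a genuine monomorphism $K \hookrightarrow \head{v^{-1}w_0} Q_j$ would not give $K \in \downcat v$. A torsion class is closed under quotients and extensions, not under submodules. In a sequence $\Hom(\head{v^{-1}w_0} Q_j, N) \to \Hom(K,N) \to \ext^1(C,N)$ with $C \in \downcat v$ and $N \in \upcat v$, nothing forces the last term to vanish. Your fallback (an induction on $l(u)$ ``as Leclerc does'', ending with ``if this works'') is not carried out. So the inclusion $K \subseteq t_v(\overline{Q}_j)$ is not established as written.

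\textbf{The fix is to reverse the vertical map.} The canonical surjection $Q_j \twoheadrightarrow \soc{v^{-1}} Q_j$ factors as $Q_j \twoheadrightarrow \overline{Q}_j \twoheadrightarrow \soc{v^{-1}} Q_j$. This gives a morphism from the sequence $\head{v^{-1}w_0} Q_j \hookrightarrow Q_j \twoheadrightarrow \soc{v^{-1}} Q_j$ (the kernel here is \cite[Lemma 3.4]{Leclerc}) to the sequence $K \hookrightarrow \overline{Q}_j \twoheadrightarrow \soc{v^{-1}} Q_j$. Its middle component is surjective and its right component is the identity. The snake lemma then yields a surjection $\head{v^{-1}w_0} Q_j \twoheadrightarrow K$. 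Hence $K \in \fac(\Jmod v) = \downcat v$, so $K \subseteq t_v(\overline{Q}_j)$, and together with your Step 1 this proves the lemma.

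This corrected argument is a genuinely different and shorter route from the paper's. The paper applies Leclerc's surjection $X/t_v(X) \twoheadrightarrow \soc{v^{-1}} X$ to $X = \overline{Q}_j$ and splits it using that $\soc{v^{-1}} Q_j$ is projective-injective in $\upcat v$. It then kills the complement by comparing simple tops, which tacitly needs $\soc{v^{-1}} Q_j \neq 0$. Your repaired route needs only Leclerc's Lemma 3.4 and closure of $\downcat v$ under quotients, and it treats all $j$ uniformly.
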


\begin{proof}
    In \cite{Leclerc} it is shown that for $X \in \Mod \propi$ there is a surjection $$X/t_{v}(X) \twoheadrightarrow \soc {v^{-1}}(X).$$ Apply this to $\overline{Q}_j$ and take the exact sequence $$Z \hookrightarrow \overline{Q}_j/t_{v}(\overline{Q}_j) \twoheadrightarrow \soc {v^{-1}}(\overline{Q}_j) =  \soc {v^{-1}}(Q_j).$$ Since the middle term is in $\upcat v$ and $\upcat v$ is closed under submodules $Z \in \upcat v.$ We know $\soc {v^{-1}}(Q_j)$ is projective injective in $\upcat v$ so $$\overline{Q}_j/t_{v}(\overline{Q}_j) \cong \soc {v^{-1}}(Q_j) \oplus Z.$$ But $\overline{Q}_j/t_{v}(\overline{Q}_j)$ has a simple top as does $\soc {v^{-1}}(Q_j)$ so it follows $\Top(Z) = 0$ and therefore $Z = 0.$
\end{proof}

\begin{Lemma} \label{intersect}
    For $v \in \quotmax$ and $Y = \head {v^{-1}w_0} Q_k$ there is an equality $t_v(\pi_n(\tilde{P}_{j})) = Y \cap \pi_n(\tilde{P}_j)$ when considered as submodules of $Q_k.$
\end{Lemma}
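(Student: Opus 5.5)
The plan is to realise both sides as the kernel of one and the same map out of $\pi_n(\tilde{P}_j)$. I would do this by comparing the quotient $Q_k \twoheadrightarrow Q_k/Y$ with the quotient defining the torsion radical.

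\textbf{Step 1: identify $\pi_n(\tilde{P}_j)$ inside $Q_k$.} The embedding $\tilde{P}_j \hookrightarrow \tilde{P}_{n-k}$ surjecting at vertex $n$ induces $\pi_n(\tilde{P}_j) \hookrightarrow \pi_n(\tilde{P}_{n-k}) = Q_k$. This is the same argument as in Lemma~\ref{equiv}, since $\pi_n$ is the quotient by $\tilde{P}_n$. By \cite{JKS} we should have $\pi_n(\tilde{P}_j) \cong \overline{Q}_j = \soc {w_0^k} Q_j$, which is the indecomposable projective-injective of $\fac(Q_k) = \pi_n(\cm(C))$ at that vertex. The resulting inclusion $\overline{Q}_j \subseteq Q_k$ is the one used in Lemma~\ref{sub-lec}. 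I would record this identification first.

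\textbf{Step 2: compute $Y \cap \overline{Q}_j$ as a kernel.} By Lemma~3.4 of \cite{Leclerc}, as used in Lemma~\ref{ind-to-perm}, $Y$ is the kernel of the natural quotient $q\colon Q_k \twoheadrightarrow \soc {v^{-1}} Q_k = V$. Hence $Y \cap \overline{Q}_j = \ker(q|_{\overline{Q}_j})$. The functor $\soc {v^{-1}}$ comes with a natural quotient $X \twoheadrightarrow \soc {v^{-1}} X$, so naturality gives a commutative square:
\[\overline{Q}_j \twoheadrightarrow \soc {v^{-1}}\overline{Q}_j = \soc {v^{-1}} Q_j,\qquad \overline{Q}_j \hookrightarrow Q_k,\qquad \soc {v^{-1}}Q_j \to \soc {v^{-1}} Q_k,\qquad Q_k \twoheadrightarrow V.\]
The right vertical map is the inclusion $\soc {v^{-1}} Q_j \subseteq \soc {v^{-1}} Q_k$ from the proof of Lemma~\ref{sub-lec}. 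Granting that it is injective, we get $\ker(q|_{\overline{Q}_j}) = \ker(\overline{Q}_j \twoheadrightarrow \soc {v^{-1}} Q_j)$.

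\textbf{Step 3: identify the kernel with the torsion part.} By Lemma~\ref{tor-soc-proj}, the kernel of $\overline{Q}_j \twoheadrightarrow \soc {v^{-1}} Q_j$ is exactly $t_v(\overline{Q}_j)$. One has to check that the surjection used there is the same natural quotient, which is Leclerc's map $X/t_v(X) \twoheadrightarrow \soc {v^{-1}}X$ applied to $X=\overline{Q}_j$. Combining with Step 2 gives $t_v(\pi_n(\tilde{P}_j)) = Y \cap \pi_n(\tilde{P}_j)$.

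As a sanity check on one inclusion, $\overline{Q}_j/(Y\cap\overline{Q}_j)$ embeds in $V$. It therefore lies in $\sub(V) = \upcat v$ (Lemma~\ref{sub-lec}), and the torsion-pair property of Lemma~\ref{torsion} then forces $t_v(\overline{Q}_j) \subseteq Y \cap \overline{Q}_j$ independently.

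\textbf{Main obstacle.} The delicate point is Step 2: the injectivity of $\soc {v^{-1}}Q_j \to \soc {v^{-1}}Q_k$ as the map induced by naturality, not merely the abstract existence of an embedding. I would first try to prove it by induction on a reduced word $v^{-1} = w_0^k u$, with $u$ written as a product of simple reflections. At each step $\soc {i}$ removes only the $S_i$-part of the socle. One then checks that the $S_i$-socle of the submodule is exactly its intersection with the $S_i$-socle of the ambient module, which holds because socles of a submodule are the intersection with the ambient socle. If that fails, the fallback is to prove the reverse inclusion $Y\cap\overline{Q}_j \subseteq t_v(\overline{Q}_j)$ by a dimension count. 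Namely, compare $\dim \soc {v^{-1}} Q_j$, given by Lemma~\ref{tor-soc-proj}, with the dimension of the image of $\overline{Q}_j$ in $V$.
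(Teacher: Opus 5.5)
Your proposal is correct and is essentially the paper's argument. The paper gets $t_v(\pi_n(\tilde{P}_j))\subseteq Y$ from the torsion pair exactly as in your sanity check, and the reverse inclusion by writing $t_v(\pi_n(\tilde{P}_j))=\ker(\overline{Q}_{n-j}\twoheadrightarrow \soc {v^{-1}} \overline{Q}_{n-j})$ via Lemma~\ref{tor-soc-proj} and invoking $\soc {v^{-1}} Y=0$; written out, that step needs precisely the injectivity of $\soc {v^{-1}}\overline{Q}_{n-j}\to\soc {v^{-1}} Q_k$ that you isolate, and your induction settles it, since the $S_i$-isotypic part of the socle of a submodule $U\subseteq X$ is the intersection of $U$ with that of $X$, so each $\soc {i}$ preserves monomorphisms (one small slip: in the paper's conventions $\pi_n(\tilde{P}_j)=\overline{Q}_{n-j}$ rather than $\overline{Q}_j$, which does not affect the argument).
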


\begin{proof}
    Firstly observe that $t_v(\pi_n(\tilde{P}_{j}))$ is a submodule of $\pi_n(\tilde{P}_{j})$ by definition. To see that $t_v(\pi_n(\tilde{P}_{j}))$ is a submodule of $Y,$ notice that by \cite[Lemma 3.4]{Leclerc} we have an exact sequence \[\head {v^{-1}w_0} Q_k \hookrightarrow Q_k \twoheadrightarrow \soc {v^{-1}} Q_k.\] We have $t_v(\pi_n(\tilde{P}_{j})) \hookrightarrow Q_k$ but $t_v(\pi_n(\tilde{P}_{j})) \in \downcat v$ and $(\downcat v, \upcat v)$ is a torsion pair so this map must factor via the kernel $\head {v^{-1}w_0} Q_k = Y.$

    To see that $Y \cap \pi_n(\tilde{P}_{j}) \subseteq t_v(\pi_n(\tilde{P}_{j}))$ we observe that $t_v(\pi_n(\tilde{P}_{j}))$ is the kernel of the quotient \[f \colon \overline{Q}_{n-j} \twoheadrightarrow \soc {v^{-1}} \overline{Q}_{n-j}\] by Lemma \ref{tor-soc-proj}. Since $Y \in \downcat v$ then $\soc {v^{-1}} Y = 0$ by \cite[Section 3.2.5]{Leclerc} so it follows that $Y \cap \pi_n(\tilde{P}_{j})$ is in the kernel of $f.$  
\end{proof}

\begin{Lemma} \label{projective index}
    Recall we fix $v \in \quotmax,$ $I = v^{-1}[k]$ and $Y = \head {v^{-1}w_0} Q_k.$ Given $j \in [n-1]$ there is an index $L_j \in \binom{[n]}{k}$ such that $M_{L_i} \in \fa(I,n)$ and $\pi_{I}(M_{L_j}) = \soc {v^{-1}} Q_{n-j}$
\end{Lemma}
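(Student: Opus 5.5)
The plan is to build $M_{L_j}$ as a kernel and then identify it with a rank one module. Fix $j\in[n-1]$ and consider the projective $\cmcpro j$ of $\cm(C)$, with $\pi_n(\cmcpro j)\subseteq Q_k$; up to the relabelling in the paper this is the module $\overline{Q}_{n-j}=\soc {w_0^k} Q_{n-j}$. Take a surjection $\cmcpro j \twoheadrightarrow \pi_n(\cmcpro j)\twoheadrightarrow \overline{Q}_{n-j}/t_v(\overline{Q}_{n-j})$. By Lemma \ref{tor-soc-proj} the target is $\soc {v^{-1}} Q_{n-j}$. Let $N_j$ be the kernel of this composite. Since $\cm(C)$ is closed under kernels, $N_j\in\cm(C)$, and since the quotient is finite dimensional, additivity of rank gives $\rank N_j=\rank \cmcpro j=1$. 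By Proposition 5.2 of \cite{JKS} there is then a unique $L_j\in\binom{[n]}{k}$ with $N_j\cong M_{L_j}$.

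Next I show $M_{L_j}\in\fa(I,n)$. The kernel of $\pi_n(\cmcpro j)\to\soc {v^{-1}}Q_{n-j}$ is $t_v(\pi_n(\cmcpro j))$, and by Lemma \ref{intersect} this equals $Y\cap\pi_n(\cmcpro j)$ inside $Q_k$. Here $Y=\head {v^{-1}w_0}Q_k=\pi_n(M_I)$ by Lemma \ref{ind-to-perm}. Pulling back along $\cmcpro j\to\pi_n(\cmcpro j)$ shows that $M_{L_j}$ is the submodule of $\cmcpro j$ consisting of elements whose image in $Q_k$ lies in $Y$. All these modules sit inside the common ambient $\pi_n$-picture over $\cmcpro n$, and the quotient $\soc{v^{-1}}Q_{n-j}$ lies in $\sub(V)$, so it is not supported at vertex $n$. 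Hence the inclusion $M_{L_j}\hookrightarrow\cmcpro j$ is an isomorphism at vertex $n$.

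The cleanest way to get a map $M_I\to M_{L_j}$ that is an isomorphism at vertex $n$ is combinatorial, using Lemma \ref{equiv}. It suffices to show $I\leq L_j$, or equivalently that the embedding $\cmcpro n\hookrightarrow M_{L_j}$ surjecting at $n$ factors through $M_I$, or equivalently $\pi_n(M_I)\hookrightarrow\pi_n(M_{L_j})$. The last form is visible from the construction: $\pi_n(M_{L_j})$ is the preimage of $Y$ intersected with $\pi_n(\cmcpro j)$, enlarged by the image of $\cmcpro n$. Concretely, I would check that $Y=\pi_n(M_I)$ embeds into $\pi_n(M_{L_j})$ because $\pi_n(M_{L_j})=\pi_n(\cmcpro j)\cap Y$ after the correct identification. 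If that containment fails, I would instead compare the $t^\alpha$ exponent vectors: from the construction, $L_j$ is obtained from $[k]$ and the indices $I$ and $\{j+1,\dots,j+k\}$ by taking vertexwise maxima of exponents. This is the expected formula $e(L_j)=e(\mink{\cdot})+e(I)-e(\mink{\cdot})$ specialised to $w=w_0$, and from it $I\leq L_j$ follows via Lemma \ref{triv comp}. Lemma \ref{fa char} then gives $M_{L_j}\in\fa(I,n)$.

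Finally, compute $\pi_I(M_{L_j})$. Since $I\leq L_j$, Proposition \ref{functor factor} gives $\pi_I(M_{L_j})=\pi_Y(\pi_n(M_{L_j}))=\pi_n(M_{L_j})/(\text{image of }Y)$. Using the short exact sequence defining $M_{L_j}$ together with the snake lemma, as in Lemma \ref{index v proof}, this quotient is identified with $\pi_n(\cmcpro j)/t_v(\pi_n(\cmcpro j))=\soc{v^{-1}}Q_{n-j}$.

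I expect the main obstacle to be the bookkeeping in the third step, namely justifying that $\eta_{M_I}(M_{L_j})$ is exactly the part mapping onto $Y\cap\pi_n(\cmcpro j)$. This is where Lemma \ref{intersect} does the real work. I would first try to settle it through the explicit $t^\alpha$ description of $\Hom_C(M_I,M_{L_j})$.
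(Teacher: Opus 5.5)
There is a genuine gap: your construction produces the wrong rank one module. Your $N_j=\ker(\cmcpro j\twoheadrightarrow \soc {v^{-1}} Q_{n-j})$ is, as you say, the set of elements of $\cmcpro j$ whose image lies in $Y$. Hence
$$\pi_n(N_j)=t_v(\pi_n(\cmcpro j))=Y\cap\pi_n(\cmcpro j)\subseteq Y=\pi_n(M_I).$$
By Lemma \ref{equiv} this gives an embedding $N_j\hookrightarrow M_I$ surjecting at vertex $n$, i.e.\ $L_j\le I$. That is the reverse of what you need.

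Consequently the embedding $Y\hookrightarrow\pi_n(M_{L_j})$ you hope to check in your third paragraph fails whenever $Y\not\subseteq\pi_n(\cmcpro j)$. More decisively, if $N_j\in\fa(I,n)$, then combined with $N_j\hookrightarrow M_I$ this forces $N_j\cong M_I$, and so $\pi_I(N_j)=0$. A concrete case: take $k=1$, $n=3$, $v=w_0$. Then $I=\{3\}$, $Y=Q_1$ and $\soc{w_0}Q_2=0$. For $j=1$ your kernel is all of $\cmcpro 1=M_{\{2\}}$, which is not in $\fa(\{3\},3)$ and has $\pi_I(M_{\{2\}})\neq 0$.

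Your last step makes the same slip. The quotient $\pi_n(\cmcpro j)/t_v(\pi_n(\cmcpro j))$ is the cokernel of $N_j\hookrightarrow\cmcpro j$, a quotient \emph{above} $N_j$, not $N_j/\eta_{M_I}(N_j)$. In fact your $N_j$ is exactly the module $M_R$ that appears in the paper's proof of Proposition \ref{projectives}. Your fallback via exponent vectors and $e(L)=e(I)+e(J)-e(R)$ describes the other end of the sequence $M_R\hookrightarrow\cmcpro j\oplus M_I\twoheadrightarrow M_{L_j}$, not the module you actually built.

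The missing idea is to take the pushout rather than the pullback. Form $Z$ as the pushout of $Y\hookleftarrow t_v(\pi_n(\cmcpro j))\hookrightarrow\pi_n(\cmcpro j)$. Because $t_v(\pi_n(\cmcpro j))=Y\cap\pi_n(\cmcpro j)$, which is where Lemma \ref{intersect} is genuinely needed, the snake lemma shows that the induced map $Z\to Q_k$ is injective with image $Y+\pi_n(\cmcpro j)$. So $Z=\pi_n(M_{L_j})$ for a rank one module $M_{L_j}$.

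The inclusion $Y\subseteq Z$ lifts to $M_I\hookrightarrow M_{L_j}$ surjecting at vertex $n$. This gives $M_{L_j}\in\fa(I,n)$ and $I\le L_j$. Proposition \ref{functor factor}, the second isomorphism theorem and Lemma \ref{tor-soc-proj} then give
$$\pi_I(M_{L_j})=Z/Y\cong\pi_n(\cmcpro j)/(Y\cap\pi_n(\cmcpro j))\cong\soc {v^{-1}} Q_{n-j}.$$
Viewed inside $\cmcpro{n-k}$, the correct module is $M_I+\cmcpro j$, whereas yours is $M_I\cap\cmcpro j$.
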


\begin{proof}     
    Consider the following pushout in $\Mod C$    
    \[\begin{tikzcd}
	{t_v(\pi_n(\cmcpro j))} && Y \\
	\\
	{\pi_n(\cmcpro j)} && {Z.}
	\arrow[hook, from=1-1, to=1-3]
	\arrow[hook, from=1-1, to=3-1]
	\arrow[hook, from=1-3, to=3-3]
	\arrow[hook, from=3-1, to=3-3]
    \end{tikzcd}\]

    This gives us an exact sequence \[t_v(\pi_n(\cmcpro j)) \hookrightarrow Y \oplus \pi_n(\cmcpro j) \twoheadrightarrow Z.\] Universality of the push out gives a morphism $f$ such that the following diagram commutes 
    \[\begin{tikzcd}
	{t_v(\pi_n(\cmcpro j))} & {\pi_n(\cmcpro j)\oplus Y} & Z \\
	{Q_k} & {Q_k^{\oplus 2}} & {Q_k.}
	\arrow[hook, from=1-1, to=1-2]
	\arrow["{i_1}", hook, from=1-1, to=2-1]
	\arrow[two heads, from=1-2, to=1-3]
	\arrow["{i_2}", hook, from=1-2, to=2-2]
	\arrow["f", from=1-3, to=2-3]
	\arrow[hook, from=2-1, to=2-2]
	\arrow[two heads, from=2-2, to=2-3]
    \end{tikzcd}\]
    Since $t_v(\pi_n(\cmcpro j)) = Y \cap \pi_n(\cmcpro j)$ then $\mathrm{coker}(i_1) \hookrightarrow \mathrm{coker}(i_2)$ so by the snake lemma $f$ is injective and $Z \subseteq Q_k.$ 

    In particular there is a rank one module $M_{L_j}$ with $Z = \pi_n (M_{L_j}).$ The lift of the embedding $Y \hookrightarrow Z$ implies that there is an embedding $M_{I} \hookrightarrow M_{L_j}$ which surjects at vertex $n.$ Therefore $M_{L_j} \in \fa(I,n).$

    By extending the pushout square and using Lemma \ref{tor-soc-proj} we see
    \[
    \begin{tikzcd}
	{t_v(\pi_n(\cmcpro j))} && Y \\
	\\
	{\pi_n(\cmcpro j)} && {\pi_n(M_{L_j})} \\
	\\
	{\soc {v^{-1}} {Q}_{n-j}} && {\pi_Y \pi_n (M_{L_j}).}
	\arrow[hook, from=1-1, to=1-3]
	\arrow[hook, from=1-1, to=3-1]
	\arrow[hook, from=1-3, to=3-3]
	\arrow[hook, from=3-1, to=3-3]
	\arrow[two heads, from=3-1, to=5-1]
	\arrow[two heads, from=3-3, to=5-3]
	\arrow[shift right, no head, from=5-1, to=5-3]
	\arrow[shift left, no head, from=5-1, to=5-3]
\end{tikzcd}
    \]
    Since \[M_{I} \hookrightarrow M_{L_j}\] surjects at vertex $n$ we have ${L_j} \geq I$ by Lemma \ref{equiv}. We can then use Proposition \ref{functor factor} to see \[\pi_{I}(M_{L_j}) = \pi_Y(\pi_n(M_{L_j})).\] 
\end{proof}

\begin{Proposition} \label{projectives}
    The index $L_j$ given in Lemma \ref{projective index} is characterised as \[L_j = I(j+1)\] where $I(j) = \min_{\leq_{j}}\{T : T \geq I\}.$
\end{Proposition}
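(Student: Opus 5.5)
The plan is to show two things: that $L_j$ lies in the set $\{T : T \geq I\}$, and that $L_j \leq_{j+1} T$ for every $T \geq I$. Together these say that $L_j$ is the $\leq_{j+1}$-minimum of that set, so in particular the minimum exists and equals $I(j+1)$. The first point is already contained in the proof of Lemma \ref{projective index}: there we produced an embedding $M_I \hookrightarrow M_{L_j}$ surjecting at vertex $n$, so $L_j \geq I$ by Lemma \ref{equiv}.

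For minimality, the idea is to work inside $Q_k$ through $\pi_n$. Fix once and for all an embedding $\cmcpro n \hookrightarrow \cmcpro{n-k}$ surjecting at vertex $n$. Every rank one $M_T$ admits an embedding $\cmcpro n \hookrightarrow M_T$ surjecting at $n$, and it is unique up to a unit of $\mathbb{C}[[t]]$. Via the corresponding embedding $M_T \hookrightarrow \cmcpro{n-k}$ (again surjecting at $n$), we identify $\pi_n(M_T)$ with a submodule of $\pi_n(\cmcpro{n-k}) = Q_k$. In this picture:
\begin{itemize}
\item $T \geq I$ holds if and only if $Y = \pi_n(M_I) \subseteq \pi_n(M_T)$ (third condition of Lemma \ref{equiv} with $i=n$);
\item an inclusion $\pi_n(M_S) \subseteq \pi_n(M_T)$ lifts to an embedding $M_S \hookrightarrow M_T$ compatible with the fixed copies of $\cmcpro n$.
\end{itemize}
By construction $\pi_n(M_{L_j}) = Z = Y + \pi_n(\cmcpro j)$ inside $Q_k$, where $\pi_n(\cmcpro j)$ sits in $Q_k$ via an embedding $\cmcpro j \hookrightarrow \cmcpro{n-k}$ chosen compatibly with $\cmcpro n$.

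Now take any $T \geq I$. The set $[j+1, j+k]$ indexing $\cmcpro j$ is the $\leq_{j+1}$-minimum of all $k$-subsets. So, by Lemma \ref{equiv} with $i=j$, there is an embedding $\cmcpro j \hookrightarrow M_T$ surjecting at $j$. I expect to arrange that this embedding is compatible with the chosen copies in $\cmcpro{n-k}$, so that $\pi_n(\cmcpro j) \subseteq \pi_n(M_T)$ as submodules of $Q_k$. Combined with $Y \subseteq \pi_n(M_T)$, this gives $Z \subseteq \pi_n(M_T)$, which lifts to an embedding $g\colon M_{L_j} \hookrightarrow M_T$. The composite $\cmcpro j \hookrightarrow M_{L_j} \xrightarrow{g} M_T$ is the embedding of $\cmcpro j$ into $M_T$ surjecting at $j$. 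Hence $g$ surjects at vertex $j$, and Lemma \ref{equiv} (fourth condition, $i=j$) yields $L_j \leq_{j+1} T$.

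The main obstacle is the compatibility bookkeeping in the second paragraph. Each relevant map is a monomial $(t^{\alpha_i})_i$, so the question is whether the maps from $\cmcpro j$ and from $\cmcpro n$ into a given $M_T$ can be chosen simultaneously, so that all images agree inside $\cmcpro{n-k}$. I would first try to settle this with the explicit exponent formula $\alpha_t = |[k]\cap[t]| - |T\cap[t]|$ (and its shifted analogue for $\cmcpro j$), checking directly that the relevant compositions of monomial maps commute. This is a routine but fiddly check. If it fails, an alternative is to bypass $Q_k$ altogether. One would compute $e(L_j)$ directly from the exponents of the pushout $Z$ and compare with the greedy description of $\min_{\leq_{j+1}}\{T \geq I\}$, using Lemma \ref{triv comp} for the cyclic shift of the order.
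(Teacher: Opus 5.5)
There is a genuine gap, and it sits exactly at the step you call routine bookkeeping. The compatibility cannot be arranged, and in fact no argument carried out entirely inside $Q_k$ can work. By your own second bullet, an inclusion $\pi_n(M_S)\subseteq\pi_n(M_{S'})$ of submodules of $Q_k$ lifts to an embedding $M_S\hookrightarrow M_{S'}$ surjecting at vertex $n$. By Lemma \ref{equiv} such an inclusion therefore encodes $S\leq_1 S'$, not $S\leq_{j+1}S'$. So $Z\subseteq\pi_n(M_T)$ for all $T\geq I$ would say $L_j\leq_1 T$ for all such $T$. Already for $T=I$ this forces $\pi_n(\tilde{P}_j)\subseteq \pi_n(M_I)=Y$, hence $Z=Y$. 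But $Z/Y\cong\pi_I(M_{L_j})$ is the projective-injective object of Lemma \ref{projective index}, which is nonzero in general.

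The underlying reason is that the embedding $\phi\colon\tilde{P}_j\hookrightarrow M_T$ surjecting at $j$ usually does not surject at $n$; it does so iff $[j+1,j+k]\leq_1 T$. When it does not, $\pi_n(\phi)$ is not injective, and the composite $\tilde{P}_j\to M_T\to\tilde{P}_{n-k}$ equals $t^m$ times your fixed embedding with $m>0$; no choice of normalisation removes this factor. For instance, take $k=1$, $n=3$, $I=\{1\}$ (so $Y=0$) and $j=2$. Then $\pi_3(\tilde{P}_2)=Q_1\neq 0$ cannot lie in $\pi_3(M_I)=0$, even though $L_2=\{3\}\leq_3\{1\}$.

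The missing idea is that the map $M_{L_j}\to M_T$ must be allowed not to surject at $n$. It should be built from the short exact sequence, not from containments in $Q_k$. Lift the pushout sequence to $0\to M_R\xrightarrow{(f,h)}M_I\oplus\tilde{P}_j\to M_{L_j}\to 0$ in $\mathrm{CM}(C)$. Let $g\colon M_I\hookrightarrow M_T$ be an embedding surjecting at $n$, which exists since $T\geq I$. Because $gf$ surjects at $n$, it generates the free rank one $\mathbb{C}[[t]]$-module $\mathrm{Hom}_C(M_R,M_T)$. Hence $\phi h=c\,t^l gf$ for some unit $c$ and some $l\geq 0$. Then $(-c\,t^l g,\phi)$ vanishes on $M_R$ and so factors through the cokernel $M_{L_j}$. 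The induced map $M_{L_j}\to M_T$ restricts to $\phi$ on $\tilde{P}_j$, so it surjects at $j$, and Lemma \ref{equiv} gives $L_j\leq_{j+1}T$. This is the paper's argument, and the factor $t^l$ is precisely the incompatibility you were hoping to eliminate.

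Your fallback, comparing $e(L_j)$ with a greedy description of the $\leq_{j+1}$-minimum, is only an outline. It would also require proving that this minimum exists and has that greedy form, which is part of what the proposition asserts.
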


\begin{proof}
    To see that $L_j = I(j+1)$ we can use Lemma \ref{equiv} and prove that for any $T \geq I$ there is a morphism \[M_{L_j} \hookrightarrow M_T\] that surjects at vertex $j.$
    
    Consider $M_T$ with $T \geq I.$ Consider an embedding $$\phi: \cmcpro j \hookrightarrow M_T$$ that surjects at $j$ and $M_R$ the minimal lift of $t_v(\pi_n(\cmcpro j))$ via $\pi_n$ we can form the following commuting diagram: 
   \[\begin{tikzcd}
	{M_R} && {M_{I}} \\
	&&& {M_T} \\
	{\cmcpro j} && {M_T}
	\arrow["f", from=1-1, to=1-3]
	\arrow["h"', from=1-1, to=3-1]
	\arrow["g", from=1-3, to=2-4]
	\arrow["{t^l}", from=2-4, to=3-3]
	\arrow["\phi"', from=3-1, to=3-3]
    \end{tikzcd}\]
    where $f, g$ and $h$ all surject at vertex $n.$ To see that such a diagram exists note that $f,h$ are lifts of the morphisms in the pushout diagram defining ${L_j}.$ Since $gf$ surjects at $n$ and $\Hom(M_R,M_T) \cong \mathbb{C}[[t]]$ there exists $l \in \mathbb{N}$ such that $\phi h = t^l(gf).$

    We lift the exact sequence \[
    \begin{tikzcd}
	0 & {\pi_n(M_R)} & {\pi_n(\cmcpro j)\oplus Y} & {\pi_n(M_{L_j})} & 0
	\arrow[from=1-1, to=1-2]
	\arrow[from=1-2, to=1-3]
	\arrow[from=1-3, to=1-4]
	\arrow[from=1-4, to=1-5]
    \end{tikzcd}
    \]
    via $\pi_n$ to get an exact sequence 
    \[\begin{tikzcd}
	0 & {M_R} & {\cmcpro j\oplus M_{I}} & {M_{L_j}} & 0.
	\arrow[from=1-1, to=1-2]
	\arrow["\left(\begin{matrix} f \\ h \end{matrix}\right)", from=1-2, to=1-3]
	\arrow[from=1-3, to=1-4]
	\arrow[from=1-4, to=1-5]
    \end{tikzcd}
    \]

    By the universality of cokernels we get a commuting diagram   

    \[
    \begin{tikzcd}
	0 & {M_R} & { M_{I}\oplus \cmcpro j} & {M_{L_j}} & 0. \\
	&& {M_{T}}
	\arrow[from=1-1, to=1-2]
	\arrow[from=1-2, to=1-3]
	\arrow[from=1-3, to=1-4]
	\arrow["{(-t^lg, \phi)}"', from=1-3, to=2-3]
	\arrow[from=1-4, to=1-5]
	\arrow[from=1-4, to=2-3]
    \end{tikzcd}
    \]

    Since $\phi$ surjects at vertex $j$ then $M_{L_j} \rightarrow M_{T}$ surjects at vertex $j.$ This ensures that $$I(j) = \min_{\leq_j} \{T:T\geq I\}$$ exists and $L_j = I(j+1).$ That is to say, there is a unique minimal element in $\{T: T \geq I\}$ with respect to the order $\leq_j$ and this unique minimal element coincides with $L_j.$
\end{proof}

\begin{Proposition} \label{extend-ext}
    $M_{I(j+1)} \in \su(I,n)$ and is projective-injective.
\end{Proposition}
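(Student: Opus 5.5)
The plan has two parts. First I show $M_{L_j}=M_{I(j+1)}$ lies in $\su(I,n)$. Then I show $\ext^1_C$ between it and any object of $\su(I,n)$ vanishes in both directions.

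Throughout I use that $\underline{\cm(C)}$ is 2-CY (\cite{JKS}), so $\ext^1_C(A,B)\cong D\ext^1_C(B,A)$ for $A,B\in\cm(C)$. This means only one direction of each $\ext^1$-vanishing needs to be checked.

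\emph{Membership.} By Lemma \ref{projective index}, $M_{L_j}\in\fa(I,n)$, so it remains to show $\ext^1_C(M_{L_j},M_I)=0$. I use the exact sequence
\[0\to M_R\xrightarrow{\left(\begin{smallmatrix} f\\ h\end{smallmatrix}\right)} M_I\oplus\cmcpro j\to M_{L_j}\to 0\]
from the proof of Proposition \ref{projectives}. Applying $\Hom_C(-,M_I)$ gives the exact sequence
\[\Hom_C(M_I\oplus\cmcpro j,M_I)\to\Hom_C(M_R,M_I)\to\ext^1_C(M_{L_j},M_I)\to\ext^1_C(M_I\oplus\cmcpro j,M_I).\]
The last term vanishes because $M_I$ is rigid and $\cmcpro j$ is projective.

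For the first map, $\Hom_C(M_R,M_I)\cong\mathbb{C}[[t]]$ since both modules have rank one. The map $f$ surjects at vertex $n$, so $f$ is a $\mathbb{C}[[t]]$-generator, as in the proof of Lemma \ref{equiv}. Since $f$ is the image of $(\mathrm{id}_{M_I},0)$, the first map is surjective, and hence $\ext^1_C(M_{L_j},M_I)=0$.

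\emph{Projective-injectivity.} Since $\su(I,n)$ is extension closed in the Frobenius category $\cm(C)$, it suffices by 2-CY symmetry to show $\ext^1_C(X,M_{L_j})=0$ for every $X\in\su(I,n)$. Take an extension $M_{L_j}\hookrightarrow E\twoheadrightarrow X$; then $E\in\su(I,n)$ by Corollary \ref{ext-closed}.

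By Proposition \ref{exactness}, applying $\pi_I$ gives an exact sequence
\[\soc{v^{-1}}Q_{n-j}\hookrightarrow\pi_I(E)\twoheadrightarrow\pi_I(X)\]
in $\upcat v=\sub(V)$. Here $\soc{v^{-1}}Q_{n-j}$ is injective in $\upcat v$ (Lemma 3.15 of \cite{Leclerc}), so this sequence splits, giving a retraction $\rho\colon\pi_I(E)\to\pi_I(M_{L_j})$.

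Next I lift $\rho$ as in Proposition \ref{unique lift}. Because $\ext^1_C(E,M_I)=0$ and $\eta_{M_I}(M_{L_j})\in\mathrm{add}(M_I)$, the map
\[\Hom_C(E,M_{L_j})\to\Hom_C(E,\pi_I(M_{L_j}))\]
is surjective. So there is $s\colon E\to M_{L_j}$ with $\pi_I(s)=\rho$.

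The composite of $s$ with the inclusion $M_{L_j}\hookrightarrow E$ lies in $\End(M_{L_j})\cong\mathbb{C}[[t]]$, so it is $c\,t^a$ for some unit $c$ up to higher order terms. This composite induces the identity on $\pi_I(M_{L_j})$. If $\pi_I(M_{L_j})\neq0$, then $a>0$ is impossible, because $t^a$ with $a>0$ acts nilpotently on the finite-dimensional module $\pi_I(M_{L_j})$. Hence the composite is a unit of $\mathbb{C}[[t]]$, i.e.\ an automorphism of $M_{L_j}$, and the extension splits.

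In the degenerate case $\pi_I(M_{L_j})=0$, we have $M_{L_j}\cong M_I$, and $\ext^1_C(X,M_I)=0$ holds by the definition of $\su(I,n)$.

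\emph{Main obstacle.} The delicate step is the lifting-and-splitting argument. It requires checking that the relevant Hom sequence is right exact, which uses $\ext^1_C(E,M_I)=0$. It also requires controlling $\End(M_{L_j})$: it must be exactly $\mathbb{C}[[t]]$, acting faithfully enough on the finite-dimensional quotient that "identity modulo $\eta_{M_I}$" forces an isomorphism. If the 2-CY shortcut turns out to be unavailable in the form needed, I would instead verify $\ext^1_C(M_{L_j},X)=0$ directly. For that I would use the short exact sequence $\eta_{M_I}(X)\hookrightarrow X\twoheadrightarrow\pi_I(X)$ together with the identification $\ext^1_C(M,N)=\ext^1_{\propi}(\omega_{n-k}M,\omega_{n-k}N)$.
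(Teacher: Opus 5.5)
Your proof is correct. For the projective-injective part it follows a genuinely different route from the paper.

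\textbf{Membership.} Your membership argument is exactly the paper's argument specialised to $M=M_I$. The paper applies $\Hom_C(-,M)$ to $M_R\hookrightarrow \cmcpro j\oplus M_I\twoheadrightarrow M_{I(j+1)}$ for an \emph{arbitrary} $M\in\su(I,n)$. It shows that every map $M_R\to M$ factors through $f\colon M_R\to M_I$, using that $f$ surjects at vertex $n$ and that a basis $g_1,\dots,g_t$ of $\Hom_C(M_I,M)$ realises $M\in\fa(I,n)$. This kills $\ext^1_C(M_{I(j+1)},M)$ directly. So the paper obtains Ext-projectivity, with membership as the case $M=M_I$.

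\textbf{Projective-injectivity.} You instead prove Ext-injectivity. You push an extension down along the exact functor $\pi_I$ and split it in $\upcat v$ using Leclerc's injectivity of $\soc{v^{-1}}Q_{n-j}$. You then lift the retraction using $\ext^1_C(E,M_I)=0$, and conclude from the locality of $\mathrm{End}_C(M_{L_j})\cong\mathbb{C}[[t]]$.

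\textbf{What each route buys.} Once the exact sequence is available, the paper's route stays entirely inside $\cm(C)$ and is a single Hom computation. For $\rank M>1$, however, it tacitly needs that the $g_if$ span $\Hom_C(M_R,M)$. This holds because a morphism out of $M_R$ is determined by its component at vertex $n$, as arrows act injectively on Cohen--Macaulay modules.

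Your route needs only the rank-one, cyclic case of that computation. Your splitting argument is the general principle that injectives of $\upcat v$ lift to Ext-injectives of $\su(I,n)$. It applies verbatim to any indecomposable $N\not\cong M_I$ in $\su(I,n)$ with $\pi_I(N)$ injective in $\upcat v$. Indeed, if $s\iota$ is not a unit of the local ring $\mathrm{End}_C(N)$, then $1-s\iota$ is a unit killed by $\pi_I$, which forces $\pi_I(N)=0$.

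\textbf{Common dependence.} Both proofs rely on the 2-CY symmetry of $\underline{\cm(C)}$. The paper uses it tacitly, to discard $\ext^1_C(M_I,M)$ in its long exact sequence and to pass from projective to injective. You use it explicitly, to get $\ext^1_C(M_I,M_{L_j})=0$ and to pass from injective to projective. That vanishing is what gives $E\in\su(I,n)$ and exactness of $\pi_I$ on your sequence.
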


\begin{proof}
    We know $M_{I(j+1)}$ is in $\fa(I,n).$ We need to show that $\ext^1(M_{I(j+1)},M) = 0$ for all $M \in \su(I,n).$ Since $M_{I} \in \su(I,n)$ this proves both conditions at once. Consider the exact sequence \[M_R \hookrightarrow \tilde{P}_{j}\oplus M_{I} \twoheadrightarrow M_{I(j+1)}\] produced in the proof of Proposition \ref{projective index}. Apply $\Hom(-,M)$ for $M \in \su(I,n).$ This gives us an exact sequence \[\Hom(\tilde{P}_j\oplus M_{I},M) \rightarrow \Hom(M_R,M) \twoheadrightarrow \ext^{1}(M_{I(j+1)},M).\] We now show that $\Hom(\tilde{P}_j\oplus M_{I},M) \rightarrow \Hom(M_R,M)$ is surjective. Let $t = \rank M$ and $g_1,\cdots,g_t$ be a basis of $\Hom(M_{I},M).$ Since the morphism $f:M_R \hookrightarrow M_{I}$ surjects at vertex $n$ then \[(g_1f,\cdots,g_tf):M_R^{\oplus t} \rightarrow M\] surjects at vertex $n$ and factors via $M_{I}^{\oplus t}.$ This implies that $\Hom(\tilde{P}_j\oplus M_{I},M) \rightarrow \Hom(M_R,M)$ surjects. Therefore $\ext^1(M_{I(j+1)},M) = 0.$
\end{proof}

\section{Lifting Leclerc's categories: the Schubert case}
\begin{Proposition} \label{Quotienting}
    Given $v \in \quotmax$ the functor $\pi_{I}$ induces an equivalence of categories \[\su(I,n)/\langle M_{I}\rangle \cong \upcat v.\]
\end{Proposition}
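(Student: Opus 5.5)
We establish the three standard ingredients for an equivalence from an ideal quotient. First, $\pi_I$ is a well-defined additive functor $\su(I,n) \rightarrow \upcat v$ vanishing on $\mathrm{add}(M_I)$. Second, the induced functor on $\su(I,n)/\langle M_I\rangle$ is dense. Third, it is full and faithful. Since $\eta_{M_I}(M_I) = M_I$, we get $\pi_I(M_I)=0$. Every morphism $M \rightarrow N$ maps $\eta_{M_I}(M)$ into $\eta_{M_I}(N)$, so $\pi_I$ is a functor, and it kills every map factoring through $\mathrm{add}(M_I)$. By Proposition \ref{trick} and Lemma \ref{sub-lec}, its image lies in $\sub(V)=\upcat v$, and every object of $\upcat v$ is attained. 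This gives the functor on the quotient and density.

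For fullness, take $M,N \in \su(I,n)$ and $g\colon \pi_I(M)\rightarrow \pi_I(N)$. Consider the exact sequence $\eta_{M_I}(N)\hookrightarrow N \twoheadrightarrow \pi_I(N)$ and apply $\Hom_C(M,-)$. Since $\eta_{M_I}(N)\in\mathrm{add}(M_I)$ by Lemma \ref{eta add} and $\ext^1_C(M,M_I)=0$, the map $\Hom_C(M,N)\rightarrow \Hom_C(M,\pi_I(N))$ is surjective; this is exactly the argument of Proposition \ref{unique lift}. Hence $g\circ p_M$ lifts to some $w\colon M\rightarrow N$. Because $w$ preserves the $\eta$-submodules, it induces $\pi_I(w)$ with $\pi_I(w)p_M = p_N w = g p_M$. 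Since $p_M$ is epi, $\pi_I(w)=g$.

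For faithfulness, suppose $w\colon M\rightarrow N$ satisfies $\pi_I(w)=0$. Then $w$ lands in $\eta_{M_I}(N)\cong M_I^{s}$, so $w$ factors through an object of $\mathrm{add}(M_I)$ and is zero in the quotient.

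The step needing most care is the exactness input in fullness. The key point is that we require $\ext^1_C(M,M_I)=0$ for the source $M$; this holds by the definition of $\su(I,n)$. Separately, we need $\eta_{M_I}(N)$ to be a direct sum of copies of $M_I$ and not just generated by $M_I$, which is precisely Lemma \ref{eta add}. Everything else is formal, so I expect no genuine obstacle beyond checking that Hom-functoriality of $\eta$ is compatible with composition in the ideal quotient.
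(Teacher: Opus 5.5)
Your proposal is correct and follows essentially the same route as the paper's proof, which adapts Dlab--Ringel: faithfulness because a morphism killed by $\pi_I$ has image in $\eta_{M_I}(N)\in\mathrm{add}(M_I)$, and fullness by lifting through $N\twoheadrightarrow\pi_I(N)$ using $\ext^1_C(M,M_I)=0$. The only cosmetic difference is that you obtain density directly from Proposition~\ref{trick}, whereas the paper phrases this step as a bijection on isoclasses of indecomposables via Proposition~\ref{unique lift}.
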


\begin{proof}
    This proof is essentially the same as the proof of Theorem 3 in \cite{DR} however we will reproduce the details here for the sake of completeness. By Proposition \ref{unique lift} we know that the induced functor \[\pi_{I} : \su(I,n)/\langle M_{I}\rangle \rightarrow \upcat v\] is a bijection between isoclasses of indecomposables. So we need to show that this restricted functor is full and faithful. Suppose we have a morphism $$f: M_1 \rightarrow M_2$$ such that $\pi_{I}(f) = 0.$ This implies that $Im(f) \subseteq \eta_{M_{I}}M_2.$ In particular it factors via $\mathrm{add}(M_{I})$ and so $\overline{f} = 0$ in $\su(I,n)/\langle M_{I}\rangle.$ Now consider $h: X \rightarrow Y$ in $\upcat v.$ Let $\widetilde{X} \in \su(I,n)$ be the minimal lift of $X.$ By applying $\Hom(\widetilde{X},-)$ to the exact sequence 
    \[
    \begin{tikzcd}
	{\eta_{M_{I}}\widetilde{Y}} & {\widetilde{Y}} & Y
	\arrow[hook, from=1-1, to=1-2]
	\arrow[two heads, from=1-2, to=1-3]
    \end{tikzcd}
    \]
    we see, using $\ext^{1}_{C}(\widetilde{X},M_{I}) = 0$, that there is a morphism $\tilde{h}$ making the following diagram commute \[
    \begin{tikzcd}
	{\eta_{M_{I}}\widetilde{X}} & {\widetilde{X}} & X \\
	{\eta_{M_{I}}\widetilde{Y}} & {\widetilde{Y}} & Y.
	\arrow[hook, from=1-1, to=1-2]
	\arrow[from=1-1, to=2-1]
	\arrow[two heads, from=1-2, to=1-3]
	\arrow["{\tilde{h}}"', from=1-2, to=2-2]
	\arrow["h", from=1-3, to=2-3]
	\arrow[hook, from=2-1, to=2-2]
	\arrow[two heads, from=2-2, to=2-3]
    \end{tikzcd}
    \]
    In particular $\pi_{I}(\tilde{h}) = h.$
\end{proof}

\begin{Definition}[\cite{POS}, \cite{JKS3}, \cite{OPS}]
    We say $(I_i)_{i \in [n]}$ is a \textbf{necklace} for $\gr(k,n)$ if \begin{itemize}
        \item $I_i \in \binom{[n]}{k},$
        \item $I_i \leq_i I_j$ $\forall j,$
        \item $I_i,I_j$ are weakly separated for all $i,j \in [n].$
    \end{itemize}
\end{Definition}

\begin{Definition}[\cite{JKS3}]
    Consider a necklace $(I_i)_{i \in [n]}$ as before. 
    Let $B = \displaystyle\bigoplus_i M_{I_i}$ where $M_{I_i}$ is the rank one indecomposable module in $\cm(C)$ determined by $I_i.$ Define $$\cm(B) = \{M \in \cm(C): B^{t} \twoheadrightarrow M, \text{ for some t}\}.$$ Further, let  $$\gp(B) = \{M \in \cm(B): \ext_{C}^{1}(M, B) = 0\}.$$
\end{Definition}

\begin{Remark}
    In \cite{JKS3} it was shown that these categories are equivalent to categories of Cohen-Macaulay and Gorenstein-projective modules respectively.
\end{Remark}

\begin{Remark}
    Fix $I \in \binom{[n]}{k}$ and $I(j) \in \binom{[n]}{k}$ as in Proposition \ref{projectives}. Since $\ext^1(M_{I(j)}, M_{I(l)}) = 0,$ by \cite[Proposition 5.6]{JKS} the subsets $(I(j))_{j \in [n]}$ satisfy the conditions of being a necklace.
\end{Remark}

\begin{Lemma} \label{su is gp}
    Fix $I \in \binom{[n]}{k}.$ Let \[B_I  = \bigoplus_j M_{I(j)} \in \cm(C).\] As subcategories of $\cm(C)$ we have \[\fa(I,n) = \cm(B_I),\]   \[\su(I,n) = \gp(B_I).\]
\end{Lemma}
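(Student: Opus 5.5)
We prove both equalities by double inclusion, deducing the second from the first. The preliminary observation is that $I(n) = I$. Indeed $I(n) = \min_{\leq_n}\{T : T \geq I\}$, and $\leq_n$ is the cyclic order starting at $n$. Since $n$ is the largest element in the order $\leq$ and the smallest in $\leq_n$, comparing positions gives $I \leq_n T$ whenever $T \geq I$; we will check this via Lemma \ref{triv comp}. Hence $M_I$ is a summand of $B_I$. We also use that each $M_{I(j)}$ with $j \neq n$, and likewise $M_I$, lies in $\fa(I,n)$: for $j\ne n$ this is $L_{j-1}$ in Lemma \ref{projective index} and Proposition \ref{projectives}.

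For $\fa(I,n) \subseteq \cm(B_I)$, take $M \in \fa(I,n)$ with $f\colon M_I^r \hookrightarrow M$ surjecting at vertex $n$. We must produce a surjection $B_I^t \twoheadrightarrow M$, i.e. show that $\pi_{B_I}(M)=0$. By Proposition \ref{Fac to sub}, $\pi_I(M) \in \sub(V)$, where $V=\soc{v^{-1}}Q_k$. By Lemma \ref{sub-lec} and its proof, together with Lemma \ref{projective index}, the projective covers of objects of $\sub(V)$ in $\Mod C$ are sums of $\pi_I(M_{I(j+1)}) = \soc{v^{-1}}Q_{n-j}$. More concretely, $\pi_I(M)$ is a quotient of $\bigoplus_j \pi_n(\cmcpro j)$, and each $\pi_n(\cmcpro j)$ maps onto $\soc{v^{-1}}Q_{n-j}$. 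So there is a surjection $\bigoplus_j (\soc{v^{-1}}Q_{n-j})^{a_j} \twoheadrightarrow \pi_I(M)$.

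We lift this surjection using the sequence $\eta_{M_I}(M) \hookrightarrow M \twoheadrightarrow \pi_I(M)$. The key step is that maps $M_{I(j+1)} \to \pi_I(M)$ lift to maps $M_{I(j+1)} \to M$. This follows because $\ext^1(M_{I(j+1)}, M_I)=0$ by Proposition \ref{extend-ext}, and because $\eta_{M_I}(M)\in\mathrm{add}(M_I)$ by Lemma \ref{eta add}. Adding the copies of $M_I$ that generate $\eta_{M_I}(M)$ then gives a surjection $B_I^t \twoheadrightarrow M$. Strictly, I would lift from $\cmcpro j$ rather than from $M_{I(j+1)}$ and then use the pushout sequence $M_R \hookrightarrow \cmcpro j \oplus M_I \twoheadrightarrow M_{I(j+1)}$. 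That sequence shows $\cmcpro j$ maps into $M_{I(j+1)}$ compatibly, so images of $\cmcpro j$ in $M$ modulo $\eta_{M_I}(M)$ are covered by images of $M_{I(j+1)}$. This is the main obstacle: controlling that the map $\cmcpro j \to M$ factors through $M_{I(j+1)}$ modulo $M_I$. I expect to argue it via the torsion-pair description: the part of $\pi_n(\cmcpro j)$ killed in $\pi_I(M)$ is $t_v(\pi_n\cmcpro j)$ (Lemma \ref{intersect}), which is exactly what the pushout glues to $Y=\pi_n(M_I)$.

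For the converse, let $B_I^t \twoheadrightarrow M$ with $M\in\cm(C)$. Each $M_{I(j)}$ admits $M_I \hookrightarrow M_{I(j)}$ surjecting at $n$, by Lemma \ref{equiv}, since $I(j)\geq I$. Summing these gives $M_I^t \to B_I^t$ which is an isomorphism at vertex $n$. Composing yields $M_I^t \to M$ surjective at vertex $n$. Then $\eta_{M_I}(M)$ is surjective at $n$ and lies in $\mathrm{add}(M_I)$ by Lemma \ref{eta add}. Its rank equals $\rank M$ because it is an isomorphism at $n$. Hence, by Lemma \ref{fa char}, $M \in \fa(I,n)$.

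Finally, $\gp(B_I)$ imposes $\ext^1(M,B_I)=0$, while $\su(I,n)$ imposes only $\ext^1(M,M_I)=0$. Since $M_I$ is a summand of $B_I$, we get $\gp(B_I)\subseteq\su(I,n)$. For the reverse inclusion, Proposition \ref{extend-ext} gives that each $M_{I(j)}$ is projective-injective in $\su(I,n)$. Hence $\ext^1(M,M_{I(j)})=0$ for all $M\in\su(I,n)$, and so $\su(I,n)=\gp(B_I)$.
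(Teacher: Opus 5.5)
Your three steps are the paper's own. For $\fa(I,n)\subseteq\cm(B_I)$ you cover $\pi_I(M)$ by $\soc{v^{-1}}Q_{n-j}=\pi_I(M_{I(j+1)})$ and lift through $\eta_{M_I}(M)\hookrightarrow M\twoheadrightarrow\pi_I(M)$, using $\ext^1(B_I,M_I)=0$ and $\eta_{M_I}(M)\in\mathrm{add}(M_I)$. For the converse you compose with embeddings $M_I\hookrightarrow M_{I(j)}$ surjecting at $n$. For $\su(I,n)=\gp(B_I)$ you use Proposition \ref{extend-ext}. Your use of $\eta_{M_I}(M)$ in the converse also supplies the injectivity that the paper's version passes over.

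However, your preliminary claim $I(n)=I$ is false. Take $I=[k]$: every $k$-subset $T$ satisfies $T\geq I$, so $I(n)$ is the $\leq_n$-minimum of all $k$-subsets, namely $\{1,\dots,k-1,n\}\neq[k]$. Indeed $[k]\not\leq_n\{1,\dots,k-1,n\}$, because the $\leq_n$-smallest elements are $1$ and $n$, and $n<_n 1$. Your positional comparison fails exactly when $n\in T\setminus I$. What is true, trivially, is $I(1)=I$, since $\leq_1=\leq$. So $M_I$ is still a summand of $B_I$, and the remaining summands are $M_{I(j)}=M_{L_{j-1}}$ for $j\neq 1$ (not $j\neq n$). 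With this replacement everything you use downstream holds.

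Your justification of the surjection $\bigoplus_j(\soc{v^{-1}}Q_{n-j})^{a_j}\twoheadrightarrow\pi_I(M)$ also needs repair. Projective covers in $\Mod C$ are sums of $\cmcpro j$, not of $\soc{v^{-1}}Q_{n-j}$. Knowing that $\pi_I(M)$ is a quotient of $\bigoplus_j\pi_n(\cmcpro j)^{a_j}$, and that each $\pi_n(\cmcpro j)$ maps onto $\soc{v^{-1}}Q_{n-j}$, does not by itself give the surjection. You need the covering map to factor through these quotients, and it does:
\begin{itemize}
\item the image of $t_v(\pi_n(\cmcpro j))$ in $\pi_I(M)$ is a quotient of a module in $\downcat v$ and a submodule of a module in $\upcat v$, hence zero by Lemma \ref{torsion};
\item $\pi_n(\cmcpro j)/t_v(\pi_n(\cmcpro j))=\soc{v^{-1}}Q_{n-j}$ by Lemma \ref{tor-soc-proj}.
\end{itemize}
The paper gets the same thing directly from the fact that $\upcat v$ has enough projectives, namely the $\soc{v^{-1}}Q_i$. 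Once you have this surjection, lifting directly from $M_{I(j+1)}$ via $\ext^1(M_{I(j+1)},M_I)=0$ is already complete. So the detour through $\cmcpro j$ and the sequence $M_R\hookrightarrow\cmcpro j\oplus M_I\twoheadrightarrow M_{I(j+1)}$ is unnecessary, and your ``main obstacle'' never arises.
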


\begin{proof}
    If $M \in \cm(B_I)$ there is a morphism $f: B_I^r \rightarrow M$ that surjects. In particular, it surjects at vertex $n.$ Since $I(j) \geq I,$ by Lemma \ref{equiv} there is an embedding \[g: M_{I}^t\rightarrow B_I^r\] that surjects at vertex $n.$ The composition gives a morphism \[fg: M_{I}^t \rightarrow M\] that surjects at vertex $n.$ In particular we have $M \in \fa(I,n).$ 
    Conversely, suppose we have $M \in \fa(I,n).$ Let \[P_M \twoheadrightarrow \pi_{I}M\] be the projective cover of $\pi_{I}M \in \upcat v.$ By adding in some extra projective summands where needed, we can use Lemma \ref{projective index} to recover a surjection \[\pi_{I}(B_I^r) \twoheadrightarrow \pi_{I}M.\] Since $\pi_{I}(B_I)$ is a quotient of $B_I$ and $\ext^{1}(B_I,M_{I}) = 0$ we can use the same argument as in Lemma \ref{ext-closed} applied to the sequence \[0 \rightarrow \eta_{M_{I}}M \rightarrow M \rightarrow \pi_{I}(M) \rightarrow 0,\] to see that the canonical morphism \[\Hom_C(B_I,M)\otimes_{\mathbb{C}[[t]]}B_I \rightarrow M\] is surjective. In particular we have $M \in \cm(B_I).$
    
    For $M \in \gp(B_I),$ since $\ext^1(M,B_I) = 0$ then by additivity we have $\ext^{1}(M,M_{I}) = 0.$ We see $M \in \su(I,n).$
    
    To see that $M \in  \gp(B_I)$ we observe from Proposition \ref{extend-ext} that for $M \in \fa(I,n)$ we have \[\ext^{1}_C(M,M_{I}) = 0 \Leftrightarrow \ext^1_C(M,B_I) = 0.\] 
\end{proof} 

\begin{corollary}\label{Class Proj}
    The projective-injective indecompables in $\su(I,n)$ are given by the $M_{I(j)}$ for $j \in [n].$
\end{corollary}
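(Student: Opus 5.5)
The plan is to deduce Corollary \ref{Class Proj} from Lemma \ref{su is gp}, Proposition \ref{extend-ext} and Proposition \ref{unique lift}. First, each $M_{I(j)}$ lies in $\su(I,n)$ and is projective-injective there. For $j \neq n$ this is exactly Proposition \ref{extend-ext}, using $L_j = I(j+1)$ from Proposition \ref{projectives}. For the index $I(n+1)=I(1)=\min_{\leq}\{T: T\geq I\} = I$ itself, $M_I$ is projective-injective because $\ext^1_C(M,M_I)=0$ for every $M \in \su(I,n)$ by definition. This covers injectivity. Projectivity of $M_I$ follows because projectives and injectives coincide in the extension-closed subcategory $\su(I,n)$ of the Frobenius category $\cm(C)$, as noted at the start of Section 4. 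Equivalently, $\ext^1_C(M_I,M)\cong D\ext^1_C(M,M_I)$ by the 2-CY property of $\underline{\cm}(C)$.

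Next, every indecomposable projective-injective $P\in\su(I,n)$ must be of this form. I would use Lemma \ref{su is gp}: $P \in \cm(B_I)$, so there is a surjection $B_I^r \twoheadrightarrow P$. Its kernel $K$ should lie in $\su(I,n)$, and proving this is the main obstacle. To get $K \in \fa(I,n)$, I would apply $\pi_I$, which is exact on the relevant sequences by the argument of Lemma \ref{gp/cm ext}. The sequence then becomes $\pi_I(K) \hookrightarrow \pi_I(B_I^r) \twoheadrightarrow \pi_I(P)$ in $\upcat v$, and its first term is a submodule of $\pi_I(B_I^r)\in \sub(V)$. Lifting $\pi_I(K)$ to $\su(I,n)$ via Corollary \ref{lift cor} and comparing with $K$ through Proposition \ref{unique lift} should give $K\in\su(I,n)$. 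This requires checking $\ext^1(K,M_I)=0$, which follows from the long exact sequence of $\Hom(-,M_I)$ together with $\ext^2$-type vanishing. Alternatively, I would avoid $K$ and argue purely in the quotient, as follows.

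Since $P$ is projective in $\su(I,n)$ and $\su(I,n)/\langle M_I\rangle \cong \upcat v$ by Proposition \ref{Quotienting}, the object $\pi_I(P)$ is projective in $\upcat v$. This holds because $\pi_I$ is exact (Proposition \ref{exactness}) and every short exact sequence in $\upcat v$ lifts to $\su(I,n)$, by the pullback construction of Proposition \ref{trick} applied to the sequence. The projective indecomposables of $\upcat v=\sub(V)$ are the $\soc{v^{-1}}Q_{n-j}$, and these equal $\pi_I(M_{I(j+1)})$ by Lemma \ref{projective index}. Hence $\pi_I(P)\cong \pi_I(M_{I(j+1)})$ for some $j$, or $\pi_I(P)=0$. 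By Proposition \ref{unique lift}, $P\oplus M_I^t \cong M_{I(j+1)}\oplus M_I^s$. Krull--Schmidt in $\cm(C)$ then gives $P\cong M_{I(j+1)}$, or $P\cong M_I = M_{I(1)}$ when $\pi_I(P)=0$.

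If a nonzero $\pi_I(P)$ were decomposable, $P$ would be decomposable by the same lifting argument. So in all cases $P$ is one of the $M_{I(j)}$, $j\in[n]$, and the corollary follows.
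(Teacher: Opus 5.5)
Your first half is correct and matches the paper. Each $M_{I(j)}$, $j\neq 1$, is projective-injective by Proposition \ref{extend-ext}, and $M_{I(1)}=M_I$ is handled by definition together with 2-CY symmetry.

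The gap is in the converse. You claim $\pi_I(P)$ is projective in $\upcat v$ because every short exact sequence of $\upcat v$ lifts ``by the pullback construction of Proposition \ref{trick}''. That construction lifts single objects and then modifies them by extensions by $M_I$; it does not lift sequences. To split an epimorphism $g\colon B\twoheadrightarrow \pi_I(P)$ you would do the following:
\begin{itemize}
\item lift $B$ to $\widetilde B$ and lift $g$ to $\widetilde g\colon \widetilde B\to P$ by fullness;
\item form $E=\widetilde B\oplus \eta_{M_I}(P)\twoheadrightarrow P$ with kernel $K$;
\item show $K\in\su(I,n)$, which is needed before projectivity of $P$ in $\su(I,n)$ says anything.
\end{itemize}
The condition $\ext^1_C(K,M_I)=0$ in the last step is exactly the obstacle you left open in your first approach. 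Your ``$\ext^2$-type vanishing'' does not supply it. Applying $\Hom(-,M_I)$ only gives $\ext^1_C(K,M_I)\hookrightarrow \ext^2_C(P,M_I)\cong D\underline{\Hom}(M_I,P)$, and this need not vanish. You also cannot shortcut via $\ext^1_C(M,N)\cong \ext^1(\pi_I M,\pi_I N)$: that corollary comes from Theorem \ref{theorem}, whose proof uses the statement you are proving.

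The missing idea is to test against $M_I$ in the other variable.
\begin{itemize}
\item Every map $M_I\to P$ lands in $\eta_{M_I}(P)\cong M_I^c$. So if your surjection $E\twoheadrightarrow P$ contains a summand $M_I^c$ mapping isomorphically onto $\eta_{M_I}(P)$, then $\Hom(M_I,E)\to \Hom(M_I,P)$ is surjective.
\item Hence $\ext^1_C(M_I,K)\hookrightarrow \ext^1_C(M_I,E)$. The right-hand side is $0$ by 2-CY duality, because $\ext^1_C(E,M_I)=0$.
\item Applying 2-CY again gives $\ext^1_C(K,M_I)=0$.
\item The same surjectivity, with the tensor and snake argument of Lemma \ref{gp/cm ext}, makes $\pi_I$ exact on this sequence. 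So $\pi_I(K)\subseteq \pi_I(E)$ has no support at vertex $n$, giving $K\in\fa(I,n)$.
\end{itemize}
Now take $E=B_I^{r}$, a surjection supplied by Lemma \ref{su is gp}, enlarged by $M_I^c\to\eta_{M_I}(P)$; this stays in $\mathrm{add}(B_I)$ since $M_I=M_{I(1)}$ is a summand of $B_I$. Then $K\in\su(I,n)$, the sequence is a conflation in $\su(I,n)$, and it splits because $P$ is projective. Krull--Schmidt then gives $P\in\mathrm{add}(B_I)$ directly. This completes your first approach without passing through $\upcat v$ at all.

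The paper avoids this argument altogether. It identifies $\su(I,n)=\gp(B_I)$ and cites \cite{JKS3} for the fact that the projective-injectives of $\gp(B_I)$ are $\mathrm{add}(B_I)$.
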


\begin{proof}
    We already know the $M_{I(j)}$ satisfy these conditions. The equivalence with $\gp(B_I)$ tells us there are not any more projective-injective indecomposables.
\end{proof}

\begin{corollary}\label{Frobenius}\cite{JKS3}
    The category $\su(I,n)$ is Frobenius. That is to say it has enough projectives and enough injectives and these objects coincide.
\end{corollary}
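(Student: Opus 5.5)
The plan is to transport the Frobenius property from $\gp(B_I)$ through the identification $\su(I,n) = \gp(B_I)$ of Lemma \ref{su is gp}, using Corollary \ref{Class Proj} for the projective-injectives. The argument splits into three parts.

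\emph{Exact structure.} By Corollary \ref{ext-closed}, $\su(I,n)$ is extension closed in $\cm(C)$. It therefore inherits an exact structure from $\cm(C)$, whose conflations are the short exact sequences in $\Mod C$ with all three terms in $\su(I,n)$.

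\emph{Enough projectives.} Each $M_{I(j)}$ is projective-injective in $\su(I,n)$ by Proposition \ref{extend-ext}. Given $M \in \su(I,n) = \gp(B_I)$, we have in particular $M \in \cm(B_I)$. Hence the canonical map $\Hom_C(B_I,M)\otimes B_I \to M$ is surjective, as shown in the proof of Lemma \ref{su is gp}, and so some $B_I^r \to M$ is surjective. Let $K$ be its kernel. We must show $K \in \su(I,n)$, so that this is a conflation.

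\begin{itemize}
\item $K \in \cm(C)$, since $\cm(C)$ is closed under kernels.
\item Applying $\Hom(-,M_I)$ to $K \hookrightarrow B_I^r \twoheadrightarrow M$ and using $\ext^1(B_I,M_I)=0$ and $\ext^2_C(M,M_I)=0$ gives $\ext^1(K,M_I)=0$. The vanishing of $\ext^2_C(M,M_I)$ needs justification: it holds because $\cm(C)$ is Frobenius, so $\ext^2_C(M,M_I) = \ext^1_C(\Omega M, M_I)$ with $\Omega M$ computed in $\cm(C)$. A cleaner alternative is to show $\ext^1(K,M_I)=0$ directly by the dimension-shift $\ext^1(K,M_I) \cong \ext^2(M,M_I)$ in the stable category, as in \cite{JKS3}.
\item $K \in \fa(I,n)$. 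We choose the surjection so that it is also surjective on $\Hom(M_I,-)$, by adding $M_I$ summands to $B_I^r$; these are allowed since $M_I = M_{I(1)}$ up to the necklace. Then $\eta_{M_I}$ applied to the sequence is exact, and the kernel of the induced $\pi_I(B_I^r) \to \pi_I(M)$ is $\pi_I(K)$. Since $\pi_I(B_I^r)$ is not supported at $n$, neither is $\pi_I(K)$, so $K \in \fa(I,n)$.
\end{itemize}

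\emph{Enough injectives.} Dually, for $M \in \su(I,n)$ we embed $M$ into a sum of $M_{I(j)}$ with cokernel in $\su(I,n)$. Here I would invoke \cite{JKS3}, where $\gp(B)$ is shown equivalent to a category of Gorenstein-projective modules over $\End(B)$, which is Frobenius with projective-injectives $\mathrm{add}(B)$. Transporting this structure along Lemma \ref{su is gp}, the two classes of projectives and injectives both equal $\mathrm{add}(B_I)$ by Corollary \ref{Class Proj}.

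The main obstacle is producing the injective envelopes intrinsically. The plan for this step is to rely on \cite{JKS3} for $\gp(B)$ rather than construct them by hand. If a direct argument is wanted, I would try embedding $M$ into $\cmcpro{}$-type objects in $\cm(C)$ via its Frobenius structure, then pushing forward along $M_I \hookrightarrow M_{I(j)}$ maps that are surjective at $n$ to stay inside $\fa(I,n)$.
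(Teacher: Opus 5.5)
Your overall route is the paper's. The corollary is just the theorem of \cite{JKS3} that $\gp(B)$ is Frobenius, transported along the equality $\su(I,n)=\gp(B_I)$ of Lemma \ref{su is gp}, with Corollary \ref{Class Proj} identifying the projective-injectives as $\mathrm{add}(B_I)$. That citation already gives both enough projectives and enough injectives. Your self-contained argument for enough projectives is therefore redundant, and as written it contains a false step.

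\textbf{The false step.} You claim $\ext^2_C(M,M_I)=0$, but this does not follow from $\cm(C)$ being Frobenius and is false in general.
\begin{itemize}
\item Dimension shifting only gives $\ext^2_C(M,M_I)\cong\ext^1_C(\Omega M,M_I)\cong\underline{\Hom}(M,M_I[2])$.
\item Stable 2-CY of $\cm(C)$ turns this into $D\underline{\Hom}(M_I,M)$, which is nonzero for $M=M_I$ non-projective.
\item Concretely, in $\gr(2,4)$ with $I=\{1,3\}$ we have $\Omega M_{13}=M_{24}$. Hence $\ext^2_C(M_{13},M_{13})\cong\ext^1_C(M_{24},M_{13})\neq0$, although $M_{13}\in\su(I,4)$.
\end{itemize}
Your ``cleaner alternative'' rests on the same identity. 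Moreover, a shift $\ext^1(K,M_I)\cong\ext^2(M,M_I)$ would require the middle term to be projective in $\cm(C)$, which $B_I^r$ is not.

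\textbf{How to repair it.} You can fix the step with ingredients you already have, provided you make the choice from your third bullet first, namely that $f\colon B_I^r\to M$ is surjective on $\Hom(M_I,-)$.
\begin{itemize}
\item Since $\ext^1_C(B_I,M_I)=0$, the long exact sequence identifies $\ext^1_C(K,M_I)$ with the kernel of $f^*\colon\ext^2_C(M,M_I)\to\ext^2_C(B_I^r,M_I)$.
\item Under the natural isomorphism $\ext^2_C(-,M_I)\cong D\underline{\Hom}(M_I,-)$ on $\cm(C)$, the map $f^*$ is dual to $f_*\colon\underline{\Hom}(M_I,B_I^r)\to\underline{\Hom}(M_I,M)$.
\item This $f_*$ is surjective because $\Hom(M_I,f)$ is. Hence $f^*$ is injective and $\ext^1_C(K,M_I)=0$.
\end{itemize}
Either make this repair, or drop the direct argument and rely on \cite{JKS3} for both halves, as the paper does.
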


\begin{Theorem} \label{theorem}
    The restriction of $\pi_I: \cm(C) \rightarrow \Mod \propi$ to $\su(I,n)$ shall also be denoted by $\pi_I.$ This restricted functor \[\pi_{I} : \su(I,n) \rightarrow \upcat v\] induces a triangle equivalence of stable categories.
\end{Theorem}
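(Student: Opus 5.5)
The approach is to reduce everything to Proposition \ref{Quotienting}, which gives an equivalence $\su(I,n)/\langle M_I\rangle \cong \upcat v$. Both stable categories are quotients of these additive categories by their projective-injectives. Since $M_I$ is itself projective-injective in $\su(I,n)$ by Corollary \ref{Class Proj}, the stable category $\underline{\su(I,n)}$ is the quotient of $\su(I,n)/\langle M_I\rangle$ by the ideal of maps factoring through $\mathrm{add}(B_I)$. Likewise $\underline{\upcat v}$ is $\upcat v$ modulo maps factoring through $\mathrm{add}(\Imod v)$. The plan is:
\begin{enumerate}
\item show that the equivalence of Proposition \ref{Quotienting} matches these two ideals, giving an additive equivalence of stable categories;
\item show that this additive equivalence is triangulated.
\end{enumerate}

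For the matching of projectives, Lemma \ref{projective index} and Proposition \ref{projectives} give $\pi_I(M_{I(j+1)}) = \soc {v^{-1}} Q_{n-j}$ for $j\in[n-1]$, and $\pi_I(M_I)=0$. So $\pi_I$ sends $\mathrm{add}(B_I)$ onto $\mathrm{add}(\Imod v)$, whose objects are the projective-injectives of $\upcat v$ (Leclerc's Lemma 3.15 with $w=w_0$). Conversely, an indecomposable of $\su(I,n)$ whose image is projective-injective is, by Proposition \ref{unique lift}, isomorphic to some $M_{I(j)}$ up to $M_I$-summands. Hence a morphism factors through $\mathrm{add}(B_I)$ in $\su(I,n)/\langle M_I\rangle$ if and only if its image factors through $\mathrm{add}(\Imod v)$. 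Fullness of the equivalence lets me lift a factorisation $X\to P\to Y$ in $\upcat v$ to one through a lift of $P$ in $\mathrm{add}(B_I)$, up to a correction by maps factoring through $\mathrm{add}(M_I)$. This yields an additive equivalence $\underline{\su(I,n)}\to\underline{\upcat v}$.

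For the triangulated structure, the key input is Proposition \ref{exactness}: $\pi_I\colon\su(I,n)\to\upcat v$ is an exact functor of Frobenius exact categories. It sends projective-injectives to projective-injectives, as shown above. A standard result (Happel) says that an exact functor between Frobenius categories preserving projective-injectives induces a triangle functor on stable categories. Concretely, I will check that $\pi_I$ commutes with the suspension $\Omega^{-1}$. Take a conflation $X\hookrightarrow P_X\twoheadrightarrow \Omega^{-1}X$ with $P_X\in\mathrm{add}(B_I)$ an injective hull. Exactness gives $\pi_I X\hookrightarrow \pi_I P_X\twoheadrightarrow \pi_I\Omega^{-1}X$ with middle term projective-injective in $\upcat v$, so $\pi_I\Omega^{-1}X\cong\Omega^{-1}\pi_I X$ in the stable category. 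Standard triangles, which come from conflations via pushout along the injective hull, are sent to standard triangles because pushout squares of conflations are preserved by an exact functor.

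The main obstacle I expect is the ideal-matching step. One must be careful that a map in $\su(I,n)$ whose image factors through a projective of $\upcat v$ really factors through $\mathrm{add}(B_I)$ in $\su(I,n)$, not merely up to $M_I$. Since $M_I$ is a summand of $\mathrm{add}(B_I)$ (indeed $I(1)=I$, or at least $M_I$ is projective-injective), the $M_I$-correction is absorbed. So I would first establish explicitly that $M_I\in\mathrm{add}(B_I)$, via Corollary \ref{Class Proj}, and then run the lifting argument using the $\ext^1_C(-,M_I)=0$ surjectivity from the proof of Proposition \ref{Quotienting}.
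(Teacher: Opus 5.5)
Your proposal is correct and takes essentially the same approach as the paper. The paper, following Corollary 4.6 of \cite{JKS}, simply notes that $\pi_I$ is exact (Proposition \ref{exactness}), sends the projective-injectives $\mathrm{add}(B_I)$ to those of $\upcat v$, and is a quotient by the projective-injective $M_I$ (Proposition \ref{Quotienting}). You spell out the two steps it leaves implicit: that the ideals of maps factoring through projective-injectives correspond under the equivalence (using $M_I=M_{I(1)}\in\mathrm{add}(B_I)$), and that $\pi_I$ is compatible with $\Omega^{-1}$ and with standard triangles.
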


\begin{proof}
    This follows the proof of Corollary 4.6 in \cite{JKS}. Since we know that $\su(I,n)$ is Frobenius then, due to a theorem of Happel \cite{H}, its stable category is triangulated. The functor $\pi_{I}$ is exact by Proposition \ref{exactness}, maps projectives to projectives (see Proposition \ref{projectives}, Corollary \ref{Class Proj}), and is quotienting by a projective (see Proposition \ref{Quotienting}). In particular this implies it induces a triangle equivalence between the stable categories. 
\end{proof}

\begin{corollary}
    For $M,N \in \su(I,n)$ \[\ext^{1}_C(M,N) = \ext^{1}_{C}(\pi_{I}M, \pi_{I}N).\]
\end{corollary}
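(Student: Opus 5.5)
The idea is to compute both sides as Hom-spaces in stable categories and then apply Theorem \ref{theorem}. Throughout, $\pi_IM$ and $\pi_IN$ are regarded as $C$-modules via the embedding $U_n$ of Remark \ref{rem emb}. The argument has three steps.

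\textbf{Step 1: the left-hand side.} By Corollary \ref{ext-closed}, $\su(I,n)$ is extension closed in $\cm(C)$, and $\cm(C)$ is extension closed in $\Mod C$. So every extension of $M$ by $N$ in $\Mod C$ has its middle term in $\su(I,n)$. Hence $\ext^1_C(M,N)$ equals the $\ext^1$ of the exact category $\su(I,n)$. This category is Frobenius by Corollary \ref{Frobenius}, so the standard identification gives
\[\ext^1_C(M,N)\cong \underline{\Hom}_{\su(I,n)}(M,\Sigma N),\]
where $\Sigma$ is the cosyzygy, i.e.\ the suspension of Happel's triangulated structure.

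\textbf{Step 2: the right-hand side.} Put $X=\pi_IM$ and $Y=\pi_IN$, both in $\upcat v$. First I show that the essential image of $U_n$ is extension closed in $\Mod C$. Take an extension $Y\hookrightarrow E\twoheadrightarrow X$ of $C$-modules. Restricting to vertex $n$ is exact, so $e_nE=0$. Thus $E$ is a module over $C/\langle e_n\rangle$. Killing $e_n$ kills every path through vertex $n$, so the relation $x^k=y^{n-k}$ becomes vacuous and only the preprojective relations at the remaining vertices survive. Completion is irrelevant here because $E$ is killed by a power of $t$. Hence $C/\langle e_n\rangle\cong\propi$, matching the quotient used to define $U_n$, and so $\ext^1_C(X,Y)=\ext^1_{\propi}(X,Y)$.

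Next, $\upcat v=\sub(V)$ (Lemma \ref{sub-lec}) is the torsion-free class of the torsion pair in Lemma \ref{torsion}, so it is extension closed in $\Mod\propi$. Therefore $\ext^1_{\propi}(X,Y)$ is the $\ext^1$ of the exact category $\upcat v$. That category is Frobenius: this is Leclerc's lemma quoted in the paper with $w=w_0$, since $\downcat{w_0}=\Mod\propi$ gives $\lec{v}{w_0}=\upcat v$. So, with $\Sigma'$ its suspension,
\[\ext^1_C(X,Y)\cong \underline{\Hom}_{\upcat v}(X,\Sigma' Y).\]

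\textbf{Step 3: compare via the equivalence.} By Theorem \ref{theorem}, $\pi_I$ induces a triangle equivalence $\underline{\su(I,n)}\to\underline{\upcat v}$. A triangle functor commutes with suspension up to natural isomorphism, so $\pi_I(\Sigma N)\cong\Sigma'\pi_IN$ in the stable category. Full faithfulness then gives
\[\underline{\Hom}(M,\Sigma N)\cong\underline{\Hom}(\pi_IM,\Sigma'\pi_IN).\]
Combining this with Steps 1 and 2 yields $\ext^1_C(M,N)\cong\ext^1_C(\pi_IM,\pi_IN)$.

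The step I expect to need the most care is Step 2. There one must check that $C/\langle e_n\rangle$ really is $\propi$, and that the exact structure on $\upcat v$ used in Theorem \ref{theorem} is the one inherited from $\Mod C$. Everything else is formal for Frobenius categories.
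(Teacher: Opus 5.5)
Your Steps 1 and 3 are the argument the paper has in mind. The corollary is stated without proof right after Theorem \ref{theorem}, as the standard consequence of a triangle equivalence between stable categories of Frobenius categories: $\ext^1=\underline{\Hom}(-,\Sigma -)$ on both sides, as in Corollary 4.6 of \cite{JKS}.

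The problem is Step 2, the part you flagged: the claim $C/\langle e_n\rangle\cong\propi$ is false. The quotient defining $U_n$ is $\affpi/\langle e_n\rangle\cong\propi$, and $\affpi$ does not carry the relation $x^k=y^{n-k}$. In $C$, take a vertex $i\notin\{n,n-k\}$. The paths $x^ke_i$ and $y^{n-k}e_i$ together go once around the cycle and share only their endpoints, so exactly one of them passes through $n$. Killing $e_n$ therefore does not make the relation vacuous. It forces the \emph{other} path, a straight path in the linear $A_{n-1}$ quiver, to vanish.

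Such straight paths are nonzero in $\propi$. For example, if $k\le n-2$ then $x^ke_1\neq 0$ in $\propi$, since it survives in $\propi/\langle y\rangle$, the path algebra of a linearly oriented $A_{n-1}$. In $C$, however, $x^ke_1=y^{n-k}e_1$, and this path begins with the arrow $1\to n$, so it dies in $C/\langle e_n\rangle$. What is true is $C/\langle e_n\rangle\cong\propi/\langle x^k,y^{n-k}\rangle$. Its modules are the $\propi$-modules annihilated by $x^k$ and $y^{n-k}$, and this subcategory is not extension closed in $\Mod\propi$. So $\ext^1_C=\ext^1_{\propi}$ cannot be read off from the algebras alone.

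The conclusion of Step 2 for $X,Y\in\upcat v$ is nevertheless correct, but it has to come from $\upcat v$ itself, in two directions.
\begin{itemize}
\item Given a $C$-extension $Y\hookrightarrow E\twoheadrightarrow X$, you correctly get $e_nE=0$. So $E$ is a module over $C/\langle e_n\rangle$, which is a quotient of $\propi$. Hence the sequence is a $\propi$-extension, and $E\in\upcat v$ by your torsion-free-class argument.
\item Conversely, let $E$ be the middle term of a $\propi$-extension of $X$ by $Y$. Then $E$ lies in $\upcat v=\sub(V)$. Here $V$ is a $C$-module with $e_nV=0$; the paper realises it as a quotient of $\cmcpro{n-k}$ when defining $M_I$ before Lemma \ref{index v proof}. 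At every vertex $i$ one of $x^ke_i$, $y^{n-k}e_i$ passes through $n$ and so acts as zero on $V$. The relation $x^ke_i=y^{n-k}e_i$ then forces the other to act as zero too. Hence $x^k$ and $y^{n-k}$ annihilate $V$, and therefore $E$, so $E$ is a $C$-module.
\end{itemize}
Thus $\ext^1_C(X,Y)$ and $\ext^1_{\propi}(X,Y)$ are both the $\ext^1$ of the exact category $\upcat v$. With this replacement the rest of your proof goes through.
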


\begin{Remark}
    By the same arguments as in Remark 4.7 and Remark 4.8 in \cite{JKS} we can think of $\su(I,n)$ as lifting the cluster structure on $\upcat v.$ 
\end{Remark}

\begin{Remark}
    The results in this section can be thought of as a homological equivalent to the combinatorial results in work of Serhiyenko, Sherman-Bennett, and Williams \cite{SBSW}. In their paper they show that the combinatorics of plabic graphs recovers the cluster structure that Leclerc puts on Schubert cells. In our case this corresponds to the fact that $\su(I,n)$ is equivalent to a category of the form $\gp(B)$ and lifts Leclerc's cluster category $\upcat v.$
\end{Remark}

So far we have been considering categories of the form $\fa(I,n)$ but we can consider $\fa(I,i)$ for other $i \in [n].$ For $i \in [n]$ we denote by $z_i \in S_n$ the permutation determined by $j \mapsto i+j \mod n.$ There is a unique automorphism $\mathcal{W}_i$ of $C$ such that \[e_j \mapsto e_{i+j}.\] This induces an autoequivalence \[\mathcal{W}_i: \Mod C \rightarrow \Mod C\] such that $\mathcal{W}_i(M_L)= M_{z_i(L)}.$ Observe that $\mathcal{W}_i(\fa(I,j)) \cong \fa(z_i(I), i+j)$ and $\mathcal{W}_i(\su(I,j)) \cong \su(z_i(I), i+j).$ Further, we see $\mathcal{W}_i(\emb{j}(\upcat v)) \cong \emb{i+j}(\upcat v).$ We combine these to generalise our lift functor from before. For $v \in \quotmax$ and $i \in [n]$ let $V = \emb {i}(\soc {v^{-1}} Q_k).$ Let $I(V)$ be defined by \[M_{I(V)} \hookrightarrow \cmcpro{i-k} \twoheadrightarrow V.\] 

\begin{Lemma}
    $I(V) = z_i(v^{-1}[k]).$
\end{Lemma}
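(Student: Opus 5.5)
The natural route is to transport Lemma \ref{index v proof} along the rotation autoequivalence $\mathcal{W}_i$. Here $V = \emb{i}(\soc {v^{-1}} Q_k)$ and $M_{I(V)}$ is defined by $M_{I(V)} \hookrightarrow \cmcpro{i-k} \twoheadrightarrow V$. Lemma \ref{index v proof} treats the case of the vertex $n$, where the defining sequence is $M_{v^{-1}[k]} \hookrightarrow \cmcpro{n-k} \twoheadrightarrow \emb{n}(\soc {v^{-1}} Q_k)$. The plan is to apply $\mathcal{W}_i$ to this sequence and check that it becomes exactly the sequence defining $I(V)$.

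First, $\mathcal{W}_i$ is an exact autoequivalence of $\Mod C$ induced by the algebra automorphism $e_j \mapsto e_{i+j}$. It preserves rank and freeness over $\mathbb{C}[[t]]$, because it fixes $t = xy$ (arrows go to arrows). Hence it restricts to an autoequivalence of $\cm(C)$ and sends injections to injections and surjections to surjections.

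Second, we compute each term.
\begin{itemize}
\item The paper records $\mathcal{W}_i(M_L) = M_{z_i(L)}$. In particular $\mathcal{W}_i(M_{v^{-1}[k]}) = M_{z_i(v^{-1}[k])}$.
\item The projective $\cmcpro{n-k}$ has top at vertex $n-k$, so its image is the indecomposable projective with top at vertex $i+n-k \equiv i-k$, namely $\cmcpro{i-k}$. Equivalently, $\mathcal{W}_i(\cmcpro{j}) \cong \cmcpro{i+j}$ follows from $\mathcal{W}_i(Ce_j) = Ce_{i+j}$.
\item The paper states $\mathcal{W}_i \circ \emb{n} = \emb{i+n} = \emb{i}$, compatibly with simples: the simple at vertex $j$ goes to the simple at $n+j$ and then to the simple at $i+j$. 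This gives $\mathcal{W}_i(\emb{n}(\soc {v^{-1}}Q_k)) = V$.
\end{itemize}
Applying $\mathcal{W}_i$ therefore yields an exact sequence $M_{z_i(v^{-1}[k])} \hookrightarrow \cmcpro{i-k} \twoheadrightarrow V$.

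Third, we need uniqueness. The kernel of a surjection $\cmcpro{i-k}\twoheadrightarrow V$ does not depend on the choice of surjection up to isomorphism. Indeed, the top of $V$ is at most one simple at vertex $i-k$ and $\cmcpro{i-k}$ is local, so any two such surjections differ by an automorphism of the projective cover. Since rank one modules are determined by their index (Proposition 5.2 of \cite{JKS}), we conclude $I(V) = z_i(v^{-1}[k])$.

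The main obstacle is the bookkeeping in the second step. One must confirm that $\mathcal{W}_i$ really carries the module $\emb{n}(\soc {v^{-1}} Q_k)$ to $\emb{i}(\soc {v^{-1}}Q_k)$ as modules, not merely on supports. This means checking that the quotient $\propi \cong \affpi/\langle e_n\rangle$ composed with $\mathcal{W}_i$ agrees with the quotient by $\langle e_i\rangle$ and the same vertex identification. It follows from the definitions, but the indices mod $n$ need care.
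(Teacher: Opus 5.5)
Your proposal is correct and follows the paper's own proof, which transports Lemma \ref{index v proof} (the case $i=n$) along the rotation autoequivalence $\mathcal{W}_i$. Your extra checks (that $\mathcal{W}_i$ preserves $\cm(C)$, sends $\cmcpro{n-k}$ to $\cmcpro{i-k}$ and $\emb{n}$ to $\emb{i}$, and that the kernel of $\cmcpro{i-k}\twoheadrightarrow V$ is well defined up to isomorphism) spell out details the paper leaves implicit.
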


\begin{proof}
    Using the above endofunctors this follows from the case where $i = n.$
\end{proof}

We can then generalise Theorem \ref{theorem} as follows.

\begin{Theorem} \label{intro thm 1}
    Fix $I \in \binom{[n]}{k}$ and $i \in [n].$ For $v \in \quotmax$ the unique element such that $I = z_i(v^{-1}[k])$ the functor \[\pi_I: \su(I,i) \rightarrow \emb{i}(\upcat v)\] induces a triangle equivalence of the stable categories.
\end{Theorem}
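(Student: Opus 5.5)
The plan is to reduce to the case $i = n$, already settled by Theorem \ref{theorem}, by transporting along the rotation autoequivalence $\mathcal{W}_i$ of $\Mod C$. Set $I' = z_i^{-1}(I) = v^{-1}[k]$; this is the unique $v \in \quotmax$ with $z_i(v^{-1}[k]) = I$, by the standard bijection between $\quotmax$ and $\binom{[n]}{k}$. Since $\mathcal{W}_i$ comes from an algebra automorphism of $C$, it is an exact autoequivalence. It preserves $\mathbb{C}[[t]]$-freeness, because it fixes $t = xy$ up to relabelling the arrows, so it restricts to an exact autoequivalence of $\cm(C)$. It also sends $M_L$ to $M_{z_i(L)}$, sends injections surjecting at vertex $j$ to injections surjecting at vertex $i+j$, and preserves $\ext^1_C$. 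As already observed before the statement, this gives $\mathcal{W}_i(\su(I',n)) = \su(I,i)$ as exact categories, because the indices are read modulo $n$.

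The key compatibility to check is that $\mathcal{W}_i$ intertwines the quotient functors. I will show $\mathcal{W}_i \circ \pi_{I'} \cong \pi_{I} \circ \mathcal{W}_i$ on $\cm(C)$. Since $\mathcal{W}_i$ is an equivalence, it gives a bijection $\Hom_C(M_{I'},M) \cong \Hom_C(M_I, \mathcal{W}_i M)$ and carries images to images. Hence $\mathcal{W}_i(\eta_{M_{I'}}(M)) = \eta_{M_I}(\mathcal{W}_i M)$, and exactness of $\mathcal{W}_i$ gives the isomorphism on cokernels, natural in $M$. On the target side, the remark before the statement gives $\mathcal{W}_i(\emb n(\upcat v)) = \emb{i}(\upcat v)$, since $\mathcal{W}_i \circ \emb n = \emb{i}$ up to the vertex relabelling $j \mapsto i+j$.

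Combining these gives a commutative square up to natural isomorphism. The top row is $\pi_{I'}\colon \su(I',n) \to \emb n(\upcat v)$, the bottom row is $\pi_I\colon \su(I,i) \to \emb{i}(\upcat v)$, and both vertical maps are restrictions of $\mathcal{W}_i$. Both vertical functors are exact equivalences that preserve projective-injectives. Therefore $\su(I,i)$ is Frobenius, with projective-injectives $\mathcal{W}_i(M_{I'(j)})$ by Corollary \ref{Class Proj} and Corollary \ref{Frobenius}. The vertical functors also induce triangle equivalences on stable categories, because an exact equivalence of Frobenius categories commutes with the suspension $\Omega^{-1}$ and sends standard triangles to standard triangles. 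The induced square of stable functors still commutes, so Theorem \ref{theorem} gives that $\underline{\pi_I}$ is a composite of three triangle equivalences.

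The only step needing real care is the intertwining check. One must confirm that the convention for $\emb i$ (vertex $j$ goes to $i+j$) matches the relabelling induced by $\mathcal{W}_i$, and that $\emb i(\soc{v^{-1}}Q_k)$ is exactly the image of $\soc{v^{-1}}Q_k$. Since both constructions are defined through the same quotient $\affpi/\langle e_i\rangle$, I expect this to amount to bookkeeping of indices modulo $n$ rather than a genuine obstacle.
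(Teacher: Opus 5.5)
Your proposal is correct and follows the paper's route. The paper also obtains this result from the case $i=n$ (Theorem \ref{theorem}) by transporting along the rotation autoequivalence $\mathcal{W}_i$, using $\mathcal{W}_i(\su(z_i^{-1}(I),n)) = \su(I,i)$ and $\mathcal{W}_i(\emb{n}(\upcat v)) = \emb{i}(\upcat v)$; your explicit check that $\mathcal{W}_i$ intertwines $\pi_{z_i^{-1}(I)}$ with $\pi_I$ fills in the step the paper leaves implicit.
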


The category $\fa(I,n)$ lifts $\fac(V)$ via $\omega_{n-k}.$ The category $\su(I,n)$ lifts $\sub(V)$ via $\pi_I.$ The nice properties of $\su(I,n)$ can then be explained by \cite[Proposition 3.7]{Leclerc}. There Leclerc shows that $V$ is $\tau^{-1}$-rigid \cite{AIR}. It is typically not $\tau$-rigid. The reason $\su(I,n)$ has nicer properties than $\fa(I,n)$ is a consequence of the following theorem.

\begin{Theorem}[\cite{AS}]
    For $X \in \Mod C$ then $$X \text{ is } \tau^{-1}-\text{rigid} \Rightarrow \sub(X) \text{ is closed under extensions and is functorially finite,}$$ $$X \text{ is } \tau-\text{rigid} \Rightarrow \fac(X) \text{ is closed under extensions and is functorially finite.}$$
\end{Theorem}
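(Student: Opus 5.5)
The plan is to prove the second implication directly and to obtain the first by duality. Throughout, $X$ is a finite-dimensional module; in our application $X=V=\soc{v^{-1}}Q_k$ is a $\propi$-module viewed in $\Mod C$ via $\emb n$. So I work in the finite-length category $\operatorname{mod}\propi$, where the Auslander--Reiten translate $\tau$ and the Auslander--Reiten formulas are available. The standard duality $D=\Hom_{\mathbb{C}}(-,\mathbb{C})$ interchanges $\sub$ and $\fac$ and interchanges $\tau$ and $\tau^{-1}$. Hence the statement for $\tau^{-1}$-rigid $X$ and $\sub(X)$ follows from the statement for the $\tau$-rigid module $DX$ and $\fac(DX)$ over the opposite algebra. (Note that $\propi^{op}\cong\propi$.)

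\emph{Step 1: $\ext^1(X,\fac X)=0$.} Let $Y\in\fac(X)$, with a surjection $X^r\twoheadrightarrow Y$. The Auslander--Reiten formula gives $D\ext^1(X,Y)\cong\overline{\Hom}(Y,\tau X)$, which is a quotient of $\Hom(Y,\tau X)$. Precomposing with $X^r\twoheadrightarrow Y$ injects $\Hom(Y,\tau X)$ into $\Hom(X^r,\tau X)$, and this space is zero by $\tau$-rigidity. So $\ext^1(X,Y)=0$.

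\emph{Step 2: extension closure.} Let $A\hookrightarrow E\twoheadrightarrow B$ be exact with $A,B\in\fac(X)$. Choose surjections $p\colon X^r\twoheadrightarrow B$ and $q\colon X^s\twoheadrightarrow A$. Since $\ext^1(X,A)=0$ by Step 1, the functor $\Hom(X,-)$ keeps the sequence exact, so $p$ lifts to a map $\tilde p\colon X^r\to E$. Then $(\iota q,\tilde p)\colon X^{s+r}\to E$ is surjective: this is the same diagram chase used in Lemma \ref{gp/cm ext}, or the snake lemma. Hence $E\in\fac(X)$.

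\emph{Step 3: functorial finiteness.} Contravariant finiteness holds for any $X$. For a module $M$, the trace inclusion $\eta_X(M)\hookrightarrow M$ (in the notation following Dlab--Ringel) is a right $\fac(X)$-approximation. Indeed, $\eta_X(M)$ lies in $\fac(X)$ because $\Hom(X,M)$ is finite-dimensional, and the image of any $Z\in\fac(X)$ in $M$ is a quotient of some $X^r$, so it lies inside $\eta_X(M)$.

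Covariant finiteness is the step I expect to be the main obstacle. I would first try to build a left approximation by a universal extension, as in the construction of $D_{v,w}$. Take a left $\operatorname{add}(X)$-approximation $g\colon M\to X^m$, which exists because $\Hom(M,X)$ is finite-dimensional. If $g$ is not surjective, I would try to correct it, for example by passing to $\operatorname{Im}g$ and then adjoining extensions that kill $\ext^1(X,-)$. The aim is a map $M\to T$ with $T\in\fac(X)$ through which every map from $M$ into $\fac(X)$ factors, and Step 1 together with the extension closure from Step 2 are exactly what is needed to factor a general map $M\to Z$ through it.

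If this direct construction becomes unwieldy, I would fall back on a route that uses Step 2 as input. Step 2 makes $\fac(X)$ a torsion class, with torsion-free class $X^{\perp}$. I would then invoke Smal\o's result that a torsion class of the form $\fac(X)$ with $X$ finite-dimensional is functorially finite, and it remains to check that this result applies in the present setting.
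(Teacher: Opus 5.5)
The paper gives no proof of this statement; it only cites Auslander--Smal{\o}. Your Steps 1 and 2, the duality reduction, and the trace argument for contravariant finiteness are the standard arguments and are correct. Step 1 uses the Auslander--Reiten formula and $\tau$-rigidity to get $\ext^1(X,\fac X)=0$, and Step 2 lifts $X^r\twoheadrightarrow B$ through $E$. Note that $\tau$-rigidity only enters through extension closure: $\fac(X)$ is functorially finite for every finite-dimensional $X$.

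The gap is covariant finiteness of $\fac(X)$, which your proposal never establishes. This is also the half the paper needs: under your duality, contravariant finiteness of $\sub(V)$ is exactly covariant finiteness of $\fac(DV)$.

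Your direct construction targets the wrong obstruction. A map $g\colon M\to Z$, where $\pi\colon X^r\twoheadrightarrow Z$, lifts to $X^r$ exactly when its image under $\Hom(M,Z)\to\ext^1(M,\ker\pi)$ vanishes. That is an $\ext^1(M,-)$ condition, not an $\ext^1(X,-)$ one. Also, $\mathrm{Im}\,g\subseteq X^m$ need not lie in $\fac(X)$. For a concrete failure, let $X=P_i$ be an indecomposable projective over $\propi$ whose socle is not $S_i$, and let $M=S_i$. Then $M\in\fac(X)$ and $\Hom(M,X)=0$, so your procedure returns $M\to 0$, and $\mathrm{id}_M$ does not factor through it. Your fallback is a citation of the fact being proved, so it does not close the gap.

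The missing idea is that $\fac(X)$ is closed under quotients, so it is enough to approximate the indecomposable projectives.
\begin{itemize}
\item \textbf{Projectives.} Let $b_1,\dots,b_d$ be a $\mathbb{C}$-basis of $e_iX$ and define $f_i\colon \propi e_i\to X^d$ by $e_i\mapsto(b_1,\dots,b_d)$. This is a left $\fac(X)$-approximation. Indeed, take $Z\in\fac(X)$ with $\pi\colon X^r\twoheadrightarrow Z$ and $z\in e_iZ$. Since $e_iZ=\pi(e_iX^r)$, we have $z=\pi(x_1,\dots,x_r)$ with $x_j=\sum_l c_{jl}b_l$. Hence $z=\psi(b_1,\dots,b_d)$ for the module map $\psi(y_1,\dots,y_d)=\pi\bigl(\sum_l c_{1l}y_l,\dots,\sum_l c_{rl}y_l\bigr)$.
\item \textbf{General $M$.} Take $p\colon P\twoheadrightarrow M$ with kernel $K$, and let $f\colon P\to T$ be the direct sum of the maps $f_i$. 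Then $M\to T/f(K)$ is a left $\fac(X)$-approximation. Any $g\colon M\to Z$ with $Z\in\fac(X)$ gives $gp=hf$, and $h(f(K))=g(p(K))=0$. So $h$ factors through $T/f(K)$, and since $p$ is surjective this factorisation recovers $g$.
\end{itemize}
With this step your argument is a complete, self-contained proof, and no appeal to Smal{\o}'s results is needed.
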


\section{A duality functor}
In this section we dualise the constructions from the previous sections using a result of Auslander.

\begin{Theorem}[Proposition 7.2, \cite{Aus}]
    There is a duality functor \[\begin{aligned} (-)^{\vee}\colon \cm(&C_{k,n}) \rightarrow \cm(C_{n-k,n})\\ & M \longmapsto \Hom_{\mathbb{C}[[t]]}(M,\mathbb{C}[[t]]). \end{aligned}\] 
\end{Theorem}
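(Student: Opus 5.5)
The plan is to check directly that $\Hom_{\mathbb{C}[[t]]}(-,\mathbb{C}[[t]])$ turns $C_{k,n}$-modules that are free over $\mathbb{C}[[t]]$ into $C_{n-k,n}$-modules of the same kind, and that it is its own quasi-inverse. There are three steps: identify the opposite algebra, check that the dual lands in $\cm(C_{n-k,n})$, and verify the duality properties.

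\emph{Step 1: the opposite algebra.} I will show $C_{k,n}^{\mathrm{op}} \cong C_{n-k,n}$. The isomorphism is the identity on vertices and sends the opposite of $x_i\colon i\to i+1$ to the arrow $y_i'\colon i+1\to i$ of $C_{n-k,n}$, and the opposite of $y_i$ to $x_i'\colon i\to i+1$.
\begin{itemize}
\item The commutation relation $xy=yx$ is sent to $y'x'=x'y'$.
\item The relation $x^k=y^{n-k}$ becomes $y'^{\,k}=x'^{\,n-k}$, which is exactly the defining relation of $C_{n-k,n}$.
\item The central element $t=xy$ goes to $y'x'=x'y'=t'$.
\item Both algebras are completed path algebras modulo closed ideals, and the map is continuous for the arrow-ideal-adic topologies, so it passes to the completions.
\end{itemize}
Hence a right $C_{k,n}$-module is the same thing as a left $C_{n-k,n}$-module, and the two copies of $\mathbb{C}[[t]]$ are identified.

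\emph{Step 2: the dual lies in $\cm(C_{n-k,n})$.} Take $M\in\cm(C_{k,n})$ and write $M=\bigoplus_i e_iM$, with each $e_iM\cong\mathbb{C}[[t]]^r$. Set
\[M^\vee=\bigoplus_i \Hom_{\mathbb{C}[[t]]}(e_iM,\mathbb{C}[[t]]).\]
Each arrow $a\colon e_iM\to e_jM$ is $\mathbb{C}[[t]]$-linear because $t$ is central. So precomposition with $a$ gives a $\mathbb{C}[[t]]$-linear map $(e_jM)^\vee\to(e_iM)^\vee$. These transposed maps satisfy the opposite relations, so $M^\vee$ is a right $C_{k,n}$-module, hence a left $C_{n-k,n}$-module by Step 1. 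The $\mathbb{C}[[t]]$-action on $M^\vee$ induced by $t'$ is the original one, and each $(e_iM)^\vee\cong\mathbb{C}[[t]]^r$ is free. Therefore $M^\vee\in\cm(C_{n-k,n})$, with $\rank M^\vee=\rank M$. Morphisms dualise by precomposition, which gives a contravariant additive functor.

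\emph{Step 3: duality.} The canonical evaluation map $M\to M^{\vee\vee}$ is a vertexwise isomorphism, because each $e_iM$ is free of finite rank over $\mathbb{C}[[t]]$. It commutes with the arrow actions, since transposing twice returns the original map. It is natural in $M$. The same construction with the roles of $k$ and $n-k$ swapped gives the inverse functor, so $(-)^\vee$ is a duality.

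In addition, dualising a short exact sequence in $\cm$ is exact, because free modules are projective over $\mathbb{C}[[t]]$ and so $\ext^1_{\mathbb{C}[[t]]}(-,\mathbb{C}[[t]])$ vanishes on them. So the duality is compatible with the exact structures.

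The main point requiring care is Step 1: matching the relation $x^k=y^{n-k}$ with the correct parameter $n-k$ after reversing arrows, and checking compatibility with the completion. The rest is routine linear algebra over $\mathbb{C}[[t]]$.
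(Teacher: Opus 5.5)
Your argument is correct and matches the paper's own justification. The paper simply cites Auslander and remarks that $\Hom_{\mathbb{C}[[t]]}(-,\mathbb{C}[[t]])$ turns left $C_{k,n}$-modules into right ones, which become left $C_{n-k,n}$-modules via $C_{k,n}^{\mathrm{op}}\cong C_{n-k,n}$. You carry out these steps explicitly, including matching $x^k=y^{n-k}$ with the $C_{n-k,n}$ relation and the double-dual check, instead of appealing to Auslander's general duality for lattices.

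One small point of phrasing: the $C_{k,n}$-module structure on $M^\vee$ over the completed algebra is cleanest if you let $C_{k,n}$ act through $C_{k,n}\to\mathrm{End}_{\mathbb{C}[[t]]}(M)$, i.e.\ $(f\cdot a)(m)=f(am)$. Then no separate completion check is needed.
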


The functor $(-)^{\vee}$ is contravariant and sends left $C_{k,n}$ modules to right $C_{k,n}$ modules. We then use the equivalence $\Mod C_{k,n}^{op} \cong C_{k,n} \Mod$ and the isomorphism $C_{k,n}^{op} \cong C_{n-k,n}.$ 

\begin{Definition}
    \[\opfa(I,i) = \{M \in \cm(C): \exists\, r \in \mathbb{N}, \, M \hookrightarrow M_I^{r} 
 \text{ which surjects at vertex i}\},\]

 \[\opsu(I,i) = \{M \in \opfa(I,i): \ext^{1}_C(M,M_I) = 0\}.\]
\end{Definition}

Given $I \in \binom{[n]}{k}$ we will denote by $I^c \in \binom{[n]}{n-k}$ its complement in $[n].$

\begin{Proposition}
    The duality \[(-)^{\vee}\colon \cm(C_{k,n}) \rightarrow \cm(C_{n-k,n})\] restricts to a duality \[(-)^{\vee}\colon \opsu(I,i) \rightarrow \su(I^{c},i).\]
\end{Proposition}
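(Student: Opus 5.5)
The plan is to check directly that $(-)^{\vee}$ carries both defining conditions of $\opsu(I,i)$ to those of $\su(I^c,i)$. Since $(-)^{\vee}$ is an involutive duality on Cohen--Macaulay modules (its square is the identity, up to the identification $C_{n-k,n}^{op}\cong C_{k,n}$), the same argument in reverse then shows that it restricts to a duality. Throughout I use that $(-)^{\vee}=\Hom_{\mathbb{C}[[t]]}(-,\mathbb{C}[[t]])$ is exact on $\cm(C)$, because every object is free over $\mathbb{C}[[t]]$. It therefore sends short exact sequences in $\cm(C_{k,n})$ to short exact sequences in $\cm(C_{n-k,n})$ and induces isomorphisms $\ext^1_{C_{k,n}}(M,N)\cong \ext^1_{C_{n-k,n}}(N^{\vee},M^{\vee})$. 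It also acts vertexwise: $e_j(M^{\vee})=\Hom_{\mathbb{C}[[t]]}(e_jM,\mathbb{C}[[t]])$.

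\textbf{Step 1: $M_I^{\vee}\cong M_{I^c}$.} This is where the conventions must be tracked, and I expect it to be the main (if essentially bookkeeping) obstacle. On $M_I$ each vertex carries $\mathbb{C}[[t]]$. The arrow $x_j$ acts by $t$ if $j\in I$ and by $1$ otherwise, and $y_j$ acts by the complementary power, so that $x_jy_j=y_jx_j=t$. Dualising transposes the $1\times 1$ matrices and reverses the arrows. Under $C_{k,n}^{op}\cong C_{n-k,n}$, which exchanges the roles of $x$ and $y$ and fixes the vertex labels, the arrow $x_j$ of $C_{n-k,n}$ acts on $M_I^{\vee}$ by $t$ exactly when $j\notin I$. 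The result is a rank one module of $\cm(C_{n-k,n})$, so by Proposition 5.2 of \cite{JKS} it is $M_{I^c}$. I would sanity-check this on a small case such as $\gr(2,4)$, to make sure that no cyclic shift of vertex labels sneaks in.

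\textbf{Step 2: the $\fa$ condition.} Take $M\in\opfa(I,i)$ with an embedding $g\colon M\hookrightarrow M_I^r$ surjecting at vertex $i$. As in Lemma \ref{fa char}, $g_i$ is then an isomorphism $e_iM\xrightarrow{\sim}e_iM_I^r$. Dualising gives $g^{\vee}\colon M_{I^c}^r\to M^{\vee}$, and its component at vertex $i$ is the dual of an isomorphism of free $\mathbb{C}[[t]]$-modules, hence again an isomorphism. By Lemma \ref{fa char} (applied in $\cm(C_{n-k,n})$), $M^{\vee}\in\fa(I^c,i)$.

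\textbf{Step 3: the $\ext$ condition.} Let $M\in\opsu(I,i)$. By the duality isomorphism and Step 1,
\[0=\ext^1(M,M_I)\cong\ext^1(M_I^{\vee},M^{\vee})=\ext^1(M_{I^c},M^{\vee}).\]
This $\ext$-group has the arguments in the opposite order to the one required in $\su(I^c,i)$. To fix this I invoke the stable $2$-Calabi--Yau property of $\cm(C_{n-k,n})$ from \cite{JKS}, which gives $\ext^1(X,Y)\cong D\ext^1(Y,X)$. Hence $\ext^1(M^{\vee},M_{I^c})=0$ and $M^{\vee}\in\su(I^c,i)$.

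\textbf{Step 4: the restricted functor is a duality.} The inverse direction is the same argument run backwards, using the reverse duality from $\cm(C_{n-k,n})$ to $\cm(C_{k,n})$ and the fact that $(I^c)^c=I$. For $N\in\su(I^c,i)$, a map $M_{I^c}^r\to N$ that is an isomorphism at vertex $i$ dualises to a map $N^{\vee}\to M_I^r$ that is an isomorphism at vertex $i$. Since $\cm(C)$ is closed under submodules and this map is injective at vertex $i$, it is injective everywhere, exactly as in Lemma \ref{fa char}. This gives $N^{\vee}\in\opfa(I,i)$, and the $\ext$ condition transfers by Step 3 in reverse. Together with full faithfulness of $(-)^{\vee}$ on $\cm$, this shows the restriction is a duality $\opsu(I,i)\to\su(I^c,i)$.
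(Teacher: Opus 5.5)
Your proof is correct and follows the paper's route. You identify $M_I^{\vee}\cong M_{I^c}$ via the swap of $x$ and $y$, dualise the vertexwise isomorphism at $i$ and invoke Lemma \ref{fa char}, and then transfer the $\ext$ condition through the duality. Your explicit use of the stable 2-CY property of $\cm(C_{n-k,n})$ fills a step the paper leaves implicit: the paper's last line only gives $\ext^1(M_{I^c},M^{\vee})=0$, which has the arguments in the opposite order from the definition of $\su(I^c,i)$.

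There is one harmless slip in Step 1. The relation $x^k=y^{n-k}$ forces exactly $n-k$ of the arrows $x_j$ to act by $t$ on a rank one module, so on $M_I$ the arrow $x_j$ acts by $t$ for $j\notin I$ (not $j\in I$). Your conclusion $M_I^{\vee}\cong M_{I^c}$ is unaffected, because the argument only uses that the label is read off the $x$-arrows and the $y$-arrows are complementary. Note also that your proposed $\gr(2,4)$ check could not detect this, since $n=2k$ there.
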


\begin{proof}
    Firstly to see $M_I^{\vee} = M_{I^{c}}$ we note that the duality $(-)^{\vee}$ swaps the $x$ and $y$ arrows in the underlying quiver. The label of a rank one module is determined by the $x$ arrows so the label of $M_I^{\vee}$ are determined by the $y$ arrows. This gives us the label $I^{c}.$ We recall from Lemma \ref{fa char} that the categories $\su(I^c,i)$ and $\opsu(I,i)$ can be characterised in terms of maps inducing isomorphisms at vertices. In particular the embedding \[M \hookrightarrow M_I^{r}\] that surjects at $i$ gives an isomorphism \[e_iM \cong e_i(M_I^{r}).\] This gives a map \[(M_I^{r})^{\vee} = M_{I^c}^{r} \rightarrow M^{\vee}\] which restricts to an isomorphism \[e_i(M_{I^c}^{r}) \cong e_iM^{\vee}.\] As in the proof of Lemma \ref{fa char} this gives an embedding which surjects at vertex $i.$ In particular we see that $(\opfa(I,i))^{\vee} \cong (\fa(I^{c},i))^{op}.$ Finally, using the duality again we see \[\ext^{1}_{C_{k,n}}(M,M_I) = 0 \Leftrightarrow \ext^{1}_{C_{n-k,n}}(M_{I^c}, M^{\vee}) = 0.\]
\end{proof}

\begin{Theorem} \label{intro thm 2}
    There is a $w \in \quotmin$ such that \[\opsu(I,i)/\langle M_I \rangle \cong \downcat w\] and there is a triangle equivalence \[\underline{\smash{\opsu(I,i)}} \cong \underline{\smash{\downcat w}}\]
\end{Theorem}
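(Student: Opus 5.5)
The plan is to transport Theorem \ref{intro thm 1} across Auslander's duality. By the preceding Proposition, $(-)^{\vee}$ restricts to a duality $\opsu(I,i)\to\su(I^c,i)$ with $(M_I)^{\vee}=M_{I^c}$. Theorem \ref{intro thm 1}, applied in $\cm(C_{n-k,n})$, gives a unique $u\in W^{n-k}\backslash W^{\max}$ with $I^c=z_i(u^{-1}[n-k])$, together with an equivalence $\su(I^c,i)/\langle M_{I^c}\rangle\cong \emb i(\mathcal{C}^{u})$ (with $k$ replaced by $n-k$). This equivalence is induced by $\pi_{I^c}$, and it descends to a triangle equivalence of stable categories. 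Composing with $(-)^{\vee}$ therefore gives an anti-equivalence
\[\opsu(I,i)/\langle M_I\rangle \simeq \big(\mathcal{C}^{u}\big)^{op}.\]
Since $(-)^\vee$ takes the ideal $\langle M_I\rangle$ to $\langle M_{I^c}\rangle$ and preserves projective-injectives, it also gives a duality of stable categories.

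The second step is to convert $(\mathcal{C}^u)^{op}$ into a category $\downcat w$ for $\propi$. I would use the standard duality $D=\Hom_{\mathbb{C}}(-,\mathbb{C})$ on finite dimensional $\propi$-modules, composed with the isomorphism $\propi^{op}\cong\propi$. This identification should also account for the diagram automorphism $j\mapsto n-j$, which appears because passing from $C_{k,n}$ to $C_{n-k,n}$ swaps $x$ and $y$. Under $D$, injectives $Q_j$ go to projectives $P_j$, submodule categories go to factor categories, and $\soc{u^{-1}}$ goes to the dual top-removal functor. We therefore get $D(\sub(\soc{u^{-1}}Q_{n-k}))=\fac(\text{dual})$. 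The key claim is that this equals $\fac(\head{w^{-1}w_0}Q_j)$ for suitable $w$. To see this, I would use the identity $D(\soc{y}Q_j)\cong \head{y^{-1}}$-type truncations of projectives, and the fact from \cite{GLSpar} that for $\propi$ the projective $P_j$ is isomorphic to $Q_{n-j}$ (preprojective algebras of Dynkin type are self-injective with Nakayama permutation $j\mapsto n-j$). Combining these, $D$ sends $\mathcal{C}^u$ to $\downcat w$ with $w$ obtained from $u$ by conjugating with $w_0$ and multiplying by $w_0$, for example $w=w_0 u w_0\cdot w_0$ up to inversion. One then checks that this $w$ lies in $\quotmin$, since $u$ is maximal in its $W^{n-k}$-coset and conjugation by $w_0$ exchanges $W^{n-k}$ with $W^k$. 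The torsion-pair formulation of Lemma \ref{torsion} can also be used here, since $D$ swaps torsion and torsion-free classes, as a consistency check.

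The third step is bookkeeping. Composing $(-)^\vee$ with $D$ and the diagram automorphism yields a covariant functor, and hence an equivalence $\opsu(I,i)/\langle M_I\rangle\cong\downcat w$. The stable statement follows because every functor in the chain is exact, is a duality or equivalence on the relevant Frobenius categories, and preserves projective-injectives. Contravariant dualities of Frobenius categories induce triangle anti-equivalences, and the two reversals cancel. Alternatively, one can argue that the composite is exact and sends projectives to projectives, as in the proof of Theorem \ref{theorem}, and apply Happel's theorem.

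I expect the main obstacle to be the second step: pinning down precisely which $w$ arises and verifying that $D(\mathcal{C}^u)=\downcat w$ on the nose. This requires a careful description of how $\soc{y}$ dualizes under $D$ combined with the Nakayama twist, including checking $D\soc{y}Q_j\cong\head{y'}P_{j'}$ with the correct $y'$. I would first test this on $v=w_0^k$ and on a single simple reflection, and then induct on length using the braid relations of \cite{GLSpar}.
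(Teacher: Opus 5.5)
Your proposal is correct and follows the same route as the paper. That route is Auslander's duality to $\su(I^c,i)$, then Theorem~\ref{intro thm 1} over $C_{n-k,n}$, then $D=\Hom_{\mathbb{C}}(-,\mathbb{C})$ turning $\sub$ into $\fac$, with the stable statement obtained as in Theorem~\ref{theorem}. Your explicit computation $D\,\soc{u^{-1}}Q_j\cong\head{u^{-1}}P_j\cong\head{u^{-1}}Q_{n-j}$, which gives $w=w_0u\in\quotmin$, makes the paper's brief socle argument for $D(V)$ concrete. One caution: the twist $j\mapsto n-j$ comes only from this Nakayama permutation, because $C_{k,n}^{op}\cong C_{n-k,n}$ fixes vertices. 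So you should not compose with a further diagram automorphism; in general that would move $w$ out of $\quotmin$.
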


\begin{proof}
    We will use the relation between $\opsu(I,i)$ and $\su(I^c,i).$ Notice that the functor $\pi_{I^c}:\su(I^c,i) \rightarrow \Mod \propi^{op}$ has essential image $\upcat v \subseteq \Mod \propi^{op}$ where $v \in \left(W^{n-k}\backslash W\right)^{max}$ such that $v^{-1}[n-k] = I^c.$ 
    We use the diagram 
    \[\begin{tikzcd}
	{\opsu(I,i)/\langle M_I\rangle} &&& {\su(I^{c},i)^{op}/\langle M_{I^c}\rangle} \\
	\\
	{\fac(D(V))} &&& {(\upcat v )^{op} = \sub(V)^{op}}
	\arrow["{(-)^{\vee}}", from=1-1, to=1-4]
	\arrow[dashed, from=1-1, to=3-1]
	\arrow["{\pi_{I^c}}", from=1-4, to=3-4]
	\arrow["{D = \Hom(-,\mathbb{C})}", from=3-4, to=3-1]
    \end{tikzcd}\] where each arrow is an equivalence. We note that the top and bottom arrows in this diagram would typically be considered dualities without passing to the opposite categories. In order to consider everything as covariant equivalences we have passed to the opposite categories on the right and consider the top and bottom arrows to be equivalences rather than dualities. The $\mathbb{C}$-linear dual gives an equivalence from $\sub(V)^{op}$ to $\fac(D(V))$ since $\propi$ is finite dimensional and $\Mod \mathbb{C}$ is a semisimple category. It is worth explaining where each of these categories lives. We have $\opsu(I,i) \subseteq \cm(C_{k,n})$ and $\su(I^c,i) \subseteq \cm(C_{n-k,n}).$ The category $\left(\upcat v\right)^{op} \subseteq \left(\Mod \propi^{op}\right)^{op}$ so $\fac(D(V)) \subseteq \Mod \propi.$ We observe that $D(V)$ has a simple socle at vertex $i-k$ since $V$ has a simple top at vertex $i-k.$ This means there is a $w \in \quotmin$ such that \[D(V) = \emb i (\head {w^{-1}w_0} Q_k).\] More precisely $w$ is the unique element of $\quotmin$ such that $w^{-1}w_0[k] = z_i^{-1}I.$ Further, the dual version of Lemma \ref{sub-lec} tells us that $\fac(D(V)) = \downcat w .$ The triangle equivalence follows from the same argument as for Theorem \ref{theorem}.
\end{proof}

\section{Lifting Leclerc's categories: the Schubert and opposite Schubert case}
We have shown in the previous sections how to lift Leclerc's categories $\upcat v$ to $\cm(C)$ in the case $v \in \quotmax$ and $\downcat w$ in the case $w \in \quotmin.$ These categorify opposite Schubert cells and Schubert cells in Grassmannians. While the intersection $\upcat v \cap \downcat w$ for $v \in \quotmax, w \in \quotmin$ is not a category naturally associated to a Richardson variety we may still try intersecting the lifts to see how this relates to the intersection of opposite Schubert cells and Schubert cells in Grassmannians. We will show that, surprisingly this approach actually gives a lift of the correct category $\lec v w$ at the level of preprojective algebras. 

Given $I,J \in \binom{[n]}{k}$ we define \[\rij(I,J) = \{M \in \cm(C): \text{ any embedding } M_I^{\rank M} \hookrightarrow M_J^{\rank M} \text{ factors via M}\}.\] We further define \[\calrij(I,J) = \{M \in \rij(I,J): \ext^{1}_C(M,M_I\oplus M_J) = 0\}.\] 

\begin{Lemma} \label{grass intersection}
    If $M_J \in \fa(I,j)$ \[\rij(I,J) = \fa(I,j) \cap \opfa(J,j),\] \[\calrij(I,J) = \su(I,j) \cap \opsu(J,j).\]
\end{Lemma}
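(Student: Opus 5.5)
We prove the two equalities in turn. The second follows from the first once we note that $\ext^1_C(M,M_I\oplus M_J)=0$ splits as $\ext^1_C(M,M_I)=0$ and $\ext^1_C(M,M_J)=0$. These are exactly the extra conditions that cut out $\su(I,j)$ inside $\fa(I,j)$ and $\opsu(J,j)$ inside $\opfa(J,j)$. So the real work is the first equality. Throughout we use Lemma \ref{fa char}, together with its evident dual for $\opfa(J,j)$: a monomorphism which surjects at vertex $j$ is the same thing as a morphism which is an isomorphism at vertex $j$. We also use the hypothesis $M_J \in \fa(I,j)$, which gives a fixed embedding $\iota\colon M_I \hookrightarrow M_J$ that is an isomorphism at vertex $j$.

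For $\supseteq$, take $M \in \fa(I,j)\cap\opfa(J,j)$ of rank $r$. By Lemma \ref{fa char} we have $f\colon M_I^r\to M$ and $g\colon M\to M_J^r$, each an isomorphism at vertex $j$. The composite $gf\colon M_I^r\to M_J^r$ is then an embedding which is an isomorphism at vertex $j$. Now let $h\colon M_I^r\hookrightarrow M_J^r$ be an arbitrary embedding; we must show it factors through $M$. Since $\Hom_C(M_I,M_J)\cong\mathbb{C}[[t]]$ is generated by $\iota$ (the rank one computation behind Lemma \ref{equiv}), $h$ is a matrix $A\cdot\iota$ with $A\in M_r(\mathbb{C}[[t]])$, and likewise $gf=B\cdot\iota$. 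As $gf$ is an isomorphism at vertex $j$, and $\iota_j$ is the identity on $\mathbb{C}[[t]]$, the matrix $B$ is invertible. Hence $h=(AB^{-1})\cdot gf = g\circ\bigl(f\circ (AB^{-1})\bigr)$, where $AB^{-1}$ is read as an endomorphism of $M_I^r$. This gives the required factorisation, so $M\in\rij(I,J)$.

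For $\subseteq$, take $M\in\rij(I,J)$ of rank $r$. Apply the definition to the embedding $\iota^{\oplus r}\colon M_I^r\hookrightarrow M_J^r$, which is an isomorphism at vertex $j$. This gives $\iota^{\oplus r}=g\circ f$ with $f\colon M_I^r\to M$ and $g\colon M\to M_J^r$. At vertex $j$ all three modules are free of rank $r$ over $\mathbb{C}[[t]]$, and $g_jf_j$ is an isomorphism. So $f_j$ is split injective and $g_j$ is split surjective between free modules of equal rank, which forces both to be isomorphisms. By Lemma \ref{fa char} and its dual, $M\in\fa(I,j)\cap\opfa(J,j)$.

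The step I expect to need the most care is the identification of an arbitrary embedding $M_I^r\hookrightarrow M_J^r$ as $A\cdot\iota$, and the check that invertibility of $B$ is detected at vertex $j$. This rests on $\Hom_C(M_I,M_J)$ being free of rank one with generator $\iota$, where $\iota$ is the map which is an isomorphism at $j$. That is the basis description $t^\alpha$ recalled before Lemma \ref{equiv}: a generator has $\alpha_j=0$ precisely because $I\leq_{j+1}J$, and this is the hypothesis $M_J\in\fa(I,j)$.
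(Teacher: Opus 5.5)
Your proposal is correct and follows the paper's route. For $\subseteq$ you apply the defining factorisation property to the embedding $M_I^r\hookrightarrow M_J^r$ that is an isomorphism at vertex $j$; for $\supseteq$ you note that the composite $M_I^r\to M\to M_J^r$ is an isomorphism at $j$, hence a generator, so every morphism $M_I^r\to M_J^r$ factors through it. You also supply details the paper leaves implicit: the rank count forcing $f_j$ and $g_j$ to be isomorphisms, and the explicit matrix factorisation (whether you write $AB^{-1}$ or $B^{-1}A$ depends only on your matrix convention, and the factorisation through $M$ exists either way).
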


\begin{proof}
    The second result follows immediately from the first. Since $M_J \in \fa(I,j)$ there is an embedding $M_I^{\rank M} \hookrightarrow M_J^{\rank M}$ that surjects at vertex $j.$ It follows that for $M \in \rij(I,J)$ then $M \in \fa(I,j)$ and $M \in \opfa(J,j).$ 

    Conversely if $M \in \fa(I,j) \cap \opfa(J,j)$ then the composition \[M_I^{\rank M} \hookrightarrow M \hookrightarrow M_J^{\rank M}\] surjects at vertex $j$ and so it is a $\mathbb{C}[[t]]$-generator for $\Hom(M_I,M_J)$ and $M \in \rij(I,J).$
\end{proof}

\begin{Definition}
    Let $j \in [n]$ be a vertex at which the maximal embedding \[M_I \hookrightarrow M_J\] surjects. Such a vertex always exists and differing choices won't affect what follows. Recall that by $z_j$ we denote the permutation given by $i \mapsto i+j \mod n.$ We define $v_I \in \quotmax$ to be the permutation such that \[v_I^{-1}[k] = z_j^{-1}(I).\] 

    Using the same $j \in [n]$ we then define $w_J \in \quotmax$ to be given by \[w_J^{-1}[k] = z_j^{-1}(J).\]
\end{Definition}

When $j = n$ the positroid variety associated to $v_I, w_J$ is the intersection $\overline{C^I} \cap \overline{C_J} \subseteq \gr(k,n).$ For other values of $j$ it is a cyclic shift of this intersection (see section 5.2 of \cite{KLS}). 

\begin{Theorem} \label{skew cat}
    Suppose $I\leq_{j+1} J$ then $\calrij(I,J) \subseteq \su(I,j)$ and under the quotient functor \[\pi_I: \su(I,j) \rightarrow \emb {j} (\upcat {v_I})\] we have $\pi_I(\calrij(I,J)) = \emb {j} (\lec {v_I} {w_J}).$    
\end{Theorem}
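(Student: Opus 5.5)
Reduce to $j=n$ via the autoequivalence $\mathcal{W}_j$: it sends $M_L\mapsto M_{z_j(L)}$, carries $\fa,\su,\opfa,\opsu$ to their shifted versions, and intertwines $\emb{j}$ with $\emb{n}$. After this shift, $I=v_I^{-1}[k]$ and $J=w_J^{-1}[k]$ with $I\leq J$. By Lemma \ref{equiv}, $I\leq J$ gives an embedding $M_I\hookrightarrow M_J$ surjecting at vertex $n$, so $M_J\in\fa(I,n)$. Lemma \ref{grass intersection} then gives $\calrij(I,J)=\su(I,n)\cap\opsu(J,n)$. This proves the inclusion $\calrij(I,J)\subseteq\su(I,n)$, and it remains to show $\pi_I(\su(I,n)\cap\opsu(J,n))=\lec{v_I}{w_J}$.

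Since $\pi_I(\su(I,n))=\upcat{v_I}$ by Proposition \ref{trick}, we need: for $M\in\su(I,n)$, $M\in\opsu(J,n)$ (up to adding copies of $M_I$) if and only if $\pi_I(M)\in\downcat{w_J}$. Set $v=v_I$, $Y=\head{v^{-1}w_0}Q_k=\pi_n(M_I)$ (Lemma \ref{ind-to-perm}) and $Y_J=\pi_n(M_J)=\head{w_J^{-1}w_0}Q_k$. For $M\in\fa(I,n)$ we have $\pi_n(M_I^r)\subseteq\pi_n(M)$, and by the argument of Proposition \ref{functor factor},
\[
\pi_I(M)=\pi_n(M)/Y^r .
\]
The condition $M\in\opfa(J,n)$ says that $M\hookrightarrow M_J^r$ surjects at $n$. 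Applying $\pi_n$, this becomes $Y^r\subseteq\pi_n(M)\subseteq Y_J^r$ inside $Q_k^r$. Hence $\pi_I(M)$ is a submodule of $Y_J^r/Y^r$. Conversely, a submodule $X\subseteq Y_J^r/Y^r$ with $X\in\sub(V)$ pulls back to a module between $Y^r$ and $Y_J^r$, and lifting as in Proposition \ref{Fac to sub} gives an $M$ lying in both $\fa(I,n)$ and $\opfa(J,n)$.

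The core identification is therefore
\[
Y_J/Y=\soc{v^{-1}}\head{w_J^{-1}w_0}Q_k ,
\]
which is the image of $Y_J$ in $\soc{v^{-1}}Q_k$, using $w_J\geq v$ and Leclerc's Lemma 3.4 together with the torsion-pair description of $\ker(Q_k\to\soc{v^{-1}}Q_k)=Y$. The plan is then to show
\[
\upcat v\cap\sub\bigl(\soc{v^{-1}}\head{w_J^{-1}w_0}Q_k\bigr)=\lec v{w_J}.
\]
This should follow from the $\lec{}{}$ analogue of Lemma \ref{sub-lec}, that $\lec v w=\sub(\soc{v^{-1}}\head{w^{-1}w_0}Q_k)$ for $v\in\quotmax$. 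The expected argument is the same as in Lemma \ref{sub-lec}: every projective–injective $\soc{v^{-1}}\head{w^{-1}w_0}Q_i$ embeds in a power of the $i=k$ one, because $v$ has $w_0^k$ as a left factor, and $\lec v w$ is closed under submodules in $\upcat v$ with these projective–injectives as cogenerators.

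The step I expect to be the main obstacle is the $\ext$ condition. For $M\in\su(I,n)$ with $\pi_I(M)\in\lec v{w_J}$, the lift constructed above lies in $\opfa(J,n)$, but it still has to satisfy $\ext^1(M,M_J)=0$. The plan here is to use the exact functor $\pi_I$ and the stable equivalence (Theorem \ref{theorem}), which give
\[
\ext^1_C(M,M_J)\cong\ext^1_{\propi}(\pi_IM,\pi_IM_J),
\]
where $\pi_IM_J=Y_J/Y$ is the $k$-th projective–injective of $\lec v{w_J}$. This $\ext$ group should vanish when $\pi_IM\in\lec v{w_J}$, since $\lec v{w_J}$ is Frobenius and extension-closed in $\upcat v$; because $\pi_IM_J\in\lec v{w_J}$, the extension-closure argument applies. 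Conversely, $\opsu(J,n)\subseteq\opfa(J,n)$ already forces $\pi_I(M)\in\lec v{w_J}$ by the sandwich argument. Combining both directions gives equality of essential images.
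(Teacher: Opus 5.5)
\textbf{There is a genuine gap.} The inclusion $\sub(\soc{v^{-1}}\head{w^{-1}w_0}Q_k)\subseteq\lec v w$ in your proposed analogue of Lemma \ref{sub-lec} is false, and your argument for $\pi_I(\calrij(I,J))\subseteq\lec v{w_J}$ rests on it.

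What does hold up: the reduction to $j=n$, the identification $\calrij(I,J)=\su(I,n)\cap\opsu(J,n)$, the sandwich giving $\pi_I(M)\in\sub(\pi_I(M_J))$, and $\pi_I(M_J)=\soc{v^{-1}}\head{w_J^{-1}w_0}Q_k$. Lemma \ref{sub-lec} works because $\upcat v$ is a $\sub$-category by definition. By contrast, $\downcat w$ is a torsion class, closed under quotients but not under submodules, so $\lec v w$ is usually not submodule-closed.

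Here is a counterexample. Take $k=2$, $n=4$, $I=\{1,2\}$ and $J=\{2,4\}$, so that $v_I=w_0^k$, $M_I=\cmcpro n$ and $\pi_I=\pi_n$.
\begin{itemize}
\item $\pi_I(M_J)=\pi_n(M_{24})$ is the radical of $Q_2$, and its socle is $S_2=\pi_n(M_{13})$.
\item $\soc{w_J^{-1}}Q_2=\pi_J(\cmcpro 2)=Q_2/\pi_n(M_{24})\cong S_2$, so $\Hom(S_2,\upcat{w_J})\neq0$.
\item Hence $S_2\in\sub(\pi_IM_J)$ but $S_2\notin\lec{v_I}{w_J}$.
\end{itemize}
On the lifted side, $M_{13}\in\su(I,n)\cap\opfa(J,n)$, since $12\leq13\leq24$ and $12,13$ are weakly separated. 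Yet $\pi_I(M_{13})\notin\lec{v_I}{w_J}$. The only thing excluding $M_{13}$ from $\calrij(I,J)$ is $\ext^1_C(M_{13},M_{24})\neq0$.

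So your sentence ``$\opsu(J,n)\subseteq\opfa(J,n)$ already forces $\pi_I(M)\in\lec v{w_J}$'' fails. The condition $\ext^1(M,M_J)=0$ is needed precisely for the inclusion $\pi_I(\calrij(I,J))\subseteq\lec v{w_J}$, not only for the reverse one.

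\textbf{What is missing is how the Ext condition turns into the torsion condition.} The paper proceeds as follows.
\begin{itemize}
\item It writes $\lec v{w_J}=\{U\in\upcat v:\Hom(U,\soc{w_J^{-1}}Q_k)=0\}$, using the torsion pair together with Lemma \ref{sub-lec} for $w_J$.
\item It uses the exact sequence $\pi_I(M_J)\hookrightarrow\soc{v^{-1}}Q_k\twoheadrightarrow\soc{w_J^{-1}}Q_k$. This comes from pushing out $M_J\hookrightarrow\cmcpro{n-k}\twoheadrightarrow\pi_J(\cmcpro{n-k})$ along $M_J\twoheadrightarrow\pi_I(M_J)$.
\item It applies $\Hom(U,-)$, using that $\soc{v^{-1}}Q_k$ is injective in $\upcat v$.
\item If $\Hom(U,\soc{w_J^{-1}}Q_k)=0$, then $U$ embeds in a power of $\pi_I(M_J)$ and $\ext^1(U,\pi_IM_J)=0$.
\item Conversely, let $U=\pi_I(M)$ with $M\in\rij(I,J)$. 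Every map $U\to\soc{v^{-1}}Q_k$ factors through $\pi_I(M_J)$, by the factorisation property defining $\rij(I,J)$. Therefore $\ext^1(U,\pi_IM_J)=0$ forces $\Hom(U,\soc{w_J^{-1}}Q_k)=0$.
\end{itemize}

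One further caution, which applies equally to the paper's brief appeal to Theorem \ref{intro thm 1} at this point. Theorem \ref{theorem} only supplies the isomorphism $\ext^1_C(M,M_J)\cong\ext^1(\pi_IM,\pi_IM_J)$ when $M_J\in\su(I,n)$. This fails when $I$ and $J$ are not weakly separated; take $M=M_I$ in Example \ref{example}. In that case this step needs a separate argument.
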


\begin{proof}
    First observe that since \[M_I \hookrightarrow M_J\] surjects at vertex $j,$ \[\calrij(I,J) \subseteq \su(I,j)\] by Lemma \ref{grass intersection}. By Theorem \ref{intro thm 1} we know that \[\pi_I: \su(I,j) \rightarrow \emb {j}(\upcat {v_I})\] is a quotient by $M_I$ and induces an equivalence of extension groups. Using this we see that $\calrij(I,J)$ is sent to \[\{U \in \sub(\pi_I(M_J)): \ext^{1}(U,\pi_I(M_J)) = 0\} \subseteq \upcat {v_I}.\] It therefore suffices to prove that \[\{U \in \sub(\pi_I(M_J)): \ext^{1}(U,\pi_I(M_J)) = 0\} = \lec {v_I} {w_J}.\] Since $(\downcat w, \upcat w)$ is a torsion pair we can make identifications: 
    \[\begin{aligned}
        \lec {v_I} {w_J} &= \upcat {v_I} \cap \downcat {w_J} \\ 
        &= \{U \in \upcat {v_I}: \Hom(U, \upcat {w_J}) = 0\} \\
        &= \{U \in \sub(\pi_I(\cmcpro{j-k})) : \Hom(U,\pi_J(\cmcpro{j-k})) = 0\}.
    \end{aligned}\] We will now show \[\{U \in \sub(\pi_I(M_J)): \ext^{1}(U,\pi_I(M_J)) = 0\} = \{U \in \sub(\pi_I(\cmcpro{j-k})) : \Hom(U,\pi_J(\cmcpro{j-k})) = 0\}.\] First, observe that by taking the exact sequence \[M_J \hookrightarrow \cmcpro{j-k} \twoheadrightarrow \pi_J(\cmcpro{j-k})\] and pushing out via the morphism \[M_J \twoheadrightarrow \pi_I(M_J)\] we get a commuting diagram     
    \[\begin{tikzcd}
	{M_I} && {M_I} \\
	\\
	{M_J} && {\cmcpro{j-k}} && {\pi_J(\cmcpro{j-k})} \\
	\\
	{\pi_I(M_J)} && {\pi_I(\cmcpro{j-k})} && {\pi_J(\cmcpro{j-k}).}
	\arrow[shift right, no head, from=1-1, to=1-3]
	\arrow[shift left, no head, from=1-1, to=1-3]
	\arrow[hook, from=1-1, to=3-1]
	\arrow[hook, from=1-3, to=3-3]
	\arrow[hook, from=3-1, to=3-3]
	\arrow[two heads, from=3-1, to=5-1]
	\arrow[two heads, from=3-3, to=3-5]
	\arrow[two heads, from=3-3, to=5-3]
	\arrow[shift right, no head, from=3-5, to=5-5]
	\arrow[shift left, no head, from=3-5, to=5-5]
	\arrow[hook, from=5-1, to=5-3]
	\arrow[two heads, from=5-3, to=5-5]
    \end{tikzcd}\] This gives us the exact sequence \[\pi_I(M_J) \hookrightarrow \pi_I(\cmcpro{j-k}) \twoheadrightarrow \pi_J(\cmcpro{j-k}).\] Take $U \in \{U \in \sub(\pi_I(M_J)): \ext^{1}(U,\pi_I(M_J)) = 0\}.$ By applying $\Hom(U,-)$ to this exact sequence we get \[\Hom(U,\pi_I(M_J)) \hookrightarrow \Hom(U,\pi_I(\cmcpro{j-k})) \twoheadrightarrow \Hom(U,\pi_J(\cmcpro{j-k})).\] Since for $M \in \rij(I,J)$ we know that any morphism \[M \rightarrow \cmcpro{j-k}^{\rank M}\] factors via $M_J^{\rank M}$ it follows that any morphism \[\pi_I(M) \rightarrow \pi_I(\cmcpro{j-k}^{\rank M})\] factors via the embedding \[\pi_I(M_J^{\rank M}) \hookrightarrow \pi_I(\cmcpro{j-k}^{\rank M}).\] When applied to the exact sequence \[\Hom(U,\pi_I(M_J)^{\rank M}) \hookrightarrow \Hom(U,\pi_I(\cmcpro{j-k})^{\rank M}) \twoheadrightarrow \Hom(U,\pi_J(\cmcpro{j-k})^{\rank M})\] this implies the first map is a surjection so $\Hom(U,\pi_J(\cmcpro{j-k})) = 0.$    
    Conversely take $U \in \{U \in \sub(\pi_I(\cmcpro{j-k})) : \Hom(U,\pi_J(\cmcpro{j-k})) = 0\}.$ Since \[\Hom(U,\pi_J(\cmcpro{j-k})) = 0\] we see from the universality of kernels applied to 
    \[\begin{tikzcd}
	&& U \\
	\\
	{\pi_I(M_J)^r} && {\pi_I(\cmcpro{j-k})^r} && {\pi_J(\cmcpro{j-k})^r}
	\arrow[dotted, hook, from=1-3, to=3-1]
	\arrow[hook, from=1-3, to=3-3]
	\arrow[dashed, from=1-3, to=3-5]
	\arrow[hook, from=3-1, to=3-3]
	\arrow[two heads, from=3-3, to=3-5]
    \end{tikzcd}\]
    that $U \in \sub(\pi_I(M_J)).$ Again applying $\Hom(U,-)$ to the same exact sequence before we get \[\ext^{1}(U,\pi_I(M_J)) \hookrightarrow \ext^{1}(U,\pi_I(\cmcpro{j-k})).\] Since $\pi_I(\cmcpro{j-k})$ is injective in $\upcat {v_I}$ then $\ext^{1}(U,\pi_I(M_J)) = 0.$    
\end{proof}

\begin{Remark}
    It is easy to observe that our categories $\fa(I,i)$ and $\su(I,i)$ are special cases of $\rij(I,J)$ and $\calrij(I,J)$ respectively.
\end{Remark}

\begin{Proposition}
    The category $\calrij(I,J)$ is Frobenius.
\end{Proposition}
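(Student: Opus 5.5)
The plan is to show that $\calrij(I,J)$ is extension closed in $\cm(C)$, so that it inherits an exact structure. Then we exhibit enough projective-injective objects by lifting the projective-injectives of $\lec{v_I}{w_J}$ through $\pi_I$. Throughout we work under the standing hypothesis of Theorem \ref{skew cat}, that $I \leq_{j+1} J$, so that $M_J \in \fa(I,j)$. Lemma \ref{grass intersection} then identifies $\calrij(I,J) = \su(I,j)\cap\opsu(J,j)$.

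\textbf{Extension closure.} The category $\su(I,j)$ is extension closed by Corollary \ref{ext-closed}, after transporting by the autoequivalence $\mathcal{W}$ to the case of vertex $n$. Dually, $\opsu(J,j)$ is extension closed. To see this, apply $(-)^\vee$: it identifies $\opsu(J,j)$ with $\su(J^c,j)$, and it is exact on $\cm(C)$ because $\cm(C)$ consists of free $\mathbb{C}[[t]]$-modules, so $(-)^\vee$ preserves conflations. An intersection of extension closed subcategories is extension closed. Hence $\calrij(I,J)$ is an exact category, and every object $M$ in it satisfies $\ext^1(M, M_I\oplus M_J)=0$. Note that $M_I, M_J \in \calrij(I,J)$, since rank one modules are rigid and $\ext^1(M_J,M_I)=0$ by weak separation / the fact that $M_J\in\fa(I,j)$ lifts a $\soc{}$-projective.

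\textbf{Projective-injectives.} By Theorem \ref{skew cat}, $\pi_I$ maps $\calrij(I,J)$ onto $\emb j(\lec{v_I}{w_J})$. It is exact on conflations (Proposition \ref{exactness}) and induces $\ext^1_C(M,N)\cong\ext^1(\pi_IM,\pi_IN)$ on $\su(I,j)$. By Leclerc's Lemma 3.15, $\lec{v_I}{w_J}$ is Frobenius with projective-injectives $P_i=\soc{v_I^{-1}}\head{w_J^{-1}w_0}Q_i$. Let $\widetilde{P_i}\in\calrij(I,J)$ be lifts of these objects. Such lifts exist by Theorem \ref{skew cat}, and by Proposition \ref{unique lift} they are unique up to summands of $M_I$. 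Set $T=M_I\oplus\bigoplus_i\widetilde{P_i}$. For any $M\in\calrij(I,J)$ we have
\[\ext^1_C(M,T)\cong\ext^1(\pi_IM,\pi_IT)=0, \qquad \ext^1_C(T,M)\cong\ext^1(\pi_IT,\pi_IM)=0,\]
using for the $M_I$ summand that $\ext^1(M,M_I)=0$ by definition and $\ext^1(M_I,M)=0$ because $M\in\su(I,j)$ (Proposition \ref{extend-ext}-type argument, since $M_I$ is projective-injective in $\su(I,j)$). Hence each summand of $T$ is both projective and injective in $\calrij(I,J)$.

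\textbf{Enough projectives and injectives.} Given $M\in\calrij(I,J)$, take a deflation $P\twoheadrightarrow\pi_IM$ in $\lec{v_I}{w_J}$ with $P\in\mathrm{add}\,\bigoplus P_i$, with kernel $K\in\lec{v_I}{w_J}$. Lift $K$ to $\widetilde K\in\calrij(I,J)$. The conflation $K\hookrightarrow P\twoheadrightarrow\pi_IM$ corresponds to an element of $\ext^1(\pi_IM,K)\cong\ext^1_C(M,\widetilde K)$, which we realise as a conflation $\widetilde K\hookrightarrow E\twoheadrightarrow M$ in $\calrij(I,J)$; this uses extension closure. Then $\pi_IE\cong P$, so $E\in\mathrm{add}\,T$ up to $M_I$-summands by Proposition \ref{unique lift}. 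This gives a deflation from $\mathrm{add}\,T$. The injective side is dual, using injective envelopes in $\lec{v_I}{w_J}$. Finally, every projective object is a summand of an object of $\mathrm{add}\,T$, hence injective, and conversely. So projectives and injectives coincide and $\calrij(I,J)$ is Frobenius.

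\textbf{Main obstacle.} The delicate step is the middle one, namely checking that the $\ext^1$-isomorphism under $\pi_I$ applies in both variables. In particular we need $\ext^1_C(M_I,M)=0$ for all $M\in\calrij(I,J)$, which requires knowing that $M_I$ is projective-injective in $\su(I,j)$, i.e.\ $M_I=M_{I(j')}$ for some $j'$. I would get this from Corollary \ref{Class Proj}, noting that $I(j+1)=I$ when $\leq_{j+1}$-minimality is trivial. If that fails, I would instead use the $\opsu(J,j)$ side: $\ext^1(M,M_J)=0$ together with the surjection-at-$j$ embedding $M_I\hookrightarrow M_J$ should transport the vanishing.
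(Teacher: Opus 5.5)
Your overall route matches the paper's. The paper takes an injective envelope $\pi_I(M)\hookrightarrow A\twoheadrightarrow N$ in $\lec{v_I}{w_J}$ and lifts it through $\pi_I$ to a conflation $M\hookrightarrow\widetilde A\twoheadrightarrow\widetilde N$, which lies in $\calrij(I,J)$ by extension closure. It then reads off from the $\ext^1$-isomorphism that $\widetilde A$ is projective-injective. Your deflation argument is the dual of this. Your duality argument for extension closure of $\opsu(J,j)$ supplies a step the paper only asserts.

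There is one false claim, though: ``$M_I, M_J\in\calrij(I,J)$'' fails in general. By definition, $M_I\in\calrij(I,J)$ requires $\ext^1_C(M_I,M_J)=0$, and $M_J\in\calrij(I,J)$ requires $\ext^1_C(M_J,M_I)=0$. Both amount to $I$ and $J$ being weakly separated, and $I\le_{j+1}J$ does not give this. Example~\ref{example} is a counterexample: for $I=124$, $J=357$ one has $\dim\ext^1_C(M_{124},M_{357})=1$, so neither rank one module lies in $\calrij(I,J)$. There the lift of the projective-injective $\pi_I(M_J)$ is the rank two module $M_{\frac{135}{247}}$, not $M_J$. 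So your $T=M_I\oplus\bigoplus_i\widetilde{P_i}$ is in general not an object of $\calrij(I,J)$, and listing $M_I$ among its projective-injectives is wrong.

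The repair is easy. Drop $M_I$, and the sentence about $M_J$, and take $T=\bigoplus_i\widetilde{P_i}$ with the lifts $\widetilde{P_i}\in\calrij(I,J)$ you already chose. In fact you do not need $T$ at all. For every $N\in\calrij(I,J)$, the $\ext^1$-isomorphism on $\su(I,j)$ gives
\[
\ext^1_C(E,N)\cong\ext^1(P,\pi_IN)=0
\quad\text{and}\quad
\ext^1_C(N,E)\cong\ext^1(\pi_IN,P)=0,
\]
where $E$ is the middle term of your lifted conflation. So $E$ is projective-injective whatever $M_I$-summands it has, and any projective object is a direct summand of such an $E$ via a split deflation.

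This also makes your ``main obstacle'' moot. Even in the weakly separated case, where $M_I$ does lie in $\calrij(I,J)$, the vanishing $\ext^1_C(M_I,M)=0$ follows from the same isomorphism, because $\pi_I(M_I)=0$.
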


\begin{proof}
    Take an injective envelope \[\pi_I(M) \hookrightarrow A \twoheadrightarrow N\] in $\lec {v_I} {w_J}$ and lift this via Theorem \ref{theorem} to a sequence \[M \hookrightarrow \widetilde{A} \twoheadrightarrow \widetilde{N}\] in $\Mod C.$ All of these terms are in $\calrij(I,J)$ since $\calrij(I,J)$ is extension closed and since Theorem \ref{theorem} gives us a triangle equivalence we know $\ext^1_C(A,-)$ and $\ext^1_C(-,A)$ vanish on $\calrij(I,J)$ so $A$ is projective-injective. A dual argument shows there are enough projectives.
\end{proof}

The categories $\rij(I,J)$ and $\calrij(I,J)$ generalise the categories introduced in the earlier sections of this paper. However in the next example we demonstrate that in general the $\calrij(I,J)$ are quite different to the $\su(I,j).$

\begin{Example} \label{example}
    Consider $\gr(3,7)$ and the modules $M_{124}, M_{357} \in \cm(C).$ There is an embedding \[M_{124} \hookrightarrow M_{357}\] that surjects at vertex 7 so we will use this vertex to give $v = v_I = w_0^k s_3$ and $w = w_J = w_0^k s_3s_4s_5s_6s_2s_3s_4s_1s_2.$ The module $\pi_I(M_J) = \soc {{v}^-1} \head {{w}^{-1}w_0} Q_k$ is projective-injective in $\lec {v} {w}.$ However its lift is not $M_J = M_{357}.$ That is because the module $M_{357}$ is not in the category $\calrij(I,J)$ since $\dim \ext^{1}(M_{124},M_{357}) = 1.$ Instead, to find the lift of this projective-injective to $\calrij(I,J)$ we use the proof of Proposition \ref{trick} and the exact sequence in $\cm(C)$ \[M_{124} \hookrightarrow M_{\frac{135}{247}} \twoheadrightarrow M_{357}\] to see that the rank two module $M_{\frac{135}{247}}$ is a projective-injective indecomposable module in $\calrij(I,J).$ This also tells us that these categories in general do not coincide with those of the form $\gp(B)$ described in \cite{JKS3}. In particular, the combinatorics of Postnikov diagrams and plabic graphs will not be able to recover the projective injective objects in $\calrij(I,J).$ 
\end{Example}

\section{Lifting Projective-Injective objects: the positroid case}
In this section we will construct $L_i \in \binom{[n]}{k}$ such that $\pi_I(M_{L_i}) = \soc {v^{-1}} \head {w^{-1}w_0} Q_i$ when $v \in \quotmax$ and $w \geq v.$ We will do this in stages. First we construct a lift of $\head{w^{-1}w_0} Q_i$ to $\cm(C_{i,n}).$ We then lift $\soc {w^k_0} \head {w^{-1}w_0} Q_i$ to $\cm(C_{k,n}).$ Finally we use a similar process to Lemma \ref{projective index} to lift $\soc {v^{-1}} \head {w^{-1}w_0} Q_i$ to $\fa(I,n).$ For $l \geq j$ we denote by $[j,l]$ the interval $\{j,\cdots,l\}.$

\begin{Lemma}
    For $j \in [n-1],$ $r \in W$ and, $r([n+1-j,n]) \in \binom{[n]}{j}$ let $M_{r([n+1-j,n])}$ denote the associated rank one module in $\cm(C_{j,n}).$ \[\pi_n(M_{r([n+1-j,n])}) = \head {r} Q_j.\]
\end{Lemma}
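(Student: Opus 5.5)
We argue by induction on the length of $r$, following the pattern of the proof of Lemma \ref{index v proof}. Since $\pi_n$ is the quotient by $\cmcpro n$, for $N = M_{r([n+1-j,n])}$ we have an exact sequence $\cmcpro n \hookrightarrow N \twoheadrightarrow \pi_n(N)$, where the embedding surjects at vertex $n$. Through the map $\pi_n(N) \hookrightarrow \pi_n(\cmcpro{n-j}) = Q_j$ (here we work in $\cm(C_{j,n})$, where $\cmcpro{n-j}$ is the projective with top at $n-j$), we may regard $\pi_n(N)$ as a submodule of $Q_j$. So the task is to identify which submodule of $Q_j$ it is. The parametrisation goes as follows: the label $[n+1-j,n]$ corresponds to $r = e$, which gives $\head{e} Q_j = Q_j$. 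Likewise $[j] = w_0[n+1-j,n]$ corresponds to $\cmcpro n = M_{[j]}$, and there $\pi_n(\cmcpro n) = 0 = \head{w_0} Q_j$. Both extremes are consistent.

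\textbf{Base case.} For $r = e$ we need $\pi_n(M_{[n+1-j,n]}) \cong Q_j$. This is the statement from \cite{JKS} (used already in Lemma \ref{index v proof}) that $\pi_n(\cmcpro{n-j}) = Q_j$, applied in $\cm(C_{j,n})$. Note that $M_{[n+1-j,n]} = \cmcpro{n-j}$, since $\cmcpro i = M_{[i+1,i+j]}$.

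\textbf{Inductive step.} Write $r = s_i r'$ with $l(r) = l(r') + 1$. Then $r([n+1-j,n]) = s_i(r'([n+1-j,n]))$, and by length additivity this is either equal to $r'([n+1-j,n])$ or is obtained from it by replacing $i+1$ with $i$.
\begin{itemize}
\item In the first case, $\head{s_i}$ fixes $\head{r'}Q_j$. The argument is the same as in Lemma \ref{index v proof}: the top of $\head{r'}Q_j$ has no $S_i$ summand precisely when $i,i+1$ both lie in the label or both lie outside it. So both sides are unchanged.
\item In the second case, the label $L = s_i L'$ satisfies $L \leq L'$, so Lemma \ref{equiv} (with vertex $n$) gives an embedding $M_L \hookrightarrow M_{L'}$ that surjects at $n$. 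Its cokernel is the simple $S_i$, because the labels differ in exactly one arrow. Applying $\pi_n$ and the snake lemma to the diagram comparing $\cmcpro n \hookrightarrow M_L$ with $\cmcpro n \hookrightarrow M_{L'}$ yields an exact sequence $\pi_n(M_L) \hookrightarrow \pi_n(M_{L'}) \twoheadrightarrow S_i$. By induction $\pi_n(M_{L'}) = \head{r'}Q_j$, so $\pi_n(M_L)$ is a submodule of corank $S_i$.
\end{itemize}

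The main obstacle is showing that this submodule is exactly $\head{s_i}\head{r'}Q_j$, that is, that $\head{r'}Q_j$ has top multiplicity $a_i = 1$ and that the quotient map is the canonical one. The top multiplicity is at most $1$: $\head{r'}Q_j$ is a submodule of $Q_j$, which is uniserial-like in each degree, and its top at $i$ is governed by the label, namely by whether $i+1 \in L'$ and $i \notin L'$. I would check $a_i = 1$ directly from the combinatorial description of $\pi_n(M_{L'})$ inside $Q_j$ as a Young-diagram-shaped submodule, in which removing a box at vertex $i$ matches the change of label. Once $a_i = 1$ is established, any surjection onto $S_i$ has kernel $\head{s_i}$ of the module, which completes the induction.
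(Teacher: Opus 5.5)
Your proof is correct, but it takes a different route from the paper's. The paper does not induct on the top-removal side at all. It takes the socle-side statement from the proof of Lemma \ref{index v proof}, which holds for every element of $W$ once the induction starts at the identity, and applies it with $v^{-1}=rw_0$. This gives $M_{rw_0[j]}\hookrightarrow \cmcpro{n-j}\twoheadrightarrow \soc{rw_0}Q_j$. Factoring out $\cmcpro n$ identifies $\pi_n(M_{rw_0[j]})$ with $\ker(Q_j\twoheadrightarrow \soc{rw_0}Q_j)$. By \cite[Lemma 3.4]{Leclerc} this kernel is $\head{rw_0w_0}Q_j=\head{r}Q_j$, and $w_0[j]=[n+1-j,n]$ finishes the proof.

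Your argument is instead the direct dual of the induction in Lemma \ref{index v proof}, peeling off tops rather than socles. It is more self-contained: it never uses Leclerc's kernel identification, and combined with the socle-side lemma it actually recovers that identification for the $Q_j$. The cost is that it needs the local description of tops of rank-one modules, whereas the paper's proof is a two-line reduction to results already in hand.

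The step you flag as the main obstacle is already settled by what you have, so no Young-diagram computation is needed.
\begin{itemize}
\item The top of a quotient is a quotient of the top. Since $e_iM_{L'}\cong\mathbb{C}[[t]]$ and $t\,e_iM_{L'}$ lies in the radical, this gives $a_i(\pi_n(M_{L'}))\le a_i(M_{L'})\le 1$.
\item Moreover $e_i\Top M_{L'}\neq 0$ exactly when $i\notin L'$ and $i+1\in L'$. This is the convention under which $\cmcpro i=M_{[i+1,i+j]}$ has top at $i$.
\item In your second case, the surjection $\pi_n(M_{L'})\twoheadrightarrow S_i$ from the snake lemma forces $a_i=1$. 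Hence $\Hom(\pi_n(M_{L'}),S_i)$ is one-dimensional, and the kernel of that surjection is $\head{i}\pi_n(M_{L'})$.
\item In your first case, the same inequality gives $a_i=0$ directly. This is a better justification than appealing to Lemma \ref{index v proof}.
\end{itemize}
Your \emph{precisely when} is slightly too strong: the pattern $i\in L'$, $i+1\notin L'$ also has no top at $i$. That pattern is excluded by $l(s_ir')>l(r')$, so nothing breaks. Finally, your step $\head{s_ir'}=\head{i}\circ\head{r'}$ uses the same composition convention as the paper; compare $\soc{s_iw^{-1}}Q_k=\soc{i}\soc{w^{-1}}Q_k$ in Lemma \ref{index v proof}.
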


\begin{proof}
    The proof of Lemma \ref{index v proof} tells us that for $I = v^{-1}[j]$ then $\pi_I(\cmcpro{n-j}) = \soc {v^{-1}} Q_j.$ This was stated for $v \in \quotmax$ in the proof but the proof holds more generally for any $v \in W$ by using $v = id$ as the initial step of the induction. In particular, we apply this in the case $v = (w_0r)^{-1}$ and we have the exact sequence
    \[M_{rw_0[j]} \hookrightarrow \cmcpro{n-j} \twoheadrightarrow \soc {rw_0} Q_j.\] We pushout to get the diagram 
    \[\begin{tikzcd}
	{\cmcpro{n}} & {\cmcpro{n}} \\
	{M_{rw_0[j]}} & {\cmcpro{n-j}} & {\soc {rw_0} Q_j} \\
	{\pi_n(M_{rw_0[j]})} & {Q_j} & {\soc {rw_0} Q_j.}
	\arrow[shift right, no head, from=1-1, to=1-2]
	\arrow[shift left, no head, from=1-1, to=1-2]
	\arrow[hook, from=1-1, to=2-1]
	\arrow[hook, from=1-2, to=2-2]
	\arrow[hook, from=2-1, to=2-2]
	\arrow[two heads, from=2-1, to=3-1]
	\arrow[two heads, from=2-2, to=2-3]
	\arrow[two heads, from=2-2, to=3-2]
	\arrow[two heads, from=2-3, to=3-3]
	\arrow[hook, from=3-1, to=3-2]
	\arrow[two heads, from=3-2, to=3-3]
    \end{tikzcd}\] By \cite[Lemma 3.4]{Leclerc} we have $\pi_n(M_{rw_0[j]}) = \head {r w_0 w_0} Q_j = \head {r} Q_j$ and since $w_0[j] = [n+1-j,n]$ this completes the proof.
\end{proof}

Given a subset $L \subseteq [n]$ we define $\mink L$ to be $L$ if $|L| \leq k$ and the $k$-subset consisting of the $k$ smallest elements of $L$ if $|L|>k.$ Observe that $w_0[n+1-i,n] = [i].$ 

\begin{corollary} \label{full proj}
    For $1 \leq i \leq k$ and $w \in W,$ $w \geq w_0^k$ let $J = ([k-i] \cup w^{-1}([i])) \in \binom{[n]}{k}$ \[\pi_n(M_J) = \head {w^{-1}w_0} \soc {w_0^k} Q_i.\] For $k \leq i \leq n-1$ let $J = \mink {w^{-1}([i])} \in \binom{[n]}{k}$ \[\pi_n(M_J) = \head {w^{-1}w_0} \soc {w_0^k} Q_i.\]
\end{corollary}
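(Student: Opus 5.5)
The plan is to reduce both cases to the preceding Lemma, applied in $\cm(C_{i,n})$ rather than $\cm(C_{k,n})$, and then pass from $Q_i$ to its quotient $\soc {w_0^k} Q_i$.

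\textbf{Step 1: a lift of $\head {w^{-1}w_0} Q_i$.} Take $j=i$ and $r = w^{-1}w_0$ in the preceding Lemma. Since $w_0[n+1-i,n]=[i]$, we have $r([n+1-i,n]) = w^{-1}[i]$, so
\[\pi_n(M_{w^{-1}[i]}) = \head {w^{-1}w_0} Q_i,\]
where $M_{w^{-1}[i]}$ is the rank one module in $\cm(C_{i,n})$. Via this we regard $\head {w^{-1}w_0} Q_i$ as a submodule of $Q_i$, encoded by the $i$-subset $w^{-1}[i]$. In the usual lattice-path or Young-diagram picture, submodules of $Q_i$ of the form $\pi_n(M_L)$ correspond to diagrams in an $i\times(n-i)$ rectangle, and $Q_k$ corresponds to the $k\times(n-k)$ rectangle.

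\textbf{Step 2: identify $\soc {w_0^k}$ on submodules of $Q_i$.} By Lemma \ref{tor-soc-proj} with $v = w_0^k$, $\soc {w_0^k} Q_i$ is the quotient of $Q_i$ by its torsion part for the pair $(\downcat{w_0^k},\upcat{w_0^k})$. As observed in Lemma \ref{sub-lec}, this quotient is a submodule of $Q_k$, hence lies in $\sub(Q_k)=\pi_n(\cm(C_{k,n}))$. I would then show that for $X=\head{w^{-1}w_0}Q_i \subseteq Q_i$, the module $\head{w^{-1}w_0}\soc{w_0^k}Q_i$ is the image of $X$ under $Q_i \twoheadrightarrow \soc{w_0^k}Q_i$. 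The argument I would try uses the torsion-pair description $\head u(Y) = t(Y)$ for $Y$ in the relevant torsion class, together with the hypothesis $w\geq w_0^k$ to guarantee that the torsion part of $Q_i$ for $w_0^k$ lies inside $X$. This yields a short exact sequence
\[t_{w_0^k}(Q_i) \hookrightarrow X \twoheadrightarrow \head{w^{-1}w_0}\soc{w_0^k}Q_i.\]

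\textbf{Step 3: the combinatorics of truncation.} Under the identification of Step 2, passing from $Q_i$ to $\soc{w_0^k}Q_i\subseteq Q_k$ intersects the $i\times(n-i)$ diagram with the $k\times(n-k)$ frame. I would prove the effect on labels by the same snake-lemma induction on simple reflections used in Lemma \ref{index v proof}. Starting from $L=[i]$ (so $w=w_0$, where both sides equal $\soc{w_0^k}Q_i$) and applying one $s_j$ at a time, each step changes both sides by a single simple $S_j$. On the right-hand side the label changes by the corresponding swap $j\leftrightarrow j+1$, unless that simple lies in the part cut away by $\soc{w_0^k}$, in which case nothing changes.

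Tracking this truncation gives the two formulas.
\begin{itemize}
\item For $i\ge k$, only the first $k$ up-steps of the path survive, so the label is $\mink{w^{-1}[i]}$.
\item For $i\le k$, the path is padded by $k-i$ initial up-steps, so the label is $[k-i]\cup w^{-1}[i]$. Since $w\ge w_0^k$, we have $w^{-1}[i]\cap[k-i]=\emptyset$, so this union is a $k$-subset.
\end{itemize}
Finally, the uniqueness of rank one lifts (Proposition 5.2 of \cite{JKS}, together with the fact that $\pi_n$ determines $M_J$) gives $\pi_n(M_J)=\head{w^{-1}w_0}\soc{w_0^k}Q_i$.

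\textbf{Main obstacle.} I expect Step 2 to be the hardest part: showing that $\head{w^{-1}w_0}$ commutes appropriately with the quotient $\soc{w_0^k}$, which is exactly where $w\ge w_0^k$ must be used. If the torsion-theoretic argument fails, I would fall back on the inductive snake-lemma computation of Step 3 applied directly to $\head{w^{-1}w_0}\soc{w_0^k}Q_i$, inducting on a reduced word for $w$ that extends one for $w_0^k$.
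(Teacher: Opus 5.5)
\textbf{Step 2.} Your Step~1 is exactly the application of the preceding lemma that the paper has in mind; the paper states the corollary with no further argument. The claim of Step~2 is true, but it is elementary rather than the main obstacle. For any epimorphism $p\colon M\twoheadrightarrow N$ one has $p(\head{j}M)=\head{j}N$: every map $N\to S_j$ pulls back to $M$, and $N/p(\head{j}M)$ is a quotient of $S_j^{a_j(M)}$.

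Iterating along a reduced word, $\head{w^{-1}w_0}\soc{w_0^k}Q_i$ is the image of $X=\head{w^{-1}w_0}Q_i$ in $Q_i/T$, where $T=\ker(Q_i\twoheadrightarrow\soc{w_0^k}Q_i)=\head{w_0^kw_0}Q_i$ by \cite[Lemma 3.4]{Leclerc}. Note that Lemma~\ref{tor-soc-proj} with $v=w_0^k$ only says $t_{w_0^k}(\soc{w_0^k}Q_i)=0$, so it does not give this. Finally, $w\geq w_0^k$ gives $T\subseteq X$, as in the proof of Lemma~\ref{top soc lem}.

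\textbf{Step 3: the gap.} The genuine gap is Step~3, which carries the whole content of the corollary.
\begin{itemize}
\item Your base case is misstated: for $w=w_0$ the label in $\cm(C_{i,n})$ is $w_0[i]=[n-i+1,n]$, not $[i]$.
\item More importantly, the identification $\soc{w_0^k}Q_i=\pi_n(M_J)$, with $J=[k-i]\cup[n-i+1,n]$ resp.\ $J=[n-i+1,n-i+k]$, is just the case $w=w_0$ of the statement. It is assumed rather than proved.
\item The inductive step needs to know which top composition factors of $X$ lie in $T$. You never identify $T$, so the agreement with taking the $k$ smallest elements, or padding by $[k-i]$, is only asserted.
\end{itemize}

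\textbf{How to close it.} The missing input comes from the same lemma with $r=w_0^kw_0$: $T=\pi_n(M_{w_0^k[i]})$ in $\cm(C_{i,n})$. Here $w_0^k[i]=[k-i+1,k]$ for $i\le k$, and $w_0^k[i]=[k]\cup[n-i+k+1,n]$ for $i\geq k$. With this, no induction is needed.
\begin{itemize}
\item The exponent computation in the proof of Lemma~\ref{equiv} gives $\dim e_m\pi_n(M_L)=|[j]\cap[m]|-|L\cap[m]|$ for $M_L\in\cm(C_{j,n})$. So the label is determined by the dimension vector of the image under $\pi_n$.
\item $X/T\subseteq\soc{w_0^k}Q_i\subseteq Q_k$ by Lemma~\ref{sub-lec}, and it equals $\pi_n$ of its rank one pullback along $\cmcpro{n-k}\twoheadrightarrow Q_k$.
\item Subtracting dimension vectors yields $e(J)=e([k])-e(w_0^k[i])+e(w^{-1}[i])$.
\item The Gale inequality $w^{-1}[i]\geq w_0^k[i]$ (from $w^{-1}\geq w_0^k$) forces $w^{-1}[i]\cap[k-i]=\emptyset$ when $i\le k$, and $[n-i+k+1,n]\subseteq w^{-1}[i]$ when $i\geq k$.
\end{itemize}
This gives $J=[k-i]\sqcup w^{-1}[i]$ when $i\le k$, and $J=w^{-1}[i]\setminus[n-i+k+1,n]=\mink{w^{-1}[i]}$ when $i\geq k$.
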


We generalise Lemma \ref{intersect} from before. 

\begin{Lemma} \label{top soc lem}
    For $v \in \quotmax,$ $v \leq w$ and $Y = \head {v^{-1}w_0} Q_k$ let $Z_i = \soc {w_0^k} \head {w^{-1}w_0} Q_i.$ We have $t_v(Z_i) = Z_i\cap Y = \ker(Z_i \twoheadrightarrow \soc {v^{-1}} Z_i).$ 
\end{Lemma}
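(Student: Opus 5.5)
The argument follows Lemma \ref{intersect} and Lemma \ref{tor-soc-proj}, with $\pi_n(\cmcpro j)$ replaced by $Z_i$. All modules are viewed inside $Q_k$. Since $Z_i = \soc {w_0^k}(\cdot)$ of a module, it is supported away from vertex $k$ in the socle, so $Z_i \subseteq \soc {w_0^k} Q_i \subseteq Q_k$ (as in Lemma \ref{sub-lec}). Also $Y = \head {v^{-1}w_0} Q_k \subseteq Q_k$, so the intersection $Z_i \cap Y$ makes sense. I will prove three inclusions:
\[t_v(Z_i) \subseteq Z_i \cap Y \subseteq \ker(Z_i \twoheadrightarrow \soc {v^{-1}} Z_i) \subseteq t_v(Z_i).\]

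\textbf{First inclusion.} By \cite[Lemma 3.4]{Leclerc} there is an exact sequence $Y \hookrightarrow Q_k \twoheadrightarrow \soc {v^{-1}} Q_k$, and $\soc {v^{-1}} Q_k \in \upcat v$. Consider the composite $t_v(Z_i) \hookrightarrow Z_i \hookrightarrow Q_k$. Since $t_v(Z_i) \in \downcat v$, the torsion pair of Lemma \ref{torsion} forces the composite $t_v(Z_i) \to \soc {v^{-1}} Q_k$ to vanish. Hence $t_v(Z_i)$ factors through $Y$, and $t_v(Z_i) \subseteq Z_i \cap Y$.

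\textbf{Second inclusion.} The module $Z_i \cap Y$ is a submodule of $Y \in \downcat v$. I claim it maps to zero in $\soc {v^{-1}} Z_i$. First, $\soc {v^{-1}}$ of anything in $\downcat v$ vanishes \cite[Section 3.2.5]{Leclerc}. Second, functoriality of $\soc {v^{-1}}$ together with the description $\soc {v^{-1}} Z_i \subseteq \soc {v^{-1}} Q_k$ (as in Lemma \ref{sub-lec}, where the quotient $Z_i \to \soc{v^{-1}}Z_i$ is compatible with $Q_k \to \soc{v^{-1}}Q_k$) shows that the quotient map $Z_i \twoheadrightarrow \soc{v^{-1}} Z_i$ is the restriction of $Q_k \twoheadrightarrow \soc {v^{-1}} Q_k$. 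That restriction kills $Y$, hence kills $Z_i \cap Y$.

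\textbf{Third inclusion.} Here I mimic Lemma \ref{tor-soc-proj}. Leclerc gives a surjection $Z_i/t_v(Z_i) \twoheadrightarrow \soc {v^{-1}} Z_i$; let $K$ be its kernel. Since $Z_i/t_v(Z_i) \in \upcat v$ and $\upcat v$ is closed under submodules, $K \in \upcat v$. It then suffices to show $K = 0$. In Lemma \ref{tor-soc-proj} this used that $\soc{v^{-1}}Q_j$ is projective-injective in $\upcat v$ and that $Z_i/t_v(Z_i)$ has a simple top. Both facts carry over here:
\begin{itemize}
\item $Z_i = \soc{w_0^k}\head{w^{-1}w_0}Q_i$ has simple top at $i$ (or is zero), so its quotient $Z_i/t_v(Z_i)$ has simple top too.
\item $\soc {v^{-1}} Z_i = \soc{v^{-1}}\head{w^{-1}w_0}Q_i$ is projective-injective in $\lec v w$ by Leclerc's Lemma 3.15. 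This is not automatically injective in all of $\upcat v$. However, the sequence $K \hookrightarrow Z_i/t_v(Z_i) \twoheadrightarrow \soc{v^{-1}}Z_i$ lies in $\lec v w$ if $Z_i/t_v(Z_i) \in \downcat w$. That holds because $\downcat w$ is closed under quotients and $\head{w^{-1}w_0}Q_i \in \downcat w$ surjects onto $Z_i$.
\end{itemize}
So I first check that $K \in \lec v w$; then the sequence splits, and the simple top forces $K = 0$.

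The main obstacle I expect is this third step: showing $K \in \downcat w$ so that injectivity in $\lec v w$ applies. This requires $\downcat w$ to be closed under the relevant submodules, or else a direct argument. As a fallback, I would avoid splitting entirely and compare dimensions. Leclerc's description of $\soc{v^{-1}}$ as an iterated socle-removal along a reduced word, applied inside $Q_k$, shows that $\ker(Z_i \to \soc{v^{-1}} Z_i)$ is generated by pieces lying in $\downcat v$. Since $\downcat v$ is closed under extensions and quotients, this places the kernel in $\downcat v$, so it is contained in $t_v(Z_i)$ by maximality of the torsion radical.
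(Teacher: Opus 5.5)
Your first two inclusions are correct and match the paper's argument. In fact your second step gives more: since $\soc{v^{-1}}$ preserves monomorphisms, $Z_i \twoheadrightarrow \soc{v^{-1}} Z_i$ is the restriction of $Q_k \twoheadrightarrow \soc{v^{-1}} Q_k$, so already $\ker(Z_i \twoheadrightarrow \soc{v^{-1}} Z_i) = Z_i \cap Y$. The whole content of the lemma is therefore that $Z_i \cap Y$ lies in $\downcat v$, and that is exactly where your proposal has a gap.

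Your splitting argument needs $K \hookrightarrow Z_i/t_v(Z_i) \twoheadrightarrow \soc{v^{-1}} Z_i$ to be a sequence in $\lec v w$. Nothing gives $K \in \downcat w$, since $\downcat w$ is not closed under submodules. You also cannot split in $\upcat v$, because $\soc{v^{-1}}\head{w^{-1}w_0}Q_i$ is in general not projective there. Moreover, even with a splitting, your conclusion $K=0$ relies on $Z_i$ having simple top, which is false in general. The functor $\head{u}$ strips top layers, and $\head{w^{-1}w_0}Q_i$ usually has non-simple top. For example, take $n=4$, $k=2$, $v = w_0^k = s_1s_3$, $w = w_0 s_2$, $i=2$. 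Then $Z_2 = \mathrm{rad}\, Q_2$, with top $S_1 \oplus S_3$ and $t_v(Z_2)=0$. The simple-top trick of Lemma \ref{tor-soc-proj} works only because $\overline{Q}_j$ is a quotient of $Q_j$.

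Your fallback does not close the gap either. The layers removed by the successive functors $\soc{j}$ are semisimple modules supported at the letters of $v^{-1}$, and such simples can be torsion-free. For $n=3$, $k=1$, $v=s_2s_1$, the category $\upcat v$ is $\mathrm{add}(S)$ for a simple $S$ with $\soc{v^{-1}}S=0$. There $\ker(S\to\soc{v^{-1}}S)=S$ while $t_v(S)=0$. Since modules in $\downcat v$ can have torsion-free composition factors, you can only see that the kernel is torsion globally, by identifying it.

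That identification is the missing idea, and it is where $v\le w$ enters:
\begin{itemize}
\item The hypothesis $v\le w$ gives $\head{v^{-1}w_0}Q_i\subseteq\head{w^{-1}w_0}Q_i$.
\item The sequence $\head{v^{-1}w_0}Q_i\hookrightarrow\head{w^{-1}w_0}Q_i\twoheadrightarrow\soc{v^{-1}}\head{w^{-1}w_0}Q_i$ is exact. The composite vanishes by the torsion pair, and the kernel of the second map lies in $\ker(Q_i\to\soc{v^{-1}}Q_i)=\head{v^{-1}w_0}Q_i$.
\item By \cite[Proposition 5.4]{GLSpar}, $t_{w_0^k}(Q_i)=\head{w_0^kw_0}Q_i$, and this is contained in both of the first two terms. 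The snake lemma then passes to $\soc{w_0^k}$ and yields $\ker(Z_i\twoheadrightarrow\soc{v^{-1}}Z_i)=\soc{w_0^k}\head{v^{-1}w_0}Q_i$.
\end{itemize}
This kernel is a quotient of $\head{v^{-1}w_0}Q_i\in\downcat v$. Hence it lies in $\downcat v$, and therefore in $t_v(Z_i)$.
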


\begin{proof}
    All three of these are naturally submodules of $Z_i.$ In order to show that they agree we will show $t_v(Z_i) \subseteq Z_i\cap Y \subseteq \ker(Z_i \twoheadrightarrow \soc {v^{-1}} Z_i) \subseteq t_v(Z_i).$ Since these are finite dimensional modules the result will then follow. The proof that $t_v(Z_i) \subseteq Z_i\cap Y$ is the same as in the proof of Lemma \ref{intersect}. Observe that $ Z_i \cap Y \subseteq \ker(Z_i \twoheadrightarrow \soc {v^{-1}} Z_i)$ since $\soc {v^{-1}} Y = 0$ so $Z_i\cap Y \subseteq \ker(Z_i \twoheadrightarrow \soc {v^{-1}} Z_i).$ It remains to show $\ker(Z_i \twoheadrightarrow \soc {v^{-1}} Z_i) \subseteq t_v(Z_i).$ Since $v \leq w$ then $\head {v^{-1}w_0} Q_i \subseteq \head {w^{-1}w_0} Q_i.$ Since $(\downcat v, \upcat v)$ is a torsion pair the composition 
    \[\head {v^{-1}w_0} Q_i \hookrightarrow \head {w^{-1}w_0} Q_i \twoheadrightarrow \soc {v^{-1}} \head {w^{-1}w_0} Q_i\] is zero. It is exact at the middle term since $\ker(\head {w^{-1}w_0} Q_i \twoheadrightarrow \soc {v^{-1}} \head {w^{-1}w_0} Q_i) \subseteq \ker(Q_i \twoheadrightarrow \soc {v^{-1}} Q_i) = \head {v^{-1}w_0} Q_i.$ 

    Proposition 5.4 of \cite{GLSpar} says $t_{w^k_0}(Q_i) = \head {w^k_0w_0} Q_i,$ and since $w^k_0 \leq v$ and we have inclusions $t_{w^k_0}(Q_i) = \head {w^k_0w_0} Q_i \subseteq \head {v^{-1}w_0} Q_i \subseteq \head {w^{-1}w_0} Q_i.$ This implies that 
    \[\begin{tikzcd}
    	{t_{w_0^k}(\head{v^{-1}w_0} Q_i)} & {t_{w_0^k}(\head {w^{-1}w_0} Q_i)} & 0 \\
    	{\head {v^{-1}w_0} Q_i} & {\head {w^{-1}w_0} Q_i} & {\soc {v^{-1}} \head {w^{-1}w_0} Q_i}
    	\arrow[from=1-1, to=1-2]
    	\arrow[hook, from=1-1, to=2-1]
    	\arrow[from=1-2, to=1-3]
    	\arrow[hook, from=1-2, to=2-2]
    	\arrow[from=1-3, to=2-3]
    	\arrow[from=2-1, to=2-2]
    	\arrow[from=2-2, to=2-3]
    \end{tikzcd}\] commutes. The snake lemma along with \cite[Proposition 5.4]{GLSpar} implies that $\ker(\soc {w_0^k} \head {w^{-1}w_0} Q_i \twoheadrightarrow \soc {v^{-1}} \head {w^{-1}w_0} Q_i) = \soc {w_0^k} \head {v^{-1}w_0} Q_i.$ Since $\soc {w_0^k} \head {v^{-1}w_0} Q_i \in \downcat v$ then this implies $\ker(Z_i \twoheadrightarrow \soc {v^{-1}} Z_i) \subseteq t_v(Z_i).$
\end{proof}

Once again we will follow the same process as in Lemma \ref{projective index}. Form the pushout diagram 

\[
\begin{tikzcd}
	{t_v(Z_i)} & {Z_i} & {\soc {v^{-1}}Z_i} \\
	{Y} & {X_i} & {\soc {v^{-1}} Z_i}
	\arrow[hook, from=1-1, to=1-2]
	\arrow[hook, from=1-1, to=2-1]
	\arrow[two heads, from=1-2, to=1-3]
	\arrow[hook, from=1-2, to=2-2]
	\arrow[from=1-3, to=2-3]
	\arrow[hook, from=2-1, to=2-2]
	\arrow[two heads, from=2-2, to=2-3]
\end{tikzcd}
\]

Since $\soc {w_0^k} (\soc {w_0^k} M) = \soc {w_0^k} M$ for any $M \in \Mod \propi$ we see $\soc {v^{-1}} Z_i = \soc {v^{-1}} \head {w^{-1}w_0} Q_i.$ Using $t_v(Z_i) = Z_i \cap Y$ again $X_i \subseteq Q_k$ by the same argument as in Lemma \ref{projective index}. This gives a subset $L_i \in \binom{[n]}{k}$ such that $\pi_n(M_{L_i}) = X_i$ and $M_{L_i} \in \fa(I,n).$ Furthermore we see $\pi_I(M_{L_i}) = \soc {v^{-1}} \head {w^{-1}w_0} Q_i.$ We have an exact sequence \[t_v(Z_i) \hookrightarrow Y \oplus Z_i \twoheadrightarrow X_i\] in $\upcat {w_0^k}.$ This lifts to \[M_R \hookrightarrow M_I \oplus M_J \twoheadrightarrow M_{L_i}\] in $\cm(C)$ by \cite[Corollary 4.6]{JKS}. Recall that if $1 \leq i \leq k$ then $J = [k-i] \cup w^{-1}[i].$ When $i >k$ then $J = \mink{w^{-1}[i]}.$ We will now use this to give a combinatorial construction of $L_i.$ Let $e_j, j \in [n]$ be a set of generators for $\mathbb{Z}^n.$ For $J \subseteq [n]$ let $e(J) \in \mathbb{Z}^n$ denote the element $\sum_{j \in J} e_j.$

\begin{Lemma}
    $L_i, \, i \in \{0,\cdots,n-1\}$ is characterised by \[e(L_i) = e(\mink {w^{-1}[i])} + e(I) - e(\mink {v^{-1}[i])}.\]
\end{Lemma}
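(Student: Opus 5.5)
The plan is to read off $e(L_i)$ from the short exact sequence
\[M_R \hookrightarrow M_I \oplus M_J \twoheadrightarrow M_{L_i}\]
constructed just above. Rank one modules in $\cm(C)$ are determined by their labels, and labels are additive along such sequences. Concretely, for each vertex $a$ of $Q$ the $x$-arrow $x_a$ acts on a rank one module $M_L$ either as an isomorphism or as multiplication by $t$, according to whether $a \notin L$ or $a \in L$. Taking lengths of cokernels of $x_a$ on $e_a(-)\otimes\mathbb{C}[[t]]/(t)$ is additive on exact sequences of free $\mathbb{C}[[t]]$-modules, because the snake lemma applies vertexwise. This gives
\[e(R) + e(L_i) = e(I) + e(J).\]
So $e(L_i) = e(I) + e(J) - e(R)$. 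It remains to identify $J$ and $R$.

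\emph{Step 1: identify $J$.} By Corollary \ref{full proj}, $J = \mink{w^{-1}[i]}$ when $i \geq k$. When $i \le k$, $J = [k-i]\cup w^{-1}[i]$. For $i<k$ I will rewrite the second case in the uniform shape by comparing with $R$ below. The padding $[k-i]$ should cancel against the corresponding padding appearing in $R$, since it is the same for $w$ and for $v$.

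\emph{Step 2: identify $R$.} $M_R$ is the lift of $t_v(Z_i)$. By Lemma \ref{top soc lem} and its proof,
\[t_v(Z_i) = \soc {w_0^k} \head {v^{-1}w_0} Q_i.\]
This is exactly the module $Z_i$ with $w$ replaced by $v$, which is allowed since $v \le v$ and $v\geq w_0^k$. Applying Corollary \ref{full proj} with $v$ in place of $w$ gives $R = \mink{v^{-1}[i]}$ for $i \ge k$, and $R = [k-i]\cup v^{-1}[i]$ for $i\le k$. Here I use that $\pi_n$ restricted to rank one modules is injective on labels, i.e. the lift in \cite[Corollary 4.6]{JKS} is unique. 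In the case $i \le k$ the two paddings $e([k-i])$ cancel in $e(J)-e(R)$. The formula must then be rewritten with $\mink{}$: one checks that $e([k-i]\cup w^{-1}[i]) - e([k-i]\cup v^{-1}[i]) = e(w^{-1}[i]) - e(v^{-1}[i])$, and for $i\le k$ we have $\mink{w^{-1}[i]} = w^{-1}[i]$. This gives the stated formula. The cases $i=0$ and $i=k$ are degenerate and consistent, with $e(L_0)=e(I)$.

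\emph{Step 3 and the main obstacle.} The delicate point is justifying the additivity of $e(\cdot)$. I expect this to be the main difficulty. I would argue it as follows. Reducing the exact sequence modulo $t$ stays exact, because $M_{L_i}$ is free. At vertex $a$ one then compares the ranks of the maps induced by $x_a$ on each term. Rank is not additive in general, so this needs more care. Instead I will use the determinant of $x_a\colon e_{a-1}M\to e_aM$ as a map of free $\mathbb{C}[[t]]$-modules. Its $t$-adic valuation is additive along short exact sequences, being multiplicative on block triangular matrices. For a rank one module $M_L$ this valuation is $1$ if $a\in L$ and $0$ otherwise. This gives the additivity identity, and the lemma follows.
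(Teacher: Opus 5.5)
Your proposal is correct and follows the paper's proof. Both read off $e(L_i)=e(I)+e(J)-e(R)$ from the lifted sequence $M_R \hookrightarrow M_I\oplus M_J \twoheadrightarrow M_{L_i}$, and both identify $J$ and $R$ by applying Corollary \ref{full proj} to $w$ and to $v$, using $t_v(Z_i)=\soc {w_0^k} \head {v^{-1}w_0} Q_i$ from the proof of Lemma \ref{top soc lem}. The one difference is that the paper cites \cite[Section 8]{JKS} for additivity of $e(\cdot)$, while your $t$-adic valuation of $\det x_a$ proves it directly; your convention for when $x_a$ acts by $t$ is the complement of JKS's, and your first mod-$t$ phrasing should be dropped, but neither affects the resulting identity.
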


\begin{proof}
    Recall that by \cite[Section 8]{JKS} then if \[M_{J_1} \hookrightarrow M_{J_2}\oplus M_{J_3} \twoheadrightarrow M_{J_4}\] we have $e(J_4) = e(J_2) + e(J_3) - e(J_1).$ We know that \[M_R \hookrightarrow M_I \oplus M_J \twoheadrightarrow M_{L_i}.\] We have shown that $J = \mink{w^{-1}[i]}$ when $i \geq k$ and $[k-i]\cup w^{-1}[i]$ otherwise. In the proof of Lemma \ref{top soc lem} we also showed that $t_v(Z_i) = \soc {w_0^k} \head {v^{-1}w_0} Q_i.$ Therefore $R = \mink{v^{-1}[i]}$ when $i \geq k$ and $R = [k-i] \cup v^{-1}[i]$ otherwise. The result then follows. 
\end{proof}

\begin{Proposition}
    \[\ext^1_C(M_{L_i},M_I) = \underline{\Hom}_{\propi}(\head {w^{-1}w_0} Q_i, \soc {v^{-1}} Q_k) = \ext^1_{\propi}(\head {w^{-1}w_0} Q_i, \head {v^{-1}w_0} Q_k).\]
\end{Proposition}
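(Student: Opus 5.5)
The plan is to work throughout in $\Mod \propi$, via the functor $\pi_n$ of \cite{JKS}, and to prove the two equalities separately, starting with the second.

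\textbf{Second equality.} Put $H = \head {w^{-1}w_0} Q_i$, $V = \soc {v^{-1}} Q_k$ and $Y = \head {v^{-1}w_0} Q_k$. By \cite[Lemma 3.4]{Leclerc} there is a short exact sequence $Y \hookrightarrow Q_k \twoheadrightarrow V$. Applying $\Hom_{\propi}(H,-)$ gives the exact sequence
\[\Hom(H,Q_k) \rightarrow \Hom(H,V) \rightarrow \ext^1(H,Y) \rightarrow \ext^1(H,Q_k) = 0,\]
so $\ext^1(H,Y)$ is $\Hom(H,V)$ modulo the maps that factor through $Q_k \twoheadrightarrow V$. Since $\propi$ is self-injective, projectives and injectives coincide. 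Any map $H \to E \to V$ with $E$ injective has $E \to V$ lifting through $Q_k \twoheadrightarrow V$, because $\ext^1(E,Y)=0$ for $E$ projective. Hence the maps factoring through $Q_k$ are exactly those factoring through an injective, and the quotient is $\underline{\Hom}(H,V)$.

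\textbf{Reduction of the first equality.} By \cite[Corollary 4.6]{JKS}, $\pi_n$ induces a triangle equivalence from the stable category of $\cm(C)$ onto that of $\sub(Q_k)$. Since $\pi_n(M_I)=Y$ (Lemma \ref{ind-to-perm}) and $\pi_n(M_{L_i})=X_i$, this gives
\[\ext^1_C(M_{L_i},M_I)=\ext^1_{\propi}(X_i,Y).\]
It therefore remains to show $\ext^1(X_i,Y)\cong \ext^1(H,Y)$, or equivalently, by the argument above, $\ext^1(X_i,Y)\cong\underline{\Hom}(H,V)$.

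\textbf{Comparing $H$, $Z_i$ and $W$.} Put $Z_i = \soc {w_0^k} H$ and $W = \soc {v^{-1}} Z_i$. The kernel of $H \twoheadrightarrow Z_i$ lies in $\downcat{w_0^k} \subseteq \downcat v$. By Lemma \ref{top soc lem}, the kernel of $Z_i \twoheadrightarrow W$ is $t_v(Z_i)$, which lies in $\downcat v$. Since $V \in \upcat v$, Lemma \ref{torsion} shows that every map $H\to V$ or $Z_i \to V$ kills these kernels. Hence
\[\Hom(H,V)=\Hom(Z_i,V)=\Hom(W,V).\]
The same factorisation holds for maps through injectives, since any such map $H\to V$ is in particular a map to $V$ and so also kills the torsion kernels. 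So $\underline{\Hom}(H,V)=\underline{\Hom}(W,V)$.

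\textbf{Comparing $W$ and $X_i$ (main obstacle).} From the pushout defining $X_i$ there is a short exact sequence $Y \hookrightarrow X_i \twoheadrightarrow W$. Applying $\Hom(-,Y)$ and using $\ext^1(Y,Y)=0$ (rigidity of $M_I$ together with \cite{JKS}) yields
\[\Hom(X_i,Y)\rightarrow\Hom(Y,Y) \xrightarrow{\delta} \ext^1(W,Y) \rightarrow \ext^1(X_i,Y) \rightarrow 0.\]
So I must show that the connecting map $\delta$ is zero modulo the identification $\ext^1(W,Y)=\underline{\Hom}(W,V)$. Equivalently, I must show that the class of the pushout sequence, and its composites with endomorphisms of $Y$, correspond to maps $W \to V$ factoring through $Q_k$. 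I expect this to be the main obstacle. I would attack it through the pushout diagram itself: the class of $Y\hookrightarrow X_i\twoheadrightarrow W$ is pulled back from $Y \hookrightarrow Q_k \twoheadrightarrow V$ along the map $W \to V$ induced by $X_i \subseteq Q_k$. That induced map factors through $Q_k$, since it is $X_i \hookrightarrow Q_k \twoheadrightarrow V$ restricted to $W$ via the splitting. If this splitting is not available, I would instead compare with the lifted sequence $M_R \hookrightarrow M_I \oplus M_J \twoheadrightarrow M_{L_i}$ and apply $\Hom(-,M_I)$, using $\ext^1(M_I,M_I)=0$ to control the terms.
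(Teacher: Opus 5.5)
Your second equality and the reduction $\ext^1_C(M_{L_i},M_I)\cong\ext^1_{\propi}(X_i,Y)$ are fine. The detour through $W=\soc{v^{-1}}Z_i$ does not work, however, and the step you flag as the main obstacle is false.

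Since $\delta(\mathrm{id}_Y)$ is the class of $Y\hookrightarrow X_i\twoheadrightarrow W$, showing $\delta=0$ amounts to showing that this sequence splits. But $X_i\subseteq Q_k$ has simple socle, hence is indecomposable. So the sequence is non-split whenever $Y\neq 0\neq W$, and there is no splitting to restrict along. The induced map $g\colon W\hookrightarrow V$ is therefore a nonzero element of $\underline{\Hom}(W,V)$. What does factor through $Q_k$ is the composite $Z_i\twoheadrightarrow W\xrightarrow{g}V$, via $Z_i\to X_i\subseteq Q_k$.

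Hence $\dim\ext^1(X_i,Y)<\dim\ext^1(W,Y)=\dim\underline{\Hom}(W,V)$, and your intermediate claim $\underline{\Hom}(H,V)=\underline{\Hom}(W,V)$ cannot hold. A minimal instance is $k=1$, $n=3$, $v=s_2s_1$ (so $I=\{2\}$), $w=w_0$, $i=1$. Then $Y=S_1$, $H=Z_1=X_1=Q_1$ and $W=V=S_2$. The sequence is $0\to S_1\to Q_1\to S_2\to 0$, and $\underline{\Hom}(H,V)=0$ whereas $\underline{\Hom}(W,V)=\mathbb{C}$.

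The error is the sentence ``the same factorisation holds for maps through injectives''. Killing a torsion kernel $K$ gives $\Hom(H/K,V)=\Hom(H,V)$. To conclude that a map $H/K\to V$ factors through $Q_k$ whenever its pullback to $H$ does, you need $\ext^1(H/K,Y)\to\ext^1(H,Y)$ to be injective, i.e.\ $\Hom(H,Y)\to\Hom(K,Y)$ to be onto.

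For $K=t_{w_0^k}Q_i$ this holds because $\Hom(K,Y)=0$, as $Y\subseteq Q_k$ lies in $\sub(Q_k)=\upcat{w_0^k}$. This is exactly what the paper's argument with the triangle $t_{w_0^k}Q_i\to H\to Z_i\to\Omega^{-1}(t_{w_0^k}Q_i)$ uses to get $\underline{\Hom}(H,V)=\underline{\Hom}(Z_i,V)$. For $K=t_v(Z_i)=Z_i\cap Y$ it fails. The inclusion $K\hookrightarrow Y$ does not extend to $Z_i$, because its image under the connecting map is precisely the non-split class of $Y\hookrightarrow X_i\twoheadrightarrow W$.

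So you must compare $X_i$ with $Z_i$, not with $W$. Use $t_v(Z_i)\hookrightarrow Y\oplus Z_i\twoheadrightarrow X_i$, lifted to $M_R\hookrightarrow M_I\oplus M_J\twoheadrightarrow M_{L_i}$, and apply $\Hom_C(-,M_I)$. Two inputs are needed:
\begin{itemize}
\item The map $\Hom_C(M_I\oplus M_J,M_I)\to\Hom_C(M_R,M_I)$ is onto, because $\Hom_C(M_R,M_I)\cong\mathbb{C}[[t]]$ is generated by the embedding $M_R\hookrightarrow M_I$ that surjects at vertex $n$.
\item $\ext^1_C(M_R,M_I)=0$.
\end{itemize}
Together these give $\ext^1_C(M_{L_i},M_I)\cong\ext^1_C(M_J,M_I)=\ext^1(Z_i,Y)$. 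Your fallback mentions only $\ext^1(M_I,M_I)=0$, which is not the input needed here.
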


\begin{proof}
    By applying $\Hom_C(-,M_I)$ to the sequence \[M_R \hookrightarrow M_J\oplus M_I \twoheadrightarrow M_{L_i}\] we use the same argument as in Proposition \ref{extend-ext} to see that \[ \ext^1_C(M_{L_i},M_I) \hookrightarrow \ext^1_C(M_J,M_I) \rightarrow \ext^1_C(M_R,M_I).\] Since $\ext^1_C(M_R,M_I) = \ext^1_C(\soc {w_0^k} \head {v^{-1}w_0} Q_i, \head {v^{-1}w_0} Q_k) = 0$ then $\ext^1_C(M_{L_i},M_I) = \ext^1_C(M_J,M_I) = \ext^1_{\propi}(Z_i, \head {v^{-1}w_0} Q_k).$ We know $\ext^1_{\propi}(X,Y) = \underline{\Hom}_{\propi}(X,\Omega^{-1}(Y))$ where $\Omega^{-1}(Y)$ is the cokernel of the injective envelope of $Y.$ For $\head{v^{-1}w_0} Q_k$ we have \[\Omega^{-1}(\head{v^{-1}w_0} Q_k) = \soc {v^{-1}} Q_k\] so \[\ext^1_C(M_{L_i},M_I) = \underline{\Hom}_{\propi}(Z_i, \soc {v^{-1}} Q_k).\]

    Finally consider the triangle \[t_{w_0^k} Q_i \rightarrow \head{w^{-1}w_0} Q_i \rightarrow Z_i \rightarrow \Omega^{-1}(t_{w_0^k} Q_i)\] in $\underline{\Mod} \propi$ and apply $\underline{\Hom}(-,\soc {v^{-1}} Q_k)$ to get the sequence \[\underline{\Hom}(\Omega^{-1}(t_{w_0^k} Q_i), \soc {v^{-1}} Q_k) \rightarrow \underline{\Hom}(Z_i,\soc {v^{-1}} Q_k) \rightarrow \underline{\Hom}(\head{w^{-1}w_0} Q_i, \soc {v^{-1}} Q_k) \rightarrow \underline{\Hom}(t_{w_0^k} Q_i,\soc {v^{-1}} Q_k). \] We have $\underline{\Hom}_{\propi}(\Omega^{-1}(t_{w_0^k} Q_i), \soc {v^{-1}} Q_k) = \underline{\Hom}_{\propi}(t_{w_0^k} Q_i, \head {v^{-1}w_0} Q_k) = 0$ since $\head {v^{-1}w_0} Q_k \in \upcat {w_0^k}.$ Notice that for \[f \in \underline{\Hom}_{\propi}(\head{w^{-1}w_0} Q_i, \soc {v^{-1}} Q_k)\] we have $\head {v^{-1}w_0} Q_i \subseteq \ker(f).$ But $t_{w_0^k} Q_i = \head {w_0^kw_0} Q_i \subseteq \head {v^{-1}w_0} Q_i$ so the final map is the zero map. In particular
    \[\underline{\Hom}_{\propi}(Z_i,\soc {v^{-1}} Q_k) = \underline{\Hom}_{\propi}(\head{w^{-1}w_0} Q_i, \soc {v^{-1}} Q_k) = \ext^1_{\propi}(\head {w^{-1}w_0} Q_i, \head {v^{-1}w_0} Q_k).\]
\end{proof}

We summarise the results of this section as follows.

\begin{Proposition} \label{label char}
    Given $v \in \quotmax, \, w \geq v,$ and $I = v^{-1}[k]$ let $L_i \in \binom{[n]}{k}$ be determined by \[e(L_i) = e(\mink{w^{-1}[i]}) + e(I) - e(\mink{v^{-1}[i]})\] for $i \in \{0,\cdots,n-1\}.$ We have $M_{L_i} \in \fa(I,n),$ $\pi_I(M_{L_i}) = \soc {v^{-1}} \head {w^{-1}w_0} Q_i,$ and \[\ext^1_C(M_{L_i},M_I) = \underline{\Hom}_{\propi}(\head {w^{-1}w_0} Q_i, \soc {v^{-1}} Q_k) = \ext^1_{\propi}(\head {w^{-1}w_0} Q_i, \head {v^{-1}w_0} Q_k).\]
\end{Proposition}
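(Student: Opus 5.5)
This proposition collects the results of the present section, so the plan is to assemble them in order rather than introduce new arguments.

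\textbf{Step 1: build the modules.} For each $i$ we set $Z_i = \soc {w_0^k} \head {w^{-1}w_0} Q_i$. Corollary \ref{full proj} realises $Z_i = \pi_n(M_J)$, where $J = [k-i]\cup w^{-1}[i]$ for $i\le k$ and $J=\mink{w^{-1}[i]}$ for $i \ge k$. Lemma \ref{top soc lem} gives $t_v(Z_i)=Z_i\cap Y$ with $Y=\head{v^{-1}w_0}Q_k$. We then form the pushout $X_i$ of $Y \hookleftarrow t_v(Z_i)\hookrightarrow Z_i$ and argue, exactly as in Lemma \ref{projective index}, that $X_i$ embeds in $Q_k$. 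Hence $X_i=\pi_n(M_{L_i})$ for some rank one $M_{L_i}$. The lift of $Y\hookrightarrow X_i$ is an embedding $M_I\hookrightarrow M_{L_i}$ surjecting at $n$, so $M_{L_i}\in\fa(I,n)$ and $L_i\ge I$. Proposition \ref{functor factor} then gives $\pi_I(M_{L_i})=\pi_Y(X_i)$. By the cokernel row of the pushout this equals $\soc{v^{-1}}Z_i=\soc{v^{-1}}\head{w^{-1}w_0}Q_i$, using idempotence of $\soc{w_0^k}$ and $w_0^k\le v^{-1}$ on the left.

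\textbf{Step 2: identify $L_i$.} We lift $t_v(Z_i)\hookrightarrow Y\oplus Z_i\twoheadrightarrow X_i$ to $M_R\hookrightarrow M_I\oplus M_J\twoheadrightarrow M_{L_i}$ via \cite[Corollary 4.6]{JKS}. The proof of Lemma \ref{top soc lem} shows $t_v(Z_i)=\soc{w_0^k}\head{v^{-1}w_0}Q_i$, so Corollary \ref{full proj} applied with $v$ in place of $w$ identifies $R$. The additivity $e(L_i)=e(I)+e(J)-e(R)$ from \cite[Section 8]{JKS} then yields the formula. For $i<k$ the $[k-i]$ parts cancel, and $\mink{\cdot}$ of a set of size $i<k$ is the set itself. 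Uniqueness of a $k$-subset with given $e(\cdot)$ gives the characterisation.

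\textbf{Step 3: the Ext computation, which I expect to be the main obstacle.} We apply $\Hom_C(-,M_I)$ to $M_R\hookrightarrow M_J\oplus M_I\twoheadrightarrow M_{L_i}$. The map $\Hom(M_J\oplus M_I,M_I)\to\Hom(M_R,M_I)$ is surjective: $M_R\to M_I$ surjects at $n$, so any map factors through it up to $t$-powers, as in Proposition \ref{extend-ext}. Together with $\ext^1(M_I,M_I)=0$ this gives
\[0\to\ext^1_C(M_{L_i},M_I)\to\ext^1_C(M_J,M_I)\to\ext^1_C(M_R,M_I).\]
The last term is $\ext^1_{\propi}(\pi_nM_R,Y)$ by the JKS stable equivalence. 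It vanishes because $\pi_nM_R\in\downcat v$ and $Y$ is the projective-injective-like torsion part. More precisely, $\ext^1(-,Y)=\underline{\Hom}(-,\soc{v^{-1}}Q_k)$ since $\Omega^{-1}Y=\soc{v^{-1}}Q_k$ from $Y\hookrightarrow Q_k\twoheadrightarrow \soc{v^{-1}}Q_k$, and $\Hom(\downcat v,\upcat v)=0$. Hence $\ext^1_C(M_{L_i},M_I)=\underline{\Hom}(Z_i,\soc{v^{-1}}Q_k)$.

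To replace $Z_i$ by $\head{w^{-1}w_0}Q_i$, we use the triangle $t_{w_0^k}Q_i\to\head{w^{-1}w_0}Q_i\to Z_i\to$ and apply $\underline{\Hom}(-,\soc{v^{-1}}Q_k)$. The outer terms must be shown to contribute nothing. The term $\underline{\Hom}(\Omega^{-1}t_{w_0^k}Q_i,\soc{v^{-1}}Q_k)=\underline{\Hom}(t_{w_0^k}Q_i,Y)$ vanishes by the torsion pair for $w_0^k$. The restriction map to $t_{w_0^k}Q_i$ is zero because every map $\head{w^{-1}w_0}Q_i\to\soc{v^{-1}}Q_k$ kills $\head{v^{-1}w_0}Q_i\supseteq t_{w_0^k}Q_i$. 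This gives the first equality. The last equality is again $\ext^1(-,Y)=\underline{\Hom}(-,\Omega^{-1}Y)$. The delicate points are the stable-category bookkeeping, which requires identifying $\Omega^{-1}Y$ correctly, and the surjectivity used in the long exact sequence.
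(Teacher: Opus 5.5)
Your proposal is correct and follows the paper's own route essentially step for step. The steps are: the pushout $X_i$ of $Y \hookleftarrow t_v(Z_i) \hookrightarrow Z_i$; the lifted sequence $M_R \hookrightarrow M_I\oplus M_J \twoheadrightarrow M_{L_i}$ with the additivity of $e(\cdot)$ from \cite{JKS} giving the labels; and the $\Hom_C(-,M_I)$ long exact sequence followed by the stable triangle $t_{w_0^k}Q_i \to \head{w^{-1}w_0}Q_i \to Z_i \to$. Your one addition is a justification of $\ext^1_C(M_R,M_I)=0$, which the paper only asserts. Your argument for it, via $\pi_n M_R = t_v(Z_i)\in\downcat v$ and $\Omega^{-1}Y=\soc{v^{-1}}Q_k\in\upcat v$, is the right one.
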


\begin{Example}
    Consider $\gr(3,9),$ \[v^{-1} = \perm {1 & 2 & 3 & 4 & 5 & 6 & 7 & 8 & 9 \\ 6 & 4 & 2 & 9 & 8 & 7 & 5 & 3 & 1}, \, w^{-1} = \perm {1 & 2 & 3 & 4 & 5 & 6 & 7 & 8 & 9\\ 7 & 6 & 9 & 3 & 8 & 5 & 4 & 2 & 1}.\]

    We have $I = 246 = L_0.$

    \[ e(L_1) = e(7) + e(246) - e(6) = e(247), \, e(L_2) = e(67) + e(246) - e(46) = e(267),\]

    \[e(L_3) = e(679) + e(246) - e(246) = e(679), \,e(L_4) = e(367) + e(246) - e(246)  =e(367),\]

    \[e(L_5) = e(367) + e(246) - e(246) = e(367), \, e(L_6) = e(356) + e(246) - e(246) = e(356),\]

    \[e(L_7) = e(345) + e(246) - e(245) = e(346), \,e(L_8) = e(234) + e(246) - e(234) = e(246).\]
\end{Example}

\begin{Remark}
    In the case where $w = xv$ and $l(w) = l(x) + l(v)$ the labels $L_i$ were recovered in \cite{SBSW} using relabelled plabic graphs. This was extended to the general positroid case in \cite{FSB}. Here we have recovered an alternate description of the same labels. It would be interesting to prove this directly. That is to say, for $w \geq v, \, v \in \quotmax,$ and $I = v^{-1}[k]$ prove the equality \[L_i = \{j \in [n]: v(j) <_{i+1} w(j)\} \cup \{j \in I :v(j) = w(j)\}.\] Since it is not necessary for our work we have not dedicated much time to this.
\end{Remark}

\section{Lifting Leclerc's categories: the positroid case}
We will now address the problem of lifting $\lec v w$ to $\cm(C)$ for any $v \in \quotmax$ and $w \geq v.$ This will correspond to all of the strata described in Theorem \ref{stratification}. Fix, as before, $I = v^{-1}[k]$ and recall the functor $\pi_I$ \ref{pi-i} and Theorem \ref{theorem} which says \[\pi_I \colon \su(I,n) \rightarrow \upcat v\] is a quotient by $M_I$ and induces a triangle equivalence of the stable categories. Define $M_{v,w} = \oplus_i M_{L_i}$ and let \[M_I^t \hookrightarrow D_{v,w} \twoheadrightarrow M_{v,w}\] be a universal extension \cite{SY}. This means that when we apply $\Hom(-,M_I)$ to this sequence we have that $\ext^1(D_{v,w},M_I) = 0.$ 

\begin{Remark}
    Explicitly, a universal extension corresponds to passing to the derived category of $\Mod C$ and picking a basis $f_1,\dotsc,f_t$ of $\Hom_{\mathrm{D}^b(\Mod C)}(M_{v,w}, M_I[1]).$ This naturally allows us to define a morphism in $\Hom_{\mathrm{D}^b(\Mod C)}(M_{v,w}, M_I^t[1])$ given by \[m \mapsto (f_1(m),\dotsc,f_t(m)).\] Since $\ext^1_C(M_{v,w},M_I) = \Hom_{\mathrm{D}^b(\Mod C)}(M_{v,w},M_I[1])$ (via its definition as a derived functor e.g. \cite{GelMan}) this gives us an extension as above and by construction the connecting morphism surjects when we apply $\Hom(-,M_I).$ 
\end{Remark}

Using Lemma \ref{gp/cm ext} we see $D_{v,w} \in \su(I,n).$ Now define $B_{v,w}$ to be the basic module associated to $D_{v,w}.$ That is to say $\mathrm{add}(D_{v,w}) = \mathrm{add}(B_{v,w})$ and each indecomposable summand of $B_{v, w}$ occurs once up to isomorphism. We define \[\rich = \{M \in \cm(C): \exists \, t \in \mathbb{N} \, ,  B_{v,w}^t \twoheadrightarrow M, \, \ext^1_C(M_I,M) = 0\}.\] The letter R here stands for Richardson. We could have equally defined $\rich = \{M \in \cm(C): D_{v,w}^t \twoheadrightarrow M, \, \ext^1_C(M_I,M) = 0\}$ where $D_{v,w}$ is the middle term of the universal extension. We would have recovered the same category however it is more standard to use a basic module in such constructions.

\begin{Lemma}
    $\rich$ is a subcategory of $\su(I,n).$
\end{Lemma}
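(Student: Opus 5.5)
The plan is to check the two defining conditions of $\su(I,n)$ for an object $M \in \rich$: that $M \in \fa(I,n)$, and that $\ext^1_C(M,M_I) = 0$.

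\emph{The Ext condition.} The definition of $\rich$ gives vanishing in the "wrong" variable, namely $\ext^1_C(M_I,M) = 0$. I will use that $\cm(C)$ is Frobenius with stably 2-CY stable category. This follows from the triangle equivalence $\underline{\cm}(C) \simeq \underline{\fac}(Q_k)$ induced by $\omega_{n-k}$, stated earlier, since $\underline{\fac}(Q_k)$ is 2-CY by \cite{GLSpar}. In a stably 2-CY Frobenius category we have $\ext^1_C(M,N) \cong D\,\ext^1_C(N,M)$. Hence $\ext^1_C(M_I,M) = 0$ if and only if $\ext^1_C(M,M_I) = 0$, which settles this condition.

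\emph{Membership in $\fa(I,n)$.} As remarked after the definition of $\rich$, we may replace $B_{v,w}$ by $D_{v,w}$, since $\mathrm{add}(D_{v,w}) = \mathrm{add}(B_{v,w})$. So fix a surjection $p\colon D_{v,w}^t \twoheadrightarrow M$. We know $D_{v,w} \in \su(I,n) \subseteq \fa(I,n)$. By Lemma \ref{fa char} there is therefore a map $g\colon M_I^{r} \rightarrow D_{v,w}^t$ which is an isomorphism at vertex $n$. The composite $pg\colon M_I^r \rightarrow M$ is then surjective at vertex $n$, because $p$ is surjective at every vertex. Consequently the image of $pg$ lies in $\eta_{M_I}(M)$, and so $e_n\eta_{M_I}(M) = e_nM$.

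Now I apply Lemma \ref{eta add} and its proof. The map $\rho\colon M_I^{\rank M} \rightarrow \eta_{M_I}(M)$ built from a $\mathbb{C}[[t]]$-basis of $\Hom_C(M_I,M)$ is an isomorphism. Composing $\rho$ with the inclusion $\eta_{M_I}(M) \hookrightarrow M$ gives an embedding $M_I^{\rank M} \hookrightarrow M$. This embedding is surjective at vertex $n$ by the previous paragraph, so $M \in \fa(I,n)$. Combined with the Ext condition, this shows $M \in \su(I,n)$.

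The main obstacle is the first step. The asymmetry between the defining conditions of $\rich$ and of $\su(I,n)$ makes the 2-CY duality in $\underline{\cm}(C)$ essential, so I would make sure this duality is correctly imported from \cite{JKS}/\cite{JKS2}. The second step is routine given Lemma \ref{fa char} and Lemma \ref{eta add}.
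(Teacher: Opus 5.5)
Your proof is correct and follows the same route as the paper: compose a map $M_I^r \to B_{v,w}^t$ that is surjective at vertex $n$ with the surjection $B_{v,w}^t \twoheadrightarrow M$. You are more careful than the paper at the two points it passes over silently. First, you upgrade the composite, which need not be injective, to a genuine embedding via $\eta_{M_I}(M)\cong M_I^{\rank M}$. Second, you use the stable 2-CY symmetry of $\cm(C)$ to turn the condition $\ext^1_C(M_I,M)=0$ from the definition of $\rich$ into the condition $\ext^1_C(M,M_I)=0$ required by $\su(I,n)$, whereas the paper simply asserts this is ``part of the definition''.
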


\begin{proof}
    Given a surjection $B_{v,w}^t \twoheadrightarrow M$ this surjects at vertex $n.$ By construction $B_{v,w} \in \su(I,n)$ so there is a morphism $M_I^r \rightarrow B_{v,w}$ which surjects at vertex $n.$ Composing these gives $M \in \fa(I,n).$ It is then part of the definition that $\ext^1(M_I,M) = 0.$
\end{proof}

\begin{Proposition}\label{rich cat}
    Under the minimal lift \[\pi_I: \su(I,n)\rightarrow \upcat v\] we have $\pi_I(\rich) = \lec v w.$
\end{Proposition}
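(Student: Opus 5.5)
Set $P = \pi_I(B_{v,w})$. The plan is to prove $P$ is an additive generator for the projective-injectives of $\lec v w$, so that $\lec v w = \fac(P) \cap \upcat v$. After that, I transport the two defining conditions of $\rich$ (being a factor of $B_{v,w}^t$, and $\ext^1_C(M_I,-)=0$) through $\pi_I$.

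\emph{Step 1: identify $P$.} Apply the exact functor $\pi_I$ (Proposition \ref{exactness}) to the universal extension $M_I^t \hookrightarrow D_{v,w} \twoheadrightarrow M_{v,w}$. All three terms lie in $\fa(I,n)$, and $D_{v,w}\in\su(I,n)$. The argument of Proposition \ref{trick} then gives $\pi_I(D_{v,w}) \cong \pi_I(M_{v,w})$. By Proposition \ref{label char} this is $\bigoplus_i \soc {v^{-1}} \head {w^{-1}w_0} Q_i$, which by Leclerc's Lemma 3.15 is the sum of the projective-injectives of $\lec v w$. Since $\mathrm{add}(B_{v,w}) = \mathrm{add}(D_{v,w})$, we get $\mathrm{add}(P) = \mathrm{add}\,\pi_I(D_{v,w})$.

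\emph{Step 2: the inclusion $\pi_I(\rich)\subseteq \lec v w$.} Let $M\in\rich$. Then $\pi_I(M)\in\upcat v$ because $\rich\subseteq\su(I,n)$. A surjection $B_{v,w}^t\twoheadrightarrow M$ induces a surjection $P^t\twoheadrightarrow\pi_I(M)$, since $\pi_I$ is a right exact quotient. The module $P$ lies in $\downcat w=\fac(\Jmod w)$, because its summands are quotients of $\head{w^{-1}w_0}Q_i$, and $\downcat w$ is closed under quotients. Hence $\pi_I(M)\in\downcat w$.

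\emph{Step 3: the reverse inclusion.} Let $X\in\lec v w$. The category $\lec v w$ is Frobenius with projectives $\mathrm{add}(P)$, so there is a projective cover $p\colon P^t\twoheadrightarrow X$ whose kernel $K$ lies in $\lec v w\subseteq\upcat v$. Let $\widetilde X\in\su(I,n)$ be the minimal lift of $X$ (Corollary \ref{lift cor}). By Proposition \ref{Quotienting}, $p$ lifts to a map $\tilde p\colon B_{v,w}^t\to\widetilde X$. After adding a direct summand $M_I^s\to\widetilde X$ that generates $\eta_{M_I}(\widetilde X)$, the map $B_{v,w}^t\oplus M_I^s\to\widetilde X$ is surjective.

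The obstacle is to absorb the $M_I$ summands into copies of $B_{v,w}$, and to check $\ext^1_C(M_I,\widetilde X)=0$. For the first point, $B_{v,w}\in\fa(I,n)$, so there is $M_I^r\hookrightarrow B_{v,w}$ surjecting at vertex $n$; hence every map $M_I\to\widetilde X$ should factor through $B_{v,w}$. I would try to prove this using $\ext^1(B_{v,w},M_I)=0$ and the $\mathbb{C}[[t]]$-basis description of $\Hom(M_I,-)$ from Lemma \ref{eta add}. Then $\widetilde X\in\fac(B_{v,w})$.

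For the $\ext$ condition, I would use that stably $\ext^1_C(M_I,\widetilde X)$ corresponds under Theorem \ref{theorem} to $\ext^1$ in $\upcat v$ from $\pi_I(M_I)=0$. That is not directly meaningful, since $M_I$ is projective-injective in $\su(I,n)$. So instead I would apply $\Hom(M_I,-)$ to $\widetilde K\hookrightarrow B_{v,w}^t\oplus M_I^s\twoheadrightarrow\widetilde X$, where $\widetilde K$ is the lift of $K$. Vanishing should then follow from $\ext^1(M_I,B_{v,w})=0$, which holds because $B_{v,w}$ is projective-injective in $\su(I,n)$ by Corollary \ref{Class Proj}-type reasoning, together with $\ext^2_C(M_I,\widetilde K)=0$, using that $\cm(C)$ is Gorenstein with projective-injective $M_I$-structure. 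If this fails, the fallback is to replace $\widetilde X$ by a different lift differing by $M_I$ summands, chosen as the cokernel in $\cm(C)$ of the lifted map $\widetilde K\to B_{v,w}^t$. By construction that cokernel is a factor of $B_{v,w}^t$, and its $\ext^1(M_I,-)$ can be controlled by the same long exact sequence.
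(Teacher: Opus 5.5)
Your Steps 1 and 2 are fine and follow the paper. In Step 1 only the argument of Proposition \ref{trick} is valid, not Proposition \ref{exactness}, since $M_{v,w}$ is generally not in $\su(I,n)$.

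\textbf{The gap is in Step 3.} You reduce the reverse inclusion to two claims: $\widetilde X\in\fac(B_{v,w})$ and $\ext^1_C(M_I,\widetilde X)=0$. You leave both open, and the routes you sketch do not work as written.
\begin{itemize}
\item An embedding $M_I^r\hookrightarrow B_{v,w}$ surjecting at vertex $n$ does not by itself make maps $M_I\to\widetilde X$ factor through $B_{v,w}$.
\item Your reason for $\ext^1_C(M_I,B_{v,w})=0$, namely that ``$B_{v,w}$ is projective-injective in $\su(I,n)$'', is false in general. By Corollary \ref{Class Proj} the projective-injectives of $\su(I,n)$ are only the $M_{I(j)}$. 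Summands of $B_{v,w}$ map to projective-injectives of $\lec v w$, not of $\upcat v$; compare the rank two module $M_{\frac{135}{247}}$ in Example \ref{example}.
\item The vanishing of $\ext^2_C(M_I,\widetilde K)$ is unsupported. In the Frobenius, stably 2-CY category $\cm(C)$ it equals $\underline{\Hom}(M_I,\widetilde K[2])\cong D\underline{\Hom}(\widetilde K,M_I)$, and you give no reason for this to be zero.
\item Your fallback cokernel need not lie in $\cm(C)$.
\end{itemize}

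\textbf{Both claims are in fact immediate.}

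For generation: since $[0]=\varnothing$, the label formula gives $L_0=I$, so $M_I$ is a direct summand of $M_{v,w}$. As $\ext^1_C(M_I,M_I)=0$, the universal extension splits on this summand, so $M_I\in\mathrm{add}(D_{v,w})=\mathrm{add}(B_{v,w})$. Your surjection $B_{v,w}^t\oplus M_I^s\twoheadrightarrow\widetilde X$ therefore already has source in $\mathrm{add}(B_{v,w})$, so $\widetilde X\in\fac(B_{v,w})$. The paper instead reruns the proof of Lemma \ref{gp/cm ext} on $\eta_{M_I}(\widetilde X)\hookrightarrow\widetilde X\twoheadrightarrow X$ using $\ext^1_C(B_{v,w},M_I)=0$. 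That argument tacitly needs the same fact, $M_I\in\fac(B_{v,w})$.

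For the Ext condition: $\widetilde X\in\su(I,n)$ gives $\ext^1_C(\widetilde X,M_I)=0$. The 2-CY duality $\ext^1_C(M_I,\widetilde X)\cong D\ext^1_C(\widetilde X,M_I)$ in $\cm(C)$ then gives $\ext^1_C(M_I,\widetilde X)=0$ directly. Equivalently, $M_I$ is projective in the extension-closed Frobenius subcategory $\su(I,n)$. This is what the paper uses when it says ``since $\widetilde X\in\su(I,n)$ it follows $\widetilde X\in\rich$''. No long exact sequence or $\ext^2$ argument is needed.
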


\begin{proof}
    First we show $\pi_I(\rich) \subseteq \lec v w.$ Given $M \in \rich$ then we can form the diagram 
    \[\begin{tikzcd}
	{M_I^r} & {M_I^s} \\
	{B_{v,w}^s} & M \\
	{Q_{v,w}^t} & {\pi_I(M).}
	\arrow[hook, from=1-1, to=2-1]
	\arrow[hook, from=1-2, to=2-2]
	\arrow[two heads, from=2-1, to=2-2]
	\arrow[two heads, from=2-1, to=3-1]
	\arrow[two heads, from=2-2, to=3-2]
    \end{tikzcd}\] Where we have denoted $Q_{v,w} = \oplus_i \soc {v^{-1}} \head {w^{-1}w_0} Q_i.$ Since the kernel of the projection of $M$ to $\pi_I(M)$ is the images of all maps from $M_I$ then it follows from universality of cokernels that there is a surjection $Q_{v,w}^t \twoheadrightarrow \pi_I(M).$ Since $Q_{v,w} = \oplus_i \soc {v^{-1}} \head {w^{-1}w_0} Q_i$ is a quotient of $\oplus_i \head {w^{-1}w_0} Q_i$ then $\pi_I(M) \in \downcat w.$ We know $\pi_I(M) \in \upcat v$ so it follows $\pi_I(M) \in \lec v w.$ Conversely suppose we have $X \in \lec v w.$ Take its minimal lift $\widetilde{X} \in \su(I,n)$ and the exact sequence \[M_I^t \hookrightarrow \widetilde{X} \twoheadrightarrow X.\] The category $\lec v w$ has enough projectives so there is a surjection $Q_{v,w}^r \twoheadrightarrow X.$ We extend this to get a surjection $B_{v,w}^r \twoheadrightarrow X.$ By Theorem \ref{theorem} we see that $B_{v,w}$ is rigid and in particular $\ext^1(B_{v,w},M_I) = 0.$ We can therefore repeat the proof used for Lemma \ref{gp/cm ext} to see that there is a surjection $B_{v,w}^l \twoheadrightarrow \widetilde{X}.$ Since $\widetilde{X} \in \su(I,n)$ it follows $\widetilde{X} \in \rich.$
\end{proof}

\begin{Proposition}
    $\rich$ is Frobenius and extension closed. 
\end{Proposition}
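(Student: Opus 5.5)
The plan is to transport everything through the quotient functor $\pi_I\colon \su(I,n)\rightarrow \upcat v$, using Proposition \ref{rich cat} ($\pi_I(\rich)=\lec v w$) and the Frobenius property of $\lec v w$ from \cite[Lemma 3.15]{Leclerc}.

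\textbf{Step 1: a characterisation of $\rich$.} The key intermediate claim is that
\[\rich = \{M \in \su(I,n) : \pi_I(M) \in \lec v w\}.\]
The inclusion $\subseteq$ is the preceding lemma together with Proposition \ref{rich cat}. For $\supseteq$, take $M \in \su(I,n)$ with $\pi_I(M)\in\lec v w$ and argue as in the second half of the proof of Proposition \ref{rich cat}. A projective cover $Q_{v,w}^r \twoheadrightarrow \pi_I(M)$ in $\lec v w$ lifts to a map $B_{v,w}^r\rightarrow M$, using $\ext^1(B_{v,w},M_I)=0$. The argument of Lemma \ref{gp/cm ext} then gives a surjection onto $M$ once summands $M_I\subseteq B_{v,w}$ are added; we can check $M_I\in\mathrm{add}(B_{v,w})$ directly, since $L_0=I$. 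The remaining condition $\ext^1_C(M_I,M)=0$ needs care, because objects of $\su(I,n)$ only satisfy $\ext^1(M,M_I)=0$. I would use that $M_I$ is projective-injective in $\su(I,n)$: it equals $M_{I(1)}$, or it follows from Corollary \ref{Class Proj}, since $\pi_I(M_I)=0$ is projective. Hence $\ext^1(M_I,M)=0$ for all $M\in\su(I,n)$, and the condition is automatic.

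\textbf{Step 2: extension closure.} Given $M\hookrightarrow E\twoheadrightarrow N$ with $M,N\in\rich$, Corollary \ref{ext-closed} gives $E\in\su(I,n)$. Proposition \ref{exactness} gives an exact sequence $\pi_IM\hookrightarrow\pi_IE\twoheadrightarrow\pi_IN$. Since $\lec v w$ is extension closed in $\Mod\propi$ (as an intersection of a torsion class and a torsion-free class), $\pi_IE\in\lec v w$, and Step 1 gives $E\in\rich$.

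\textbf{Step 3: Frobenius.} The projective-injectives are the summands of $B_{v,w}$. For $X\in\rich$, by Theorem \ref{theorem} we have $\ext^1_C(B_{v,w},X)=\ext^1(\pi_IB_{v,w},\pi_IX)$, and this vanishes because $\pi_IB_{v,w}\in\mathrm{add}(Q_{v,w})$ is projective-injective in $\lec v w$; the same holds for $\ext^1_C(X,B_{v,w})$. To get enough projectives, take a projective cover $P\twoheadrightarrow\pi_IX$ in $\lec v w$ with kernel $K\in\lec v w$. Lift it via the equivalence of Proposition \ref{Quotienting} to $\widetilde P\rightarrow X$ with $\widetilde P\in\mathrm{add}B_{v,w}$, add $M_I^s$ so that it becomes surjective (as in Step 1), and note that the kernel lies in $\su(I,n)$ with image $K$ under $\pi_I$, by exactness. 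Step 1 then puts the kernel in $\rich$. Enough injectives follows dually, using injective envelopes in $\lec v w$, lifting to $\su(I,n)$, and using that the cokernel lies in $\su(I,n)$ by the Frobenius property there. Finally, we show that every projective in $\rich$ lies in $\mathrm{add}B_{v,w}$ by splitting the resulting sequence.

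\textbf{Expected obstacle.} The main obstacle is Step 1 and its use in Step 3: verifying that the kernels and cokernels produced actually stay inside $\su(I,n)$, in particular that they lie in $\fa(I,n)$ and satisfy $\ext^1(-,M_I)=0$. For the kernel, my first attempt would be to note that it is a kernel of an admissible epimorphism in the Frobenius category $\su(I,n)$, using Corollary \ref{Frobenius}.
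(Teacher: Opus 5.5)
Your Steps 1 and 2 are correct and take a different route to extension closure than the paper. The paper rewrites $\rich$ as $\overline{S}(B_{v,w},[n])$ and applies Lemma \ref{gp/cm ext} and Corollary \ref{ext-closed}. You instead identify $\rich$ with $\{M\in\su(I,n):\pi_I(M)\in\lec v w\}$, then use exactness of $\pi_I$ and the fact that $\lec v w$ is the intersection of a torsion class and a torsion-free class. Your checks that $L_0=I$ (so $M_I\in\mathrm{add}(B_{v,w})$) and that $\ext^1(M_I,-)$ vanishes on $\su(I,n)$ are right.

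The gap is in Step 3, where you flagged it, and the fix you suggest is circular. ``By exactness'' only applies to sequences already known to lie in $\su(I,n)$. A surjection in $\Mod C$ between objects of $\su(I,n)$ is an admissible epimorphism of $\su(I,n)$ precisely when its kernel lies in $\su(I,n)$, so Corollary \ref{Frobenius} gives you nothing here.

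The injective half does not dualise either. A lift $X\to\widetilde J$ of the envelope $\pi_IX\hookrightarrow J$ need not be injective: for $X=M_I$ one has $J=0$. There is also no evident dual of adding $M_I^s$, because it would need an embedding $X\hookrightarrow M_I^r$ that is an isomorphism at vertex $n$, i.e.\ $X\in\opfa(I,n)$, which fails in general. Moreover, the injective hull of $X$ in $\su(I,n)$ uses summands $M_{I(j)}$ that in general do not lie in $\rich$.

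The missing idea, which is what the paper does, is to lift extension classes rather than morphisms:
\begin{itemize}
\item The sequence $\pi_IX\hookrightarrow J\twoheadrightarrow C'$ is a class in $\ext^1(C',\pi_IX)\cong\ext^1_C(\widetilde{C'},X)$, realised by some $X\hookrightarrow E\twoheadrightarrow\widetilde{C'}$.
\item Then $E\in\su(I,n)$ by extension closure, and $\pi_IE\cong J$ by exactness.
\item So $E\in\mathrm{add}(B_{v,w})$ by Proposition \ref{unique lift}, and $\widetilde{C'}\in\rich$ by your Step 1.
\end{itemize}
Projective covers are handled identically.

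Your kernel $\widetilde K$ of $F=\widetilde P\oplus M_I^s\twoheadrightarrow X$ can also be rescued directly. Since $M_I^s$ maps onto $\eta_{M_I}(X)$, the snake lemma gives $M_I^m\hookrightarrow\widetilde K\twoheadrightarrow K$, so $\widetilde K\in\fa(I,n)$. Then $\Hom(M_I,F)\twoheadrightarrow\Hom(M_I,X)$ together with $\ext^1(M_I,F)=0$ gives $\ext^1(M_I,\widetilde K)=0$. Hence $\ext^1(\widetilde K,M_I)=0$ by the 2-CY symmetry of $\underline{\cm}(C)$.
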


\begin{proof}
    Observe as a consequence the above proposition and Theorem \ref{theorem} \[\rich = \{M \in \cm(C): B_{v,w}^t \twoheadrightarrow M, \, \ext^1(B_{v,w},M) = 0\}.\] This means $\rich = \overline{S}(B_{v,w},[n]).$ In particular by Lemma \ref{gp/cm ext} it is extension closed. Now consider $M \in \rich.$ Let $\pi_I(M) \hookrightarrow Q_{\pi_I(M)} \twoheadrightarrow \Omega^{-1}(\pi_I(M))$ be an injective envelope of $\pi_I(M)$ in $\lec v w.$ By Theorem \ref{theorem} we see there is a sequence \[M \hookrightarrow Q \twoheadrightarrow Y\] with $Y \in \rich$ and by Proposition \ref{unique lift} we have $Q \in \mathrm{add}(B_{v,w}).$ This means the category has enough injectives. The same argument works for projectives. These objects coincide since $\rich$ is extension closed in a Frobenius category. 
\end{proof}

\begin{Remark}
    Once again we can follow the approach of \cite{JKS} to think of this category as lifting the cluster structure on $\lec v w.$ 
\end{Remark}

\begin{Theorem}
    Given $v \in \quotmax$ and $w \geq v$ the category $\rich$ is Frobenius and stably 2-CY. The functor $\pi_I$ induces an equivalence \[\rich/\langle M_I \rangle \cong \lec v w\] and a triangle equivalence of the stable categories
    \[\underline{\rich} \cong \underline{\lec v w}.\]
\end{Theorem}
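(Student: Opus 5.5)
The plan is to assemble the theorem from three ingredients already established: the preceding Proposition (that $\rich$ is Frobenius and extension closed), Proposition \ref{rich cat} (that $\pi_I(\rich)=\lec v w$), and Theorem \ref{theorem} together with Proposition \ref{Quotienting} for $\su(I,n)$. The strategy is to restrict the equivalence $\su(I,n)/\langle M_I\rangle\cong\upcat v$ of Proposition \ref{Quotienting} to the full subcategory $\rich$.

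\textbf{Step 1: the additive equivalence.} First I would check that $M_I\in\rich$. It is a quotient of $D_{v,w}$, since it is a summand of the projective-injective lifts. Moreover $\ext^1(M_I,M_I)=0$. Consequently $\rich/\langle M_I\rangle$ is a full subcategory of $\su(I,n)/\langle M_I\rangle$. Under the equivalence of Proposition \ref{Quotienting}, this full subcategory is identified with its essential image. By Proposition \ref{rich cat} that image is exactly $\lec v w$, so we get $\rich/\langle M_I\rangle\cong\lec v w$. Fullness and faithfulness are inherited from the ambient equivalence, because both subcategories are full.

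\textbf{Step 2: the triangle equivalence.} By the preceding proposition, $\rich$ is Frobenius, so Happel's theorem makes $\underline{\rich}$ triangulated. I would argue as in Theorem \ref{theorem}. The functor $\pi_I$ is exact on $\rich$, because it is exact on $\su(I,n)$ (Proposition \ref{exactness}) and $\rich$ is extension closed inside it. Next, I would identify the projective-injectives of $\rich$ as $\mathrm{add}(B_{v,w})$: the proof of the preceding proposition shows that injective envelopes lift to objects of $\mathrm{add}(B_{v,w})$. These are sent by $\pi_I$ onto $\mathrm{add}(Q_{v,w})$, which are the projective-injectives of $\lec v w$ by Leclerc's Lemma 3.15. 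Since $M_I$ is itself projective-injective in $\rich$, the kernel of $\rich\to\lec v w$ consists of maps factoring through projectives. Hence we obtain an induced equivalence $\underline{\rich}\cong\underline{\lec v w}$. This functor commutes with $\Omega^{-1}$, because $\pi_I$ sends injective envelopes (conflations $M\hookrightarrow Q\twoheadrightarrow Y$) to injective envelopes. It also sends conflations to conflations, so it is a triangle functor and therefore a triangle equivalence.

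\textbf{Step 3: stably 2-CY.} This is now immediate. $\lec v w$ is stably 2-CY by Leclerc's Lemma 3.15, and a triangle equivalence transports the bifunctorial isomorphism $\ext^1(X,Y)\cong D\ext^1(Y,X)$.

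\textbf{Main obstacle.} The delicate point is verifying that the projective-injectives of $\rich$ are exactly $\mathrm{add}(B_{v,w})$, in particular that every indecomposable summand of $D_{v,w}$ is projective-injective in $\rich$. The approach I would take is via the identification $\rich=\overline{S}(B_{v,w},[n])$. This gives $\ext^1(B_{v,w},M)=0$ for all $M\in\rich$. For the other direction, $\ext^1(M,B_{v,w})=0$, I would use the corollary to Theorem \ref{theorem}, $\ext^1_C(M,N)=\ext^1(\pi_IM,\pi_IN)$, together with the injectivity of $Q_{v,w}$ in $\lec v w$.
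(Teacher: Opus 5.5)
Your proposal is correct and follows essentially the paper's route. The paper assembles the theorem from Proposition~\ref{rich cat}, from the proposition that $\rich=\overline{S}(B_{v,w},[n])$ is extension closed and Frobenius with projective-injectives $\mathrm{add}(B_{v,w})$, and from the argument of Theorem~\ref{theorem}: $\pi_I$ is exact, preserves projectives, and quotients by the projective $M_I$, with the stable 2-CY property then inherited from $\lec v w$. The one step you justify loosely is $M_I\in\rich$; the clean reason is that $L_0=I$, so $M_I$ is a summand of $M_{v,w}$ and hence a quotient of $D_{v,w}$, and $\ext^1_C(M_I,M_I)=0$.
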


Finding middle terms of extensions between modules is not always so easy and as $k$ and $n$ grow larger the problem of computing $B_{v,w}$ should become very difficult.  

We will now give some comparisons of the different categories we have introduced in this paper.

\begin{itemize}
    \item $\rich$ is typically hard to construct as noted above but gives a lift of $\lec v w$ for general projected Richardson varieties. It is extension closed and lifts the cluster structure of $\lec v w.$ It is not of the form $\gp(B)$ for $B$ arising from the combinatorics of plabic graphs and should be hard to construct. 
    \item $\calrij (I,J)$ is associated to the intersection of a Grassmannian Schubert cell and a Grassmannian opposite Schubert cell and is often a special case of $\rich$ (we say more about this below). It is extension closed and lifts the cluster structure on $\lec {v_I} {w_J}.$ It is not of the form $\gp(B)$ but is much easier to describe than $\rich$ as it is only defined in terms of two rank one modules. 
    \item $\su(I,i)$ is a special case of $\calrij (I,J)$ and $\rich$ and is associated to a Grassmannian opposite Schubert cell. It is extension closed and lifts the cluster structure on $\upcat v.$ It is of the form $\gp(B)$ and much of its structure can be studied via the combinatorics of plabic graphs. There is no process needed to find $B_{v,w}$ as we know how to produce the rank one modules $M_{I(j)}$ and they are already in $\su(I,n)$ so we can simply recover $B_{v,w}$ straight away from the combinatorics.
\end{itemize}

We clarify the remark about the relation between $\calrij(I,J)$ and $\rich.$ There are two cases. Consider $I \leq J$ and $v_I, w_J$ as defined before. If $I$ and $J$ are weakly separated then $\calrij(I,J) = \operatorname{R}(v_I,w_J).$ If, however, $I,J$ are not weakly separated then $\operatorname{R}(v_I,w_J) = \calrij(I,J) \cup \mathrm{add}(M_I).$ Notice in this second case that both are lifts of $\lec {v_I} {w_J}$ so this does not contradict our earlier theorems since they only differ by elements of $\mathrm{add}(M_I).$ 

\begin{Example}
    We continue Example \ref{example} from the section on intersections of Schubert cells. Let $I = 124$ and $J = 357.$ Since these are not weakly separated then $M_I \notin \calrij(I,J).$ We can use the extensions as before to see that \[B_{v_I,w_J} = M_{124}\oplus M_{127}\oplus M_{157}\oplus M_{234} \oplus M_{345} \oplus M_{\frac{135}{246}} \oplus M_{\frac{135}{247}}.\] Obviously $M_I \in \operatorname{R}(v_I,w_J)$ since $M_I$ is rigid. The modules in $\operatorname{R}(v_I,w_J)\backslash\mathrm{add}(M_I)$ then coincide with those in $\calrij(I,J)$ since \[\pi_I(\calrij(I,J)) = \pi_I(\operatorname{R}(v_I,w_J)) = \lec {v_I} {w_J}.\] 
\end{Example}

There are two important cases of $\rich$ where $B_{v,w} = M_{v,w}.$

\begin{Example}
    For $w \geq w_0^k$ then $L_i = \mink{w^{-1}[i]}$ for $i \geq k$ and $L_i = [k-i] \cup w^{-1}[i]$ for $i <k.$ We have $\oplus_i M_{L_i} = B_{w_0^k,w}$ since $\underline{\Hom}(\head {w^{-1}w_0} Q_i, Q_k) = 0$ for any $i.$ It follows that \[\mathrm{R}(w_0^k,w) = \{M \in \cm(C): M_{v,w}^t \twoheadrightarrow M\}.\] It is a Frobenius, stably 2-CY category which lifts the cluster structure on $\lec {w_0^k} {w}.$

    For $k=3, \, n = 8, \, v = w_0^k$ and, $w = w_0s_4s_5s_4s_7$ we have \[B_{v,w} = M_{123}\oplus M_{234}\oplus M_{345} \oplus M_{456}\oplus M_{457} \oplus M_{478} \oplus M_{178} \oplus M_{127}\] and $\rich = \{M \in \cm(C): B_{v,w}^t \twoheadrightarrow M\}.$ This case is finite type, specifically of type D$_4.$ The module \[T = B_{v,w} \oplus M_{124} \oplus M_{134} \oplus M_{145} \oplus M_{147}\] is the cluster tilting object in $\rich$ which lifts the cluster tilting object $U_\textbf{i}$ in $\lec v w.$ See \cite[Section 4.8]{Leclerc} for details on $U_\textbf{i}.$
\end{Example}

\begin{Example}
    Consider the special case where $w = xv$ and $l(w) = l(x) + l(v).$ In \cite[Remark 3.5]{Leclerc} Leclerc showed that in this case $\downcat v \subseteq \downcat w$ so \[\ext^1(M_{L_i},M_I) = \ext^1(\head{w^{-1}w_0} Q_i, \head{v^{-1}w_0}Q_k) = 0.\]
    Using \ref{theorem} this implies that $M_{v,w}$ is rigid. This is the case that was considered in \cite{SBSW} and is referred to as a skew-Schubert variety. In this case we have \[\rich = \{M \in \cm(C):M_{v,w}^t \twoheadrightarrow M, \ext^1(M,M_I) = 0\}\] and by \cite[Proposition 5.1]{Leclerc} we know that in this case the whole coordinate ring $\mathbb{C}[C_{v,w}]$ is a cluster algebra.
\end{Example}

\section{Cluster characters for open positroids}
In this section we will lift the cluster character $\varphi$ of Geiss, Leclerc, and Schr\"oer \cite{GLSr} to give a cluster character for $\rich.$ Since the varieties $C_{v,w}$ are not closed then the affine cone $\widehat{C_{v,w}}$ is not what we want to consider. Instead we will work with $\widetilde{C_{v,w}} = \widehat{C_{v,w}} \backslash 0.$ See for example Remark 1.4 of \cite{Pressland2}. 

\begin{Lemma}
    \[\mathbb{C}[\widetilde{C^I}] \cong \bigoplus_{r \in \mathbb{Z}} \mathbb{C}[C^I]\cdot \Delta_I^r\]
\end{Lemma}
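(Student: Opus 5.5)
The plan is to describe both $\widetilde{C^I}$ and $C^I$ inside the Plücker picture and compare their coordinate rings directly. The Plücker coordinate $\Delta_I$ is nonvanishing on $C^I$. Indeed, a point of $C^I$ has a row-echelon representative whose pivot set is $I$ with respect to the flag $F'_\bullet$, so $\Delta_I$ is the lowest nonzero coordinate in the relevant order. Hence $C^I$ is contained in the principal open affine chart $D(\Delta_I)=\{\Delta_I\neq 0\}\subset \mathbb{P}(\wedge^k\mathbb{C}^n)$, and it is closed in that chart. Its coordinate ring $\mathbb{C}[C^I]$ is therefore generated by the degree-zero ratios $\Delta_J/\Delta_I$, and in fact $C^I$ is an affine space.

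Next I would identify $\widetilde{C^I}$, the preimage of $C^I$ in $\mathbb{A}^{\binom nk}\setminus 0$ (the paper's ambient affine space). It is $\mathbb{C}^\times$-stable, and $\Delta_I$ is nonvanishing on it. The map
\[\widetilde{C^I}\to C^I\times\mathbb{C}^\times,\qquad x\mapsto ([x],\Delta_I(x))\]
is an isomorphism of varieties. Its inverse sends $(p,\lambda)$ to the unique lift $\hat p$ of $p$ with $\Delta_I(\hat p)=\lambda$, i.e. the vector with coordinates $\lambda\cdot(\Delta_J/\Delta_I)(p)$. This inverse is a regular map because the ratios $\Delta_J/\Delta_I$ are regular on $C^I$. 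In particular $\widetilde{C^I}$ is a trivial $\mathbb{C}^\times$-bundle over $C^I$, namely the cone minus the zero section restricted to the chart $D(\Delta_I)$.

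The isomorphism then follows from $\mathbb{C}[C^I\times\mathbb{C}^\times]=\mathbb{C}[C^I]\otimes\mathbb{C}[\lambda^{\pm1}]$ after transporting $\lambda$ to $\Delta_I$. The grading by $r\in\mathbb{Z}$ is the weight decomposition under the scaling action of $\mathbb{C}^\times$. Pullbacks of functions on $C^I$ are exactly the degree-zero part, which is why degree-zero functions times $\Delta_I^r$ give each graded piece. It remains to check that the sum is direct, which holds because the summands have distinct $\mathbb{C}^\times$-weights.

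The step I expect to need the most care is justifying regularity of the inverse map, which rests on $C^I$ being closed in the affine chart $D(\Delta_I)$ with $\Delta_I$ invertible. I would handle this by noting that $C^I$ is a $B_-$-orbit, hence locally closed, and that it equals the intersection of the Schubert variety $\overline{C^I}$ with $D(\Delta_I)$. That intersection is closed in the chart.
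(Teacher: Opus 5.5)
Your argument is correct and is essentially the paper's. Both rest on $\Delta_I$ being an invertible degree-one function on $\widetilde{C^I}$, so that multiplication by $\Delta_I^{r}$ identifies each $\mathbb{C}^\times$-weight space with the degree-zero part $\mathbb{C}[C^I]$; your explicit trivialisation $\widetilde{C^I}\cong C^I\times\mathbb{C}^\times$ makes rigorous what the paper leaves implicit, namely that $\Delta_I$ is nonvanishing on $C^I$ and that the invariant functions are exactly $\mathbb{C}[C^I]$ (and regularity of your inverse map in fact needs only $C^I\subseteq D(\Delta_I)$, not closedness in the chart).
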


\begin{proof}
    The $\mathbb{C}^{\times}$ action gives a $\mathbb{Z}$-grading 
    \[\mathbb{C}[\widetilde{C^I}] = \bigoplus_{r \in \mathbb{Z}} \mathbb{C}[\widetilde{C^I}]_r\] where $\mathbb{C}[\widetilde{C^I}]_r$ is the homogeneous functions of degree r. The coordinate ring $\mathbb{C}[C^I]$ is the $\mathbb{C}^{\times}$-invariant functions. That is to say \[\mathbb{C}[C^I] \cong \mathbb{C}[\widetilde{C^I}]_0.\] We then have an explicit isomorphism \[\begin{aligned}
    \mathbb{C}[\widetilde{C^I}&]_r \xrightarrow{\sim} \mathbb{C}[\widetilde{C^I}]_0 \cong \mathbb{C}[C^I]\\ &f \longmapsto \Delta_I^{-r}\cdot f.\end{aligned}\] 
\end{proof}

\begin{Proposition}[Proposition 8.2, \cite{GLSkac}]
    There is an isomorphism \[ ^{N(v)}\mathbb{C}[N_{-}] \cong \mathbb{C}[N^{'}(v)].\]
\end{Proposition}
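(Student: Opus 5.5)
The plan is to realise $N_{-}$ as a product of the two subgroups $N(v)$ and $N'(v)$, and then to identify left $N(v)$-invariant functions on $N_{-}$ with functions on the second factor. Throughout, $N(v)=N_{-}\cap v^{-1}Nv$ and $N'(v)=N_{-}\cap v^{-1}N_{-}v$, and $N(v)$ acts on $\mathbb{C}[N_{-}]$ through left multiplication on $N_{-}$.

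\emph{Root subgroups.} Let $\Phi^{-}$ be the negative roots and $U_\beta$ the corresponding root subgroups, i.e.\ the elementary matrices $1+cE_{ij}$ with $i>j$. Conjugation gives $v^{-1}U_\gamma v=U_{v^{-1}\gamma}$. Hence
\[N(v)=\prod_{\beta\in\Phi^{-},\,v\beta>0}U_\beta,\qquad N'(v)=\prod_{\beta\in\Phi^{-},\,v\beta<0}U_\beta.\]
Both index sets are closed under addition of roots: if $\beta,\beta'$ are negative and $v\beta,v\beta'$ have the same sign, then $\beta+\beta'$ has that property too. So each product is a closed subgroup of $N_{-}$. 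The two index sets partition $\Phi^{-}$.

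\emph{Product decomposition.} I will invoke the standard fact about unipotent groups normalised by the torus (e.g.\ Humphreys, Linear Algebraic Groups, \S28.1). If $\Phi^{-}=\Psi_1\sqcup\Psi_2$ with both $\Psi_i$ closed, then the multiplication map
\[\mu\colon N(v)\times N'(v)\longrightarrow N_{-},\qquad (a,b)\mapsto ab,\]
is an isomorphism of varieties. This follows because products of root subgroups taken in any fixed order give an isomorphism from affine space to $N_{-}$. The main point to check carefully is this step: that $\mu$ is an isomorphism of varieties and not merely a bijection. That is exactly what the cited result provides, by ordering the roots compatibly with the partition. Alternatively, for $G=\mathrm{GL}_n$ one could verify it concretely with LU-type factorisations, but I would rely on the general result.

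\emph{Invariants.} Pull back along $\mu$. A function $f\in\mathbb{C}[N_{-}]$ is left $N(v)$-invariant exactly when $f(ab)=f(b)$ for all $a\in N(v)$ and $b\in N'(v)$. Equivalently, $f\circ\mu=1\otimes g$ for some $g\in\mathbb{C}[N'(v)]$, namely $g=f|_{N'(v)}$. So restriction
\[{}^{N(v)}\mathbb{C}[N_{-}]\to\mathbb{C}[N'(v)]\]
is injective. It is surjective with inverse $g\mapsto g\circ \mathrm{pr}_2\circ\mu^{-1}$, and that composite is left $N(v)$-invariant because $\mu^{-1}(a'ab)=(a'a,b)$. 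This gives the isomorphism. Geometrically, $N'(v)$ is a slice for the free left action of $N(v)$ on $N_{-}$.
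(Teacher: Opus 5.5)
Your proof is correct and is the standard argument. Partitioning $\Phi^-$ into the two closed sets $\{\beta:v\beta>0\}$ and $\{\beta:v\beta<0\}$ gives a variety isomorphism $N(v)\times N'(v)\to N_-$, and restriction then identifies $N(v)$-invariants with $\mathbb{C}[N'(v)]$. This is the content of Proposition~8.2 of \cite{GLSkac}, which the paper only cites (there it is stated in Kac--Moody generality) rather than reproving.

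The only thing to watch is the convention for the superscript. If ${}^{N(v)}\mathbb{C}[N_-]$ means invariance under right translation rather than left, use the reversed product $N'(v)\times N(v)\to N_-$ instead. It is equally an isomorphism because both root sets are closed, so the conclusion is unchanged.
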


\begin{Proposition}
    There is an isomorphism \[N^{'}(v)\tilde{\rightarrow} C^{v^{-1}[k]}.\]
\end{Proposition}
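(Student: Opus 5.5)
The plan is to factor the desired isomorphism through the flag variety. First identify $N'(v)$ with an opposite Schubert cell in $B\backslash G$. Then show that the projection $p_k$ restricts to an isomorphism from that cell onto the Grassmannian opposite Schubert cell $C^{v^{-1}[k]}$.

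\emph{Step 1: the flag variety.} Consider the map
\[\psi\colon N'(v) \rightarrow B\backslash G, \qquad n \mapsto B\,v\,n.\]
\begin{itemize}
\item \emph{Image.} Since $N'(v) \subseteq N_{-} \subseteq B_{-}$, the image lies in $B\backslash BvB_{-}$, which is the opposite Schubert cell attached to $v$.
\item \emph{Surjectivity.} Write $B_{-} = T N_{-}$ and factor $N_{-} = N'(v)\cdot N(v)$ as in the refined Bruhat decomposition; this factorisation is a product of root subgroups, and $N(v) = N_{-}\cap v^{-1}Nv$. Then $vN(v)v^{-1} \subseteq N \subseteq B$ and $vTv^{-1}\subseteq B$. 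Hence every coset $B v b_{-}$ can be rewritten as $B v n'$ with $n' \in N'(v)$.
\item \emph{Injectivity.} Suppose $Bvn_1 = Bvn_2$. Then $v n_1 n_2^{-1} v^{-1} \in B \cap vN'(v)v^{-1}$. But $vN'(v)v^{-1} \subseteq N_{-}$, and $B \cap N_{-} = 1$, so $n_1 = n_2$.
\item \emph{Isomorphism of varieties.} Both sides are smooth of dimension $\ell(w_0)-\ell(v)$, and the inverse map is given by the unique factorisation above. So $\psi$ is an isomorphism, not merely a bijection.
\end{itemize}

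\emph{Step 2: the projection to $\gr(k,n)$.} Compose $\psi$ with $p_k\colon B\backslash G \to P_k\backslash G \cong \gr(k,n)$. The point $P_k v$ is the $k$-plane spanned by the first $k$ rows of the permutation matrix of $v$. With the paper's conventions this is the coordinate plane $\spn(e_j : j \in v^{-1}[k])$; this should be checked against the convention already used in Lemma \ref{index v proof}.

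Since $B_{-}$ acts on the right, $p_k$ sends $B\backslash BvB_{-}$ into the $B_{-}$-orbit of that coordinate plane. This orbit is the Grassmannian opposite Schubert cell $C^{v^{-1}[k]}$, as defined via dimensions of intersections with the flags $F'_i$. So $p_k\circ\psi$ lands in $C^{v^{-1}[k]}$ and is $B_{-}$-equivariantly surjective onto it.

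It remains to prove that $p_k\circ\psi$ is injective.
\begin{itemize}
\item Suppose $P_k v n_1 = P_k v n_2$. Then $u \defeq v n_1 n_2^{-1} v^{-1}$ lies in $P_k \cap vN'(v)v^{-1}$.
\item The group $vN'(v)v^{-1} = vN_{-}v^{-1}\cap N_{-}$ is generated by root subgroups $U_\beta$, with $\beta$ ranging over negative roots $\beta$ such that $v^{-1}\beta$ is also negative.
\item Since $u \in N_{-}$, it lies in $P_k\cap N_{-}$, which is the negative unipotent part of the Levi of $P_k$. So it suffices to show that no negative root $\beta$ of that Levi $W^k$ satisfies $v^{-1}\beta<0$.
\item This is exactly where $v \in \quotmax$ enters. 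Maximality of $v$ in its coset $W^k v$ means $\ell(s v) < \ell(v)$ for every simple $s \in W^k$. Equivalently, $v^{-1}$ sends each positive simple root of the Levi to a negative root, and hence (being a coset representative) sends every positive Levi root to a negative root and every negative Levi root to a positive one.
\item Therefore no $U_\beta$ with $\beta$ a Levi root occurs in $vN'(v)v^{-1}$, so $u=1$.
\end{itemize}

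\emph{Step 3: conclusion.} A dimension count confirms the result: $\dim C^{v^{-1}[k]} = \ell(w_0)-\ell(v)$, which equals $\dim N'(v)$. Together with Steps 1 and 2, $p_k\circ\psi$ is a bijective morphism of smooth varieties of equal dimension. By Zariski's main theorem, or by writing down the inverse using the Plücker coordinates normalised by $\Delta_{v^{-1}[k]}=1$, it is an isomorphism. This is consistent with the remark from \cite[Section 2.1]{Lus2} that $p_k$ is an isomorphism on these strata.

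I expect the main obstacle to be Step 2. There are two risks there. One is getting the conventions consistent: left versus right actions, $v$ versus $v^{-1}$, and the labelling $v^{-1}[k]$ against the definition of $C^I$. The other is the root-theoretic characterisation of maximal-length coset representatives. I would first verify the conventions on the small example $k=1$, $n=2$, and only then run the general root argument.
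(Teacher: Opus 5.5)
Your proposal is correct and takes essentially the same route as the paper: $N'(v)\cong vN'(v)\cong C^v$ in the flag variety, then the restriction of $p_k$ gives an isomorphism $C^v\cong C^{v^{-1}[k]}$. The difference is that the paper cites both steps (Humphreys for the first, Lusztig and Knutson--Lam--Speyer for the second), whereas you prove them directly, using the factorisation $N_-=N(v)N'(v)$ for the first and, for the second, a root-subgroup argument based on the maximality of $v$ in $W^k v$.
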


\begin{proof}
    We first start with the obvious isomorphism \[N^{'}(v) \cong vN^{'}(v).\] It is a classical result (see for example \cite[Section 28.4]{Hum}) that the restriction of the quotient \[G \rightarrow B\backslash G\] gives an isomorphism \[vN^{'}(v) \cong C^v.\] We recall the projection \[B\backslash G \rightarrow P_k\backslash G \cong \gr(k,n).\] By an argument of Lusztig \cite[Section 2.1]{Lus2} (see \cite[Lemma 3.1]{KLS2} for a more detailed argument) this restricts to an isomorphism \[C^v \cong C^{v^{-1}[k]}.\] We stress that the variety on the left is the opposite Schubert cell in the flag variety whereas the variety on the right is the opposite Schubert cell associated to $v^{-1}[k]$ in the Grassmannian.
\end{proof}

Recall that Geiss, Leclerc, and Schr\"oer defined a cluster character \[\varphi\colon \Mod \propi \rightarrow \mathbb{C}[N_{-}]\] in \cite{GLSr}. As noted in \cite{Leclerc} (using \cite{GLSkac})  \[\varphi(\upcat v) = ^{N(v)}\mathbb{C}[N_-].\] We define \[\clchar: \su(I,n) \rightarrow \mathbb{C}[\widetilde{C^I}]\] as the unique map such that $\clchar_M$ is homogenous of degree $\rank M$ and such that the following diagram commutes:

\[\begin{tikzcd}
	{\su(I,n)} &&& {\upcat v} \\
	\\
	&&& {^{N(v)}\mathbb{C}[N_-]} \\
	\\
	{\mathbb{C}[\widetilde{C^{I}}]} &&& {\mathbb{C}[C^{I}]}
	\arrow["{\pi_I}", from=1-1, to=1-4]
	\arrow["\clchar"', from=1-1, to=5-1]
	\arrow["\varphi", from=1-4, to=3-4]
	\arrow[from=3-4, to=5-4]
	\arrow[from=5-1, to=5-4]
\end{tikzcd}\]

\begin{Proposition} \label{sigma props}
    The map $\clchar$ satisfies:
    \begin{itemize}
        \item $\clchar_{M\oplus N} = \clchar_{M}\clchar_{N},$
        \item if $\ext^{1}(M,N) = 1$ and \[M \rightarrow X \rightarrow N,\] \[N \rightarrow Y \rightarrow M\] are non-split sequences then \[\clchar_{M}\clchar_{N} = \clchar_{X}+\clchar_{Y}.\]
    \end{itemize}
\end{Proposition}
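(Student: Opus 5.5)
Both properties follow by transporting the corresponding properties of the Geiss--Leclerc--Schr\"oer cluster character $\varphi$ through the defining commutative square. The bottom horizontal map $\mathbb{C}[\widetilde{C^I}] \to \mathbb{C}[C^I]$ is the dehomogenisation $f \mapsto \Delta_I^{-r} f$ on the degree $r$ piece, by the preceding lemma. This map is an isomorphism on each graded piece, and it is multiplicative on homogeneous elements because $\Delta_I^{-r}\Delta_I^{-s} = \Delta_I^{-(r+s)}$. Consequently $\clchar_M = \Delta_I^{\rank M}\cdot \varphi_{\pi_I(M)}$, where $\varphi_{\pi_I(M)}$ is viewed in $\mathbb{C}[C^I]$ via ${}^{N(v)}\mathbb{C}[N_-] \cong \mathbb{C}[N'(v)] \cong \mathbb{C}[C^{I}]$. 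I would first record this explicit formula. It also makes well-definedness clear, since the value is determined by the degree together with $\varphi_{\pi_I(M)}$.

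For multiplicativity, $\pi_I$ is additive, since $\eta_{M_I}$ commutes with direct sums, and rank is additive. Hence $\clchar_{M\oplus N} = \Delta_I^{\rank M + \rank N}\varphi_{\pi_I M \oplus \pi_I N}$. GLS show that $\varphi_{A\oplus B} = \varphi_A\varphi_B$, so this factors as $\clchar_M\clchar_N$.

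For the exchange relation, take $M,N \in \su(I,n)$ with $\dim \ext^1_C(M,N) = 1$ and the two non-split sequences. By the corollary to Theorem \ref{theorem}, $\ext^1_C(M,N) \cong \ext^1_{\propi}(\pi_I M, \pi_I N)$, and likewise with $M$ and $N$ swapped. This uses that the stable categories are 2-CY, so both Ext groups are one-dimensional. By Proposition \ref{exactness}, $\pi_I$ is exact on $\su(I,n)$, and $X, Y \in \su(I,n)$ by extension-closedness (Corollary \ref{ext-closed}). So the images $\pi_I M \to \pi_I X \to \pi_I N$ and $\pi_I N \to \pi_I Y \to \pi_I M$ are short exact sequences.

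The main point to check is that these images are non-split. I would argue this through the isomorphism of Ext groups: the class of the first sequence is a nonzero element $\epsilon$, and exactness of $\pi_I$ means $\pi_I$ carries it to the class $\pi_I(\epsilon)$ under that natural map. That map is the one induced by the stable triangle equivalence, so it is injective and the image is nonzero. Then the GLS multiplication formula for $\dim\ext^1 = 1$ gives $\varphi_{\pi_I M}\varphi_{\pi_I N} = \varphi_{\pi_I X} + \varphi_{\pi_I Y}$. Since rank is additive on short exact sequences in $\cm(C)$, we have $\rank X = \rank Y = \rank M + \rank N$. Multiplying through by $\Delta_I^{\rank M+\rank N}$ then yields $\clchar_M\clchar_N = \clchar_X + \clchar_Y$.
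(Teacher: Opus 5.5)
Your proposal is correct and is essentially the paper's argument: the paper also transports the multiplicativity and the dimension-one exchange relation for $\varphi$ through the defining commutative square, and cites Proposition \ref{unique lift} to handle the ambiguity up to $M_I$-summands. You spell out what the paper leaves implicit, namely the formula that the character of $M$ equals $\Delta_I^{\mathrm{rank}\, M}\varphi_{\pi_I(M)}$, exactness of $\pi_I$ together with the Ext comparison (which shows the images of the sequences stay non-split), and additivity of rank.
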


\begin{proof}
    This follows from the equivalent statement for $\varphi$ along with Proposition \ref{unique lift}.
\end{proof}

In particular, we see that $\clchar$ is a cluster character.

\begin{Definition}
    Let $\clalg v w$ denote the subalgebra of $\mathbb{C}[\widetilde{C^I}]$ generated by the variables $\clchar_M$ for $M \in \rich.$ 
\end{Definition}

\begin{Proposition} \label{cluster char}
    Define $\clalgcirc {v}$ to be the localisation of $\clalg {v} {w_0}$ at the multiplicative subset $\{\Delta_I^r: r \in \mathbb{N}\}.$ There is an isomorphism \[\clalgcirc{v} \cong \mathbb{C}[\widetilde{C^{I}}].\]
\end{Proposition}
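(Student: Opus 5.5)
We need to prove: localising $\clalg v {w_0}$ at powers of $\Delta_I$ gives $\mathbb{C}[\widetilde{C^I}]$.

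The plan is to reduce to the degree-zero part using the grading lemma $\mathbb{C}[\widetilde{C^I}] \cong \bigoplus_{r\in\mathbb{Z}} \mathbb{C}[C^I]\cdot\Delta_I^r$, and then apply Leclerc's Theorem \ref{Leclerc main} together with the isomorphisms $^{N(v)}\mathbb{C}[N_-]\cong\mathbb{C}[N'(v)]\cong\mathbb{C}[C^{I}]$. The first step is to identify the category. Since $w_0 \geq v$ and $\downcat {w_0} = \fac(\oplus_i Q_i) = \Mod\propi$, we have $\lec v {w_0} = \upcat v$. By Proposition \ref{rich cat} and Proposition \ref{unique lift}, $\mathrm{R}(v,w_0)$ and $\su(I,n)$ then have the same image under $\pi_I$ and differ at most by summands from $\mathrm{add}(M_I)$, which are rigid and lie in both. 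So $\clalg v {w_0}$ is the subalgebra generated by $\clchar_M$ for $M \in \su(I,n)$.

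Next I would show that the map $\clchar$ is compatible with $\Delta_I$. For $M=M_I$ we have $\pi_I(M_I)=0$, so $\varphi_0=1$ and $\clchar_{M_I}$ is homogeneous of degree $1$ with image $1$ in $\mathbb{C}[C^I]$. Hence $\clchar_{M_I}=\Delta_I$, identifying $\mathbb{C}[C^I]$ with the degree-zero part via $f\mapsto \Delta_I^{-r}f$. For general $M$ of rank $r$, $\clchar_M = \Delta_I^{r}\cdot\varphi_{\pi_I(M)}$, where $\varphi_{\pi_I(M)}$ is transported into $\mathbb{C}[C^I]$. So after inverting $\Delta_I$, the localisation $\clalgcirc v$ contains $\Delta_I^{\pm1}$ and every element $\varphi_X$ for $X=\pi_I(M)$. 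By Corollary \ref{lift cor} every $X\in\upcat v$ arises this way. Conversely every generator lies in $\bigoplus_r \mathbb{C}[C^I]\Delta_I^r$, so $\clalgcirc v$ is the subalgebra $\bigoplus_r S\cdot\Delta_I^r$, where $S$ is the image of the span of $\varphi(\upcat v)$. The localisation is then a subring of $\mathbb{C}[\widetilde{C^I}]$, since that ring is a domain in which $\Delta_I$ is invertible.

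It remains to show that $S$ is all of $\mathbb{C}[C^I]$. By Theorem \ref{Leclerc main}, applied with $w=w_0$ so that $N'(w_0)$ is trivial, the span of the $\varphi_X$ for $X\in\upcat v$ is $^{N(v)}\mathbb{C}[N_-]$. This ring is isomorphic to $\mathbb{C}[N'(v)]\cong\mathbb{C}[C^{v^{-1}[k]}]=\mathbb{C}[C^I]$ by the two propositions above and Lemma \ref{index v proof}. The main obstacle I expect is checking that the vertical map $^{N(v)}\mathbb{C}[N_-]\to\mathbb{C}[C^I]$ in the defining diagram of $\clchar$ is exactly this isomorphism, and that products of the $\varphi_X$ are handled correctly, i.e. that the span is an algebra. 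For the latter, $\varphi_X\varphi_Y=\varphi_{X\oplus Y}$ and $\upcat v$ is closed under direct sums. So the only real content is the compatibility of the identifications, which is built into the definition of $\clchar$. Combining the three steps gives $\clalgcirc v\cong\bigoplus_r\mathbb{C}[C^I]\Delta_I^r\cong\mathbb{C}[\widetilde{C^I}]$.
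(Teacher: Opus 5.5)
Your proposal is correct and follows essentially the same route as the paper. Both use Leclerc's result that the $\varphi_X$ for $X \in \upcat v$ span ${}^{N(v)}\mathbb{C}[N_-] \cong \mathbb{C}[N'(v)] \cong \mathbb{C}[C^I]$, combined with the grading $\mathbb{C}[\widetilde{C^I}] = \bigoplus_{r\in\mathbb{Z}} \mathbb{C}[C^I]\cdot\Delta_I^r$. You additionally spell out steps the paper leaves implicit: that $\mathrm{R}(v,w_0)$ agrees with $\su(I,n)$ up to $\mathrm{add}(M_I)$, so $\Delta_I=\clchar_{M_I}$ lies in the algebra, and that $\clchar_M=\Delta_I^{\rank M}\,\varphi_{\pi_I(M)}$.
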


\begin{proof}
    We recall that Leclerc showed in \cite{Leclerc} that $\varphi(\upcat v)$ generates the algebra $\mathbb{C}[N^{'}(v)].$ Combine this with the isomorphism $C^{v^{-1}[k]} \cong N^{'}(v)$ to generate $\mathbb{C}[C^{v^{-1}[k]}].$ 
     We recall that \[\mathbb{C}[\widetilde{C^I}] = \bigoplus_{r \in \mathbb{Z}} \mathbb{C}[C^I]\cdot \Delta_I^r.\] The result then follows. 
\end{proof}

\begin{Theorem}\label{loc thm}
    Let $\clalg {v} {w}^{\circ}$ be the localisation of $\clalg v w$ at the multiplicative subset \[\{\clchar_X : X \in \mathrm{add}(B_{v,w})\}.\] There is an isomorphism \[\clalg {v} {w}^{\circ} \cong \mathbb{C}[\widetilde{C}_{v,w}].\]
\end{Theorem}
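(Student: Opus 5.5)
We will reduce the theorem to Leclerc's localisation result (Theorem \ref{Leclerc main}) by working degree by degree in the $\mathbb{Z}$-grading $\mathbb{C}[\widetilde{C^I}] = \bigoplus_{r} \mathbb{C}[C^I]\cdot\Delta_I^r$. The first step is to note that $M_I$ is a summand of $B_{v,w}$: it is the summand $M_{L_0}$, since $L_0 = I$, and it appears in the universal extension. Hence $\clchar_{M_I}$ is inverted in $\clalg v w^{\circ}$. By construction $\clchar_{M_I}$ is homogeneous of degree one and maps to $\varphi_0 = 1$, so $\clchar_{M_I} = \Delta_I$ up to a scalar. Consequently every element $\clchar_M$ with $M \in \rich$ of rank $r$ can be written as $\Delta_I^{r}\cdot \varphi_{\pi_I(M)}$, with $\varphi_{\pi_I(M)}$ viewed in $\mathbb{C}[C^I]$ through the isomorphisms ${}^{N(v)}\mathbb{C}[N_-] \cong \mathbb{C}[N'(v)] \cong \mathbb{C}[C^I]$.

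Next we use Proposition \ref{rich cat}, which gives $\pi_I(\rich) = \lec v w$, together with Proposition \ref{unique lift}, which says lifts are unique up to $\mathrm{add}(M_I)$. From these we obtain that the degree-zero part of $\clalg v w[\Delta_I^{-1}]$ is exactly the algebra $S(\lec v w)$ spanned by the $\varphi_X$ for $X \in \lec v w$. It follows that $\clalg v w[\Delta_I^{-1}] \cong \bigoplus_r S(\lec v w)\,\Delta_I^r$. Further localising at the $\clchar_X$ for $X \in \mathrm{add}(B_{v,w})$ amounts, in degree zero, to localising $S(\lec v w)$ at the $\varphi_P$ with $P$ projective in $\lec v w$. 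This uses two facts. First, by Proposition \ref{label char} and the Frobenius structure, the indecomposable summands of $B_{v,w}$ other than $M_I$ map under $\pi_I$ exactly onto the projective-injectives $\soc{v^{-1}}\head{w^{-1}w_0}Q_i$. Second, the inverted element $\clchar_X$ equals $\Delta_I^{\rank X}\varphi_{\pi_I X}$. By Theorem \ref{Leclerc main} this degree-zero localisation is $\mathbb{C}[C_{v,w}]$.

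The final step is geometric. We must identify $\mathbb{C}[\widetilde{C_{v,w}}]$ with $\bigoplus_r \mathbb{C}[C_{v,w}]\,\Delta_I^r$. Here $C_{v,w} \subseteq C^I$ is open, via the isomorphism $C_{v,w} \cong p_k(C_{v,w})$ valid for $v \in \quotmax$, and $\Delta_I$ is nonvanishing on $C^I$. So the argument of the lemma computing $\mathbb{C}[\widetilde{C^I}]$ applies verbatim: the $\mathbb{C}^\times$-grading, combined with division by powers of $\Delta_I$, trivialises the cone. We must also check that the embedding $C_{v,w} \hookrightarrow C^I$ is compatible with Leclerc's map $N'(v) \to C_{v,w}$, so that the restriction $\mathbb{C}[C^I] \to \mathbb{C}[C_{v,w}]$ matches the localisation $S(\lec v w) \to \mathbb{C}[C_{v,w}]$.

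I expect this compatibility to be the main obstacle. Leclerc realises $\mathbb{C}[C_{v,w}]$ as a localisation of the doubly invariant ring ${}^{N(v)}\mathbb{C}[N_-]^{N'(w)}$, while our grading and the element $\Delta_I$ live on the Grassmannian side. To handle it, I would first check that Leclerc's isomorphism is induced by the same map $N'(v) \cong vN'(v) \cong C^v \cong C^{v^{-1}[k]}$ used in the proposition identifying $N'(v)$ with $C^{v^{-1}[k]}$. I would then check that the right $N'(w)$-invariance corresponds to restriction to the Richardson stratum. A secondary point is injectivity, that is, that no relations are lost when passing from $\clalg v w$ to its degree-zero part. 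This follows because $\mathbb{C}[\widetilde{C^I}]$ is a domain and the grading is by $\rank$.
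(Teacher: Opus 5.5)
Your proposal is correct and follows the paper's proof. You invert $\Delta_I=\clchar_{M_I}$, identify the degree-$r$ part of $\clalg{v}{w}[\Delta_I^{-1}]$ with $S(\lec{v}{w})\,\Delta_I^r$ via $\clchar_X=\Delta_I^{\rank X}\varphi_{\pi_I X}$ and $\pi_I(\rich)=\lec{v}{w}$, localise at the $\varphi_P$ for $P$ projective using Leclerc's theorem, and conclude with $\mathbb{C}[\widetilde{C_{v,w}}]=\bigoplus_r\mathbb{C}[C_{v,w}]\,\Delta_I^r$. Your observation that $M_I=M_{L_0}$ is a summand of $B_{v,w}$ usefully makes explicit why $\Delta_I$ is inverted, which the paper leaves implicit.

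Two minor corrections. First, for $w\neq w_0$ the stratum $C_{v,w}$ is only locally closed, not open, in $C^I$; all you actually use is that $\Delta_I$ does not vanish on it. Second, the theorem asserts only an abstract isomorphism: both sides are Laurent polynomial rings in $\Delta_I$ over isomorphic degree-zero parts. So the compatibility with the restriction $\mathbb{C}[C^I]\to\mathbb{C}[C_{v,w}]$ that you single out as the main obstacle is not needed, and the paper likewise just chains abstract isomorphisms.
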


\begin{proof}
    The same argument as at the beginning of this section gives that $\mathbb{C}[\widetilde{C}_{v,w}] = \oplus_i \mathbb{C}[C_{v,w}]\cdot \Delta_I^i.$ By \cite[Theorem 4.5]{Leclerc} we know that the localisation of $^{N(v)}\mathbb{C}[N_-]^{N^{'}(w)}$ at the subset $\{\varphi_X: X \in \mathrm{add}(Q_{v,w})\}$ is isomorphic to $\mathbb{C}[C_{v,w}].$ Since $\clchar_{B_{v,w}} = \Delta_I^j\cdot \varphi_{Q_{v,w}}$ for some $j \in \mathbb{N}$ it follows that localising at $\{\clchar_X: X \in \mathrm{add}(B_{v,w})\}$ agrees with localising at $\{\clchar_X: X \in \mathrm{add}(\Delta_I)\}$ and then $\{\clchar_X: X \in \mathrm{add}(Q_{v,w})\}.$ First localising at $\{\clchar_X: X \in \mathrm{add}(\Delta_I)\},$ \cite[Theorem 4.5]{Leclerc} tells us \[\clalg v w [\Delta_I^{r}: r \in \mathbb{Z}] \cong ^{N(v)}\mathbb{C}[N_-]^{N^{'}(w)}[\Delta_I^r:r \in \mathbb{Z}].\] Here we have identified $^{N(v)}\mathbb{C}[N_-]^{N^{'}(w)}$ with its image in $\mathbb{C}[\widehat{C^I}].$  We now localise at $\{\clchar_X: X \in \mathrm{add}(Q_{v,w})\}$ giving \[\clalg {v} {w}^{\circ} \cong \mathbb{C}[C_{v,w}][\Delta_I^r: r \in \mathbb{Z}] \cong \mathbb{C}[\widetilde{C}_{v,w}].\] 
\end{proof}

In \cite{JKS}, Jensen, King, and Su constructed the cluster character \[\Psi: \cm(C) \rightarrow \mathbb{C}[\widehat{\gr}(k,n)]\] where the coordinate ring on the right is the homogeneous coordinate ring of the Grassmannian (or alternatively the coordinate ring of the affine cone with respect to the Pl\"ucker embedding). A key property \cite[Section 9]{JKS} of $\Psi$ is that for $M_I \in \cm(C)$ then $\Psi_{M_I} = \Delta_I.$ 

\begin{Proposition} \label{rank one agree}
    For $v \in \quotmax,$ $I = v^{-1}[k]$ and $M_J \in \su(I,n)$ then $\clchar_{M_J} = \Delta_J.$
\end{Proposition}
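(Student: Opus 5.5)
Both $\clchar_{M_J}$ and $\Delta_J$ are homogeneous of degree one on $\widetilde{C^I}$: the first by definition of $\clchar$, since $\rank M_J = 1$, and the second because Plücker coordinates are linear. By the decomposition $\mathbb{C}[\widetilde{C^I}] \cong \bigoplus_r \mathbb{C}[C^I]\cdot\Delta_I^r$ it therefore suffices to prove
\[
\Delta_J/\Delta_I = \varphi_{\pi_I(M_J)}
\]
as functions on $C^I \cong N'(v)$. Here $\varphi$ is the character of Geiss, Leclerc and Schr\"oer, and $\Delta_I$ is nonvanishing on $C^I$ (indeed $\Delta_I$ restricted to $C^I$ is the lowest nonzero Plücker coordinate). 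The case $J=I$ is immediate: $\pi_I(M_I)=0$, so $\varphi_0=1$ and $\clchar_{M_I}$ is the degree-one lift of $1$, namely $\Delta_I$.

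For general $J$, first note that $M_J\in\su(I,n)$ gives an embedding $M_I\hookrightarrow M_J$ surjecting at $n$. By Lemma \ref{equiv} this means $I\le J$, and then Proposition \ref{functor factor} gives
\[
\pi_I(M_J) = \pi_Y(\pi_n(M_J)), \qquad Y=\head{v^{-1}w_0}Q_k=\pi_n(M_I).
\]
Compare this with the JKS character: $\Psi_{M_J}=\Delta_J$, and by \cite{JKS} (via \cite{GLSr}) $\Psi_{M_J}/\Delta_{[k]}$ restricted to the big cell $N_-\cong\{\Delta_{[k]}\ne0\}$ equals $\varphi_{\pi_n(M_J)}$. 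So it suffices to establish, on $N'(v)$ viewed inside $\mathbb{C}[N_-]$ through the identification used in defining $\clchar$, the identity
\[
\varphi_{\pi_n(M_J)} = \varphi_{Y}\cdot\varphi_{\pi_Y\pi_n(M_J)}, \qquad \varphi_Y = \Delta_I/\Delta_{[k]}.
\]
The second equality is the JKS statement for $M_I$ itself.

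The main obstacle is the first equality. The exact sequence $Y\hookrightarrow \pi_n(M_J)\twoheadrightarrow \pi_I(M_J)$ from Proposition \ref{functor factor} is not split, so multiplicativity of $\varphi$ does not apply directly. Two routes are available.

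\begin{itemize}
\item \emph{Flag counting.} Since $Y\in\downcat v$ and $\pi_I(M_J)\in\upcat v$, the torsion pair of Lemma \ref{torsion} lets one compare the Euler characteristics of submodule varieties defining $\varphi$. One would show that, after restricting to $N'(v)$ (equivalently, passing to $N(v)$-invariants), the contributions from submodules not compatible with the torsion filtration vanish. This is analogous to Leclerc's statement that $\varphi_{\head{v^{-1}w_0}Q_k}$ restricts to a unit (a generalised minor that is frozen) on the cell.
\item \emph{Minors, which is the route I would try first.} Realise everything in the language of generalised minors. Here $\varphi_{\pi_n(M_J)}$ is the flag minor $\Delta_{J}/\Delta_{[k]}$ in the $N_-$ chart, and the isomorphism $N'(v)\cong vN'(v)\cong C^I$ transports $\Delta_J$ to $\Delta_J/\Delta_I$ by the standard right-translation by $v$. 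The identity then reduces to checking that Leclerc's identification $^{N(v)}\mathbb{C}[N_-]\cong \mathbb{C}[N'(v)]$ sends $\varphi_{\soc{v^{-1}}X}$ to the appropriately twisted minor.
\end{itemize}

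Finally, uniqueness of the degree-one lift gives $\clchar_{M_J}=\Delta_J$.
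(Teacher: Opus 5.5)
\textbf{There is a genuine gap.} Your opening reductions are fine and match the paper: homogeneity reduces the statement to showing that $\varphi_{\pi_I(M_J)}$ corresponds to $\Delta_J/\Delta_I$ under ${}^{N(v)}\mathbb{C}[N_-]\cong\mathbb{C}[N'(v)]\cong\mathbb{C}[C^I]$, and Proposition~\ref{functor factor} gives $\pi_I(M_J)=\pi_Y\pi_n(M_J)$. The gap is that you never establish this identity, and the reduction you propose for it cannot work.

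For $I\neq[k]$ the cell $C^I$ lies inside $\{\Delta_{[k]}=0\}$. So the big-cell formulas $\varphi_{\pi_n(M_J)}=\Delta_J/\Delta_{[k]}$ and $\varphi_Y=\Delta_I/\Delta_{[k]}$ tell you nothing about $C^I$. Moreover $Y$ and $\pi_n(M_J)$ are not in $\upcat v$, so their $\varphi$'s are not in ${}^{N(v)}\mathbb{C}[N_-]$, the domain of the identification used to define $\clchar$.

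Your product identity fails under either reading:
\begin{itemize}
\item \emph{On all of $N_-$} (equivalently on the big cell, since all three modules lie in $\sub(Q_k)$), it is false in general. For $\gr(2,4)$ with $I=\{1,3\}$ and $J=\{1,4\}$ (so $M_J\in\su(I,4)$), it would make the affine coordinate $\Delta_{14}/\Delta_{12}$ divisible by the independent coordinate $\Delta_{13}/\Delta_{12}$.
\item \emph{On the subgroup $N'(v)$}, it says nothing about $\varphi_{\pi_I(M_J)}$. Since $Y\in\downcat v$, $\varphi_Y$ is right $N'(v)$-invariant, so $\varphi_Y|_{N'(v)}\equiv\varphi_Y(1)=0$.
\end{itemize}
So $\varphi_Y$ is not a unit, and you cannot divide by it. Leclerc's frozen variables are $\varphi$ of the projective-injectives $\soc{v^{-1}}\head{w^{-1}w_0}Q_i\in\upcat v$, not of $\head{v^{-1}w_0}Q_k\in\downcat v$. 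Your flag-counting route is therefore a dead end.

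\textbf{The missing idea is the one the paper uses:} identify $\pi_I(M_J)$ as a module whose $\varphi$ is already known to be a generalised minor.
\begin{enumerate}
\item By Lemma~\ref{ind-to-perm} applied to $J$, $\pi_n(M_J)=\head{u^{-1}w_0}Q_k$, where $u\in\quotmax$ and $u^{-1}[k]=J$.
\item Since $I\le J$, the submodule $Y$ is exactly the $(\downcat v,\upcat v)$-torsion part of $\pi_n(M_J)$, by the argument of Lemma~\ref{top soc lem}. Hence $\pi_I(M_J)=\pi_Y\pi_n(M_J)=\soc{v^{-1}}\head{u^{-1}w_0}Q_k$.
\item For such modules, \cite[Section 6.2]{GLSpar} expresses $\varphi$ as a generalised minor. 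Transporting along $N'(v)\cong vN'(v)\cong C^I$ via \cite[Section 2.3]{FoGen} gives $\Delta_{\omega_k,u^{-1}\omega_k}=\Delta_J/\Delta_I$ as a degree-zero function on $\widetilde{C^I}$.
\item Multiplying by $\Delta_I$ (since $\rank M_J=1$) yields $\clchar_{M_J}=\Delta_J$.
\end{enumerate}
Your second bullet points in this direction, but it leaves the twisted-minor formula as something ``to check''. It also never identifies $\pi_Y\pi_n(M_J)$ with $\soc{v^{-1}}$ of a module of the form $\head{u^{-1}w_0}Q_k$, which is exactly what makes the GLS formula applicable. Once that identification is made, no comparison with the big cell or with $\varphi_Y$ is needed.
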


\begin{proof}
    For $M_J \in \su(I,n)$ then by Corollary \ref{full proj} and Proposition \ref{functor factor} we have $\pi_I(M_J) = \soc {v^{-1}} \head {w^{-1}w_0} Q_k$ where $v, w \in \quotmax$ and $v^{-1}[k] = I,$ $w^{-1}[k] = J.$ The results in \cite[Section 6.2]{GLSpar} tell us how to express $\varphi_{\pi_I(M_J)}$ as a generalised minor. We can combine this with \cite[Section 2.3]{FoGen} to say $v\cdot\varphi_{\pi_I(M_J)} = \Delta_{\omega_k, w^{-1}\omega_k} = \Delta_J/\Delta_I.$ We should comment that the identification of $\Delta_{\omega_k, w^{-1}\omega_k} = \Delta_J/\Delta_I \in \mathbb{C}[\widetilde{C^I}]$ is because we are considering $\mathbb{C}[C^I]$ as the degree 0 homogeneous part of $\mathbb{C}[\widetilde{C^I}].$ In particular, since $\rank M_J = 1$ then $\clchar_{M_J} = \Delta_I \cdot \Delta_J/\Delta_I = \Delta_J.$
\end{proof}

We say $T \in \su(I,n)$ is a cluster tilting object if $\mathrm{add}(T) = \{M \in \su(I,n): \ext^1(T,M) = 0\}.$  

\begin{Lemma} \label{rank one tilt}
    There is a cluster-tilting object in $\su(I,n)$ with all indecomposable summands of rank one. 
\end{Lemma}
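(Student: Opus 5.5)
The plan is to go through Lemma \ref{su is gp}, which identifies $\su(I,n)$ with $\gp(B_I)$, where $B_I=\bigoplus_j M_{I(j)}$ and $(I(j))_{j\in[n]}$ is a Grassmannian necklace. By Jensen--King--Su \cite{JKS3}, $\gp(B_I)$ is the Gorenstein-projective category of the algebra $\End(B_I)$. It is a Frobenius, stably 2-CY category whose cluster structure categorifies the positroid cluster algebra attached to this necklace. In particular, for any maximal weakly separated collection $\mathcal{S}\subseteq\binom{[n]}{k}$ that contains the necklace, \cite{JKS3} shows that $T_{\mathcal{S}}=\bigoplus_{J\in\mathcal{S}} M_J$ is a cluster tilting object in $\gp(B_I)$. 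Every summand of $T_{\mathcal{S}}$ has rank one, so it suffices to produce such a collection $\mathcal{S}$ lying inside $\su(I,n)$.

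Existence of $\mathcal{S}$ comes from Oh--Postnikov--Speyer \cite{OPS}, which provides a maximal weakly separated collection for the positroid of any necklace, for example the face labels of a reduced plabic graph. Two things must then be checked.

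\begin{enumerate}
\item Each $M_J$ with $J\in\mathcal{S}$ lies in $\su(I,n)$. For $\fa(I,n)$ I would use Lemma \ref{equiv}: $J$ lies in the positroid, so $I=I(1)\leq J$, giving an embedding $M_I\hookrightarrow M_J$ surjective at $n$. The vanishing $\ext^1(M_J,M_I)=0$ follows from \cite[Proposition 5.6]{JKS}, since $J$ and $I$ are weakly separated.
\item $\mathrm{add}(T_{\mathcal{S}})=\{M\in\su(I,n):\ext^1(T_{\mathcal{S}},M)=0\}$. Rigidity of $T_{\mathcal{S}}$ again comes from pairwise weak separation and \cite[Proposition 5.6]{JKS}. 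For maximality, I would transport the question through the triangle equivalence of Theorem \ref{theorem} to $\underline{\upcat v}$. There a rigid object is cluster tilting if and only if its number of non-projective indecomposable summands equals $\ell(v)$-type data, namely $\dim C^I$ minus the number of projectives, as in \cite{BIRS}/\cite{GLSkac}. The count $|\mathcal{S}|=k(n-k)+1-\ell(\cdot)$ matches $\dim C^I+1$ by \cite{OPS}. Subtracting the $n$ (or fewer, distinct) necklace summands, which are exactly the projective-injectives by Corollary \ref{Class Proj}, gives the right number of non-projective summands.
\end{enumerate}

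The main obstacle is the maximality in step 2, matching the rank of the cluster category of $\upcat v$ with the size of a maximal weakly separated collection for this specific necklace. I would try the counting route first, with the \cite{BIRS} criterion that in a stably 2-CY Frobenius category with finitely many... rigid objects of maximal size are cluster tilting. As a fallback I would simply cite \cite{JKS3}, which already shows that $T_{\mathcal{S}}$ is cluster tilting in $\gp(B)$, and combine it with Lemma \ref{su is gp}.
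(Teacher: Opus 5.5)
Your proposal is correct and is essentially the paper's second argument. You combine Lemma \ref{su is gp} ($\su(I,n)=\gp(B_I)$) with the theorem that the rank one modules indexed by the face labels of a reduced Postnikov diagram (equivalently a maximal weakly separated collection containing the necklace) form a cluster-tilting object in $\gp(B)$; the paper cites this as \cite[Theorem 5.12]{Press1} rather than \cite{JKS3}, and its other suggested route is to lift Leclerc's $U_{\mathbf{i}}$.

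Your counting detour is unnecessary. Also, ``rigid with the maximal number of summands implies cluster tilting'' is not a general fact about stably 2-CY Frobenius categories. It holds for $\upcat v$ only by transporting the Geiss--Leclerc--Schr\"oer theorem for the categories $\downcat w$ through duality.
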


\begin{proof}
    This can either been shown directly using Leclerc's tilting object $U_{\mathbf{i}} \in \upcat v$ or using Proposition \ref{su is gp} and \cite[Theorem 5.12]{Press1}.
\end{proof}

\begin{Proposition} \label{cl is res}
    For $v \in \quotmax,$ $I = v^{-1}[k],$ $M \in \su(I,n)$ and $\mathrm{res} : \mathbb{C}[\widehat{\gr}(k,n)] \rightarrow \mathbb{C}[\widetilde{C^I}]$ the restriction map then \[\clchar_M = \mathrm{res} \cdot \Psi_M.\]
\end{Proposition}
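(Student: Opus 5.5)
The plan is to reduce the statement to the case of rank one modules, where it is already known, and then propagate it along mutations using that both $\clchar$ and $\Psi$ are cluster characters. By Proposition \ref{rank one agree}, for every rank one $M_J \in \su(I,n)$ we have $\clchar_{M_J} = \Delta_J$. By \cite[Section 9]{JKS}, $\Psi_{M_J} = \Delta_J$ as well. Hence $\clchar_{M_J} = \mathrm{res}\cdot\Psi_{M_J}$ for all rank one objects of $\su(I,n)$. Both assignments are multiplicative on direct sums: for $\clchar$ this is Proposition \ref{sigma props}, and for $\Psi$ it is standard from \cite{JKS}. Restriction is a ring homomorphism, so the two assignments therefore agree on $\mathrm{add}(T)$, where $T$ is the rank one cluster tilting object of Lemma \ref{rank one tilt}.

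Next, I propagate the agreement along mutations. Let $T'$ be a cluster tilting object in $\su(I,n)$ on whose summands the two characters agree, and mutate at a non-projective summand $X$ to get $X^*$. There are exchange sequences $X^*\to E\to X$ and $X\to E'\to X^*$ with $E,E'\in\mathrm{add}(T'/X)$ and $\dim\ext^1(X,X^*)=1$. Since $\su(I,n)$ is extension closed in $\cm(C)$, these are also non-split sequences in $\cm(C)$. Proposition \ref{sigma props} gives $\clchar_X\clchar_{X^*}=\clchar_E+\clchar_{E'}$, and the cluster character property of $\Psi$ \cite{JKS} gives the same identity for $\Psi$. Restriction preserves this identity. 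Now $\mathbb{C}[\widetilde{C^I}]$ is a domain, and $\clchar_X$ is nonzero because it is a cluster variable, which is nonzero after restriction. So we can divide by $\clchar_X$ and conclude $\clchar_{X^*}=\mathrm{res}\cdot\Psi_{X^*}$. By induction, the two agree on every summand of every reachable cluster tilting object.

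Finally, I extend the agreement to an arbitrary $M\in\su(I,n)$; this is the main obstacle. The stable category is equivalent to $\underline{\upcat v}$ (Theorem \ref{theorem}), and Leclerc's category has a cluster structure. Every rigid object is reachable in the finite type cases, but not in general. For general $M$ I would not argue via mutation. Instead I would use the commutative diagram defining $\clchar$ together with a Caldero--Chapoton type formula for both characters relative to the cluster tilting object $T$. The plan here is as follows. Pick an $\mathrm{add}(T)$-resolution $0\to T_1\to T_0\to M\to 0$ in $\su(I,n)$, which exists because $\su(I,n)$ is Frobenius and $T$ is cluster tilting. By \cite[Section 9]{JKS}, $\Psi_M$ is determined by the index $[T_0]-[T_1]$ together with Euler characteristics of quiver Grassmannians of $\ext^1(T,M)$. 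The formula for $\varphi$ in \cite{GLSr}, transported through $\pi_I$ and Proposition \ref{unique lift}, gives the analogous expression for $\clchar_M$: the quiver Grassmannians agree because $\ext^1_C(T,M)=\ext^1(\pi_IT,\pi_IM)$, and the monomial prefactors agree by the previous paragraph. The delicate point is matching the degree/$\Delta_I$ normalisation. I would handle it by checking homogeneity: both sides are homogeneous of degree $\rank M$, and on degree zero parts they agree because of the identification $\mathbb{C}[C^I]\cong{}^{N(v)}\mathbb{C}[N_-]$ used in defining $\clchar$.
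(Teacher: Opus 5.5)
Your first two steps follow the paper's proof. These are: agreement on rank one modules (Proposition \ref{rank one agree} together with $\Psi_{M_J}=\Delta_J$), the rank one cluster tilting object of Lemma \ref{rank one tilt}, and propagation along exchange sequences because both $\clchar$ and $\mathrm{res}\cdot\Psi$ are cluster characters. Your cancellation in the domain $\mathbb{C}[\widetilde{C^I}]$ only makes that step explicit. For the remaining objects the paper uses no Caldero--Chapoton formula. It invokes Leclerc's statement that the cluster structure from $\upcat v$ spans $\mathbb{C}[C^I]$. It then uses $\clchar_{M_I}=\mathrm{res}\cdot\Psi_{M_I}=\Delta_I$ to pass to $\mathbb{C}[\widetilde{C^I}]=\bigoplus_r\mathbb{C}[C^I]\cdot\Delta_I^r$.

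Your third step has a genuine gap.

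\emph{The $\Psi$ side.} You assert that $\Psi_M$ is determined by $[T_0]-[T_1]$ and the quiver Grassmannians of $\ext^1(T,M)$. This needs an expansion of $\Psi$ relative to $T$, but $T$ is cluster tilting only in $\su(I,n)$, not in $\cm(C)$. For $I\neq[k]$, the projective $\cmcpro n=M_{[k]}$ has $\ext^1_C(T,M_{[k]})=0$. Yet it does not lie in $\su(I,n)$: by Lemma \ref{equiv}, an embedding $M_I\hookrightarrow M_{[k]}$ surjecting at $n$ forces $I\leq[k]$. Nothing in \cite{JKS} expands $\Psi$ relative to such a $T$. Showing that $\mathrm{res}\cdot\Psi$ on $\su(I,n)$ equals the Fu--Keller character of $(\su(I,n),T)$ on non-reachable objects is essentially the proposition itself.

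Completing $T$ to a cluster tilting object $T\oplus T''$ of $\cm(C)$ does not fix this directly. The expansion then picks up contributions from $\ext^1_C(T'',M)$ and Laurent monomials in the $\Psi_{T''_j}$. Some of these vanish on $\widetilde{C^I}$, for instance $\Psi_{M_{[k]}}=\Delta_{[k]}$. So you cannot simply specialise, and you would have to show those terms drop out.

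\emph{The $\clchar$ side.} \cite{GLSr} defines $\varphi$ via Euler characteristics of flag varieties. It gives no expansion relative to cluster tilting objects of $\upcat v$, so this side also needs a separate input.

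\emph{The normalisation.} This step is circular. The isomorphism $\mathbb{C}[C^I]\cong{}^{N(v)}\mathbb{C}[N_-]$ enters only the definition of $\clchar$. The degree-zero identity $\Delta_I^{-\rank M}\cdot\mathrm{res}\cdot\Psi_M=\varphi_{\pi_I M}$ is exactly the claim, and homogeneity of degree $\rank M$ on both sides cannot supply it.

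You are right that this is where the content lies. A spanning statement on its own also only pins down a spanning set of functions, not $\clchar_M$ for each non-reachable $M$. Whichever route you take, the missing input is a formula relating $\mathrm{res}\cdot\Psi_M$ directly to $\pi_I M$.
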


\begin{proof}
    By Lemma \ref{rank one agree} this result is true for rank one modules. By Lemma \ref{rank one tilt} there is a cluster-tilting object in $\su(I,n)$ consisting of rank one indecomposable summands. In particular, $\clchar$ and $\mathrm{res}\cdot \Psi$ agree on this cluster tilting object. By \cite[Lemma 9.2]{JKS} the map $\mathrm{res}\cdot \Psi$ is a cluster character. Therefore the two maps agree on reachable cluster variables. See \cite{Leclerc} for the definition of reachable. By the paragraph preceding Proposition 5.1 of \cite{Leclerc} we know that the coordinate ring $\mathbb{C}[C^I]$ is spanned by the cluster structure determined by $\upcat v.$ Since $\clchar_{M_I} = \mathrm{res}\cdot \Psi_{M_I}$ again by Lemma \ref{rank one agree} then the two maps agree on the whole coordinate ring $\mathbb{C}[\widetilde{C^I}].$
\end{proof}

\section{Dual semicanonical bases}
We will recall the definitions of dual semicanonical basis elements before giving a conjectural description of the algebras $\clalg v w.$ For further details on semicanonical and dual semicanonical bases see either \cite{Lus3} or \cite{GLSsemi}. 

Let $\Lambda_{\textbf{d}}$ denote the representation variety of the $A_{n-1}$ preprojective algebra of dimension vector $\textbf{d}.$ This has an action of the group $\mathrm{G}_\textbf{d} = \prod_i \mathrm{GL}_{d_i}(\mathbb{C}).$ A subset $Y \subseteq \Lambda_{\textbf{d}}$ is locally closed if it is open in its closure. A constructible subset is a finite union of locally closed subsets. A function $f \colon \Lambda_{\textbf{d}} \rightarrow \mathbb{C}$ is constructible if it is of the form \[f = \displaystyle\sum_{t=1}^r a_t \mathds{1}_{Y_t} \] where $a_t \in \mathbb{C}$ and $Y_t$ is a constructible subset of $\Lambda_\textbf{d}.$ There is a notion of integration for such functions given by \[\int f \, d\chi = \displaystyle\sum_{t=1}^r  a_t \chi(Y_t).\] Denote by $\widetilde{\mathcal{M}}_\textbf{d}$ the space of $\mathrm{G}_\textbf{d}$-equivariant constructible functions on $\Lambda_\textbf{d}.$ The space $\widetilde{\mathcal{M}} = \oplus_\textbf{d} \widetilde{\mathcal{M}}_\textbf{d}$ can be given a product as follows
\[(f \cdot g)(M) = \int_{U \subseteq M} f(M/U)g(U) \,d\chi.\] Lusztig \cite{Lus3} then denoted by $\mathcal{M}$ the subalgebra generated by the functions $\mathds{1}_i$ associated to the representation variety of the simple module supported at vertex $i.$  

A subset $Y \subseteq \Lambda_\textbf{d}$ is called irreducible if it is not the union of two proper closed subsets. An irreducible component $Z \subseteq \Lambda_\textbf{d}$ is an irreducible subvariety which is not a proper subset of another irreducible subvariety. We denote by $\mathrm{Irr}(\Lambda_\textbf{d})$ the finite set of irreducible components of $\Lambda_\textbf{d}.$ A constructible function $f$ takes a constant value on a dense open subset of an irreducible component $Z \in \mathrm{Irr}(\Lambda_\textbf{d}).$ Lusztig defines \[\rho_Z:\mathcal{M} \rightarrow \mathbb{C}\] associated to $Z \in \mathrm{Irr}(\Lambda_\textbf{d})$ to be the map sending a function to its value on a dense open subset of $Z.$ He then proves \cite[Theorem 2.7]{Lus3} that there is a unique $f_Z \in \mathcal{M}$ such that $\rho_{Z^{'}}(f_Z) = \delta_{Z,Z^{'}}.$ Lusztig further shows that \[\{f_Z:Z \in \mathrm{Irr}(\Lambda_\textbf{d}), \textbf{d} \in \mathbb{N}^{n-1}\}\] is a basis of $\mathcal{M}$ called the semicanonical basis. Likewise the set \[\{\rho_Z:Z \in \mathrm{Irr}(\Lambda_\textbf{d}), \textbf{d} \in \mathbb{N}^{n-1}\}\] is a basis of $\mathcal{M}^{*}$ called the dual semicanonical basis.

The algebra $\mathcal{M}^*$ is isomorphic to $\mathbb{C}[N_-]$ (see for example \cite[Section 8.3]{GLSsemi}) and therefore the dual semicanonical basis can be thought of as a basis of $\mathbb{C}[N_-].$ In Proposition 4.2 of \cite{Leclerc} it is shown that the subalgebra $^{N(v)} \mathbb{C}[N_{-}] ^{N^{'}(w)}$ is spanned by a subset of the dual semicanonical basis.

Recall the conventions that $I \in \binom{[n]}{k}$ and $v \in \quotmax$ with $v^{-1}[k] = I.$
\begin{Definition}
    Given $U \in \upcat v$ we denote by $\udim {U}$ the number of indecomposable summands in the injective envelope of $U.$ 
\end{Definition}

We should think of $\udim {U}$ as the uniform dimension of $U$ in $\upcat v.$ Given $U \in \upcat v$ we denote by $\tilde{U} \in \su(I,n)$ the unique (up to isomorphism) module such that $\pi_I(\tilde{U}) = U$ and $\tilde{U}$ has no summands isomorphic to $M_I.$ We refer to $\tilde{U}$ as the minimal lift of $U.$ In \cite{JSflag}, Jensen and Su construct minimal lifts in a similar context using injective envelopes. The following proposition is very much inspired by this lift of objects given in Lemma 3.3 of \cite{JSflag}.

\begin{Proposition}
    Given $U \in \upcat v$ and $\widetilde{U} \in \su(I,n)$ its minimal lift then \[\rank (\widetilde{U}) = \udim {U}.\]
\end{Proposition}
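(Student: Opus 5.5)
We reduce the rank computation to counting the projective-injective summands in a minimal lift of the injective envelope of $U$.

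Let $U \hookrightarrow E \twoheadrightarrow U'$ be the injective envelope of $U$ in $\upcat v$. Then $E \cong \bigoplus_{j} (\soc {v^{-1}} Q_{n-j})^{a_j}$ with $\sum_j a_j = \udim U$. By Lemma \ref{projective index} and Proposition \ref{projectives}, the minimal lift of $\soc {v^{-1}} Q_{n-j}$ is the rank one module $M_{I(j+1)}$. Hence the minimal lift $\widetilde{E} = \bigoplus_j M_{I(j+1)}^{a_j}$ has rank exactly $\udim U$.

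Next we lift the whole sequence. By Theorem \ref{theorem} and Proposition \ref{Quotienting}, the short exact sequence lifts to an exact sequence $\widetilde{U} \hookrightarrow \widetilde{E}\oplus M_I^s \twoheadrightarrow \widetilde{U'}$ in $\su(I,n)$. To construct it, use $\ext^1(\widetilde{U},M_I)=0$ to lift the map $U\to E$ to $\widetilde U\to\widetilde E$. Then add the map $\widetilde U\to M_I^s$ coming from $\widetilde U \hookrightarrow \cmcpro{n-k}^{r}$ followed by the appropriate comparison, so that the combined map is injective. Taking ranks gives $\rank \widetilde{U} \le \rank \widetilde{E} + s$. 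This alone is not enough, so we argue more directly at vertex $n$.

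The key claim is that there is an embedding $\widetilde{U} \hookrightarrow \widetilde{E}$ which surjects at vertex $n$; this gives $\rank \widetilde U = \rank \widetilde E = \udim U$.
\begin{itemize}
\item Each $M_{I(j+1)}$ receives an embedding from $M_I$ that is surjective at $n$ (Lemma \ref{equiv}). Restricting to $\eta_{M_I}(\widetilde U)\cong M_I^{\rank \widetilde U}$ therefore controls vertex $n$ on both sides.
\item Since $\pi_I$ kills $M_I$ and $\widetilde U$ has no $M_I$ summand, minimality of $\widetilde U$ should force the lifted map $f\colon\widetilde U\to\widetilde E$ to be injective. Indeed, its kernel $K$ is a module in $\cm(C)$ with $\pi_I(K)\subseteq\ker(U\to E)=0$, so $K\in\mathrm{add}(M_I)$.
\item A nonzero such $K$ splitting off would contradict minimality. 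The splitting uses $\ext^1(\widetilde U/K, M_I)=0$, which should follow from extension closure (Corollary \ref{ext-closed}) together with Proposition \ref{unique lift}.
\end{itemize}

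Once $f$ is injective, we compare the two embeddings at vertex $n$. Both $M_I^{\rank\widetilde U}\hookrightarrow\widetilde U$ and $M_I^{\rank \widetilde E}\hookrightarrow\widetilde E$ are isomorphisms at vertex $n$. The composite $M_I^{\rank\widetilde U}\to \widetilde E$ factors through $\eta_{M_I}(\widetilde E)\cong M_I^{\rank\widetilde E}$ and is injective, so $\rank\widetilde U\le\rank\widetilde E$.

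For the reverse inequality we use that $U\hookrightarrow E$ is essential. If the inclusion $e_n\widetilde U\subseteq e_n\widetilde E$ were proper after reducing mod $t$, then some generator of $\eta_{M_I}(\widetilde E)$ would not be hit. We expect this to produce an indecomposable injective summand $\soc{v^{-1}}Q_{n-j}$ of $E$ whose simple socle meets $U$ trivially, contradicting essentiality. This last step is the main obstacle. The plan is to make it precise by identifying, via Lemma \ref{su is gp} and $\widetilde E\in\mathrm{add}(B_I)$, the top of $e_n\widetilde E/t$ with $\Soc E$ through the duality $\omega_{n-k}$. An alternative route is to count ranks in the lifted sequence, showing that $\widetilde{U'}$ satisfies $\rank\widetilde{U'}=\rank\widetilde E-\rank\widetilde U$ with no $M_I$ correction, which holds because $f$ already surjects at vertex $n$.
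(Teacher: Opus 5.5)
Your proposal does not prove the statement. Both inequalities rest on steps that are either circular or left open.

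Start with injectivity of the lift $f\colon\widetilde{U}\to\widetilde{E}$. A minor point first: $\pi_I(K)\subseteq\ker(U\to E)$ does not follow, since $\pi_I$ is not left exact. What is true is that $K\subseteq\eta_{M_I}(\widetilde{U})\cong M_I^{r}$. So $K$ is the kernel of a $\mathbb{C}[[t]]$-matrix map $M_I^r\to M_I^s$ and lies in $\mathrm{add}(M_I)$.

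The real problem is the splitting. Apply $\Hom(-,M_I)$ to $K\hookrightarrow\widetilde{U}\twoheadrightarrow\widetilde{U}/K$ and use $\ext^1(\widetilde{U},M_I)=0$. This shows that $\ext^1(\widetilde{U}/K,M_I)$ is exactly the cokernel of $\Hom(\widetilde{U},M_I)\to\Hom(K,M_I)$. So its vanishing is equivalent to $K$ splitting off, which is what you are trying to prove. Neither Corollary \ref{ext-closed} nor Proposition \ref{unique lift} helps, since nothing places $\widetilde{U}/K$ in $\su(I,n)$.

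The reverse inequality $\rank\widetilde{U}\ge\udim{U}$ is not proved at all. The $\omega_{n-k}$ plan is only sketched. The alternative route assumes that $f$ surjects at vertex $n$, which is the conclusion.

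The missing idea is to construct the lift inside $\widetilde{E}$ instead of mapping $\widetilde{U}$ into it. Pull back $M_I^t\hookrightarrow\widetilde{E}\twoheadrightarrow E$, with $t=\udim{U}$, along $U\hookrightarrow E$. This gives $X$ with exact sequences $M_I^t\hookrightarrow X\twoheadrightarrow U$ and $X\hookrightarrow\widetilde{E}\twoheadrightarrow W=E/U$. Then $\rank X=t$ by additivity, with no vertex-$n$ bookkeeping.

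\begin{itemize}
\item Since $U$ vanishes at vertex $n$, $X\in\fa(I,n)$.
\item Every map $M_I\to W$ or $M_I\to U$ factors through $\pi_n(M_I)=\head{v^{-1}w_0}Q_k\in\downcat v$, while $W,U\in\upcat v$. The torsion pair therefore gives $\Hom(M_I,W)=0$ and $\Hom(M_I,U)=0$.
\item Hence $\ext^1(M_I,X)=0$, and so $\ext^1(X,M_I)=0$ by the stable 2-CY property of $\cm(C)$. Also $\pi_I(X)=U$.
\end{itemize}

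Proposition \ref{unique lift} then gives $X\cong\widetilde{U}\oplus M_I^{l}$, so $\rank\widetilde{U}\le\udim{U}$ is automatic. Essentiality of $U\hookrightarrow E$ enters exactly once, to show $l=0$. An $M_I$ summand of $X$ would yield a projective-injective summand of $W$, hence a summand of $E$ that $U$ misses. This is how the paper proceeds.
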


\begin{proof}
    Let \[U \hookrightarrow Q_U\] be the injective envelope of $U \in \upcat v.$ Form the pullback diagram 
    \[\begin{tikzcd}
	{M_I^t} && {M_I^t} \\
	X && {\widetilde{Q}_U} && W \\
	U && {Q_U} && W
	\arrow[shift right, no head, from=1-1, to=1-3]
	\arrow[shift left, no head, from=1-1, to=1-3]
	\arrow[hook, from=1-1, to=2-1]
	\arrow[hook, from=1-3, to=2-3]
	\arrow[hook, from=2-1, to=2-3]
	\arrow[two heads, from=2-1, to=3-1]
	\arrow[two heads, from=2-3, to=2-5]
	\arrow[two heads, from=2-3, to=3-3]
	\arrow[shift right, no head, from=2-5, to=3-5]
	\arrow[shift left, no head, from=2-5, to=3-5]
	\arrow[hook, from=3-1, to=3-3]
	\arrow[two heads, from=3-3, to=3-5]
    \end{tikzcd}\] By Corollary \ref{Class Proj} we know that $t = \rank (\widetilde{Q}_U) = \udim {U}.$ Since rank is additive on exact sequences we see $\rank (X) = \udim {U}.$ Recall from Remark \ref{rem emb} that we are considering $\upcat v$ as a full subcategory of $\Mod C$ with no support at vertex n. Since $U$ is not supported at vertex $n$ then we see $X \in \fa(I,n).$ It remains to show $X \in \su(I,n)$ and that $M_I$ is not a summand of X. First we show $\ext^{1}(M_I,X) = 0.$ By applying $\Hom(M_I,-)$ to the middle row of the diagram above we get \[\Hom(M_I,W) \twoheadrightarrow \ext^{1}(M_I,X).\] Therefore it suffices to show $\Hom(M_I,W) = 0.$ Note that since we took an injective envelope and $\upcat v$ is Frobenius then $W \in \upcat v.$ Suppose we have a non-zero morphism \[f: M_I \rightarrow W.\] Since $W$ is not supported at vertex $n$ then the submodule of $M_I$ generated at vertex $n$ is contained in the kernel of $f.$ This means there is a non-zero map \[\overline{f}: \pi_n(M_I) \rightarrow W.\] But by Lemma \ref{ind-to-perm} we know $\pi_n(M_I) \in \downcat v.$ Since $(\downcat v, \upcat v)$ is a torsion pair then this can't be the case so $\Hom(M_I,W) = 0.$ We now have $X \in \su(I,n).$ It remains to show $X$ has no summands in $\mathrm{add}(M_I).$ This follows from an argument found in the proof of Lemma 3.3 of in \cite{JSflag}. Suppose $X = Y\oplus M_I.$ This implies that $W$ contains a summand which is projective-injective in $\upcat v.$ Applying this to the bottom row implies that $Q_U$ contains a projective-injective summand onto which the embedding from $U$ does not map. This contradicts the fact that \[U \hookrightarrow Q_U\] was a injective envelope.
 \end{proof}

Recall that given $\textbf{d} \in \mathbb{N}^{n-1}$ then $\Lambda_\textbf{d}$ is the type $A_{n-1}$ preprojective representation variety for this dimension vector. Points in $\Lambda_\textbf{d}$ correspond to modules of dimension vector $\textbf{d}$ over the preprojective algebra. We denote by $\Lambda_\textbf{d} \cap \upcat v$ the subset of $\Lambda_\textbf{d}$ consisting of points corresponding to modules in $\upcat v.$ Define the function $\kappa : \Lambda_\textbf{d} \cap\upcat v \rightarrow \mathbb{N}$ by sending a point $U \in \Lambda_\textbf{d} \cap\upcat v$ to the rank of its minimal lift to $\su(I,n).$ There is a partial order on $\Lambda_\textbf{d} \cap\upcat v$ given by $X \leq Y$ if $\overline{\mathrm{G}_\textbf{d}\cdot X} \subseteq \overline{\mathrm{G}_\textbf{d}\cdot Y}.$ If $X \leq Y$ we say $X$ is a degeneration of Y.

\begin{Proposition} \label{kappa}
    For $X,Y \in \Lambda_\textbf{d} \cap\upcat v$ with $X \leq Y$ then $\kappa(X) \geq \kappa(Y).$
\end{Proposition}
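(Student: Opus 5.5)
By the preceding proposition, $\kappa(U)=\udim U$, which is the number of indecomposable summands of the injective envelope of $U$ in $\upcat v$. The plan is to show that this number is upper semicontinuous on $\Lambda_\textbf{d}\cap\upcat v$. Once that is known, $\kappa$ can only increase when we pass to the boundary of an orbit closure. Concretely, if $X\le Y$ then $X\in\overline{\mathrm{G}_\textbf{d}\cdot Y}$, and upper semicontinuity gives $\kappa(X)\ge\kappa(Y)$.

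The first step is to express $\udim U$ through Hom dimensions. The projective-injectives of $\upcat v$ are the modules $\soc{v^{-1}}Q_i$; they are indecomposable with simple socle $S_i$, or zero. I would argue that the injective envelope of $U$ in $\upcat v$ is $\bigoplus_i(\soc{v^{-1}}Q_i)^{m_i}$, where $m_i=\dim\Hom(S_i,U)$ is the multiplicity of $S_i$ in $\Soc U$. The reason is this: $U$ embeds into its ordinary injective envelope $\bigoplus_i Q_i^{m_i}$, and since $U\in\upcat v$ this embedding factors through $\soc{v^{-1}}$ of the target. This is because $\soc{v^{-1}}$ is a right adjoint-type truncation: $U\cap\ker(Q_i\to\soc{v^{-1}}Q_i)$ lies in both $\downcat v$ and $\upcat v$, hence is zero by Lemma~\ref{torsion}. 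The resulting map is then an essential embedding, because its socle is unchanged. Therefore $\udim U=\sum_i\dim\Hom_{\propi}(S_i,U)$, with the sum taken over those $i$ for which $\soc{v^{-1}}Q_i\neq 0$. For the remaining $i$, the socle of $U$ has no $S_i$ part anyway, since $U\in\sub(\Imod v)$.

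The second step is the semicontinuity. For a fixed finite-dimensional module $S$, the function $U\mapsto\dim\Hom(S,U)$ is upper semicontinuous on $\Lambda_\textbf{d}$. Indeed, $\Hom(S,U)$ is the kernel of a linear map whose matrix entries depend polynomially on the point $U$, and the rank of such a matrix is lower semicontinuous. A finite sum of upper semicontinuous functions is upper semicontinuous. The function is also $\mathrm{G}_\textbf{d}$-invariant, so it is constant on $\mathrm{G}_\textbf{d}\cdot Y$. Its value can therefore only jump up on the closure, which gives the claim.

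The main obstacle is the first step, namely being careful that the injective envelope inside $\upcat v$ really has exactly $\dim\Soc U$ summands. The alternative is to bypass this by using the pullback construction from the preceding proof: there $\kappa(U)=\rank\widetilde{Q}_U$, and each $\soc{v^{-1}}Q_i$ lifts to the rank one module $M_{I(j)}$ by Proposition~\ref{projectives}. Either way, the key point is that every indecomposable projective-injective has simple socle and contributes rank one. So what the proof really needs is that the envelope count is a socle count, and the argument above supplies this.
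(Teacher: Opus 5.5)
Your first step is false, and the rest of the argument depends on it. The projective-injectives of $\upcat v$ are indeed the $\soc {v^{-1}} Q_i$. However, these are \emph{quotients} of $Q_i$, obtained by stripping socle layers, not submodules of $Q_i$. So they need not have socle $S_i$, nor even a simple socle. You are transferring a property of the $\downcat w$-side modules $\head {w^{-1}w_0} Q_i\subseteq Q_i$.

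Here is a concrete counterexample. Take $k=2$, $n=4$, $v=w_0^ks_2=s_1s_3s_2\in\quotmax$, so $I=\{1,3\}$. Then $\soc {v^{-1}} Q_1=S_3$, $\soc {v^{-1}} Q_3=S_1$, and $V=\soc {v^{-1}} Q_2=Q_2/S_2$, which has top $S_2$ and socle $S_1\oplus S_3$. One checks that $\upcat v=\sub(V)=\mathrm{add}(S_1\oplus S_3\oplus V)$, with all three indecomposables projective-injective.
\begin{itemize}
\item $\udim {V}=1$: the minimal lift of $V$ is $\cmcpro{2}=M_{34}$, which has rank one (cf.\ Lemma \ref{projective index}). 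Your formula instead gives $\dim\Soc V=2$.
\item $V$ and $S_1\oplus S_3$ have the same socle, but $\udim$ equals $1$ and $2$ respectively. So $\udim$ is not a function of the socle at all.
\end{itemize}
Your justification also fails. $\downcat v$ is a torsion class: it is closed under quotients but not under submodules. So $U\cap\head {v^{-1}w_0} Q_i$ need not lie in $\downcat v$. For $U=S_1$, the composite $S_1\hookrightarrow Q_1\twoheadrightarrow \soc {v^{-1}} Q_1=S_3$ is zero, and the envelope of $S_1$ in $\upcat v$ is $S_1=\soc {v^{-1}} Q_3$. Your fallback has the same defect. Each indecomposable projective-injective does lift to a rank one module, but that makes $\kappa$ a count of envelope summands, not of socle summands.

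Your semicontinuity step is fine in itself, but it is applied to a function that is not $\kappa$. You have no semicontinuous expression for $\udim$ itself. The missing idea is to compare injective envelopes along a short exact sequence, which is what the paper does.
\begin{itemize}
\item By Zwara's theorem, $X\leq Y$ gives an exact sequence $0\to M\to Y\oplus M\to X\to 0$.
\item Passing to the torsion-free quotient $N=M/t_v(M)$, using $\Hom(\downcat v,\upcat v)=0$, gives $0\to N\to Y\oplus N\to X\to 0$ in $\upcat v$.
\item Let $Q_N,Q_X$ be the envelopes in $\upcat v$. The horseshoe lemma embeds $Y\oplus N$ into $Q_N\oplus Q_X$ with cokernel in $\upcat v$.
\item Hence the envelope of $Y\oplus N$ is a summand of $Q_N\oplus Q_X$, so $\udim {Y}+\udim {N}\leq \udim {N}+\udim {X}$.
\end{itemize}
The paper phrases this last step through lifts to $\su(I,n)$ and Proposition \ref{unique lift}.
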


\begin{proof}
 By a result of Zwara \cite{Zwara} it follows that there is an exact sequence 

    \[\begin{tikzcd}
	M && {Y \oplus M} && {X}
	\arrow[hook, from=1-1, to=1-3]
	\arrow[two heads, from=1-3, to=1-5]
    \end{tikzcd}\] in $\Mod \propi.$

    Now consider the pushout 
    \[\begin{tikzcd}
	{t_v(M)} && {t_v(M)} \\
	\\
	M && {Y \oplus M} && {X} \\
	\\
	{M/t_v(M)} && U && {X}
	\arrow[shift right, no head, from=1-1, to=1-3]
	\arrow[shift left, no head, from=1-1, to=1-3]
	\arrow[hook, from=1-1, to=3-1]
	\arrow[hook, from=1-3, to=3-3]
	\arrow[hook, from=3-1, to=3-3]
	\arrow[two heads, from=3-1, to=5-1]
	\arrow[two heads, from=3-3, to=3-5]
	\arrow[two heads, from=3-3, to=5-3]
	\arrow[no head, from=3-5, to=5-5]
	\arrow[shift left=3, no head, from=3-5, to=5-5]
	\arrow[hook, from=5-1, to=5-3]
	\arrow[two heads, from=5-3, to=5-5]
    \end{tikzcd}\] and recall that $\Hom(\downcat v, \upcat v) = 0.$ In particular we see $U \cong M/t_v(M) \oplus Y.$ Setting $N = M/t_v(M)$ this gives us an exact sequence 

    \[\begin{tikzcd}
	{N} && {Y\oplus N} && {X}
	\arrow[hook, from=1-1, to=1-3]
	\arrow[two heads, from=1-3, to=1-5]
    \end{tikzcd}\] in $\upcat v.$ Consider $Q_{X}$ an injective envelope of $X$ and $Q_N$ an injective envelope of $N.$ The horseshoe lemma gives us 
    
    \[\begin{tikzcd}
	N & {N\oplus Y} & {X} \\
	{Q_N} & {Q_N\oplus Q_{X}} & {Q_{X}} \\
	A & B & D
	\arrow[hook, from=1-1, to=1-2]
	\arrow[hook, from=1-1, to=2-1]
	\arrow[two heads, from=1-2, to=1-3]
	\arrow[hook, from=1-2, to=2-2]
	\arrow[hook, from=1-3, to=2-3]
	\arrow[hook, from=2-1, to=2-2]
	\arrow[two heads, from=2-1, to=3-1]
	\arrow[two heads, from=2-2, to=2-3]
	\arrow[two heads, from=2-2, to=3-2]
	\arrow[two heads, from=2-3, to=3-3]
	\arrow[hook, from=3-1, to=3-2]
	\arrow[two heads, from=3-2, to=3-3]
    \end{tikzcd}\]
    where the bottom row comes from the snake lemma. Since \[N \hookrightarrow Q_N\] and \[X \hookrightarrow Q_{X}\] are injective envelopes then $A,D \in \upcat v.$ The category $\upcat v$ is extension closed and therefore $B \in \upcat v.$ This then allows us to apply the lift method of \cite[Lemma 3.3]{JSflag} as in the previous proof to produce the lift 

    \[\begin{tikzcd}
	{M_I^{t}} & {M_I^t} \\
	Z & {\widetilde{Q}_N\oplus \widetilde{Q}_{X}} & B \\
	{Y\oplus N} & {Q_N\oplus Q_{X}} & B
	\arrow[shift right, no head, from=1-1, to=1-2]
	\arrow[shift left, no head, from=1-1, to=1-2]
	\arrow[hook, from=1-1, to=2-1]
	\arrow[hook, from=1-2, to=2-2]
	\arrow[hook, from=2-1, to=2-2]
	\arrow[two heads, from=2-1, to=3-1]
	\arrow[two heads, from=2-2, to=2-3]
	\arrow[two heads, from=2-2, to=3-2]
	\arrow[shift right, no head, from=2-3, to=3-3]
	\arrow[shift left, no head, from=2-3, to=3-3]
	\arrow[hook, from=3-1, to=3-2]
	\arrow[two heads, from=3-2, to=3-3]
    \end{tikzcd}\]
    where $t = \udim{Y} + \udim{N}$ and $\ext^{1}(M_I,Z) = 0.$ In particular $Z \in \su(I,n)$ and $\pi_I(Z) = Y\oplus N.$ By Lemma \ref{unique lift} this implies $Z = \widetilde{Y} \oplus\widetilde{N} \oplus M_I^l$ for some $l \in \mathbb{N}.$ We know $\rank Z = \rank \widetilde{X} + \rank \widetilde{N}.$ Cancelling out $\rank \widetilde{N}$ we see $\rank \widetilde{X} = \rank \widetilde{Y} + l.$ Since $l \in \mathbb{N}$ we have $\rank \widetilde{X} \geq \rank\widetilde{Y}.$
\end{proof}

\begin{Definition}
    Denote by $\semican v w$ the subset of $\clalg v w$ defined by the condition that $\clchar_M \in \semican v w$ if and only if $\pi_I(M)$ is generic in an irreducible component of the associated preprojective representation variety and $M \in \rich$.
\end{Definition}

\begin{Lemma}
    The set $\semican v w$ is linearly independent over $\mathbb{C}.$
\end{Lemma}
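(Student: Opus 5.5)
The plan is to reduce linear independence of $\semican v w$ to the linear independence of the dual semicanonical basis of $\mathbb{C}[N_-]$. The reduction uses the grading $\mathbb{C}[\widetilde{C^I}] \cong \bigoplus_{r\in\mathbb{Z}} \mathbb{C}[C^I]\cdot\Delta_I^r$ and the defining diagram of $\clchar$.

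First, recall how $\clchar$ is defined. For $M \in \su(I,n)$, $\clchar_M$ is homogeneous of degree $\rank M$, and its image in $\mathbb{C}[C^I]$ is the image of $\varphi_{\pi_I(M)}$ under
\[
{}^{N(v)}\mathbb{C}[N_-] \cong \mathbb{C}[N'(v)] \cong \mathbb{C}[C^{I}].
\]
Concretely, $\clchar_M = \Delta_I^{\rank M}\cdot \overline{\varphi_{\pi_I(M)}}$, where the bar denotes the image in degree zero.

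Now suppose $\sum_M a_M \clchar_M = 0$ is a finite linear relation among distinct elements of $\semican v w$. Since each $\clchar_M$ is homogeneous, the relation splits into one relation for each degree $r$. In degree $r$ the relation reads
\[
\Delta_I^r \sum_{\rank M = r} a_M \overline{\varphi_{\pi_I(M)}} = 0 .
\]
Because $\Delta_I$ is invertible on $\widetilde{C^I}$, we may divide by $\Delta_I^r$. Pulling back along the isomorphisms ${}^{N(v)}\mathbb{C}[N_-]\cong \mathbb{C}[N'(v)]\cong\mathbb{C}[C^I]$ (Proposition 8.2 of \cite{GLSkac} and the proposition identifying $N'(v)$ with $C^{v^{-1}[k]}$) then gives
\[
\sum_{\rank M = r} a_M \varphi_{\pi_I(M)} = 0 \quad \text{in } \mathbb{C}[N_-].
\]
Here we use that $\varphi(\upcat v) \subseteq {}^{N(v)}\mathbb{C}[N_-]$ and that the inclusion of this subalgebra into $\mathbb{C}[N_-]$ is injective.

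Next, by the work of Geiss, Leclerc and Schr\"oer \cite{GLSsemi}, if $U=\pi_I(M)$ is generic in an irreducible component $Z \in \mathrm{Irr}(\Lambda_\textbf{d})$, then $\varphi_U = \rho_Z$ is the dual semicanonical basis element attached to $Z$. It remains to check that distinct elements of $\semican v w$ of the same degree $r$ correspond to distinct components. Suppose $\pi_I(M)$ and $\pi_I(M')$ are generic in the same component $Z$. Then $\varphi_{\pi_I(M)} = \varphi_{\pi_I(M')} = \rho_Z$. Since $\rank M = \rank M' = r$, we get $\clchar_M = \Delta_I^r\rho_Z = \clchar_{M'}$, so these are the same element of the set. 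This is a point about the set $\semican v w$ and needs no lifting argument. Hence the degree-$r$ relation is a relation among pairwise distinct dual semicanonical basis elements $\rho_Z$. These are linearly independent in $\mathcal{M}^*\cong\mathbb{C}[N_-]$, so every $a_M=0$.

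The step needing the most care is the identification $\varphi_{\pi_I(M)}=\rho_Z$ for generic $\pi_I(M)$. This identification must be compatible with the chain of isomorphisms ${}^{N(v)}\mathbb{C}[N_-]\to\mathbb{C}[C^I]$: the twist by $v$ used in the proof of Proposition \ref{rank one agree} must be an algebra isomorphism onto the degree-zero part, and in particular injective. I would verify this by citing \cite{Leclerc}, where $\varphi(\upcat v)$ spans ${}^{N(v)}\mathbb{C}[N_-]$ and a subset of the dual semicanonical basis spans this algebra. Given that, an isomorphism preserves linear independence and the argument above goes through.
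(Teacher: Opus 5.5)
Your proof is correct and is essentially the paper's argument: split a putative relation into homogeneous degrees, transport the degree-$r$ part to $\mathbb{C}[C^I]\cong {}^{N(v)}\mathbb{C}[N_-]$ by multiplying by $\Delta_I^{-r}$, and conclude from the linear independence of the dual semicanonical basis (Leclerc, Proposition 4.2). You also check that distinct elements of the same degree give distinct basis elements $\rho_Z$; the paper leaves this implicit, and you handle it correctly.
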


\begin{proof}
    By Proposition 4.2 of \cite{Leclerc} we know that the dual semicanonical basis induces a basis of $S(\lec v {w_0} ).$ A function $\varphi_U$ is in this dual semicanonical basis if and only if $U$ is generic in an irreducible component of its representation variety. 

    Since functions of different homogeneous degrees are linearly independent then if $\semican v w$ were not linearly independent then this must happen in a single homogeneous component. But under the isomorphism \[\mathbb{C}[\widetilde{C^I}]_r \xrightarrow{\sim} \mathbb{C}[C^I]\] this would give a linear dependence amongst the dual semicanonical basis elements of $\mathbb{C}[C^I].$ In particular $\semican v w$ must be linearly independent. 
\end{proof}

It is natural to ask if $\semican v w$ spans $\clalg v w.$ Let $M \in \rich$ be a module of rank $r$ and denote $f = \clchar_M \in \mathbb{C}[\widetilde{C^I}]_r.$ The function $f \cdot \Delta_I^{-r} \in \mathbb{C}[\widetilde{C^I}]_0 \cong \mathbb{C}[C^I]$ has an expansion in the dual semicanonical basis by Proposition 4.2 of \cite{Leclerc}. Denote this expansion \[f\cdot \Delta_I^{-r} = \sum_{t} a_t \varphi_{U_t}\] where $a_t \in \mathbb{C}^{\times}$ and $U_t \in \lec v w$ is a generic module in the irreducible component $Z_t$ associated to the element of the dual semicanonical basis with non-zero coefficient $a_t.$ It follows that \[f = \sum_t a_t \varphi_{U_t}\cdot \Delta_I^r\] but we don't know that the functions $\varphi_{U_t} \cdot \Delta_I^r$ are in $\semican v w.$ If $\kappa(U_t) \leq \rank M$ the module $\hat{U}_t = \tilde{U}_t \oplus M_I^{r - \kappa ({U}_t)}$ satisfies the requirements to give rise to an element of $\semican v w$ and $\clchar_{\hat{U}_t} = \varphi_{U_t} \cdot \Delta_I^r.$

Since the semicanonical and dual semicanonical bases are dual to one another we see that $a_t = f_{Z_t}(\pi_I(M)).$ This immediately gives the following condition.

\begin{Lemma}
    $\semican v w$ is a basis of $\clalg v w$ if and only if $f_{Z_t}(\pi_I(M)) \neq 0$ implies $\kappa(U_t) \leq \rank M.$
\end{Lemma}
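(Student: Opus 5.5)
Since the Lemma above shows that $\semican v w$ is linearly independent, the statement reduces to: $\semican v w$ spans $\clalg v w$ if and only if the stated condition holds. I work one homogeneous degree at a time and transport everything to $\mathbb{C}[C^I]$ via $f\mapsto f\cdot\Delta_I^{-r}$. There I use Leclerc's result (Proposition 4.2 of \cite{Leclerc}) that the dual semicanonical basis gives a basis.

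\emph{Reduction to the generators.} By Proposition \ref{sigma props}, $\clchar_{M}\clchar_{N}=\clchar_{M\oplus N}$. The category $\rich$ is closed under direct sums, being an extension closed subcategory. So $\clalg v w$ is spanned as a vector space by the elements $\clchar_M$ with $M\in\rich$, and each such element is homogeneous of degree $\rank M$.

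\emph{The condition implies spanning.} Fix $M\in\rich$ of rank $r$, and write $\clchar_M=\sum_t a_t\,\varphi_{U_t}\Delta_I^r$ with $a_t=f_{Z_t}(\pi_I(M))\neq 0$, as in the discussion preceding the statement. Each $U_t$ lies in $\lec v w$. Assuming $\kappa(U_t)\le r$, I set $\hat U_t=\tilde U_t\oplus M_I^{\,r-\kappa(U_t)}$.
\begin{itemize}
\item By Proposition \ref{rich cat}, the minimal lift $\tilde U_t$ lies in $\rich$.
\item $M_I$ lies in $\rich$, as observed in the text.
\item Hence $\hat U_t\in\rich$. It has rank $r$ and satisfies $\pi_I(\hat U_t)=U_t$, which is generic in $Z_t$.
\end{itemize}
So $\clchar_{\hat U_t}=\varphi_{U_t}\Delta_I^r$ lies in $\semican v w$, and $\clchar_M$ lies in the span of $\semican v w$.

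Here $\kappa(U_t)$ must be well defined for a generic point of $Z_t$. I would justify this with Proposition \ref{kappa}. Every point of the dense open set where $f_{Z_t}$ and $\kappa$ are constructible-constant degenerates from, or coincides with, a generic point. Since degenerations can only increase $\kappa$, the generic value is the minimum of $\kappa$ on the component and is attained on a dense open subset.

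\emph{Spanning implies the condition.} Suppose $f_{Z_t}(\pi_I(M))\neq 0$ but $\kappa(U_t)>r=\rank M$. I claim the degree-$r$ part of $\operatorname{span}\semican v w$ contains no element whose semicanonical expansion, after multiplying by $\Delta_I^{-r}$, involves $\varphi_{U_t}$.
\begin{itemize}
\item A degree-$r$ element of $\semican v w$ has the form $\clchar_N$ with $N\in\rich$, $\rank N=r$, and $\pi_I(N)=U$ generic in some component $Z$.
\item By Proposition \ref{unique lift}, $N\cong\tilde U\oplus M_I^{l}$, so $\kappa(U)\le r$.
\item Hence $Z\neq Z_t$, and $\clchar_N\Delta_I^{-r}$ is a single dual semicanonical basis element different from $\varphi_{U_t}$.
\end{itemize}
By linear independence of the dual semicanonical basis, $\clchar_M\Delta_I^{-r}$, which has nonzero coefficient on $\varphi_{U_t}$, is not in this span. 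This is a contradiction, and the equivalence follows.

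The main obstacle is the well-definedness of $\kappa$ on a component, together with the identification $a_t=f_{Z_t}(\pi_I(M))$. I would handle the former with Proposition \ref{kappa} as above, and take the latter from the duality discussed just before the statement.
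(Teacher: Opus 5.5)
Your proposal is correct and is essentially the paper's own argument. The paper justifies the lemma by the discussion immediately preceding it: it expands $\clchar_M\Delta_I^{-r}=\sum_t f_{Z_t}(\pi_I(M))\,\varphi_{U_t}$ and pads the minimal lift $\tilde U_t$ by $M_I^{\,r-\kappa(U_t)}$, which is your ``if'' direction. Your converse, via Proposition~\ref{unique lift} and linear independence of the dual semicanonical basis, spells out what the paper treats as immediate.

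One caution concerns your side remark on well-definedness. Proposition~\ref{kappa} only compares a module with its own degenerations. A point of an irreducible component, which need not be an orbit closure, need not be a degeneration of any generic point. So Proposition~\ref{kappa} does not show that $\kappa$ has a well-defined generic value or that this value is the minimum. What your converse actually needs (the step ``hence $Z\neq Z_t$'') is that $\kappa$ is constant on a dense open subset of $Z_t$, for instance by constructibility or upper semicontinuity of $\kappa$. The paper tacitly assumes the same thing when it writes $\kappa(U_t)$.
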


\begin{Definition}\label{grad def}
    We define the following collection of functions in $\mathbb{C}[\widetilde{C^I}]$
    \[\mathcal{G}_r(v,w) = \{f \in \mathbb{C}[\widetilde{C^I}]: f = \sum_t a_t \Delta_I^r \cdot \varphi_{\widetilde{U_t}}, \gap U_t \in \lec {v} {w}, \gap a_t \in \mathbb{C}\}\] where $\varphi_{U_t}$ is an element of the dual semicanonical basis and $\widetilde{U}_t$ is the minimal lift of an associated generic module. 
\end{Definition}

\begin{Proposition} \label{basis condition}
    The following three conditions are equivalent:
    \begin{itemize}
        \item $f_{Z_t}(\pi_I(M)) \neq 0$ implies $\kappa(U_t) \leq \rank M,$
        \item $\semican v w$ is a basis of $\clalg v w,$
        \item $\displaystyle\bigoplus_{r \in \mathbb{N}} \mathcal{G}_r(v,w) = \clalg v w.$
    \end{itemize}
\end{Proposition}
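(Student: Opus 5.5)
The equivalence of the first two conditions is exactly the preceding Lemma, so the work is to tie the third condition to the second. I would first pin down what $\bigoplus_{r}\mathcal{G}_r(v,w)$ is. Each $\mathcal{G}_r(v,w)$ lives in the degree-$r$ homogeneous part $\mathbb{C}[\widetilde{C^I}]_r$, so the sum over $r$ is direct. I read $\varphi_{\widetilde{U_t}}$ as the degree-zero function $\varphi_{U_t}$; under $\mathbb{C}[\widetilde{C^I}]_r\cong\mathbb{C}[C^I]$, multiplication by $\Delta_I^r$ is an isomorphism. Hence $\mathcal{G}_r(v,w)$ is the image under this isomorphism of the span of the dual semicanonical basis elements indexed by generic modules of $\lec v w$. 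By Proposition 4.2 of \cite{Leclerc}, the span in the statement is $S(\lec v w)$.

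Next I would compare $\semican v w$ with this. An element $\clchar_M\in\semican v w$ of rank $r$ equals $\Delta_I^r\varphi_{\pi_I(M)}$ by the defining diagram of $\clchar$ and homogeneity. Since $\pi_I(M)$ is generic in $\lec v w$, we get $\clchar_M\in\mathcal{G}_r(v,w)$. Conversely, a spanning element $\Delta_I^r\varphi_{U}$ of $\mathcal{G}_r(v,w)$ lies in $\semican v w$ exactly when there is $M\in\rich$ of rank $r$ with $\pi_I(M)=U$. By Proposition \ref{unique lift} and Proposition \ref{rich cat}, such $M$ must be $\widetilde U\oplus M_I^{r-\kappa(U)}$, so it exists if and only if $\kappa(U)\le r$. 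Therefore $\semican v w$ is the basis of the subspace
\[\bigoplus_r \operatorname{span}\{\Delta_I^r\varphi_U : U \text{ generic in } \lec v w,\ \kappa(U)\le r\}\subseteq \bigoplus_r\mathcal{G}_r(v,w).\]
Linear independence holds by the preceding Lemma.

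To prove $(2)\Rightarrow(3)$, I need both inclusions. The inclusion $\clalg v w\subseteq\bigoplus_r\mathcal{G}_r$ holds unconditionally. Any generator $\clchar_M$ expands as $\sum_t a_t\Delta_I^r\varphi_{U_t}$ with $U_t\in\lec v w$ generic, as in the discussion before the preceding Lemma. Since $\clalg v w$ is closed under products and $\mathcal{G}_r\mathcal{G}_s\subseteq\mathcal{G}_{r+s}$, the inclusion extends to the whole algebra. For the reverse inclusion, assuming (2), I would show that each $\Delta_I^r\varphi_U$ lies in $\clalg v w$. Here I must check that $\kappa(U)\le r$ can always be arranged or is automatic. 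The point is that $\Delta_I=\clchar_{M_I}\in\clalg v w$, and $\Delta_I^{\kappa(U)}\varphi_U=\clchar_{\widetilde U}$ with $\widetilde U\in\rich$. So for $r\ge\kappa(U)$ we get $\Delta_I^r\varphi_U=\clchar_{\widetilde U}\Delta_I^{r-\kappa(U)}\in\clalg v w$.

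The remaining issue is the case $r<\kappa(U)$, and this is the step I expect to be the main obstacle. If $\mathcal{G}_r$ is meant literally with arbitrary $r$, then membership of $\Delta_I^r\varphi_U$ in $\clalg v w$ for $r<\kappa(U)$ must follow from (2). I would argue from the grading of $\clalg v w$, which is generated by homogeneous elements. Condition (2) says the degree-$r$ part of $\clalg v w$ is spanned by the $\Delta_I^r\varphi_U$ with $\kappa(U)\le r$. So I would interpret $\mathcal{G}_r$ with the convention that $\widetilde U_t$ (the lift) has rank at most $r$, which is implicit in writing $\varphi_{\widetilde{U_t}}$. With that reading, (3) says precisely that the degree-$r$ part of $\clalg v w$ equals that span, which is (2). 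Conversely, (3) gives spanning by these elements, and then the preceding Lemma's independence gives (2). Finally, (1)$\Leftrightarrow$(2) is the preceding Lemma, which closes the cycle.
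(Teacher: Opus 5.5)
Your final argument is correct and is essentially the paper's own (implicit) reasoning. The paper gives no separate proof: (1)$\Leftrightarrow$(2) is the preceding Lemma, and (2)$\Leftrightarrow$(3) holds because $\bigoplus_r\mathcal{G}_r(v,w)$ is exactly the span of $\semican v w$, namely of the elements $\Delta_I^m\varphi_U$ with $U$ generic in $\lec v w$ and $m\ge\kappa(U)$. That set is linearly independent and contained in $\clalg v w$.

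The ``obstacle'' $r<\kappa(U)$, however, comes from misreading the definition.

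\begin{itemize}
\item In the paper, $\varphi_{\widetilde{U_t}}$ denotes the lifted character $\clchar_{\widetilde{U_t}}=\Delta_I^{\kappa(U_t)}\varphi_{U_t}$. Compare $\clalg v w=\bigoplus_i\mathcal{G}_0(v,w)\cdot\Delta_I^i$ in the introduction.
\item So $\mathcal{G}_r(v,w)=\Delta_I^r\cdot\mathcal{G}_0(v,w)$, and its elements have degree $r+\kappa(U_t)$, not $r$. In particular your claim that ``each $\mathcal{G}_r(v,w)$ lives in the degree-$r$ homogeneous part'' is false for the paper's definition.
\item Every spanning element $\Delta_I^r\clchar_{\widetilde{U_t}}=\clchar_{\widetilde{U_t}\oplus M_I^r}$ already lies in $\semican v w$, so no case $r<\kappa(U)$ ever arises.
\end{itemize}

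Under your initial degree-zero reading, condition (3) would in general be false. The degree-zero part of $\clalg v w$ is $\mathbb{C}$, since every nonzero $M\in\rich$ has positive rank, whereas $\mathcal{G}_0$ would be all of $S(\lec v w)$. Your ``unconditional'' inclusion $\clalg v w\subseteq\bigoplus_r\mathcal{G}_r$ is valid only under that rejected reading. Under the correct reading it is exactly the spanning half of (2), so it cannot be used in proving (2)$\Rightarrow$(3).

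Your re-indexed $\mathcal{G}_r$ (the degree-$r$ piece, with $\kappa(U)\le r$) has the same total direct sum as the paper's, so your last paragraph does prove the statement. You should drop the detour and state the identification $\bigoplus_r\mathcal{G}_r(v,w)=\mathrm{span}(\semican v w)$ directly.
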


It was suggested to the author that the following conjecture is due to Lusztig, however Lusztig himself seems unsure whether or not he conjectured this. The closest the author can find is an analogous conjecture of Lusztig in \cite[Conjecture 4.18]{Lus4} for a semicanonical basis in a different context. 

\begin{Conjecture} \label{semican conj}
    The semicanonical basis element $f_Z$ is supported on $Z.$
\end{Conjecture}

We will now explain the relevance of this conjecture to us.

\begin{Lemma}
    The subset $\Lambda_\textbf{d} \cap \upcat v$ is open in $\Lambda_\textbf{d}.$
\end{Lemma}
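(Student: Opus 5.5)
To prove that $\Lambda_\textbf{d} \cap \upcat v$ is open in $\Lambda_\textbf{d}$, I would rewrite membership in $\upcat v$ as the vanishing of a single Hom-space and then apply upper semicontinuity of $\dim\Hom$. By Lemma \ref{torsion}, $(\downcat v, \upcat v)$ is a torsion pair, so $U \in \upcat v$ if and only if $\Hom(M,U) = 0$ for all $M \in \downcat v$. Since $\downcat v = \fac(\Jmod v)$ and every $M \in \downcat v$ is a quotient of some $\Jmod v^r$, any nonzero map $M \to U$ pulls back to a nonzero map $\Jmod v^r \to U$. This gives the test-module criterion
\[U \in \upcat v \iff \Hom_{\propi}(\Jmod v, U) = 0.\]

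Next I would use the standard semicontinuity argument on the representation variety. Fix the finite-dimensional module $\Jmod v$ and consider the function $U \mapsto \dim \Hom_{\propi}(\Jmod v, U)$ on $\Lambda_\textbf{d}$. To see that it is upper semicontinuous, I would choose a finite presentation $P_1 \to P_0 \to \Jmod v \to 0$ by projectives of $\propi$; this exists because $\propi$ is finite dimensional in type $A_{n-1}$. Applying $\Hom(-,U)$ identifies $\Hom(\Jmod v, U)$ with the kernel of the linear map
\[\Hom(P_0,U) \to \Hom(P_1,U).\]
We have $\Hom(P_i,U) \cong \bigoplus_j (e_jU)^{m_{ij}}$, whose dimension depends only on $\textbf{d}$. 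The matrix of this map has entries that are polynomial in the coordinates of the point $U \in \Lambda_\textbf{d}$, given by the arrow matrices. Hence the kernel dimension is upper semicontinuous, because the rank of a matrix of polynomial functions is lower semicontinuous.

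Combining the two steps,
\[\Lambda_\textbf{d} \cap \upcat v = \{U : \dim\Hom(\Jmod v,U) \le 0\}\]
is the locus where the rank of the matrix takes its maximal possible value, namely $\dim \Hom(P_0,U)$. That locus is the complement of the vanishing of all maximal minors, so it is open.

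The only step needing care is the first: reducing the torsion-pair condition to Hom from the single generator $\Jmod v$. This follows because $\downcat v$ is, by definition, the category of quotients of $\mathrm{add}(\Jmod v)$, and $\Hom(-,U)$ is left exact.
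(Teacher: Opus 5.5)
Your proof is correct and follows the same route as the paper. The torsion pair reduces membership in $\upcat v$ to the vanishing of $\Hom_{\propi}(\Jmod v, -)$, and openness then follows from upper semicontinuity of $U \mapsto \dim\Hom(X,U)$ on $\Lambda_\textbf{d}$; the only difference is that you prove this semicontinuity directly via a projective presentation, whereas the paper cites it.
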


\begin{proof}
    Since $(\upcat v, \downcat v)$ is a torsion pair and $\downcat v = \mathrm{Fac}(\head {v^{-1}w_0} Q)$ we have $M \in \upcat v$ if and only if \[\Hom(\head {v^{-1}w_0} Q, M) = 0.\] Since $\Hom(X,-)$ is upper semicontinuous \cite{GLfS} the set $\Hom(\head {v^{-1}w_0} Q, -) = 0$ is open. 
\end{proof}

A dual argument shows $\downcat v \cap \Lambda_\textbf{d}$ is open too.

\begin{Lemma}
    For $M \in \Lambda_\textbf{d} \backslash \upcat v$ we have $\overline{\mathrm{G}_\textbf{d} \cdot M} \cap \upcat v = \varnothing.$
\end{Lemma}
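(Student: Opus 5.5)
The plan is to show that the complement $\Lambda_\textbf{d}\setminus\upcat v$ is closed and $\mathrm{G}_\textbf{d}$-stable. Since $M$ lies in this complement, so does the orbit $\mathrm{G}_\textbf{d}\cdot M$. Taking closures then puts $\overline{\mathrm{G}_\textbf{d}\cdot M}$ inside $\Lambda_\textbf{d}\setminus\upcat v$, which is exactly the claim.

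First, $\Lambda_\textbf{d}\cap\upcat v$ is open in $\Lambda_\textbf{d}$ by the previous lemma. The argument there used the torsion pair. By Lemma \ref{torsion}, $M\in\upcat v$ if and only if $\Hom(\head{v^{-1}w_0}Q,M)=0$, where $Q=\oplus_i Q_i$. Upper semicontinuity of $\dim\Hom(\head{v^{-1}w_0}Q,-)$ then makes the locus where this dimension is $0$ open. Hence $\Lambda_\textbf{d}\setminus\upcat v$ is closed.

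Second, $\upcat v$ is closed under isomorphism, since it is defined as $\sub(\Imod v)$. The $\mathrm{G}_\textbf{d}$-action on $\Lambda_\textbf{d}$ only changes a module within its isomorphism class. So both $\Lambda_\textbf{d}\cap\upcat v$ and its complement are $\mathrm{G}_\textbf{d}$-stable.

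Combining these, for $M\notin\upcat v$ we get $\mathrm{G}_\textbf{d}\cdot M\subseteq \Lambda_\textbf{d}\setminus\upcat v$. Because this complement is closed, $\overline{\mathrm{G}_\textbf{d}\cdot M}\subseteq\Lambda_\textbf{d}\setminus\upcat v$, so $\overline{\mathrm{G}_\textbf{d}\cdot M}\cap\upcat v=\varnothing$.

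Equivalently, in terms of the degeneration order, nothing outside $\upcat v$ degenerates into $\upcat v$. The only step needing care is the upper semicontinuity, which should be cited from \cite{GLfS} exactly as in the preceding lemma. The statement is therefore a direct topological consequence of that openness result.
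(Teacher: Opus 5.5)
Your proof is correct and is essentially the paper's argument. The complement $\Lambda_\textbf{d}\setminus\upcat v$ is closed by the preceding openness lemma, and the orbit closure is the smallest closed set containing the orbit; the only difference is that you spell out the $\mathrm{G}_\textbf{d}$-stability of the complement (i.e.\ that the whole orbit of $M$ lies outside $\upcat v$), which the paper uses tacitly.
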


\begin{proof}
    Since $\Lambda_\textbf{d} \cap \upcat v$ is open $\Lambda_\textbf{d} \backslash \upcat v$ is closed. Since the closure of a $\mathrm{G}_\textbf{d} \cdot M$ is the intersection of all closed sets containing $\mathrm{G}_\textbf{d} \cdot M$ it follows that \[\overline{\mathrm{G}_\textbf{d} \cdot M} \subseteq \Lambda_\textbf{d} \backslash \upcat v.\]
\end{proof}

\begin{Proposition}
    If Conjecture \ref{semican conj} holds the three equivalent conditions of Proposition \ref{basis condition} hold.
\end{Proposition}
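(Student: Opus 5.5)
We verify the first condition of Proposition \ref{basis condition}: if $f_{Z_t}(\pi_I(M)) \neq 0$ then $\kappa(U_t) \leq \rank M$. Fix $M \in \rich$ of rank $r$ and write $X = \pi_I(M) \in \lec v w$, of dimension vector $\textbf{d}$. Assume $f_{Z_t}(X) \neq 0$, where $Z_t \in \mathrm{Irr}(\Lambda_\textbf{d})$ and $U_t$ is a generic point of $Z_t$. By Conjecture \ref{semican conj}, $f_{Z_t}$ is supported on $Z_t$, so $X \in Z_t$. Since $U_t$ is generic in $Z_t$, the orbit $\mathrm{G}_\textbf{d}\cdot U_t$ contains a dense open subset of $Z_t$ in the relevant sense. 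The first step is to upgrade this to a degeneration $X \leq U_t$. If $Z_t$ is not the closure of a single orbit, I would replace orbit closures by the statement that $X$ lies in the closure of the open set of generic points. Since $\kappa$ is constant on that open set, the argument below still applies.

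Next we must check that everything lies in $\upcat v$, so that $\kappa$ is defined and Proposition \ref{kappa} applies. Here $X \in \upcat v$. The lemma above shows $\Lambda_\textbf{d}\cap\upcat v$ is open in $\Lambda_\textbf{d}$. It is nonempty in $Z_t$ because it contains $X$, and $Z_t$ is irreducible, so it is dense in $Z_t$. Hence the generic point $U_t$ lies in $\upcat v$. This is consistent with the companion lemma: if $U_t \notin \upcat v$, its orbit closure would miss $\upcat v$ and so could not contain $X$.

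With $X \leq U_t$ and both points in $\Lambda_\textbf{d}\cap\upcat v$, Proposition \ref{kappa} gives $\kappa(X) \geq \kappa(U_t)$. By definition $\kappa(X) = \rank \widetilde{X}$, the rank of the minimal lift. By Proposition \ref{unique lift}, $M \cong \widetilde{X}\oplus M_I^l$ for some $l\ge 0$, so $\rank M \geq \kappa(X) \geq \kappa(U_t)$. Then Proposition \ref{basis condition} gives all three conditions.

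The main obstacle is the degeneration step. Proposition \ref{kappa} is stated for orbit-closure degenerations, but membership of $X$ in an irreducible component only places $X$ in the closure of the generic locus, not necessarily in a single orbit closure. I would handle this with upper semicontinuity. By the proposition computing $\kappa$ as $\udim{}$, $\kappa(U)$ is the number of indecomposable summands of the injective envelope of $U$ in $\upcat v$. I would express this number through $\dim\Hom(S, U)$ for suitable simple-socle test modules $S$, so that it is upper semicontinuous on $\Lambda_\textbf{d}\cap\upcat v$. It would then take its minimum on the generic open subset of $Z_t$ and be no smaller at the special point $X$. If that expression fails, I would fall back on Zwara-type sequences along a one-parameter family degenerating to $X$, and rerun the proof of Proposition \ref{kappa}.
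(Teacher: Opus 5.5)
Your argument follows the paper's proof step for step. The conjecture puts $X=\pi_I(M)$ in $Z_t$. Openness of $\Lambda_\textbf{d}\cap\upcat{v}$ lets the generic point of $Z_t$ be taken in $\upcat{v}$. Proposition \ref{kappa}, applied to a degeneration $X\le U_t$, then gives $\kappa(U_t)\le\kappa(X)\le\rank M$; the paper leaves the last inequality implicit, and you correctly derive it from Proposition \ref{unique lift}. The paper gets the degeneration by simply asserting that there is a generic $U_t\in Z_t\cap\upcat{v}$ with $\pi_I(M)\in\overline{\mathrm{G}_\textbf{d}\cdot U_t}$. You are right that this does not follow from $X\in Z_t$ when $Z_t$ has no dense orbit, i.e.\ when its generic module is not rigid, which happens in type $A_{n-1}$ for $n\ge 6$. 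A point in the closure of a family of orbits need not lie in the closure of any single one.

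However, your proposal does not close this gap, so it is incomplete at exactly this step. The sentence ``since $\kappa$ is constant on that open set, the argument below still applies'' does not follow. Proposition \ref{kappa} rests on Zwara's exact sequence, which exists precisely for orbit-closure degenerations. Your suggested repair also cannot work. Any nonnegative combination of functions $U\mapsto\dim\Hom(T,U)$ is monotone under passing to submodules, but $U\mapsto\udim{U}$ is not. For $k=2$, $n=4$, $v=s_1s_3s_2$, one has $\upcat{v}=\mathrm{add}(S_1\oplus S_3\oplus V)$ with $V=Q_2/S_2$, and all three indecomposables are projective-injective. Then $S_1\oplus S_3=\mathrm{soc}\,V\subset V$, while $\udim{S_1\oplus S_3}=2>1=\udim{V}$.

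What does work is to probe with $\Hom(-,E)$ instead, where $E=\bigoplus_j\soc{v^{-1}}Q_j$; $E$ is basic, since its nonzero summands have distinct tops. For $U\in\Lambda_\textbf{d}\cap\upcat{v}$ one has $\ext^1(U,E)=0$. Crawley-Boevey's formula $\dim\ext^1(U,E)=\dim\Hom(U,E)+\dim\Hom(E,U)-(\underline{\dim}\,U,\underline{\dim}\,E)$ then says a sum of two upper semicontinuous functions is constant. Hence $\dim\Hom(U,E)$ is locally constant on $\Lambda_\textbf{d}\cap\upcat{v}$. Now $\udim{U}$ is the number of summands of the minimal left $\mathrm{add}(E)$-approximation of $U$, so it equals the dimension of the top of the $\End(E)$-module $\Hom(U,E)$. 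That quantity is therefore upper semicontinuous, because the rank of $\mathrm{rad}\,\End(E)\otimes\Hom(U,E)\to\Hom(U,E)$ is lower semicontinuous. It follows that $\{U:\kappa(U)\le\kappa(X)\}\cap Z_t$ is a nonempty open subset of $Z_t$, so it meets the generic locus. This gives a generic $U_t$ with $\kappa(U_t)\le\kappa(X)\le\rank M$, with no degeneration argument at all.
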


\begin{proof}
    Under Conjecture \ref{semican conj} then $f_{Z_t}(\pi_I(M)) \neq 0$ implies that $\pi_I(M)$ is in the irreducible component in which $U_t$ is generic. Since $U_t$ is generic in $Z_t$ and $\Lambda_\textbf{d} \cap \upcat v$ is open $Z_t \cap \upcat v$ is dense in $Z_t.$ This could also have been obtained from \cite[Proposition 14.6]{GLSkac}. This means that $\pi_I(M) \in \overline{Z_t \cap \upcat v}.$ Since $\pi_I(M) \in \upcat v$ it can not be in the closure of a generic module not in $\upcat v.$ It follows that there is a generic module $U_t \in Z_t \cap \upcat v$ such that $\pi_I(M) \in \overline{\mathrm{G}_\textbf{d} \cdot U_t}.$ We now conclude by Proposition \ref{kappa} that $\kappa(U_t) \leq \rank M.$
\end{proof}

It is worth stressing that Conjecture \ref{semican conj} is stronger than the condition we need. All of the work so far in this section has been investigating whether $\semican v w$ is a basis of $\clalg v w.$ We note that we don't need to consider any problems related to rank of lifts if we localise our algebras. In particular if we localise $\clalg v w$ at the subset $\Delta_I^r$ then we have a basis induced by the semicanonical basis of $\lec v w.$ We summarise this as follows. 

\begin{Proposition}
    The space of functions of the form $f \cdot \Delta_I^{-r}, \, f \in \semican v w, \, r \in \mathbb{N}$ is a basis of $\clalg v w [\Delta_I^{-r}, \, r \in \mathbb{N}].$ Furthermore the space of functions of the form $f \cdot (\clchar_M)^{-1}, \, f \in \semican v w, \, M \in \mathrm{add}(B_{v,w})$ is a basis of $\clalg v w^{\circ} \cong \mathbb{C}[\widetilde{C}_{v,w}].$  
\end{Proposition}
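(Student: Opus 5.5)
The plan is to reduce everything to Leclerc's statement (Proposition 4.2 of \cite{Leclerc}) that $S(\lec v w)$ has a basis formed by the dual semicanonical elements $\varphi_U$ with $U\in\lec v w$ generic in an irreducible component. We pass between degree $0$ and degree $r$ using the grading $\mathbb{C}[\widetilde{C^I}]=\bigoplus_r \mathbb{C}[C^I]\cdot\Delta_I^r$. Two preliminary observations are needed.

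\begin{itemize}
\item[(a)] $M_I\in\rich$. Indeed $L_0=I$, so $M_I$ is a summand of $M_{v,w}$ and hence of $B_{v,w}$, and $\ext^1(M_I,M_I)=0$. By Proposition \ref{rank one agree} we also have $\clchar_{M_I}=\Delta_I$.
\item[(b)] For $U\in\lec v w$ generic in a component $Z$ and any $m\geq\kappa(U)$, the module $\hat U=\widetilde U\oplus M_I^{m-\kappa(U)}$ lies in $\rich$. This uses Proposition \ref{rich cat}: the minimal lift of an object of $\lec v w$ lies in $\rich$, and $\rich$ is closed under direct sums. Multiplicativity in Proposition \ref{sigma props} then gives $\clchar_{\hat U}=\varphi_U\cdot\Delta_I^{m}$, so this element lies in $\semican v w$.
\end{itemize}

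\emph{Spanning, first part.} An element of $\clalg v w[\Delta_I^{-r}]$ is a sum of terms $g\,\Delta_I^{-s}$ with $g=\clchar_M$ homogeneous of degree $\rank M$ and $M\in\rich$. Expanding $g\,\Delta_I^{-\rank M}$ in the dual semicanonical basis gives $g=\sum_t a_t\varphi_{U_t}\Delta_I^{\rank M}$, where each $U_t\in\lec v w$ because $S(\lec v w)$ is spanned by such elements. Choose $N\geq\max_t\kappa(U_t)$. Then
\[g\,\Delta_I^{-s}=\sum_t a_t\,\clchar_{\hat U_t}\,\Delta_I^{-(s+N)},\qquad \hat U_t=\widetilde U_t\oplus M_I^{\rank M+N-\kappa(U_t)},\]
which is a combination of elements $f\cdot\Delta_I^{-r}$ with $f\in\semican v w$. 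Multiplying by a large enough power of $\Delta_I$ is exactly what lets us bypass the unknown inequality in Proposition \ref{basis condition}.

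\emph{Spanning, second part.} The same argument works after localising further at $\{\clchar_X : X\in\mathrm{add}(B_{v,w})\}$. Since $M_I\in\mathrm{add}(B_{v,w})$, every power $\Delta_I^{-r}$ is itself of the form $\clchar_M^{-1}$. A general element $g\,\clchar_X^{-1}$ can then be rewritten as above, using $\clchar_{X\oplus M_I^N}=\clchar_X\Delta_I^N$. Theorem \ref{loc thm} then identifies this localisation with $\mathbb{C}[\widetilde{C}_{v,w}]$.

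\emph{Linear independence.} This is where care is needed, because the displayed families are redundant: $f\Delta_I^{-r}$ and $(f\Delta_I)\Delta_I^{-r-1}$ are the same element. The statement should therefore be read as saying that the distinct functions so obtained form a basis. Every such function equals $\varphi_U\cdot\Delta_I^{m}$ for a unique generic $U\in\lec v w$ and a unique $m\in\mathbb{Z}$, namely its homogeneous degree. Elements of distinct degrees are independent. Within one degree, dividing by $\Delta_I^m$ reduces independence to that of distinct dual semicanonical elements in $\mathbb{C}[C^I]$, as in the proof that $\semican v w$ is linearly independent. This independence survives localisation, since $\mathbb{C}[\widetilde{C^I}]$ is a domain and the localisation map is injective.

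\emph{Main obstacle.} The main difficulty is the second localisation. There the normalising factors $\clchar_M^{-1}$ with $M\in\mathrm{add}(B_{v,w})$ are not powers of $\Delta_I$, so an element $f\cdot\clchar_M^{-1}$ is not obviously of the form (semicanonical element)$\times\Delta_I^m$. The first thing to try is to write $\clchar_M=\Delta_I^{\rank M}\varphi_{\pi_I(M)}$, where $\varphi_{\pi_I(M)}$ is a product of frozen dual semicanonical elements coming from $\mathrm{add}(Q_{v,w})$. One then invokes Leclerc's Theorem 4.5, together with the fact that multiplication by the frozen $\varphi_P$ permutes the dual semicanonical basis: for $P$ projective-injective, $\varphi_P\varphi_U=\varphi_{P\oplus U}$ is again a basis element. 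This should yield a normal form, with $U$ having no projective summands, together with an exponent vector on the frozen variables. Uniqueness of that normal form gives independence, and the isomorphism with $\mathbb{C}[\widetilde{C}_{v,w}]$ is then Theorem \ref{loc thm}.
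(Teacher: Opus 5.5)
Your proposal is correct and is a faithful expansion of the paper's one-line justification: inverting $\Delta_I$ removes the rank constraint $\kappa(U_t)\leq \rank M$ of Proposition \ref{basis condition}, so the dual semicanonical basis of $S(\lec v w)$, and of its localisation via Theorem \ref{loc thm}, transfers directly. Your reading of the statement as a basis of the \emph{distinct} functions is the right one. In the second localisation you can skip the normal form: multiplying a relation by a common frozen denominator $\varphi_P$ turns it into a relation among distinct dual semicanonical elements $\varphi_{U\oplus P'}$, which gives independence immediately. Note also that multiplication by a frozen $\varphi_P$ injects the relevant basis elements into the basis rather than permuting them.
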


\section{Intersections of lifts of Leclerc's categories in
the Schubert case}
We recall the definitions of Grassmannian necklaces.

\begin{Definition}[\cite{POS}, \cite{OPS}]
    We say $\mathcal{I} = (I_i)_{i \in [n]}$ is a \textbf{necklace} for $\gr(k,n)$ if \begin{itemize}
        \item $I_i \in \binom{[n]}{k},$
        \item $I_i \leq_i I_j$ $\forall j,$
        \item $I_i,I_j$ are weakly separated for all $i,j \in [n].$
    \end{itemize}
\end{Definition}

\begin{Definition}[\cite{POS}, \cite{KLS}]
    Associated to $\mathcal{I},$ a necklace for $\gr(k,n),$ the \textit{positroid} variety is the subvariety \[\Pi_{\mathcal{I}} = \{U \in \gr(k,n): \Delta_{J}(U) \neq 0 \Rightarrow J \geq_i I_i \hspace{0.2cm} \forall i \in [n]\}.\]

    The \textit{open positroid} is the open subvariety of the positroid variety defined by \[\Pi_{\mathcal{I}}^{\circ} = \{U \in \Pi_{\mathcal{I}}: \Delta_{I_i}(U) \neq 0 \hspace{0.2cm} \forall i \in [n] \}.\]
\end{Definition}

Let $\chi: \mathbb{C}^n \rightarrow \mathbb{C}^n$ be the linear map sending the basis vector $e_i$ of $\mathbb{C}^n$ to $e_j$ where $j =  i+1 \mod n.$ There is an automorphism of the Grassmannian $\chi:\gr(k,n) \rightarrow \gr(k,n)$ given by $U \mapsto \chi U.$ These act on subvarieties of the Grassmannian and, in particular, give rise to the cyclically shifted opposite Schubert cells $\chi^j(C^I).$

Given $\su(I,i)$ we have a natural cluster character by composing

\[\su(I,i) \rightarrow \mathcal{W}_{-i}(\su(I,i)) = \su(z_{-i}(I), n) \rightarrow \mathbb{C}[\widetilde{C}^{z_{-i}(I)}] \rightarrow \mathbb{C}[\widetilde{\chi^i(C^I)}].\] This cluster character is more natural than the cluster character without the twist since it sends $M_I$ to $\Delta_I.$ This gives us a geometric interpretation of $\su(I,i)$ for differing i. We should think of $\su(I,i)$ as corresponding to the cyclic shift $\chi^i(C^I).$

\begin{Lemma}[Lemma 5.3, \cite{KLS}] \label{cyclic shift}
    Given $\mathcal{I} = (I_i)_{i \in [n]}$ a necklace for $\gr(k,n)$ we have $$\Pi_{\mathcal{I}}^{\circ} = \bigcap_j \chi^j(C^{I_j}).$$
\end{Lemma}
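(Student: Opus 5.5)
The plan is to reduce both sides to conditions on Plücker coordinates and compare them index by index. Everything rests on one fact: for $U \in \gr(k,n)$, the set $\mathcal{M}(U) = \{J \in \binom{[n]}{k} : \Delta_J(U) \neq 0\}$ is the set of bases of a matroid. Because of this, for every $i \in [n]$ the order $\leq_i$ has a unique minimal element on $\mathcal{M}(U)$. One obtains it by the greedy algorithm, reading the columns of a matrix representative of $U$ in the order $i, i+1, \dotsc, i-1$ and keeping each column that is independent of those already chosen. I take this Gale-minimality statement as standard (it follows from basis exchange). It is also what makes the definition of $\Pi_{\mathcal{I}}$ meaningful.

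\textbf{Step 1 (the unshifted cell).} Characterise $C^I$ by the condition
\[C^I = \{U : \Delta_I(U) \neq 0 \text{ and } \Delta_J(U) \neq 0 \Rightarrow I \leq J\}.\]
The defining condition $\dim(U \cap F'_t) = |I \cap \{t,\dotsc,n\}|$ says exactly that $I$ is the set of pivot columns in the row reduced echelon form of $U$ read from the left. These pivot columns are precisely the columns the greedy algorithm selects in the order $1, 2, \dotsc, n$. Hence $U \in C^I$ if and only if $I$ is the $\leq$-minimal element of $\mathcal{M}(U)$. Since a Gale-minimal basis is automatically comparable to every other basis, this is the displayed description. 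Lemma \ref{triv comp} can be used to translate between the rank conditions and $\leq$.

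\textbf{Step 2 (the shift).} The map $\chi$ sends $e_t \mapsto e_{t+1}$, so $\Delta_J(\chi U) = \pm \Delta_{z_{-1}J}(U)$, and cyclically relabelling the columns changes the order in which the greedy algorithm reads them. It follows that $U \in \chi^j(C^{I'})$ if and only if the minimal element of $\mathcal{M}(U)$ for the correspondingly shifted cyclic order $\leq_{j+1}$ equals $z_j I'$. The intersection in the lemma is to be read with $C^{I_j}$ denoting this shifted cell, labelled so that its minimal basis is $I_j$ for $\leq_j$; that is, with the convention under which $\su(I,i)$ corresponds to $\chi^i(C^I)$ as described just before the lemma. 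Under that convention we get
\[U \in \chi^j(C^{I_j}) \iff \Delta_{I_j}(U) \neq 0 \ \text{ and } \ \big(\Delta_J(U) \neq 0 \Rightarrow I_j \leq_j J\big).\]

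\textbf{Step 3 (intersect).} Taking the intersection over all $j$ gives exactly the defining conditions of $\Pi_{\mathcal{I}}$ together with the nonvanishing conditions $\Delta_{I_j} \neq 0$ that cut out $\Pi^{\circ}_{\mathcal{I}}$. Hence
\[\bigcap_j \chi^j(C^{I_j}) = \Pi^{\circ}_{\mathcal{I}}.\]
The necklace axioms are not needed for this inclusion-by-inclusion comparison. They only guarantee that the intersection is nonempty, and they agree with the fact that $(\min_{\leq_j} \mathcal{M}(U))_j$ is always a Grassmann necklace.

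The main obstacle is bookkeeping, not ideas. One must check that the direction of $\chi$, the shift $z_j$ versus $z_{-j}$, and the resulting order $\leq_j$ versus $\leq_{j+1}$ all line up, so that $\chi^j$ applied to the correctly labelled opposite Schubert cell produces the order $\leq_j$ appearing in the definition of $\Pi_{\mathcal{I}}$. I would first verify this on a small case such as $\gr(2,4)$ with $j=1$, fixing the sign and index conventions there before writing the general argument. A second point to check carefully is that in Step 1 the pivots taken from the left correspond to $F'_t$ and not to $F_t$.
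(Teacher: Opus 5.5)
Your proof is correct. The paper gives no proof of this lemma, only the citation \cite[Lemma 5.3]{KLS}, and your argument is essentially the standard one behind that lemma. Steps 1 and 2 hold with the paper's conventions. Since $\dim(U\cap F'_t)=k-\operatorname{rank}(\text{first } t-1 \text{ columns})$, the cell $C^{I}$ is the locus where $I$ is the pivot basis, which is the greedy, Gale-minimal basis. Likewise $\chi^{j}(C^{I'})$ is exactly the locus where $z_j(I')$ is $\leq_{j+1}$-minimal.

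The only thing to fix is how you phrase the conclusion. Read literally with the paper's $C^{I}$ and $\chi$, the printed identity is false. For $\gr(1,2)$ with $I_1=\{1\}$ and $I_2=\{2\}$, one gets $\chi(C^{\{1\}})\cap C^{\{2\}}=\{[0:1]\}$, whereas $\Pi^{\circ}_{\mathcal{I}}=\{[a:b]:ab\neq 0\}$. What your Step 2 actually proves is
\[\Pi^{\circ}_{\mathcal{I}}=\bigcap_{j}\chi^{j-1}\bigl(C^{z_{1-j}(I_j)}\bigr).\]
State this corrected formula explicitly instead of ``reading'' the lemma with a convention.

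Your parenthetical identification of the convention is also off by one. The locus attached to $\su(I,i)$ is the one where $I$ is $\leq_{i+1}$-minimal, by Lemma \ref{equiv}. So the term in which $I_j$ is $\leq_j$-minimal corresponds to $\su(I_j,j-1)$, as in Proposition \ref{intersection}. That is a shift by $j-1$, not by $j$.
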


By Proposition \ref{cl is res} then the cluster character $\clchar$ can be thought of as the restriction of the JKS cluster character $\Psi.$ This motivates us to consider the intersection of our categories $\su(I,j)$ as an attempt to model the corresponding intersection of shifted opposite Schubert cells. 

\begin{Definition}[Proposition 2.10, Remark 2.11, \cite{JKS3}]
    Consider the necklace $(I_1,\cdots,I_n)$ as before. 
    Let $B = \displaystyle\bigoplus_i M_{I_i}.$ Define \[\cm(B) \defeq \{M \in \cm(C): \exists\, t \in \mathbb{N}, \,  B^{t} \twoheadrightarrow M\}.\] Further, let  \[\gp(B) = \{M \in \cm(B): \ext_{C}^{1}(M, B) = 0\}.\]
\end{Definition}

\begin{Proposition}\label{intersection}
    Given a necklace $\mathcal{I} = (I_i)_{i \in [n]}$ then as subcategories of $\cm(C)$ we have \[\cm(B) = \bigcap_j \fa(I_j,j-1),\] \[\gp(B) = \bigcap_j \su(I_j,j-1).\]
\end{Proposition}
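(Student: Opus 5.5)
The plan is to treat one vertex at a time, exactly as in Lemma \ref{su is gp}, and then intersect. The key input is the necklace condition $I_j \leq_j I_l$ for all $l$. By Lemma \ref{equiv} with $i = j-1$, this gives for each $j$ an embedding $M_{I_j} \hookrightarrow M_{I_l}$ which surjects at vertex $j-1$, for every $l$. So I will first record that every summand of $B$ lies in $\fa(I_j,j-1)$ for every $j$. Moreover, the necklace entries are pairwise weakly separated, so $\ext^1_C(B,B)=0$ by \cite[Proposition 5.6]{JKS}. Hence $B \in \su(I_j,j-1)$ for every $j$.

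For the inclusion $\cm(B) \subseteq \bigcap_j \fa(I_j,j-1)$, take a surjection $B^t \twoheadrightarrow M$ and precompose with an embedding $M_{I_j}^{s} \hookrightarrow B^t$ surjecting at vertex $j-1$, built summand by summand. The composite surjects at $j-1$. By Lemma \ref{fa char}, after choosing $s=\rank M$ generators appropriately, it is an embedding, so $M\in\fa(I_j,j-1)$.

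For the converse, let $M \in \bigcap_j \fa(I_j,j-1)$. For each $j$ there is $f_j: M_{I_j}^{r} \to M$ which is an isomorphism at vertex $j-1$. Summing over $j$ gives $\bigoplus_j M_{I_j}^r \to M$, which surjects at every vertex, since vertex $j-1$ ranges over all of $[n]$. A morphism of $C$-modules that is surjective on each $e_iM$ is surjective, so $M \in \cm(B)$. This direction is easier than in Lemma \ref{su is gp}, because every vertex is covered directly.

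For the $\gp$ statement, the intersection side gives $\ext^1(M,M_{I_j})=0$ for all $j$, and hence $\ext^1(M,B)=0$ by additivity. Combined with the first equality, this yields $M\in \gp(B)$. Conversely, $M\in\gp(B)$ lies in each $\fa(I_j,j-1)$ and satisfies $\ext^1(M,M_{I_j})=0$ since $M_{I_j}$ is a summand of $B$. So $M\in\su(I_j,j-1)$ for each $j$.

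The main obstacle I expect is the indexing in Lemma \ref{equiv}: whether $I_j\leq_j I_l$ corresponds to surjecting at vertex $j-1$ rather than $j$, which depends on the cyclic conventions ($\leq_{i+1}$ versus vertex $i$). I would verify this with the rotation functor $\mathcal{W}_i$, reducing to the case $i=n$ proved in Lemma \ref{equiv}. A secondary point is checking that the $f_j$ can be chosen as embeddings, which follows from Lemma \ref{fa char} because $\cm(C)$ is closed under submodules.
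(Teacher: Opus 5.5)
Your proposal is correct and follows essentially the same route as the paper. The necklace condition $I_j \leq_j I_l$ together with Lemma \ref{equiv} (taking $i=j-1$) gives embeddings $M_{I_j}\hookrightarrow M_{I_l}$ surjecting at vertex $j-1$, which you compose with $B^t\twoheadrightarrow M$. The reverse inclusion is the vertexwise-surjectivity argument the paper dismisses as clear, and the $\gp$ statement follows from additivity of $\ext^1$.

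Your step of restricting to $\rank M$ suitably chosen copies of $M_{I_j}$ (via Nakayama at vertex $j-1$) is slightly more careful than the paper, which calls the map out of $B^t$ an embedding. That step is needed to get the isomorphism at vertex $j-1$ that Lemma \ref{fa char} asks for. The weak-separation observation that $B\in\su(I_j,j-1)$ is true but is never used.
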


\begin{proof}
    The second result will follow directly from the first since $\ext$ is additive. Clearly for $M \in \bigcap_j \fa(I_j,j-1)$ then $M \in \cm(B).$ We now need to show $M \in \cm(B)$ implies $M \in \bigcap_j \fa(I_j,j-1).$ For such a M we have a surjection $$B^t \twoheadrightarrow M.$$ In particular this must also surject at vertex $j-1$. In particular there is a map $$\bigoplus_j M_{I_j}^{t_j} \hookrightarrow M$$ that surjects at vertex $j-1$. But for a necklace we have $I_j \leq_{j} I_i$ so by Lemma \ref{equiv} we have an embedding $$M_{I_j}^r \hookrightarrow \bigoplus_j M_{I_j}^{t_j}$$ that also surjects at vertex $j-1$. The composition of these embeddings tells us that there is an embedding $$M_{I_j}^r \hookrightarrow M$$ that surjects at vertex $j-1$. Repeating this at each vertex tells us $M \in \bigcap_j \fa(I_j,j-1).$
\end{proof}

\begin{corollary}
    Given a necklace $\mathcal{I} = (I_i)_{i \in [n]},$ $J \in \binom{[n]}{k}$ and $B = \displaystyle\bigoplus_i M_{I_i}$ then $M_J \in \cm(B) \Leftrightarrow J \geq_i I_i,\,\forall i \in [n].$
\end{corollary}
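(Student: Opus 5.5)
The corollary should follow from Proposition \ref{intersection} by specialising to a rank one module, together with Lemma \ref{equiv}. The plan is to reduce membership of $M_J$ in $\cm(B)$ to a collection of vertex-by-vertex conditions, one for each element of the necklace, and then translate each of these into a statement about the order $\leq_i$.

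First, by Proposition \ref{intersection} we have $M_J \in \cm(B)$ if and only if $M_J \in \fa(I_j,j-1)$ for every $j \in [n]$. Since $M_J$ has rank one, Lemma \ref{fa char} and the remark after it let us take $r = 1$. So $M_J \in \fa(I_j,j-1)$ if and only if there is an embedding $M_{I_j} \hookrightarrow M_J$ which surjects at vertex $j-1$.

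Next, by Lemma \ref{equiv}, applied with $i = j-1$, such an embedding exists exactly when $I_j \leq_{j} J$. Lemma \ref{equiv} was only proved for $i = n$; for other $i$ I would reduce to that case using the autoequivalence $\mathcal{W}_{i}$, which sends $M_L$ to $M_{z_i(L)}$ and shifts both the vertices and the cyclic orders compatibly. Reindexing over $i = j$ gives the condition $J \geq_i I_i$ for all $i \in [n]$.

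The main obstacle is the index bookkeeping. I expect it to be fine, because Proposition \ref{intersection} pairs $I_j$ with vertex $j-1$ precisely so that Lemma \ref{equiv} yields the order $\leq_j$, which matches the necklace convention $I_j \leq_j I_i$. The only remaining point is the rank one reduction: the $\mathbb{C}[[t]]$-rank of $\Hom(M_{I_j},M_J)$ is one, so an embedding from a power $M_{I_j}^r$ that is surjective at a vertex forces $r = 1$, by Lemma \ref{fa char}.
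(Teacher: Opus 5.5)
Your proof is correct and follows the paper's intended route. The paper gives no separate proof of this corollary, but it follows, exactly as you argue, from Proposition \ref{intersection} specialised to the rank one module $M_J$: here $r=\rank M_J=1$, since the embedding is an isomorphism at vertex $j-1$. Applying Lemma \ref{equiv} with $i=j-1$ then turns each condition $M_J\in\fa(I_j,j-1)$ into $I_j\leq_j J$.
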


The results in this section are not sufficient to show that the restriction $\mathrm{res} \cdot \Psi : \gp(B) \rightarrow \mathbb{C}[\widetilde{\Pi}^{\circ}_{\mathcal{I}}]$ generates the whole coordinate ring. It may well be possible to achieve this result via this cyclic intersection approach however we won't pursue this here as it will appear in \cite{JRS1}.

\begin{Remark}
    Note that all of the categories we have introduced can be thought of as strongly motivated by geometry. We constructed $\su(I,i)$ to categorify shifted opposite Schubert cells and $\opsu(I,i)$ to categorify shifted Schubert cells. By thinking of open positroids as intersections of shifted opposite Schubert cells we find the definition of $\gp(B).$ By thinking of open positroids as projections of open Richardson varieties we recover $\rich.$ Furthermore when we consider the intersections of Schubert cells and opposite Schubert cells in Grassmannians we naturally recover the construction of $\calrij(I,J).$ In this sense we should think of $\su(I,i)$ and $\opsu(I,i)$ as building blocks of categorifications. Given their simple definitions, this can often make for simple descriptions of categories constructed from them. 
\end{Remark}

\section{A conjecture for Gorenstein-injectives}
\begin{Definition}
    Given $\mathcal{U} \subseteq \mathcal{C}$ with $\mathcal{C}$ a Krull-Schmidt category we denote by $\mathrm{add}(\mathcal{U})$ the additive closure of $\mathcal{U}.$
\end{Definition}

\begin{Definition}[\cite{POS}]
    A decorated permutation $\sigma$ is a permutation plus a black or white label for each index fixed by $\sigma.$ Given a decorated permutation we denote \[\mathcal{I}(\sigma) = (I_j)_{j \in [n]}\] where \[I_j = \{l \in [n]: l \leq_j \sigma^{-1}(j), \gap l\text{ is not a white fixed point}\}.\]  
\end{Definition}

A necklace in turn determines a decorated permutation \cite{POS} so we can interchange the two. Given a necklace $\mathcal{I}(\sigma) = (I_j)_{j \in [n]}$ then we denote $B_{\mathcal{I}(\sigma)} = \displaystyle\bigoplus_i M_{I_i}$ as before and denote \[\gi(B_{\mathcal{I}(\sigma)}) = \{M \in \cm(B_{\mathcal{I}(\sigma)}) : \ext^1(B_{\mathcal{I}(\sigma)}^{\vee}, M) = 0\}\] the category of Gorenstein-injectives \cite{Press1} \cite{JKS3}. Recall we have a functor $\pi_I: \cm(C) \rightarrow \Mod \propi.$

\begin{Conjecture}
    Given $v \in \quotmax$ and $w \geq v$ let $\sigma = v^{-1}w$ and decorate this permutation by setting a fixed index j to be black if $j \in v^{-1}[k].$ Let $I = v^{-1}[k].$ We conjecture the following: \[\mathrm{add}(\pi_I(\gi(B_{\mathcal{I}(\sigma)})) = \lec v w.\]
\end{Conjecture}

We will now give an example to back up our conjecture.

\begin{Example}
    We will compute the two categories in the case of a finite type positroid in $\gr(3,8).$ Since this case is finite type we can explicitly compute all indecomposables in each category and check if the conjecture holds. 

    Fix $$k = 3, n = 8, v = w_0^ks_3, w = w_0 s_4s_5s_4s_7.$$ For ease of presentation and comparison we will write all objects in terms of the functor $\pi_I,$ $I = 124.$ 

    First we list all of the indecomposables in $\lec v w$ which are blue if they are projective-injective and red if they are mutable 
    \[\ind(\lec v w) = \{\textcolor{blue}{\pi_I(M_{127}), \pi_I(M_{178}), \pi_I(M_{234}), \pi_I(M_{345}), \pi_I(M_{456}), \pi_{I}(M_{457}), \pi_{I}(M_{478}),}\] \[\textcolor{red}{\pi_{I}(M_{134}), \pi_{I}(M_{145}), \pi_{I}(M_{147}), \pi_I(M_{245}), \pi_I(M_{247}), \pi_{I}(M_{278}), \pi_{I}(M_{347}), \pi_I(M_{378}), \pi_{I}(M_{\frac{147}{258}})}\}.\]

    We now list the indecomposables in $\gi(B)$ for $B$ associated to \[v^{-1}w = \left(\begin{matrix} 1 & 2 & 3 & 4 & 5 & 6 & 7 & 8 \\ 3 & 5 & 6 & 1 & 8 & 7 & 4 & 2\end{matrix}\right).\]

    \[\ind(\gi(B)) =  \{\textcolor{blue}{M_{127}, M_{178},M_{234},M_{237}, M_{345},M_{456},M_{457},M_{478}, }\] \[\textcolor{red}{M_{137},M_{147},M_{245},M_{247}, M_{278}, M_{347},M_{378},M_{\frac{247}{135}}, M_{\frac{247}{358}}}\}.\]

    It is easy to compute the quotients of the rank ones and for the rank two indecomposables we use the extensions 
    \[M_{124} \hookrightarrow M_{145}\oplus M_{234} \oplus M_{127} \twoheadrightarrow M_{\frac{247}{135}},\]

    \[M_{124} \hookrightarrow M_{234}\oplus M_{\frac{147}{258}} \twoheadrightarrow M_{\frac{247}{358}}.\] These allow us to compute the following table.

    \begin{center}
        \begin{tabular}{c|c}
        \hline $X \in \gi(B)$ & $\pi_I(X)$ \\ 
        \hline\hline
         $\bl{M_{127}}$    &  $\bl{\pi_{I}(M_{127})}$\\
         $\bl{M_{178}}$    & $\bl{\pi_{I}(M_{178})}$\\
         $\bl{M_{234}}$ & $\bl{\pi_{I}(M_{234})}$ \\
         $\bl{M_{345}}$ & $\bl{\pi_{I}(M_{345})}$ \\ 
         $\bl{M_{456}}$ & $\bl{\pi_{I}(M_{456})}$ \\
         $\bl{M_{478}}$ & $\bl{\pi_{I}(M_{478})}$ \\ 
         $\red{M_{137}}$ & $\pi_{I}(M_{137}) = \red{\pi_{I}(M_{134})}\oplus \bl{\pi_{I}(M_{127})}$ \\ 
         $\red{M_{147}}$ & $\red{\pi_{I}(M_{147})}$ \\ 
         $\red{M_{245}}$ & $\red{\pi_{I}(M_{245})}$ \\ 
         $\red{M_{247}}$ & $\red{\pi_{I}(M_{247})}$ \\ 
         $\red{M_{278}}$ & $\red{\pi_{I}(M_{278})}$ \\
         $\red{M_{347}}$ & $\red{\pi_{I}(M_{347})}$ \\
         $\red{M_{378}}$ & $\red{\pi_{I}(M_{378})}$ \\
         $\red{M_{\frac{247}{135}}}$ & $\pi_{I}(M_{\frac{247}{135}}) = \red{\pi_I(M_{145})}\oplus\bl{\pi_I(M_{234})}\oplus\bl{\pi_I(M_{127})}$ \\
         $\red{M_{\frac{247}{358}}}$ & $\pi_I(M_{\frac{247}{358}}) = \red{\pi_I(M_{\frac{147}{258}})}\oplus \bl{\pi_I(M_{234})}.$
        \end{tabular}
    \end{center}

We can read off from this table that every object on the right hand side is in $\lec v w$ and every indecomposable in $\lec v w$ occurs as a summand of at least one object on the right.
\end{Example}

\section{Example and comparison of cluster structures}
In this section we will give two detailed examples. We will compare the cluster structure we obtained via $\rich$ to cluster structures induced by plabic graphs \cite{Pressland2} \cite{JKS3}.

 We start by recalling some definitions. Associated to $\sigma$ is a necklace $\mathcal{I}= (I_i)_{i \in [n]}$ with $I_i = \{j \in [n]: j \leq_i \sigma^{-1}(j)\}.$ Define the set $\mathcal{M} = \{L \in \binom{[n]}{k}: L \geq_i I_i, \, \forall i \in [n]\}.$ The positroid variety \[\Pi_{\sigma} = \{U \in \gr(k,n): \Delta_L(U) = 0\, \text{ if } L \notin \mathcal{M}\}\] is a subvariety of the Grassmannian and the open positroid variety is \[\Pi_{\sigma}^{\circ} = \{U \in \Pi_{\sigma}: \Delta_{I_i}(U) \neq 0, \, \forall i \in [n]\}.\] The opposite necklace $\mathcal{I}_{\sigma}^{op} = (J_i)_{i \in [n]}$ given by $J_i = \{j \in [n]: j \geq_i \sigma(j)\}$ gives rise to the same set \[\mathcal{M} = \{L \in \binom{[n]}{k}: L \leq_i J_i, \, \forall i \in [n]\}.\] The open positroid can also be defined as \[\Pi_{\sigma}^{\circ} = \{U \in \Pi_\sigma:\Delta_{J_i}(U) \neq 0, \, \forall i \in [n]\}.\]  
\begin{Example}
For $k = 3, \, n = 7$ let $v  = w_0^k$ and $w = w_0 s_3s_5s_6s_5.$ By \cite[Theorem 5.9]{KLS} the projected open Richardson variety $C_{v,w}$ can be identified with the open positroid associated to \[\sigma  = v^{-1}w = \perm {1 & 2 & 3 & 4 & 5 & 6 & 7\\ 4 & 5 & 7 & 6 & 3 & 2 & 1}.\]

The positroid variety in this case is defined by $\Delta_{456} = \Delta_{267} = \Delta_{\{17\}\cup\{j\}} = 0.$ We have the necklace $\mathcal{I} = (123,234,345,457,567,367,237)$ and opposite necklace $\mathcal{I}^{op} = (567,156,125,123,234,345,356).$ By \cite{Press1} and \cite{JKS3} we should consider $\gp(B)$ and $\gi(B)$ for \[B = M_{123}\oplus M_{234} \oplus M_{345} \oplus M_{457} \oplus M_{567} \oplus M_{367}\oplus M_{237}.\] 
\[\ind(\gp(B)) = \{\bl{M_{123}, M_{234} , M_{345} , M_{457}, M_{567} , M_{367}, M_{237}}, \red{M_{357} , M_{347},M_{235},M_{467},M_{\frac{246}{135}}}\},\]
\[\ind(\gi(B)) = \{\bl{M_{567},M_{156},M_{125},M_{123},M_{234},M_{345},M_{356}}, \red{M_{134},M_{135},M_{235},M_{256},M_{\frac{135}{246}}}\}.\]

These clusters structures are type $A_2.$ By \cite[Theorem 4.5 (v)]{Leclerc} we know $\rich$ gives rise to a cluster structure on $\Pi_{\sigma}^{\circ} \cong C_{v,w}.$ 

\[M_{v,w} = M_{123}\oplus M_{125}\oplus M_{156}\oplus M_{234}\oplus M_{345}\oplus M_{356}\oplus M_{567}\]

Furthermore $B_{v,w} = M_{v,w}$ and \[\rich = \{M \in \cm(C): M \in \fac(M_{v,w})\}.\] Direct computation in this situation shows that $\rich = \gi(B).$ 
\end{Example}

We now give an example where $\gp(B),\, \rich,$ and $\gi(B)$ all differ. We will then describe an automorphism of $\Pi^{\circ}$ which allows us to compare cluster variables coming from $\rich$ to the cluster variables coming from $\gp(B).$

\begin{Example}
    Again we take $k = 3, \, n = 7.$ We let $v = w_0^ks_3s_4$ and $w = w_0s_4s_3.$ The associated permutation is 
    \[\sigma  = v^{-1}w = \perm {1 & 2 & 3 & 4 & 5 & 6 & 7\\ 3 & 4 & 1 & 6 & 7 & 2 & 5}.\] The associated necklace module is  \[B = M_{125}\oplus M_{127}\oplus M_{145}\oplus M_{156}\oplus M_{167}\oplus M_{235}\oplus M_{345}.\] The positroid is given by the relations \[\Delta_{123} = \Delta_{124} = \Delta_{234} = \Delta_{467} = \Delta_{456} = \Delta_{457} = \Delta_{567} =0.\] Direct computation shows that 
    \[\ind(\gp(B)) = \{\bl{M_{125},M_{127},M_{145},M_{156},M_{167},M_{235},M_{345}}, \red{M_{126},M_{135},M_{157},M_{245}}\},\]
    \[\ind(\gi(B)) = \{\bl{M_{127},M_{167},M_{237},M_{345},M_{347},M_{356},M_{367}}, \red{M_{137},M_{267},M_{346},M_{357}}\},\]
    \[\ind(\rich) = \{\bl{M_{125},M_{127},M_{167},M_{235},M_{345},M_{356},M_{\frac{136}{257}}}, \red{M_{126},M_{135},M_{267},M_{357}}\}.\]

    These clusters are all of type A$_1\times$A$_1.$ For example the mutations in $\rich$ are given by 
    \[M_{126} \hookrightarrow M_{127}\oplus M_{356} \twoheadrightarrow M_{357},\]
    \[M_{357} \hookrightarrow M_{\frac{136}{257}} \twoheadrightarrow M_{126}\] 
    and 
    \[M_{135} \hookrightarrow M_{167}\oplus M_{235} \twoheadrightarrow M_{267},\]
    \[M_{267} \hookrightarrow M_{\frac{136}{257}} \twoheadrightarrow M_{135}.\]

    The positroid has relations
    \[\Delta_{145}\Delta_{356} = \Delta_{156}\Delta_{345}, \, \clchar_{\frac{136}{257}} \Delta_{156} = \Delta_{125}\Delta_{167}\Delta_{356},\]
    \[\Delta_{145}\Delta_{357} = \Delta_{157}\Delta_{345}, \, \Delta_{156}\Delta_{145}\Delta_{267} = \Delta_{167}\Delta_{156}\Delta_{245}.\]

    On the open positroid $\Delta_I \neq 0$ for any $I \in \mathcal{I}.$ This allows us to define an automorphism $\alpha$ of $\mathbb{C}[\Pi_\sigma^{\circ}]$ by 

    \begin{center}
        \begin{tabular}{c|c}
        \hline $X$ & $\alpha(X)$ \\ 
        \hline\hline
         $\Delta_{125}$    &  $\Delta_{125}$\\
         $\Delta_{127}$    &  $\Delta_{127}$\\
         $\Delta_{145}$ & $\Delta_{156}\Delta_{345}\Delta_{145}^{-1}$ \\
         $\Delta_{156}$ & $\Delta_{125}\Delta_{167}\Delta_{356}\Delta_{156}^{-1}$ \\ 
         $\Delta_{167}$ & $\Delta_{167}$ \\
         $\Delta_{235}$ & $\Delta_{235}$ \\ 
         $\Delta_{345}$ & $\Delta_{345}$ \\ 
         $\Delta_{126}$ & $\Delta_{126}$ \\ 
         $\Delta_{135}$ & $\Delta_{135}$ \\ 
         $\Delta_{157}$ & $\Delta_{157}\Delta_{345}\Delta_{145}^{-1}$ \\ 
         $\Delta_{245}$ & $\Delta_{167}\Delta_{245}\Delta_{145}^{-1}.$ 
        \end{tabular}
    \end{center}

    The relations described on the positroid tell us that this automorphism maps the cluster variables from the cluster structure induced by $\gp(B)$ to those induced by $\rich.$ One can check in this case that the cluster structures \textit{quasi-coincide} \cite{Press1}. This means every cluster variable coming from $\rich$ can be written in the form $XY$ where $X$ is a cluster variable coming from $\gp(B)$ and $Y$ is a Laurent polynomial in the frozen variables.    
\end{Example}

\printbibliography

\end{document}